\author{Samuele Giraudo}
\date{\today}
\address{Institut Gaspard-Monge, université Paris-Est Marne-la-Vallée,
5 Boulevard Descartes, Champs-sur-Marne, 77454, Marne-la-Vallée cedex 2, France}
\email{samuele.giraudo@univ-mlv.fr}
\title[Intervals of balanced binary trees in the Tamari lattice]
{Intervals of balanced binary trees \\ in the Tamari lattice}
\keywords{Balanced binary tree; Tamari lattice; Poset; Grammar; Generating series; Fixed-point functional equation}
\newtheorem{Theoreme}{Theorem}[section]
\newtheorem{Proposition}[Theoreme]{Proposition}
\newtheorem{Lemme}[Theoreme]{Lemma}
\newtheorem{Definition}[Theoreme]{Definition}
\newtheorem{Corollaire}[Theoreme]{Corollary}
\newtheorem{Remarque}[Theoreme]{Remark}
\numberwithin{equation}{section}
\newcommand{\ArbreVide}{\perp}
\newcommand{\Ht}{\operatorname{h}}
\newcommand{\EnsNat}{\mathbb{N}}
\newcommand{\EnsRel}{\mathbb{Z}}
\newcommand{\EnsAB}{\mathcal{T}}
\newcommand{\EnsEq}{\mathcal{B}}
\newcommand{\ABCons}{\wedge}
\newcommand{\CouvTam}{\rightthreetimes}
\newcommand{\OrdTam}{\leq_{\operatorname{T}}}
\newcommand{\Tam}{\mathbb{T}}
\newcommand{\Des}{\operatorname{i}}
\newcommand{\HyperCube}{\mathbb{H}}
\newcommand{\ADroite}{\rightsquigarrow}
\newcommand{\EnsAdmissible}{\mathcal{A}}
\newcommand{\MotHt}{\operatorname{hw}}
\newcommand{\HtMot}{\Omega}
\newcommand{\Lang}{\mathcal{L}}
\newcommand{\EnsBud}{\mathcal{D}}
\newcommand{\Buds}{\operatorname{Buds}}
\newcommand{\Eval}{\operatorname{ev}}
\newcommand{\Nd}{\operatorname{n}}
\newcommand{\WDes}{\operatorname{wi}}
\newcommand{\Canop}{\operatorname{cnp}}
\newcommand{\EnsCanop}{\mathcal{C}}
\newcommand{\GramSerie}{\mathcal{S}}
\newcommand{\LettreB}{{\tt b}}
\newcommand{\Expr}{\operatorname{subs}}
\newcommand{\GenGraph}{\mathcal{G}}
\newcommand{\Nar}{\operatorname{nar}}
\newcommand{\PBT}{{\bf PBT}}
\newcommand{\Sym}{{\bf Sym}}
\newcommand{\PP}{{\bf P}}
\newcommand{\InvolAB}{\sim}
\newcommand{\Sloane}[1]{\href{http://oeis.org/#1}{{\bf #1}}}
\newcommand{\Wrt}{w.r.t.~}
\definecolor{Noir}{RGB}{0,0,0}
\definecolor{Rouge}{RGB}{205,35,38}
\definecolor{Bleu}{RGB}{2,60,195}
\definecolor{Vert}{RGB}{23,183,1}
\definecolor{Orange}{RGB}{255,113,15}
\definecolor{Marron}{RGB}{170,100,30}
\definecolor{Blanc}{RGB}{255,255,255}
\tikzstyle{Noeud} = [circle,draw=Bleu!100,fill=Bleu!20,thick,inner sep=0pt,
\tikzstyle{Feuille} = [rectangle,draw=Noir!100,fill=Noir!30,thick,
\tikzstyle{Arete} = [Rouge!80, thick, draw, line width = 2pt]
\tikzstyle{SArbre} = [rectangle,draw=Orange!100,fill=Orange!30,thick,
\tikzstyle{EtiqClair} = [draw = Bleu!100, fill = Blanc!100]
\tikzstyle{EtiqFonce} = [draw = Bleu!100, fill = Bleu!15]
\tikzstyle{Bourgeon} = [Noeud, draw = Vert!100, fill = Vert!15]
\tikzstyle{NoeudR} = [Noeud,rectangle]
\tikzstyle{Cercle} = [circle, draw = Rouge!100, fill = Rouge!70, thick, inner sep = 0pt, minimum size = 3mm]
\tikzstyle{AreteCube} = [line width=1pt, draw=Bleu]
\tikzstyle{AreteTam} = [Bleu!80, thick, draw, line width = 8pt]
\tikzstyle{NoeudTam} = [Noeud, minimum size = 10mm]
\tikzstyle{AreteGen} = [Bleu!80, thick, draw, line width = 3pt, ->]
\tikzstyle{Marque1} = [draw = Orange!100, fill = Orange!15]
\newcommand{\Feuille}[1]{%
    \scalebox{#1}{%
    \raisebox{.3em}{%
    \begin{tikzpicture}
        \node[Feuille]{};
    \end{tikzpicture}}}}
\newcommand{\Noeud}[2]{%
    \scalebox{#1}{%
    \raisebox{-.1em}{%
    \begin{tikzpicture}%
        \node[Noeud]{#2};%
    \end{tikzpicture}}}}
\newcommand{\NoeudR}[2]{%
    \scalebox{#1}{%
    \begin{tikzpicture}%
        \node[NoeudR]{#2};%
    \end{tikzpicture}}}
\newcommand{\Bourgeon}[2]{%
    \scalebox{#1}{\raisebox{-.2em}{%
    \begin{tikzpicture}%
        \node[Bourgeon]{\huge #2};%
    \end{tikzpicture}}}}
\newcommand{\BourgeonA}[4]{%
    \scalebox{#1}{%
    \begin{tikzpicture}%
        \node[Bourgeon](1)at(0,0){\huge #2};%
        \node[Noeud,EtiqClair](2)at(1,1.25){\LARGE #3};%
        \node[Bourgeon](3)at(2,0){\huge #4};%
        \draw[Arete](1)--(2);%
        \draw[Arete](2)--(3);%
    \end{tikzpicture}}}
\newcommand{\BourgeonACarre}[4]{%
    \scalebox{#1}{%
    \begin{tikzpicture}%
        \node[Bourgeon](1)at(0,0){\huge #2};%
        \node[NoeudR,EtiqClair](2)at(1,1.25){\LARGE #3};%
        \node[Bourgeon](3)at(2,0){\huge #4};%
        \draw[Arete](1)--(2);%
        \draw[Arete](2)--(3);%
    \end{tikzpicture}}}
\newcommand{\MotifA}[3]{%
    \scalebox{#1}{%
    \begin{tikzpicture}%
        \node[Noeud,EtiqClair](0)at(0,-1){#2};%
        \node[Noeud,EtiqClair](1)at(1,0){#3};%
        \draw[Arete](1)--(0);%
    \end{tikzpicture}}}
\newcommand{\MotifB}[3]{%
    \scalebox{#1}{%
    \begin{tikzpicture}%
        \node[Noeud,EtiqClair](0)at(0,0){#2};%
        \node[Noeud,EtiqClair](1)at(1,-1){#3};%
        \draw[Arete](0)--(1);
    \end{tikzpicture}}}
\newcommand{\ArbreA}[1]{%
    \scalebox{#1}{%
    \begin{tikzpicture}%
        \node[Feuille](0)at(0,-1){};%
        \node[Noeud](1)at(1,0){};%
        \node[Feuille](2)at(2,-1){};%
        \draw[Arete](1)--(0);%
        \draw[Arete](1)--(2);%
    \end{tikzpicture}}}
\newcommand{\ArbreB}[1]{%
    \scalebox{#1}{%
    \begin{tikzpicture}%
        \node[Feuille](0)at(0,-2){};%
        \node[Noeud](1)at(1,-1){};%
        \node[Feuille](2)at(2,-2){};%
        \draw[Arete](1)--(0);%
        \draw[Arete](1)--(2);%
        \node[Noeud](3)at(3,0){};%
        \node[Feuille](4)at(4,-1){};%
        \draw[Arete](3)--(1);%
        \draw[Arete](3)--(4);%
    \end{tikzpicture}}}
\begin{document}

\begin{abstract}
    We show that the set of balanced binary trees is closed by interval
    in the Tamari lattice. We establish that the intervals $[T, T']$ where
    $T$ and $T'$ are balanced binary trees are isomorphic as posets to a
    hypercube. We introduce synchronous grammars that allow to generate
    tree-like structures and obtain fixed-point functional equations to
    enumerate these. We also introduce imbalance tree patterns and show
    that they can be used to describe some sets of balanced binary trees
    that play a particular role in the Tamari lattice. Finally, we investigate
    other families of binary trees that are also closed by interval in the
    Tamari lattice.
\end{abstract}

\maketitle
{\small \tableofcontents}

\section{Introduction}

Binary search trees are used as data structures to represent dynamic totally
ordered sets (see~\cite{AU93, KNU398, CLR04}). The algorithms solving classical
related problems such as the insertion, the deletion or the search of a
given element can be performed in linear time in terms of the depth of the
encoding binary tree, and, if the binary tree is \emph{balanced}, these
operations can be made in logarithmic time in terms of the cardinality of
the represented set. Recall that a binary tree is balanced if for each node
$x$, the heights of the left and the right subtrees of $x$ differ by at most one.
\smallskip

The algorithmic of balanced binary trees fundamentally relies on the so-called
\emph{rotation} operation. An insertion or a deletion of an element in a dynamic
ordered set modifies the binary tree encoding it and can imbalance it. The
efficiency of these algorithms comes from the fact that binary search trees
can be rebalanced very quickly after the insertion or the deletion, using
no more than two rotations~\cite{AVL62}.
\smallskip

Surprisingly, this operation appears in a different context since it defines
a partial order on the set of binary trees of a given size. A binary tree
$T_0$ is smaller than a binary tree $T_1$ if it is possible to transform
$T_0$ into $T_1$ by performing a succession of right rotations. This partial
order, known as the \emph{Tamari order}~\cite{Tam62, STA99, KNU44}, defines
a lattice structure on the set of binary trees of a given size.
\smallskip

Since binary trees are naturally equipped by this order structure induced
by rotations, and the balance of balanced binary trees is maintained doing
rotations, we would like to investigate if balanced binary trees play a
particular role in the Tamari lattice. Our goal is to combine the two points
of view of the rotation operation. Computer trials show that the intervals
$[T, T']$ where $T$ and $T'$ are balanced binary trees are only made of
balanced binary trees. The main goal of this paper is to prove this property.
As a consequence, we give a characterization on the shape of these intervals
and, using grammars allowing the generation of tree-like structures, enumerate
these ones.
\smallskip

This article is organized as follows. In Section~\ref{sec:Preliminaries},
we set the essential notions about binary trees and balanced binary trees,
and we give the definition of the Tamari lattice in our setting. Section~\ref{sec:Closure}
is devoted to establish the main result: The set of balanced binary trees
is closed by interval in the Tamari lattice. In Section~\ref{sec:GramSync},
we define \emph{synchronous grammars}. This new sort of grammars allows to generate
sets of tree-like structures and gives a way to obtain a fixed-point functional
equation for the generating series enumerating these. In Section~\ref{sec:Motifs},
we introduce a notion of binary tree pattern, namely the imbalance tree patterns,
and a notion of pattern avoidance. We also define subsets of balanced binary
trees whose elements hold a particular position in the Tamari lattice. These
sets can also be defined as the balanced binary trees avoiding some given
imbalance tree patterns. In Section~\ref{sec:Shape}, we look at balanced
binary tree intervals and show that they are, as posets, isomorphic to hypercubes.
Encoding balanced binary tree intervals by kind of tree-like structures,
and by constructing the synchronous grammar generating these trees, we give
a fixed-point functional equation satisfied by the generating series enumerating
balanced binary tree intervals. We do the same for maximal balanced binary
tree intervals. Finally, in Section~\ref{sec:AutresFamillesCloses}, we
investigate three other families of binary trees that are closed by interval
in the Tamari lattice: The weight balanced binary trees, the binary trees
with a given canopy and the $k$-Narayana binary trees. We also look at a
generalization of balanced binary trees and prove, among other, that the
set of usual balanced binary trees is the only set among this generalization
that is both closed by interval in the Tamari lattice and the subposet of
the Tamari lattice induced by it has nontrivial intervals.
\medskip

This paper is an extended version of~\cite{Gir10} where only
Sections~\ref{sec:Preliminaries},~\ref{sec:Closure},~\ref{sec:Motifs} and~\ref{sec:Shape}
were developed.

\subsection*{Acknowledgments}
The author would like to thank Florent Hivert for introducing him to the
problem addressed in this paper, and Jean-Christophe Novelli and Florent
Hivert for their invaluable advice and their improvement suggestions. The
computations of this work have been done with the open-source mathematical
software Sage~\cite{SAGE}.

\section{Preliminaries} \label{sec:Preliminaries}

\subsection{Complete rooted planar binary trees}

In this article, we consider complete rooted planar binary trees and we
call these simply \emph{binary trees}. Recall that a binary tree $T$ is
either a \emph{leaf} (also called \emph{empty tree}) denoted by $\ArbreVide$,
or a node that is attached through two edges to two binary trees, called
respectively the \emph{left subtree} and the \emph{right subtree} of $T$.
The (unique) binary tree which has $L$ as left subtree and $R$ as right
subtree is denoted by $L \ABCons R$. Let also $\EnsAB_n$ be the set of binary
trees with $n$ nodes and $\EnsAB$ be the set of all binary trees. We use
in the sequel the standard terminology (\emph{i.e.}, \emph{parent}, \emph{child},
\emph{ancestor}, \emph{path}, \emph{etc.}) about binary trees~\cite{AU93}.
\medskip

In our graphical representations, nodes are represented by circles \Noeud{.35}{},
leaves by squares \Feuille{.5}, and edges by segments
\scalebox{.35}{
\begin{tikzpicture}
    \draw[Arete](0,0)--(-1,-.8);
\end{tikzpicture}}
or
\scalebox{.35}{
\begin{tikzpicture}
    \draw[Arete](0,0)--(1,-.8);
\end{tikzpicture}}.
Besides, we shall represent arbitrary subtrees by big squares like
\scalebox{.2}{\raisebox{.3em}{
    \begin{tikzpicture}
        \node[SArbre]{};
    \end{tikzpicture}
    }
},
and arbitrary paths by zigzag lines
\scalebox{.35}{
\begin{tikzpicture}
    \draw[Arete, decorate, decoration = zigzag] (0,0) -- (0,-.8);
\end{tikzpicture}}.
\medskip

Recall that the \emph{infix reading order} of the nodes of a binary tree $T$
consists in recursively visiting its left subtree, then its root, and finally
its right subtree. We say that a node $x$ of $T$ is the \emph{leftmost node}
if $x$ is the first visited node in the infix order. We also say that a
node $y$ is \emph{to the right \Wrt}a node $x$ if $x$ appears strictly
before $y$ in the infix order and we denote that by $x \ADroite_T y$. We
extend this notation to subtrees, saying that a subtree $S$ of $T$ is
\emph{to the right \Wrt}a node $x$ if for all nodes $y$ of $S$ we have
$x \ADroite_T y$.
\begin{figure}[ht]
    \centering
    \scalebox{.3}{\begin{tikzpicture}
        \node[Feuille](0)at(0.0,-2){};
        \node[Noeud,label=below:\Huge $a$](1)at(1.0,-1){};
        \node[Feuille](2)at(2.0,-4){};
        \node[Noeud,label=below:\Huge $b$](3)at(3.0,-3){};
        \node[Feuille](4)at(4.0,-4){};
        \draw[Arete](3)--(2);
        \draw[Arete](3)--(4);
        \node[Noeud,label=below:\Huge $c$](5)at(5.0,-2){};
        \node[Feuille](6)at(6.0,-3){};
        \draw[Arete](5)--(3);
        \draw[Arete](5)--(6);
        \draw[Arete](1)--(0);
        \draw[Arete](1)--(5);
        \node[Noeud,label=below:\Huge $d$](7)at(7.0,0){};
        \node[Feuille](8)at(8.0,-4){};
        \node[Noeud,label=below:\Huge $e$](9)at(9.0,-3){};
        \node[Feuille](10)at(10.0,-4){};
        \draw[Arete](9)--(8);
        \draw[Arete](9)--(10);
        \node[Noeud,label=below:\Huge $f$](11)at(11.0,-2){};
        \node[Feuille](12)at(12.0,-3){};
        \draw[Arete](11)--(9);
        \draw[Arete](11)--(12);
        \node[Noeud,label=below:\Huge $g$](13)at(13.0,-1){};
        \node[Feuille](14)at(14.0,-3){};
        \node[Noeud,label=below:\Huge $h$](15)at(15.0,-2){};
        \node[Feuille](16)at(16.0,-3){};
        \draw[Arete](15)--(14);
        \draw[Arete](15)--(16);
        \draw[Arete](13)--(11);
        \draw[Arete](13)--(15);
        \draw[Arete](7)--(1);
        \draw[Arete](7)--(13);
        \node at (-2.5, -1.5) {\scalebox{2.8}{$T = $}};
    \end{tikzpicture}}
    \caption{An example of binary tree.}
    \label{fig:ExempleAB}
\end{figure}
For example, consider the binary tree $T$ depicted in Figure~\ref{fig:ExempleAB}.
The sequence $(a, b, c, d, e, f, g, h)$ is the sequence of all nodes of $T$
visited in the infix order. Hence, $a$ is the leftmost node of $T$ and we
have, among other, $a \ADroite_T b$ and $c \ADroite_T f$. Consider the
subtree $S$ of root $g$. It contains the nodes $e$, $f$, $g$ and $h$.
Hence, we have $a \ADroite_T S$, $b \ADroite_T S$, $c \ADroite_T S$ and
$d \ADroite_T S$. However, we neither have the relation $e \ADroite_T S$
since $S$ contains $e$, nor the relation $f \ADroite_T S$ since $S$ contains
$e$ and $f \ADroite_T e$ does not hold.

\subsection{Balanced binary trees}

If $T$ is a binary tree, we shall denote by $\Ht(T)$ its \emph{height},
that is the length of the longest path connecting its root to one of its
leaves. More formally,
\begin{equation}
    \Ht(T) :=
    \begin{cases}
        1 + \max \{\Ht(L), \Ht(R)\} & \mbox{if $T = L \ABCons R$,} \\
        0                           & \mbox{otherwise ($T = \ArbreVide$).}
    \end{cases}
\end{equation}
For example, we have $\Ht(\ArbreVide) = 0$, $\Ht \left( \ArbreA{.17} \right) = 1$,
and $\Ht \left( \raisebox{-.3em}{\ArbreB{.17}} \right) = 2$.
\medskip

Let us define the \emph{imbalance mapping} $\Des_T$ which associates an element
of $\EnsRel$ with a node $x$ of~$T$. It is defined by
\begin{equation}
    \Des_T(x) := \Ht(R) - \Ht(L),
\end{equation}
where $L$ (resp. $R$) is the left (resp. right) subtree of $x$. For example,
the imbalance values of the nodes of the binary tree $T$ shown in
Figure~\ref{fig:ExempleAB} satisfy $\Des_T(a) = 2$, $\Des_T(b) = 0$, $\Des_T(c) = -1$,
$\Des_T(d) = 0$, $\Des_T(e) = 0$, $\Des_T(f) = -1$, $\Des_T(g) = -1$ and $\Des_T(h) = 0$.
\medskip

A node $x$ is \emph{balanced} if
\begin{equation}
    \Des_T(x) \in \{-1, 0, 1\}.
\end{equation}
Balanced binary trees form a subset of $\EnsAB$ composed of binary trees
which have the property of being balanced:
\begin{Definition}
    A binary tree $T$ is \emph{balanced} if all nodes of $T$ are balanced.
\end{Definition}

Let us denote by $\EnsEq_n$ the set of balanced binary trees with $n$ nodes
(see Figure~\ref{fig:ArbresEq} for the first sets) and $\EnsEq$ the set of
all balanced binary trees. The number of balanced binary trees enumerated
according to their number of nodes is Sequence \Sloane{A006265} of~\cite{SLO08}
and begins as
\begin{equation}
    1, 1, 2, 1, 4, 6, 4, 17, 32, 44, 60, 70, 184, 476, 872, 1553, 2720, 4288, 6312, 9004.
\end{equation}
\begin{figure}[ht]
    \centering
    \begin{tabular}{c|l}
        $n$ & $\EnsEq_n$ \\
        $0$ & \scalebox{.17}{
            \begin{tikzpicture}
                \node[Feuille]{};
            \end{tikzpicture}} \\
        $1$ & \scalebox{.17}{
            \begin{tikzpicture}
                \node[Feuille](0)at(0,-1){};
                \node[Noeud](1)at(1,0){};
                \node[Feuille](2)at(2,-1){};
                \draw[Arete](1)--(0);
                \draw[Arete](1)--(2);
            \end{tikzpicture}} \\
        $2$ & \scalebox{.17}{
            \begin{tikzpicture}
                \node[Feuille](0)at(0,-2){};
                \node[Noeud](1)at(1,-1){};
                \node[Feuille](2)at(2,-2){};
                \draw[Arete](1)--(0);
                \draw[Arete](1)--(2);
                \node[Noeud](3)at(3,0){};
                \node[Feuille](4)at(4,-1){};
                \draw[Arete](3)--(1);
                \draw[Arete](3)--(4);
            \end{tikzpicture}}
            \scalebox{.17}{
            \begin{tikzpicture}
                \node[Feuille](0)at(0,-1){};
                \node[Noeud](1)at(1,0){};
                \node[Feuille](2)at(2,-2){};
                \node[Noeud](3)at(3,-1){};
                \node[Feuille](4)at(4,-2){};
                \draw[Arete](3)--(2);
                \draw[Arete](3)--(4);
                \draw[Arete](1)--(0);
                \draw[Arete](1)--(3);
            \end{tikzpicture}} \\
        $3$ & \scalebox{.17}{
            \begin{tikzpicture}
                \node[Feuille](0)at(0,-2){};
                \node[Noeud](1)at(1,-1){};
                \node[Feuille](2)at(2,-2){};
                \draw[Arete](1)--(0);
                \draw[Arete](1)--(2);
                \node[Noeud](3)at(3,0){};
                \node[Feuille](4)at(4,-2){};
                \node[Noeud](5)at(5,-1){};
                \node[Feuille](6)at(6,-2){};
                \draw[Arete](5)--(4);
                \draw[Arete](5)--(6);
                \draw[Arete](3)--(1);
                \draw[Arete](3)--(5);
            \end{tikzpicture}} \\
        $4$ & \scalebox{.17}{
            \begin{tikzpicture}
                \node[Feuille](0)at(0,-2){};
                \node[Noeud](1)at(1,-1){};
                \node[Feuille](2)at(2,-2){};
                \draw[Arete](1)--(0);
                \draw[Arete](1)--(2);
                \node[Noeud](3)at(3,0){};
                \node[Feuille](4)at(4,-3){};
                \node[Noeud](5)at(5,-2){};
                \node[Feuille](6)at(6,-3){};
                \draw[Arete](5)--(4);
                \draw[Arete](5)--(6);
                \node[Noeud](7)at(7,-1){};
                \node[Feuille](8)at(8,-2){};
                \draw[Arete](7)--(5);
                \draw[Arete](7)--(8);
                \draw[Arete](3)--(1);
                \draw[Arete](3)--(7);
            \end{tikzpicture}}
            \scalebox{.17}{
            \begin{tikzpicture}
                \node[Feuille](0)at(0,-2){};
                \node[Noeud](1)at(1,-1){};
                \node[Feuille](2)at(2,-2){};
                \draw[Arete](1)--(0);
                \draw[Arete](1)--(2);
                \node[Noeud](3)at(3,0){};
                \node[Feuille](4)at(4,-2){};
                \node[Noeud](5)at(5,-1){};
                \node[Feuille](6)at(6,-3){};
                \node[Noeud](7)at(7,-2){};
                \node[Feuille](8)at(8,-3){};
                \draw[Arete](7)--(6);
                \draw[Arete](7)--(8);
                \draw[Arete](5)--(4);
                \draw[Arete](5)--(7);
                \draw[Arete](3)--(1);
                \draw[Arete](3)--(5);
            \end{tikzpicture}}
            \scalebox{.17}{
            \begin{tikzpicture}
                \node[Feuille](0)at(0,-3){};
                \node[Noeud](1)at(1,-2){};
                \node[Feuille](2)at(2,-3){};
                \draw[Arete](1)--(0);
                \draw[Arete](1)--(2);
                \node[Noeud](3)at(3,-1){};
                \node[Feuille](4)at(4,-2){};
                \draw[Arete](3)--(1);
                \draw[Arete](3)--(4);
                \node[Noeud](5)at(5,0){};
                \node[Feuille](6)at(6,-2){};
                \node[Noeud](7)at(7,-1){};
                \node[Feuille](8)at(8,-2){};
                \draw[Arete](7)--(6);
                \draw[Arete](7)--(8);
                \draw[Arete](5)--(3);
                \draw[Arete](5)--(7);
            \end{tikzpicture}}
            \scalebox{.17}{
            \begin{tikzpicture}
                \node[Feuille](0)at(0,-2){};
                \node[Noeud](1)at(1,-1){};
                \node[Feuille](2)at(2,-3){};
                \node[Noeud](3)at(3,-2){};
                \node[Feuille](4)at(4,-3){};
                \draw[Arete](3)--(2);
                \draw[Arete](3)--(4);
                \draw[Arete](1)--(0);
                \draw[Arete](1)--(3);
                \node[Noeud](5)at(5,0){};
                \node[Feuille](6)at(6,-2){};
                \node[Noeud](7)at(7,-1){};
                \node[Feuille](8)at(8,-2){};
                \draw[Arete](7)--(6);
                \draw[Arete](7)--(8);
                \draw[Arete](5)--(1);
                \draw[Arete](5)--(7);
            \end{tikzpicture}} \\
        $5$ & \scalebox{.17}{
            \begin{tikzpicture}
                \node[Feuille](0)at(0,-3){};
                \node[Noeud](1)at(1,-2){};
                \node[Feuille](2)at(2,-3){};
                \draw[Arete](1)--(0);
                \draw[Arete](1)--(2);
                \node[Noeud](3)at(3,-1){};
                \node[Feuille](4)at(4,-3){};
                \node[Noeud](5)at(5,-2){};
                \node[Feuille](6)at(6,-3){};
                \draw[Arete](5)--(4);
                \draw[Arete](5)--(6);
                \draw[Arete](3)--(1);
                \draw[Arete](3)--(5);
                \node[Noeud](7)at(7,0){};
                \node[Feuille](8)at(8,-2){};
                \node[Noeud](9)at(9,-1){};
                \node[Feuille](10)at(10,-2){};
                \draw[Arete](9)--(8);
                \draw[Arete](9)--(10);
                \draw[Arete](7)--(3);
                \draw[Arete](7)--(9);
            \end{tikzpicture}}
            \scalebox{.17}{
            \begin{tikzpicture}
                \node[Feuille](0)at(0,-2){};
                \node[Noeud](1)at(1,-1){};
                \node[Feuille](2)at(2,-2){};
                \draw[Arete](1)--(0);
                \draw[Arete](1)--(2);
                \node[Noeud](3)at(3,0){};
                \node[Feuille](4)at(4,-3){};
                \node[Noeud](5)at(5,-2){};
                \node[Feuille](6)at(6,-3){};
                \draw[Arete](5)--(4);
                \draw[Arete](5)--(6);
                \node[Noeud](7)at(7,-1){};
                \node[Feuille](8)at(8,-3){};
                \node[Noeud](9)at(9,-2){};
                \node[Feuille](10)at(10,-3){};
                \draw[Arete](9)--(8);
                \draw[Arete](9)--(10);
                \draw[Arete](7)--(5);
                \draw[Arete](7)--(9);
                \draw[Arete](3)--(1);
                \draw[Arete](3)--(7);
            \end{tikzpicture}}
            \scalebox{.17}{
            \begin{tikzpicture}
                \node[Feuille](0)at(0,-3){};
                \node[Noeud](1)at(1,-2){};
                \node[Feuille](2)at(2,-3){};
                \draw[Arete](1)--(0);
                \draw[Arete](1)--(2);
                \node[Noeud](3)at(3,-1){};
                \node[Feuille](4)at(4,-2){};
                \draw[Arete](3)--(1);
                \draw[Arete](3)--(4);
                \node[Noeud](5)at(5,0){};
                \node[Feuille](6)at(6,-3){};
                \node[Noeud](7)at(7,-2){};
                \node[Feuille](8)at(8,-3){};
                \draw[Arete](7)--(6);
                \draw[Arete](7)--(8);
                \node[Noeud](9)at(9,-1){};
                \node[Feuille](10)at(10,-2){};
                \draw[Arete](9)--(7);
                \draw[Arete](9)--(10);
                \draw[Arete](5)--(3);
                \draw[Arete](5)--(9);
            \end{tikzpicture}}
            \scalebox{.17}{
            \begin{tikzpicture}
                \node[Feuille](0)at(0,-3){};
                \node[Noeud](1)at(1,-2){};
                \node[Feuille](2)at(2,-3){};
                \draw[Arete](1)--(0);
                \draw[Arete](1)--(2);
                \node[Noeud](3)at(3,-1){};
                \node[Feuille](4)at(4,-2){};
                \draw[Arete](3)--(1);
                \draw[Arete](3)--(4);
                \node[Noeud](5)at(5,0){};
                \node[Feuille](6)at(6,-2){};
                \node[Noeud](7)at(7,-1){};
                \node[Feuille](8)at(8,-3){};
                \node[Noeud](9)at(9,-2){};
                \node[Feuille](10)at(10,-3){};
                \draw[Arete](9)--(8);
                \draw[Arete](9)--(10);
                \draw[Arete](7)--(6);
                \draw[Arete](7)--(9);
                \draw[Arete](5)--(3);
                \draw[Arete](5)--(7);
            \end{tikzpicture}}
            \scalebox{.17}{
            \begin{tikzpicture}
                \node[Feuille](0)at(0,-2){};
                \node[Noeud](1)at(1,-1){};
                \node[Feuille](2)at(2,-3){};
                \node[Noeud](3)at(3,-2){};
                \node[Feuille](4)at(4,-3){};
                \draw[Arete](3)--(2);
                \draw[Arete](3)--(4);
                \draw[Arete](1)--(0);
                \draw[Arete](1)--(3);
                \node[Noeud](5)at(5,0){};
                \node[Feuille](6)at(6,-3){};
                \node[Noeud](7)at(7,-2){};
                \node[Feuille](8)at(8,-3){};
                \draw[Arete](7)--(6);
                \draw[Arete](7)--(8);
                \node[Noeud](9)at(9,-1){};
                \node[Feuille](10)at(10,-2){};
                \draw[Arete](9)--(7);
                \draw[Arete](9)--(10);
                \draw[Arete](5)--(1);
                \draw[Arete](5)--(9);
            \end{tikzpicture}}
            \scalebox{.17}{
            \begin{tikzpicture}
                \node[Feuille](0)at(0,-2){};
                \node[Noeud](1)at(1,-1){};
                \node[Feuille](2)at(2,-3){};
                \node[Noeud](3)at(3,-2){};
                \node[Feuille](4)at(4,-3){};
                \draw[Arete](3)--(2);
                \draw[Arete](3)--(4);
                \draw[Arete](1)--(0);
                \draw[Arete](1)--(3);
                \node[Noeud](5)at(5,0){};
                \node[Feuille](6)at(6,-2){};
                \node[Noeud](7)at(7,-1){};
                \node[Feuille](8)at(8,-3){};
                \node[Noeud](9)at(9,-2){};
                \node[Feuille](10)at(10,-3){};
                \draw[Arete](9)--(8);
                \draw[Arete](9)--(10);
                \draw[Arete](7)--(6);
                \draw[Arete](7)--(9);
                \draw[Arete](5)--(1);
                \draw[Arete](5)--(7);
            \end{tikzpicture}} \\
        $6$ & \scalebox{.17}{
            \begin{tikzpicture}
                \node[Feuille](0)at(0,-3){};
                \node[Noeud](1)at(1,-2){};
                \node[Feuille](2)at(2,-3){};
                \draw[Arete](1)--(0);
                \draw[Arete](1)--(2);
                \node[Noeud](3)at(3,-1){};
                \node[Feuille](4)at(4,-3){};
                \node[Noeud](5)at(5,-2){};
                \node[Feuille](6)at(6,-3){};
                \draw[Arete](5)--(4);
                \draw[Arete](5)--(6);
                \draw[Arete](3)--(1);
                \draw[Arete](3)--(5);
                \node[Noeud](7)at(7,0){};
                \node[Feuille](8)at(8,-3){};
                \node[Noeud](9)at(9,-2){};
                \node[Feuille](10)at(10,-3){};
                \draw[Arete](9)--(8);
                \draw[Arete](9)--(10);
                \node[Noeud](11)at(11,-1){};
                \node[Feuille](12)at(12,-2){};
                \draw[Arete](11)--(9);
                \draw[Arete](11)--(12);
                \draw[Arete](7)--(3);
                \draw[Arete](7)--(11);
            \end{tikzpicture}}
            \scalebox{.17}{
            \begin{tikzpicture}
                \node[Feuille](0)at(0,-3){};
                \node[Noeud](1)at(1,-2){};
                \node[Feuille](2)at(2,-3){};
                \draw[Arete](1)--(0);
                \draw[Arete](1)--(2);
                \node[Noeud](3)at(3,-1){};
                \node[Feuille](4)at(4,-3){};
                \node[Noeud](5)at(5,-2){};
                \node[Feuille](6)at(6,-3){};
                \draw[Arete](5)--(4);
                \draw[Arete](5)--(6);
                \draw[Arete](3)--(1);
                \draw[Arete](3)--(5);
                \node[Noeud](7)at(7,0){};
                \node[Feuille](8)at(8,-2){};
                \node[Noeud](9)at(9,-1){};
                \node[Feuille](10)at(10,-3){};
                \node[Noeud](11)at(11,-2){};
                \node[Feuille](12)at(12,-3){};
                \draw[Arete](11)--(10);
                \draw[Arete](11)--(12);
                \draw[Arete](9)--(8);
                \draw[Arete](9)--(11);
                \draw[Arete](7)--(3);
                \draw[Arete](7)--(9);
            \end{tikzpicture}}
            \scalebox{.17}{
            \begin{tikzpicture}
                \node[Feuille](0)at(0,-3){};
                \node[Noeud](1)at(1,-2){};
                \node[Feuille](2)at(2,-3){};
                \draw[Arete](1)--(0);
                \draw[Arete](1)--(2);
                \node[Noeud](3)at(3,-1){};
                \node[Feuille](4)at(4,-2){};
                \draw[Arete](3)--(1);
                \draw[Arete](3)--(4);
                \node[Noeud](5)at(5,0){};
                \node[Feuille](6)at(6,-3){};
                \node[Noeud](7)at(7,-2){};
                \node[Feuille](8)at(8,-3){};
                \draw[Arete](7)--(6);
                \draw[Arete](7)--(8);
                \node[Noeud](9)at(9,-1){};
                \node[Feuille](10)at(10,-3){};
                \node[Noeud](11)at(11,-2){};
                \node[Feuille](12)at(12,-3){};
                \draw[Arete](11)--(10);
                \draw[Arete](11)--(12);
                \draw[Arete](9)--(7);
                \draw[Arete](9)--(11);
                \draw[Arete](5)--(3);
                \draw[Arete](5)--(9);
            \end{tikzpicture}}
            \scalebox{.17}{
            \begin{tikzpicture}
                \node[Feuille](0)at(0,-2){};
                \node[Noeud](1)at(1,-1){};
                \node[Feuille](2)at(2,-3){};
                \node[Noeud](3)at(3,-2){};
                \node[Feuille](4)at(4,-3){};
                \draw[Arete](3)--(2);
                \draw[Arete](3)--(4);
                \draw[Arete](1)--(0);
                \draw[Arete](1)--(3);
                \node[Noeud](5)at(5,0){};
                \node[Feuille](6)at(6,-3){};
                \node[Noeud](7)at(7,-2){};
                \node[Feuille](8)at(8,-3){};
                \draw[Arete](7)--(6);
                \draw[Arete](7)--(8);
                \node[Noeud](9)at(9,-1){};
                \node[Feuille](10)at(10,-3){};
                \node[Noeud](11)at(11,-2){};
                \node[Feuille](12)at(12,-3){};
                \draw[Arete](11)--(10);
                \draw[Arete](11)--(12);
                \draw[Arete](9)--(7);
                \draw[Arete](9)--(11);
                \draw[Arete](5)--(1);
                \draw[Arete](5)--(9);
            \end{tikzpicture}}
    \end{tabular}
    \caption{The first balanced binary trees.}
    \label{fig:ArbresEq}
\end{figure}

\subsection{The Tamari lattice}

The Tamari lattice can be defined in several ways depending on which kind
of Catalan object (\emph{i.e.}, objects in bijection with binary trees) the
order relation is defined. The most common definitions are made on integer
vectors with some conditions~\cite{STA99}, on forests and binary trees~\cite{KNU44},
and on Dyck paths~\cite{BEB07}. We give here the most convenient definition
for our use. First, let us recall the right rotation operation:
\begin{Definition}
    Let $T_0$ be a binary tree and $y$ be a node of $T_0$ having a nonempty
    left subtree. Let $S_0 := (A \ABCons B) \ABCons C$ be the subtree of
    root $y$ of $T_0$ and $T_1$ be the binary tree obtained by replacing
    $S_0$ by $A \ABCons (B \ABCons C)$ (see Figure~\ref{fig:Rotation}).
    Then the \emph{right rotation of root} $y$ sends $T_0$ to $T_1$.
\end{Definition}

\begin{figure}[ht]
    \centering
    \scalebox{.4}{
    \begin{tikzpicture}
        \node[Noeud, EtiqClair] (racine) at (0, 0) {};
        \node (g) at (-3, -1) {};
        \node (d) at (3, -1) {};
        \node[Noeud, EtiqFonce] (r) at (0, -2) {\Huge $y$};
        \node[Noeud, EtiqClair] (q) at (-2, -4) {\Huge $x$};
        \node[SArbre] (A) at (-4, -6) {$A$};
        \node[SArbre] (B) at (0, -6) {$B$};
        \node[SArbre] (C) at (2, -4) {$C$};
        \draw[Arete] (racine) -- (g);
        \draw[Arete] (racine) -- (d);
        \draw[Arete, decorate, decoration = zigzag] (racine) -- (r);
        \draw[Arete] (r) -- (q);
        \draw[Arete] (r) -- (C);
        \draw[Arete] (q) -- (A);
        \draw[Arete] (q) -- (B);
        \node at (-4, -3) {\scalebox{2.2}{$T_0 = $}};
        \path (4, -2) edge[line width=3pt, ->] node[anchor=south,above,font=\Huge,Noir!100]{right} (6, -2);
        \path (6, -4) edge[line width=3pt, ->] node[anchor=south,above,font=\Huge,Noir!100]{left} (4, -4);
        \node[Noeud, EtiqClair] (racine') at (10, 0) {};
        \node (g') at (7, -1) {};
        \node (d') at (13, -1) {};
        \node[Noeud, EtiqFonce] (r') at (12, -4) {\Huge $y$};
        \node[Noeud, EtiqClair] (q') at (10, -2) {\Huge $x$};
        \node[SArbre] (A') at (8, -4) {$A$};
        \node[SArbre] (B') at (10, -6) {$B$};
        \node[SArbre] (C') at (14, -6) {$C$};
        \draw[Arete] (racine') -- (g');
        \draw[Arete] (racine') -- (d');
        \draw[Arete, decorate, decoration = zigzag] (racine') -- (q');
        \draw[Arete] (q') -- (r');
        \draw[Arete] (q') -- (A');
        \draw[Arete] (r') -- (B');
        \draw[Arete] (r') -- (C');
        \node at (14, -3) {\scalebox{2.2}{$ = T_1$}};
    \end{tikzpicture}}
    \caption{The right rotation of root $y$.}
    \label{fig:Rotation}
\end{figure}

We write $T_0 \CouvTam T_1$ if $T_1$ can be obtained by a right rotation
from $T_0$. We call the relation $\CouvTam$ the \emph{partial Tamari relation}.
Note that the application of a right rotation to a binary tree does not
change the infix order of its nodes. In the sequel, we mainly talk about
right rotations, so we call these simply \emph{rotations}. We are now in
a position to give our definition of the Tamari order.
\begin{Definition} \label{def:OrdreTamari}
    The \emph{Tamari relation} $\OrdTam$ is the reflexive and transitive closure
    of the partial Tamari relation $\CouvTam$. In other words, we have $T_0 \OrdTam T_k$
    if there exists a sequence $T_1, \dots, T_{k-1}$ of binary trees such that
    \begin{equation}
        T_0 \CouvTam T_1 \CouvTam \cdots \CouvTam T_{k-1} \CouvTam T_k.
    \end{equation}
\end{Definition}

The Tamari relation is an order relation. Indeed, $\OrdTam$ is reflexive
and transitive by definition. To prove that $\OrdTam$ is antisymmetric,
consider the statistic $\phi : \EnsAB \rightarrow \EnsNat$ where $\phi(T)$
is the sum for all nodes $x$ of $T$ of the number of the nodes constituting
the right subtree of $x$. It is plain that if $T_0 \CouvTam T_1$ then
$\phi(T_0) < \phi(T_1)$, showing that $\OrdTam$ is antisymmetric.
\medskip

For $n \geq 0$, the set $\EnsAB_n$ with the order relation $\OrdTam$ defines
a lattice, namely the Tamari lattice (see~\cite{HT72}). We denote by
$\Tam_n := (\EnsAB_n, \OrdTam)$ the Tamari lattice of order $n$ (see
Figure~\ref{fig:TreillisTamari} for some examples).
\begin{figure}[ht]
    \centering
    \subfigure[$\Tam_3$]{\makebox[5cm]{\scalebox{.14}{
    \begin{tikzpicture}
        \node[Feuille](a000n0)at(0,-3){};
        \node[NoeudTam](a000n1)at(1,-2){};
        \node[Feuille](a000n2)at(2,-3){};
        \draw[Arete](a000n1)--(a000n0);
        \draw[Arete](a000n1)--(a000n2);
        \node[NoeudTam](a000n3)at(3,-1){};
        \node[Feuille](a000n4)at(4,-2){};
        \draw[Arete](a000n3)--(a000n1);
        \draw[Arete](a000n3)--(a000n4);
        \node[NoeudTam](a000n5)at(5,0){};
        \node[Feuille](a000n6)at(6,-1){};
        \draw[Arete](a000n5)--(a000n3);
        \draw[Arete](a000n5)--(a000n6);
        \node[fit=(a000n0) (a000n1) (a000n2) (a000n3) (a000n4) (a000n5) (a000n6)] (c000) {};

        \node[Feuille](a100n0)at(-7,-9){};
        \node[NoeudTam](a100n1)at(-6,-8){};
        \node[Feuille](a100n2)at(-5,-10){};
        \node[NoeudTam](a100n3)at(-4,-9){};
        \node[Feuille](a100n4)at(-3,-10){};
        \draw[Arete](a100n3)--(a100n2);
        \draw[Arete](a100n3)--(a100n4);
        \draw[Arete](a100n1)--(a100n0);
        \draw[Arete](a100n1)--(a100n3);
        \node[NoeudTam](a100n5)at(-2,-7){};
        \node[Feuille](a100n6)at(-1,-8){};
        \draw[Arete](a100n5)--(a100n1);
        \draw[Arete](a100n5)--(a100n6);
        \node[fit=(a100n0) (a100n1) (a100n2) (a100n3) (a100n4) (a100n5) (a100n6)] (c100) {};

        \node[Feuille](a200n0)at(-7,-15){};
        \node[NoeudTam](a200n1)at(-6,-14){};
        \node[Feuille](a200n2)at(-5,-17){};
        \node[NoeudTam](a200n3)at(-4,-16){};
        \node[Feuille](a200n4)at(-3,-17){};
        \draw[Arete](a200n3)--(a200n2);
        \draw[Arete](a200n3)--(a200n4);
        \node[NoeudTam](a200n5)at(-2,-15){};
        \node[Feuille](a200n6)at(-1,-16){};
        \draw[Arete](a200n5)--(a200n3);
        \draw[Arete](a200n5)--(a200n6);
        \draw[Arete](a200n1)--(a200n0);
        \draw[Arete](a200n1)--(a200n5);
        \node[fit=(a200n0) (a200n1) (a200n2) (a200n3) (a200n4) (a200n5) (a200n6)] (c200) {};

        \node[Feuille](a010n0)at(7,-13){};
        \node[NoeudTam](a010n1)at(8,-12){};
        \node[Feuille](a010n2)at(9,-13){};
        \draw[Arete](a010n1)--(a010n0);
        \draw[Arete](a010n1)--(a010n2);
        \node[NoeudTam](a010n3)at(10,-11){};
        \node[Feuille](a010n4)at(11,-13){};
        \node[NoeudTam](a010n5)at(12,-12){};
        \node[Feuille](a010n6)at(13,-13){};
        \draw[Arete](a010n5)--(a010n4);
        \draw[Arete](a010n5)--(a010n6);
        \draw[Arete](a010n3)--(a010n1);
        \draw[Arete](a010n3)--(a010n5);
        \node[fit=(a010n0) (a010n1) (a010n2) (a010n3) (a010n4) (a010n5) (a010n6)] (c010) {};

        \node[Feuille](a210n0)at(0,-22){};
        \node[NoeudTam](a210n1)at(1,-21){};
        \node[Feuille](a210n2)at(2,-23){};
        \node[NoeudTam](a210n3)at(3,-22){};
        \node[Feuille](a210n4)at(4,-24){};
        \node[NoeudTam](a210n5)at(5,-23){};
        \node[Feuille](a210n6)at(6,-24){};
        \draw[Arete](a210n5)--(a210n4);
        \draw[Arete](a210n5)--(a210n6);
        \draw[Arete](a210n3)--(a210n2);
        \draw[Arete](a210n3)--(a210n5);
        \draw[Arete](a210n1)--(a210n0);
        \draw[Arete](a210n1)--(a210n3);
        \node[fit=(a210n0) (a210n1) (a210n2) (a210n3) (a210n4) (a210n5) (a210n6)] (c210) {};

        \draw[AreteTam] (c000)--(c100);
        \draw[AreteTam] (c000)--(c010);
        \draw[AreteTam] (c100)--(c200);
        \draw[AreteTam] (c200)--(c210);
        \draw[AreteTam] (c010)--(c210);
    \end{tikzpicture}}}}
    \subfigure[$\Tam_4$]{\makebox[8cm]{\scalebox{.14}{
    \begin{tikzpicture}
        \node[Feuille](a0000n0)at(0,-4){};
        \node[NoeudTam](a0000n1)at(1,-3){};
        \node[Feuille](a0000n2)at(2,-4){};
        \draw[Arete](a0000n1)--(a0000n0);
        \draw[Arete](a0000n1)--(a0000n2);
        \node[NoeudTam](a0000n3)at(3,-2){};
        \node[Feuille](a0000n4)at(4,-3){};
        \draw[Arete](a0000n3)--(a0000n1);
        \draw[Arete](a0000n3)--(a0000n4);
        \node[NoeudTam](a0000n5)at(5,-1){};
        \node[Feuille](a0000n6)at(6,-2){};
        \draw[Arete](a0000n5)--(a0000n3);
        \draw[Arete](a0000n5)--(a0000n6);
        \node[NoeudTam](a0000n7)at(7,0){};
        \node[Feuille](a0000n8)at(8,-1){};
        \draw[Arete](a0000n7)--(a0000n5);
        \draw[Arete](a0000n7)--(a0000n8);
        \node[fit=(a0000n0) (a0000n1) (a0000n2) (a0000n3) (a0000n4) (a0000n5) (a0000n6) (a0000n7) (a0000n8)] (c0000) {};

        \node[Feuille](a1000n0)at(-14,-7){};
        \node[NoeudTam](a1000n1)at(-13,-6){};
        \node[Feuille](a1000n2)at(-12,-8){};
        \node[NoeudTam](a1000n3)at(-11,-7){};
        \node[Feuille](a1000n4)at(-10,-8){};
        \draw[Arete](a1000n3)--(a1000n2);
        \draw[Arete](a1000n3)--(a1000n4);
        \draw[Arete](a1000n1)--(a1000n0);
        \draw[Arete](a1000n1)--(a1000n3);
        \node[NoeudTam](a1000n5)at(-9,-5){};
        \node[Feuille](a1000n6)at(-8,-6){};
        \draw[Arete](a1000n5)--(a1000n1);
        \draw[Arete](a1000n5)--(a1000n6);
        \node[NoeudTam](a1000n7)at(-7,-4){};
        \node[Feuille](a1000n8)at(-6,-5){};
        \draw[Arete](a1000n7)--(a1000n5);
        \draw[Arete](a1000n7)--(a1000n8);
        \node[fit=(a1000n0) (a1000n1) (a1000n2) (a1000n3) (a1000n4) (a1000n5) (a1000n6) (a1000n7) (a1000n8)] (c1000) {};

        \node[Feuille](a0100n0)at(14,-11){};
        \node[NoeudTam](a0100n1)at(15,-10){};
        \node[Feuille](a0100n2)at(16,-11){};
        \draw[Arete](a0100n1)--(a0100n0);
        \draw[Arete](a0100n1)--(a0100n2);
        \node[NoeudTam](a0100n3)at(17,-9){};
        \node[Feuille](a0100n4)at(18,-11){};
        \node[NoeudTam](a0100n5)at(19,-10){};
        \node[Feuille](a0100n6)at(20,-11){};
        \draw[Arete](a0100n5)--(a0100n4);
        \draw[Arete](a0100n5)--(a0100n6);
        \draw[Arete](a0100n3)--(a0100n1);
        \draw[Arete](a0100n3)--(a0100n5);
        \node[NoeudTam](a0100n7)at(21,-8){};
        \node[Feuille](a0100n8)at(22,-9){};
        \draw[Arete](a0100n7)--(a0100n3);
        \draw[Arete](a0100n7)--(a0100n8);
        \node[fit=(a0100n0) (a0100n1) (a0100n2) (a0100n3) (a0100n4) (a0100n5) (a0100n6) (a0100n7) (a0100n8) ] (c0100) {};

        \node[Feuille](a2000n0)at(-28,-11){};
        \node[NoeudTam](a2000n1)at(-27,-10){};
        \node[Feuille](a2000n2)at(-26,-13){};
        \node[NoeudTam](a2000n3)at(-25,-12){};
        \node[Feuille](a2000n4)at(-24,-13){};
        \draw[Arete](a2000n3)--(a2000n2);
        \draw[Arete](a2000n3)--(a2000n4);
        \node[NoeudTam](a2000n5)at(-23,-11){};
        \node[Feuille](a2000n6)at(-22,-12){};
        \draw[Arete](a2000n5)--(a2000n3);
        \draw[Arete](a2000n5)--(a2000n6);
        \draw[Arete](a2000n1)--(a2000n0);
        \draw[Arete](a2000n1)--(a2000n5);
        \node[NoeudTam](a2000n7)at(-21,-9){};
        \node[Feuille](a2000n8)at(-20,-10){};
        \draw[Arete](a2000n7)--(a2000n1);
        \draw[Arete](a2000n7)--(a2000n8);
        \node[fit=(a2000n0) (a2000n1) (a2000n2) (a2000n3) (a2000n4) (a2000n5) (a2000n6) (a2000n7) (a2000n8) ] (c2000) {};

        \node[Feuille](a2100n0)at(27,-17){};
        \node[NoeudTam](a2100n1)at(28,-16){};
        \node[Feuille](a2100n2)at(29,-18){};
        \node[NoeudTam](a2100n3)at(30,-17){};
        \node[Feuille](a2100n4)at(31,-19){};
        \node[NoeudTam](a2100n5)at(32,-18){};
        \node[Feuille](a2100n6)at(33,-19){};
        \draw[Arete](a2100n5)--(a2100n4);
        \draw[Arete](a2100n5)--(a2100n6);
        \draw[Arete](a2100n3)--(a2100n2);
        \draw[Arete](a2100n3)--(a2100n5);
        \draw[Arete](a2100n1)--(a2100n0);
        \draw[Arete](a2100n1)--(a2100n3);
        \node[NoeudTam](a2100n7)at(34,-15){};
        \node[Feuille](a2100n8)at(35,-16){};
        \draw[Arete](a2100n7)--(a2100n1);
        \draw[Arete](a2100n7)--(a2100n8);
        \node[fit=(a2100n0) (a2100n1) (a2100n2) (a2100n3) (a2100n4) (a2100n5) (a2100n6) (a2100n7) (a2100n8) ] (c2100) {};

        \node[Feuille](a0010n0)at(0,-18){};
        \node[NoeudTam](a0010n1)at(1,-17){};
        \node[Feuille](a0010n2)at(2,-18){};
        \draw[Arete](a0010n1)--(a0010n0);
        \draw[Arete](a0010n1)--(a0010n2);
        \node[NoeudTam](a0010n3)at(3,-16){};
        \node[Feuille](a0010n4)at(4,-17){};
        \draw[Arete](a0010n3)--(a0010n1);
        \draw[Arete](a0010n3)--(a0010n4);
        \node[NoeudTam](a0010n5)at(5,-15){};
        \node[Feuille](a0010n6)at(6,-17){};
        \node[NoeudTam](a0010n7)at(7,-16){};
        \node[Feuille](a0010n8)at(8,-17){};
        \draw[Arete](a0010n7)--(a0010n6);
        \draw[Arete](a0010n7)--(a0010n8);
        \draw[Arete](a0010n5)--(a0010n3);
        \draw[Arete](a0010n5)--(a0010n7);
        \node[fit=(a0010n0) (a0010n1) (a0010n2) (a0010n3) (a0010n4) (a0010n5) (a0010n6) (a0010n7) (a0010n8) ] (c0010) {};

        \node[Feuille](a0200n0)at(14,-22){};
        \node[NoeudTam](a0200n1)at(15,-21){};
        \node[Feuille](a0200n2)at(16,-22){};
        \draw[Arete](a0200n1)--(a0200n0);
        \draw[Arete](a0200n1)--(a0200n2);
        \node[NoeudTam](a0200n3)at(17,-20){};
        \node[Feuille](a0200n4)at(18,-23){};
        \node[NoeudTam](a0200n5)at(19,-22){};
        \node[Feuille](a0200n6)at(20,-23){};
        \draw[Arete](a0200n5)--(a0200n4);
        \draw[Arete](a0200n5)--(a0200n6);
        \node[NoeudTam](a0200n7)at(21,-21){};
        \node[Feuille](a0200n8)at(22,-22){};
        \draw[Arete](a0200n7)--(a0200n5);
        \draw[Arete](a0200n7)--(a0200n8);
        \draw[Arete](a0200n3)--(a0200n1);
        \draw[Arete](a0200n3)--(a0200n7);
        \node[fit=(a0200n0) (a0200n1) (a0200n2) (a0200n3) (a0200n4) (a0200n5) (a0200n6) (a0200n7) (a0200n8) ] (c0200) {};

        \node[Feuille](a1010n0)at(-14,-25){};
        \node[NoeudTam](a1010n1)at(-13,-24){};
        \node[Feuille](a1010n2)at(-12,-26){};
        \node[NoeudTam](a1010n3)at(-11,-25){};
        \node[Feuille](a1010n4)at(-10,-26){};
        \draw[Arete](a1010n3)--(a1010n2);
        \draw[Arete](a1010n3)--(a1010n4);
        \draw[Arete](a1010n1)--(a1010n0);
        \draw[Arete](a1010n1)--(a1010n3);
        \node[NoeudTam](a1010n5)at(-9,-23){};
        \node[Feuille](a1010n6)at(-8,-25){};
        \node[NoeudTam](a1010n7)at(-7,-24){};
        \node[Feuille](a1010n8)at(-6,-25){};
        \draw[Arete](a1010n7)--(a1010n6);
        \draw[Arete](a1010n7)--(a1010n8);
        \draw[Arete](a1010n5)--(a1010n1);
        \draw[Arete](a1010n5)--(a1010n7);
        \node[fit=(a1010n0) (a1010n1) (a1010n2) (a1010n3) (a1010n4) (a1010n5) (a1010n6) (a1010n7) (a1010n8) ] (c1010) {};

        \node[Feuille](a0210n0)at(0,-30){};
        \node[NoeudTam](a0210n1)at(1,-29){};
        \node[Feuille](a0210n2)at(2,-30){};
        \draw[Arete](a0210n1)--(a0210n0);
        \draw[Arete](a0210n1)--(a0210n2);
        \node[NoeudTam](a0210n3)at(3,-28){};
        \node[Feuille](a0210n4)at(4,-30){};
        \node[NoeudTam](a0210n5)at(5,-29){};
        \node[Feuille](a0210n6)at(6,-31){};
        \node[NoeudTam](a0210n7)at(7,-30){};
        \node[Feuille](a0210n8)at(8,-31){};
        \draw[Arete](a0210n7)--(a0210n6);
        \draw[Arete](a0210n7)--(a0210n8);
        \draw[Arete](a0210n5)--(a0210n4);
        \draw[Arete](a0210n5)--(a0210n7);
        \draw[Arete](a0210n3)--(a0210n1);
        \draw[Arete](a0210n3)--(a0210n5);
        \node[fit=(a0210n0) (a0210n1) (a0210n2) (a0210n3) (a0210n4) (a0210n5) (a0210n6) (a0210n7) (a0210n8) ] (c0210) {};

        \node[Feuille](a3000n0)at(-28,-29){};
        \node[NoeudTam](a3000n1)at(-27,-28){};
        \node[Feuille](a3000n2)at(-26,-32){};
        \node[NoeudTam](a3000n3)at(-25,-31){};
        \node[Feuille](a3000n4)at(-24,-32){};
        \draw[Arete](a3000n3)--(a3000n2);
        \draw[Arete](a3000n3)--(a3000n4);
        \node[NoeudTam](a3000n5)at(-23,-30){};
        \node[Feuille](a3000n6)at(-22,-31){};
        \draw[Arete](a3000n5)--(a3000n3);
        \draw[Arete](a3000n5)--(a3000n6);
        \node[NoeudTam](a3000n7)at(-21,-29){};
        \node[Feuille](a3000n8)at(-20,-30){};
        \draw[Arete](a3000n7)--(a3000n5);
        \draw[Arete](a3000n7)--(a3000n8);
        \draw[Arete](a3000n1)--(a3000n0);
        \draw[Arete](a3000n1)--(a3000n7);
        \node[fit=(a3000n0) (a3000n1) (a3000n2) (a3000n3) (a3000n4) (a3000n5) (a3000n6) (a3000n7) (a3000n8) ] (c3000) {};

        \node[Feuille](a3100n0)at(27,-35){};
        \node[NoeudTam](a3100n1)at(28,-34){};
        \node[Feuille](a3100n2)at(29,-37){};
        \node[NoeudTam](a3100n3)at(30,-36){};
        \node[Feuille](a3100n4)at(31,-38){};
        \node[NoeudTam](a3100n5)at(32,-37){};
        \node[Feuille](a3100n6)at(33,-38){};
        \draw[Arete](a3100n5)--(a3100n4);
        \draw[Arete](a3100n5)--(a3100n6);
        \draw[Arete](a3100n3)--(a3100n2);
        \draw[Arete](a3100n3)--(a3100n5);
        \node[NoeudTam](a3100n7)at(34,-35){};
        \node[Feuille](a3100n8)at(35,-36){};
        \draw[Arete](a3100n7)--(a3100n3);
        \draw[Arete](a3100n7)--(a3100n8);
        \draw[Arete](a3100n1)--(a3100n0);
        \draw[Arete](a3100n1)--(a3100n7);
        \node[fit=(a3100n0) (a3100n1) (a3100n2) (a3100n3) (a3100n4) (a3100n5) (a3100n6) (a3100n7) (a3100n8) ] (c3100) {};

        \node[Feuille](a3010n0)at(-14,-37){};
        \node[NoeudTam](a3010n1)at(-13,-36){};
        \node[Feuille](a3010n2)at(-12,-39){};
        \node[NoeudTam](a3010n3)at(-11,-38){};
        \node[Feuille](a3010n4)at(-10,-39){};
        \draw[Arete](a3010n3)--(a3010n2);
        \draw[Arete](a3010n3)--(a3010n4);
        \node[NoeudTam](a3010n5)at(-9,-37){};
        \node[Feuille](a3010n6)at(-8,-39){};
        \node[NoeudTam](a3010n7)at(-7,-38){};
        \node[Feuille](a3010n8)at(-6,-39){};
        \draw[Arete](a3010n7)--(a3010n6);
        \draw[Arete](a3010n7)--(a3010n8);
        \draw[Arete](a3010n5)--(a3010n3);
        \draw[Arete](a3010n5)--(a3010n7);
        \draw[Arete](a3010n1)--(a3010n0);
        \draw[Arete](a3010n1)--(a3010n5);
        \node[fit=(a3010n0) (a3010n1) (a3010n2) (a3010n3) (a3010n4) (a3010n5) (a3010n6) (a3010n7) (a3010n8) ] (c3010) {};

        \node[Feuille](a3200n0)at(14,-40){};
        \node[NoeudTam](a3200n1)at(15,-39){};
        \node[Feuille](a3200n2)at(16,-41){};
        \node[NoeudTam](a3200n3)at(17,-40){};
        \node[Feuille](a3200n4)at(18,-43){};
        \node[NoeudTam](a3200n5)at(19,-42){};
        \node[Feuille](a3200n6)at(20,-43){};
        \draw[Arete](a3200n5)--(a3200n4);
        \draw[Arete](a3200n5)--(a3200n6);
        \node[NoeudTam](a3200n7)at(21,-41){};
        \node[Feuille](a3200n8)at(22,-42){};
        \draw[Arete](a3200n7)--(a3200n5);
        \draw[Arete](a3200n7)--(a3200n8);
        \draw[Arete](a3200n3)--(a3200n2);
        \draw[Arete](a3200n3)--(a3200n7);
        \draw[Arete](a3200n1)--(a3200n0);
        \draw[Arete](a3200n1)--(a3200n3);
        \node[fit=(a3200n0) (a3200n1) (a3200n2) (a3200n3) (a3200n4) (a3200n5) (a3200n6) (a3200n7) (a3200n8) ] (c3200) {};

        \node[Feuille](a3210n0)at(0,-44){};
        \node[NoeudTam](a3210n1)at(1,-43){};
        \node[Feuille](a3210n2)at(2,-45){};
        \node[NoeudTam](a3210n3)at(3,-44){};
        \node[Feuille](a3210n4)at(4,-46){};
        \node[NoeudTam](a3210n5)at(5,-45){};
        \node[Feuille](a3210n6)at(6,-47){};
        \node[NoeudTam](a3210n7)at(7,-46){};
        \node[Feuille](a3210n8)at(8,-47){};
        \draw[Arete](a3210n7)--(a3210n6);
        \draw[Arete](a3210n7)--(a3210n8);
        \draw[Arete](a3210n5)--(a3210n4);
        \draw[Arete](a3210n5)--(a3210n7);
        \draw[Arete](a3210n3)--(a3210n2);
        \draw[Arete](a3210n3)--(a3210n5);
        \draw[Arete](a3210n1)--(a3210n0);
        \draw[Arete](a3210n1)--(a3210n3);
        \node[fit=(a3210n0) (a3210n1) (a3210n2) (a3210n3) (a3210n4) (a3210n5) (a3210n6) (a3210n7) (a3210n8) ] (c3210) {};

        \draw[AreteTam] (c0000)--(c1000);
        \draw[AreteTam] (c0000)--(c0100);
        \draw[AreteTam] (c1000)--(c2000);
        \draw[AreteTam] (c0100)--(c2100);
        \draw[AreteTam] (c2000)--(c2100);
        \draw[AreteTam] (c0000)--(c0010);
        \draw[AreteTam] (c0100)--(c0200);
        \draw[AreteTam] (c1000)--(c1010);
        \draw[AreteTam] (c0010)--(c1010);
        \draw[AreteTam] (c0010)--(c0210);
        \draw[AreteTam] (c0200)--(c0210);
        \draw[AreteTam] (c2000)--(c3000);
        \draw[AreteTam] (c2100)--(c3100);
        \draw[AreteTam] (c3000)--(c3100);
        \draw[AreteTam] (c3000)--(c3010);
        \draw[AreteTam] (c1010)--(c3010);
        \draw[AreteTam] (c0200)--(c3200);
        \draw[AreteTam] (c3100)--(c3200);
        \draw[AreteTam] (c3010)--(c3210);
        \draw[AreteTam] (c0210)--(c3210);
        \draw[AreteTam] (c3200)--(c3210);
    \end{tikzpicture}}}}
    \caption{The Tamari lattices $\Tam_3$ and $\Tam_4$. The smallest elements are at the top.}
    \label{fig:TreillisTamari}
\end{figure}

\section{Closure by interval of the set of balanced binary trees} \label{sec:Closure}

\subsection{Rotations and balance} \label{subsec:RotationsEquilibre}

Let us first consider the modifications of the imbalance values of the nodes
of a balanced binary tree $T_0 := (A \ABCons B) \ABCons C$ when a rotation
at its root is applied. Let $T_1$ be the binary tree obtained by this rotation,
$y$ be the root of $T_0$ and $x$ be the left child of $y$ in $T_0$ (see again
Figure~\ref{fig:Rotation}, considering now that $y$ is the root of $T_0$ and $x$
is the root of $T_1$). Note first that the imbalance values of the nodes
of the subtrees $A$, $B$ and $C$ are not modified by this rotation. Indeed,
only the imbalance values of $x$ and $y$ are changed. Since $T_0$ is balanced,
we have $\Des_{T_0}(x) \in \{-1, 0, 1\}$ and $\Des_{T_0}(y) \in \{-1, 0, 1\}$.
Thus, the pair $(\Des_{T_0}(x), \Des_{T_0}(y))$ can take nine different
values. Here follows the list of the imbalance values of $x$ and $y$ in
$T_0$ and $T_1$ expressed as $(\Des_{T_0}(x), \Des_{T_0}(y)) \longrightarrow (\Des_{T_1}(x), \Des_{T_1}(y))$:
\begin{multicols}{3}
    \begin{enumerate}[label = (R\arabic*)]
        \item $(-1, -1) \longrightarrow ({\bf 1}, {\bf 1})$, \vspace*{1em} \label{item:CasRot1}
        \item $(0, -1) \longrightarrow ({\bf 1}, {\bf 0})$, \vspace*{1em} \label{item:CasRot2}
        \item $(0, 0) \longrightarrow (2, {\bf 1})$, \label{item:CasRot3}
        \item $(1, -1) \longrightarrow (2, {\bf 0})$, \vspace*{1em} \label{item:CasRot4}
        \item $(1, 0) \longrightarrow (3, {\bf 1})$, \vspace*{1em} \label{item:CasRot5}
        \item $(-1, 0) \longrightarrow (2, 2)$, \label{item:CasRot6}
        \item $(-1, 1) \longrightarrow (3, 3)$, \vspace*{1em} \label{item:CasRot7}
        \item $(0, 1) \longrightarrow (3, 2)$, \vspace*{1em} \label{item:CasRot8}
        \item $(1, 1) \longrightarrow (4, 2)$. \label{item:CasRot9}
    \end{enumerate}
\end{multicols}
Let us gather these nine sorts of rotations into three different groups,
taking into account if the nodes $x$ and $y$ are balanced in $T_1$.
\begin{itemize}
    \item Cases~\ref{item:CasRot1} and~\ref{item:CasRot2}, where $x$ and
    $y$ stay balanced are called \emph{conservative balancing rotations};
    \item Cases~\ref{item:CasRot3},~\ref{item:CasRot4} and~\ref{item:CasRot5},
    where $y$ stays balanced but $x$ not are called \emph{simply unbalancing rotations};
    \item Cases~\ref{item:CasRot6},~\ref{item:CasRot7},~\ref{item:CasRot8}
    and~\ref{item:CasRot9} where $x$ and $y$ are both unbalanced are called
    \emph{fully unbalancing rotations}.
\end{itemize}
\medskip

This leads to the following properties.
\begin{Proposition} \label{prop:HauteursEgalesEq}
    Let $T_0$ and $T_1$ be two balanced binary trees such that $T_0 \CouvTam T_1$.
    Then, $T_0$ and $T_1$ have the same height.
\end{Proposition}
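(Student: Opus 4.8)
The plan is to reduce the claim to a purely local statement about the rotated subtree, and then to use the classification of the nine types of rotations on a balanced tree listed just above the statement. First I would fix the node $y \in T_0$ at which the rotation is performed and let $x$ be its left child, so that, in the notation of Figure~\ref{fig:Rotation} with $y$ playing the role of the root of $T_0$, the subtree of root $y$ in $T_0$ is $S_0 = (A \ABCons B) \ABCons C$ and the subtree of root $y$ in $T_1$ is $S_1 = A \ABCons (B \ABCons C)$. Since a rotation leaves every subtree of $T_0$ disjoint from $S_0$ untouched and does not change the path from the root of $T_0$ to $y$, the height of $T_0$ equals $\max$ of a quantity not involving $S_0$ and (depth of $y$) $+\, \Ht(S_0)$, and similarly for $T_1$ with $\Ht(S_1)$; hence it suffices to prove $\Ht(S_0) = \Ht(S_1)$.

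Next I would note that the imbalance value of a node depends only on the subtree rooted at that node, so the analysis preceding the statement applies verbatim even though $y$ need not be the root of $T_0$. Since $T_1$ is balanced, $x$ is balanced in $T_1$, i.e.\ $\Des_{T_1}(x) \in \{-1, 0, 1\}$. Scanning the list \ref{item:CasRot1}--\ref{item:CasRot9}, this forces the rotation to be one of the two conservative balancing rotations \ref{item:CasRot1} or \ref{item:CasRot2}, since in every other case one reads off $\Des_{T_1}(x) \geq 2$. Therefore $(\Des_{T_0}(x), \Des_{T_0}(y)) \in \{(-1,-1),\, (0,-1)\}$.

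It then remains to check the two surviving cases by a short height computation. Writing $a := \Ht(A)$, $b := \Ht(B)$, $c := \Ht(C)$, the equations $\Des_{T_0}(x) = b - a$ and $\Des_{T_0}(y) = c - \left(1 + \max\{a,b\}\right)$ determine $a,b,c$ up to a common additive constant: case \ref{item:CasRot1} gives $a = c = b+1$ and case \ref{item:CasRot2} gives $a = b = c$. In each case one then substitutes into $\Ht(S_0) = 1 + \max\{1 + \max\{a,b\},\, c\}$ and $\Ht(S_1) = 1 + \max\{a,\, 1 + \max\{b,c\}\}$ and checks they agree (they equal $b+3$ in case \ref{item:CasRot1} and $b+2$ in case \ref{item:CasRot2}). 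I do not expect any real obstacle: the only point needing care is the remark that $\Des$ is computed locally, which legitimizes invoking the table \ref{item:CasRot1}--\ref{item:CasRot9} inside an arbitrary tree; the rest is the routine case verification above.
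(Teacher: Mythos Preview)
Your proof is correct and follows essentially the same approach as the paper: reduce to a local statement about $\Ht(S_0)$ versus $\Ht(S_1)$, use the classification \ref{item:CasRot1}--\ref{item:CasRot9} to see that only \ref{item:CasRot1} or \ref{item:CasRot2} can occur when $T_1$ is balanced, and then verify the height equality in those two cases. The paper's version is simply terser, leaving the height computation implicit; one small slip in your write-up is that the root of $S_1$ in $T_1$ is $x$, not $y$ (so the unchanged path is from the global root to the attachment point of $S_0$/$S_1$, not to $y$), but this does not affect the argument.
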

\begin{proof}
    Since $T_0$ and $T_1$ are both balanced, the rotation modifies a subtree
    $S_0$ of $T_0$ such that the imbalance values of the root $y$ of $S_0$,
    and of the left child $x$ of $y$, satisfy~\ref{item:CasRot1} or~\ref{item:CasRot2}.
    Let $S_1$ be the binary tree obtained by the rotation of root $y$ from
    $S_0$. Computing the height of $S_0$ and $S_1$, we have $\Ht(S_0) = \Ht(S_1)$.
    Thus, since a rotation modifies a binary tree locally, we have $\Ht(T_0) = \Ht(T_1)$.
\end{proof}

\begin{Lemme} \label{lem:RotationDes}
    Let $T_0$ be a balanced binary tree and $T_1$ be an unbalanced binary
    tree such that $T_0 \CouvTam T_1$. Then, there exists a node $z$ in $T_1$
    such that $\Des_{T_1}(z) \geq 2$ and the left subtree and the right
    subtree of $z$ are both balanced.
\end{Lemme}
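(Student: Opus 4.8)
The plan is to analyze directly the single rotation $T_0 \CouvTam T_1$ that produces the unbalanced tree $T_1$. Since $T_1$ is unbalanced but $T_0$ is balanced, the rotation cannot be a conservative balancing rotation (case~\ref{item:CasRot1} or~\ref{item:CasRot2} in the classification of Section~\ref{subsec:RotationsEquilibre}); it must be either a simply unbalancing rotation (cases~\ref{item:CasRot3}--\ref{item:CasRot5}) or a fully unbalancing rotation (cases~\ref{item:CasRot6}--\ref{item:CasRot9}). In every one of these seven cases, the node $x$ — the new root of the rotated subtree, i.e.\ the former left child of the rotation root $y$ — has $\Des_{T_1}(x) \geq 2$, as one reads off from the table. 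So the natural candidate for $z$ is $x$ itself.

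First I would fix notation exactly as in the figure: write the rotated subtree of $T_0$ as $S_0 = (A \ABCons B) \ABCons C$ with root $y$ and left child $x$, and $S_1 = A \ABCons (B \ABCons C)$ the result, so that in $T_1$ the node $x$ has left subtree $A$ and right subtree $B \ABCons C$. The key observation is that the rotation alters the imbalance values of $x$ and $y$ only — the subtrees $A$, $B$, $C$ are carried over intact and their internal nodes keep their imbalance values. Hence the left subtree of $z := x$, namely $A$, is still balanced because $A$ was a subtree of the balanced tree $T_0$. For the right subtree of $z$, which is $B \ABCons C$, all internal nodes of $B$ and of $C$ are balanced for the same reason, so the only thing to check is that the new root of $B \ABCons C$ is balanced, i.e.\ that $\lvert \Ht(C) - \Ht(B) \rvert \leq 1$. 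But $B$ and $C$ were the two subtrees of the node $x$ in the balanced tree $T_0$ (since $x$ had left subtree $A$\dots wait — $x$'s children in $T_0$ are $A$ and $B$), so let me instead observe that $B$ and $C$ are respectively the right subtree of $x$ in $T_0$ and the right subtree of $y$ in $T_0$; balancedness of $T_0$ gives $\lvert \Ht(B) - \Ht(A) \rvert \leq 1$ and $\lvert \Ht(C) - \Ht(A \ABCons B) \rvert \leq 1$, from which a short computation bounds $\lvert \Ht(C) - \Ht(B) \rvert$ and shows the root of $B \ABCons C$ is balanced. This last step — verifying the new root of $B \ABCons C$ is balanced — is the only place a calculation is needed, and it is the main (mild) obstacle: one must run through the possibilities for $(\Des_{T_0}(x), \Des_{T_0}(y))$ and check, case by case, that $\Ht(B)$ and $\Ht(C)$ differ by at most one. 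In fact this is already encoded in the table, since every entry lists $\Des_{T_1}(x)$ and $\Des_{T_1}(y)$ and, reading the right subtree $B \ABCons C$ of $x$, its root is the node $y$ whose imbalance in $T_1$ is $\Des_{T_1}(y) \in \{0,1,2,3\}$; so strictly this shows the root of $B\ABCons C$ need not be balanced in the fully unbalancing cases \ref{item:CasRot6}--\ref{item:CasRot9}.

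To handle this, I would not take $z = x$ blindly but argue by a minimality/descent argument: among all nodes of $T_1$ with imbalance $\geq 2$, pick one, say $w$, that is lowest (has no descendant with imbalance $\geq 2$). Such a $w$ exists because $T_1$ is unbalanced, so the set of nodes with imbalance $\geq 2$ is nonempty and finite. By minimality, every proper subtree rooted at a strict descendant of $w$ has all its nodes of imbalance in $\{-1,0,1\}$; in particular the left and the right subtrees of $w$ are balanced. This $w$ is then the desired node $z$, and the argument no longer needs the case analysis of which rotation occurred — it only uses that $T_1$ has at least one node of imbalance $\geq 2$, which follows because $T_1$ is not balanced. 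I expect the write-up to be short: state the existence of a lowest node $z$ with $\Des_{T_1}(z) \geq 2$, and observe that lowness forces both subtrees of $z$ to be balanced.
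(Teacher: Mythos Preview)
Your minimality argument has a genuine gap. You claim that ``$T_1$ has at least one node of imbalance $\geq 2$, which follows because $T_1$ is not balanced,'' but being unbalanced only means some node has $\lvert\Des_{T_1}\rvert \geq 2$; a priori that node could have $\Des_{T_1} \leq -2$. The same issue recurs in the next sentence: picking $w$ lowest among nodes with $\Des_{T_1}(w) \geq 2$ does not by itself guarantee that the subtrees of $w$ are balanced, since a strict descendant of $w$ might have imbalance $\leq -2$. So as written, neither the existence of $w$ nor the balancedness of its subtrees is justified.

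The fix is exactly the observation you were trying to avoid: the rotation changes only the imbalance values of $x$ and $y$, and the table shows these new values are all nonnegative. Hence no node of $T_1$ has imbalance $\leq -2$, and then your minimality argument goes through. But this means you have not escaped the rotation-type information after all. The paper's proof is more direct and avoids the detour: rather than taking a lowest bad node, it simply chooses $z := x$ in the simply unbalancing cases \ref{item:CasRot3}--\ref{item:CasRot5} (where $\Des_{T_1}(y) \in \{0,1\}$, so $B \ABCons C$ is balanced) and $z := y$ in the fully unbalancing cases \ref{item:CasRot6}--\ref{item:CasRot9} (where $\Des_{T_1}(y) \geq 2$, and the subtrees $B$ and $C$ of $y$ are balanced because they come from $T_0$). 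You actually noticed the obstruction to $z = x$ in the fully unbalancing cases; the shortest repair is to switch to $z = y$ there rather than to introduce a minimality argument.
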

\begin{proof}
    Let $y$ be the node of $T_0$ which is the root of the rotation that
    transforms $T_0$ into $T_1$ and $x$ its left child in $T_0$. If this
    rotation is a simply unbalancing rotation, it satisfies~\ref{item:CasRot3},~\ref{item:CasRot4}
    or~\ref{item:CasRot5}, and the node $z := x$ satisfies the lemma. If
    this rotation is a fully unbalancing rotation, it satisfies~\ref{item:CasRot6},~\ref{item:CasRot7},~\ref{item:CasRot8}
    or~\ref{item:CasRot9}, and the node $z := y$ of $T_1$ agrees with the
    conclusion of the lemma.
\end{proof}

\begin{Lemme} \label{lem:RotationNonModifDroite}
    Let $T_0$ be a binary tree and $y$ be a node of $T_0$ such that all
    subtrees to the right \Wrt $y$ are balanced. Then, if the binary
    tree $T_1$ is obtained from $T_0$ by a rotation of root $y$, all subtrees
    of $T_1$ to the right \Wrt $y$ are balanced.
\end{Lemme}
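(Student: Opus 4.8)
The plan is to reduce to the hypothesis by showing that every subtree of $T_1$ which is to the right \Wrt $y$ coincides with a subtree of $T_0$ which is to the right \Wrt $y$; since the latter are all balanced, so are the former. Write $S_0 = (A \ABCons B) \ABCons C$ for the subtree of $T_0$ of root $y$, let $x$ be the left child of $y$, and let $S_1 = A \ABCons (B \ABCons C)$, of root $x$, be the subtree that the rotation substitutes for $S_0$ in $T_1$. Two facts are used repeatedly. First, a rotation does not change the infix order of the nodes (as noted in Section~\ref{sec:Preliminaries}), so for a fixed set of nodes, being to the right \Wrt $y$ means the same thing in $T_0$ and in $T_1$. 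Second, $S_0$ and $S_1$ have the same node set, and outside this common node set the trees $T_0$ and $T_1$ have the same structure; in particular, any subtree of $T_1$ whose node set is disjoint from that of $S_1$ is also a subtree of $T_0$, with the same shape.

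I would then proceed by a case analysis on the root $w$ of an arbitrary subtree $U$ of $T_1$ that is to the right \Wrt $y$. The node $w$ lies in exactly one of the following parts of $T_1$: the subtree $A$, the node $x$, the subtree $B$, the node $y$, the subtree $C$, or the set of nodes lying outside $S_1$. In the infix order, the nodes of $S_1$ are visited as the nodes of $A$, then $x$, then the nodes of $B$, then $y$, then the nodes of $C$. Hence, if $w$ belongs to $A$ or $B$, or $w = x$, or $w = y$, then $w$ is a node of $U$ that does not lie strictly after $y$ (it precedes $y$ in the first three cases, and equals $y$ in the last), so $U$ is not to the right \Wrt $y$ and this case does not arise. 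If $w \in C$, then $U$ is a subtree of $C$; but $C$ is the right subtree of $y$ in $T_0$, it is left untouched by the rotation, and it is entirely to the right \Wrt $y$, so $U$ is a subtree of $T_0$ to the right \Wrt $y$. Finally, if $w$ lies outside $S_1$, then either the node set of $U$ is disjoint from that of $S_1$ --- in which case, by the second fact above together with the invariance of the infix order, $U$ is a subtree of $T_0$ that is still to the right \Wrt $y$ --- or it is not, in which case, since two subtrees of a binary tree are nested or disjoint and $w \notin S_1$, the subtree $U$ must contain all of $S_1$, hence $y$, so $U$ is not to the right \Wrt $y$ and the case is void. In every surviving case $U$ equals a subtree of $T_0$ that is to the right \Wrt $y$, hence balanced by hypothesis, which concludes the argument.

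The only real work here is the bookkeeping of this case analysis, and the mild obstacle is to handle carefully the fact that the node set of any subtree forms a contiguous block of the infix order: this is what makes ``to the right \Wrt $y$'' equivalent to ``the leftmost node of the subtree lies after $y$'', and what forces a subtree that meets $S_1$ without being rooted inside it to contain $S_1$ entirely. There is no deeper difficulty; the lemma is essentially the locality of rotations combined with the invariance of the infix order.
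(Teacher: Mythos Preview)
Your proof is correct and follows the same idea as the paper's: a subtree of $T_1$ that is to the right \Wrt $y$ is in fact a subtree of $T_0$ that is to the right \Wrt $y$, hence balanced by hypothesis. The paper's proof compresses this into a single sentence (invoking only the invariance of the infix order and the definition of $\ADroite$), whereas you spell out the case analysis on the location of the root of the subtree; your version makes explicit the point the paper leaves implicit, namely that such a subtree is genuinely the same subtree in $T_0$.
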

\begin{proof}
    Since the rotation operation does not modify the infix order of the
    nodes and by definition of the relation $\ADroite$, if a subtree $S$
    is to the right \Wrt $y$ in $T_1$, then $S$ is also to the right
    \Wrt $y$ in $T_0$. By hypothesis, $S$ is balanced in $T_0$, and
    therefore, it is also balanced in~$T_1$.
\end{proof}

\subsection{Construction of an imbalance invariant} \label{subsec:Invariant}

Let $T$ be a binary tree, $x$ be a node of $T$ and $y$ be the leftmost node
of the subtree of root $x$ in $T$. We say that $x$ is a \emph{witness of imbalance}
if the following three conditions hold (see Figure~\ref{fig:Witness}):
\begin{figure}[ht]
    \centering
    \scalebox{.40}{
    \begin{tikzpicture}
        \node[Noeud, EtiqClair] (0) at (0, 0) {};
        \node[Noeud, EtiqClair] (y) at (-4, -6) {\Huge $y$};
        \node[Noeud, EtiqClair] (3) at (0, -2) {\Huge $x_\ell$};
        \node[Noeud, EtiqFonce, label=left:\huge $\Des_T(x) \geq 2$] (5) at (-2, -4) {\Huge $x$};
        \node (8) at (-3, -1) {};
        \node (9) at (3, -1) {};
        \node[Feuille] (feuille) at (-5, -7) {};
        \node[SArbre, label=right:\Huge $\in \EnsEq$] (Sy) at (-2, -8) {$S_y$};
        \node[SArbre, label=right:\Huge $\in \EnsEq$] (4) at (2, -4){$S_{x_\ell}$};
        \node[SArbre, label=right:\Huge $\in \EnsEq$] (6) at (0, -6){$S_x$};
        \draw[Arete] (y) -- (Sy);
        \draw[Arete] (3) -- (4);
        \draw[Arete] (5) -- (6);
        \draw[Arete] (0) -- (8);
        \draw[Arete] (0) -- (9);
        \draw[Arete] (feuille) -- (y);
        \draw[Arete, decorate, decoration = zigzag] (0) -- (3);
        \draw[Arete, decorate, decoration = zigzag] (3) -- (5);
        \draw[Arete, decorate, decoration = zigzag] (y) -- (5);
        \node at (-6, -3) {\scalebox{2.2}{$T = $}};
    \end{tikzpicture}}
    \caption{The node $x$ is a witness of imbalance of $T$. Note that the left subtree
    of $y$ is empty and thus $S_y$ has $0$ or $1$ node.}
    \label{fig:Witness}
\end{figure}
\begin{enumerate}[label = (W\arabic*)]
    \item The imbalance value of $x$ is greater than or equal to $2$; \label{item:W1}
    \item The left subtree of $x$ is balanced; \label{item:W2}
    \item The subtrees of $T$ which are to the right \Wrt $y$ are balanced. \label{item:W3}
\end{enumerate}

\begin{Remarque} \label{rem:TemoinDesImpliqueDes}
    If a binary tree $T$ has a witness of imbalance,~\ref{item:W1}
    guarantees that $T$ is unbalanced.
\end{Remarque}

The aim of this section is to define an additional property that $x$ and
$y$ must satisfy to ensure that any binary tree $T'$ such that $T \OrdTam T'$
has still a witness of imbalance. In this way, by showing that $T'$ also
satisfies this additional property, we will prove that it is impossible
to rebalance $T$ through rotations.
\medskip

Let us already give this property. In what follows, the concepts necessary
to understand it will be defined. If $y$ satisfies condition
\begin{enumerate}[label = (CC)]
    \item the height word of the node $y$ is admissible, \label{item:C}
\end{enumerate}
then, we say that $T$ satisfies the \emph{conservation condition}. Besides,
we say that $T$ has an \emph{imbalance invariant} if $T$ has a witness of
imbalance satisfying the conservation condition.

\subsubsection{Height words}
Let $T$ be a binary tree, $x_1$ be a node of $T$, $(x_1, x_2, \dots, x_\ell)$
be the sequence of all ancestors of $x_1$ whose are to the right \Wrt $x_1$
and ordered from bottom to top, and $(S_{x_i})_{1 \leq i \leq \ell}$
be the sequence of the right subtrees of the $x_i$ (see Figure~\ref{fig:ArbreDroite}).
The word $u_1 \dots u_\ell$ of $\EnsNat^*$ defined by $u_i := \Ht(S_{x_i})$
is called the \emph{height word} of $x_1$ and denoted by $\MotHt_T(x_1)$.
It is convenient to set $\MotHt_T(x) := \epsilon$ whenever $x$ is not a
node of $T$. See Figure~\ref{fig:ExempleHeightWords} for some examples of
height words associated with some nodes of a binary tree.
\begin{figure}[ht]
    \centering
    \scalebox{.40}{
    \begin{tikzpicture}
        \node[Noeud, EtiqClair] (0) at (0, 0) {};
        \node[Noeud, EtiqClair] (y) at (-4, -6) {\Huge $x_1$};
        \node[Noeud, EtiqClair] (3) at (0, -2) {\Huge $x_\ell$};
        \node[Noeud, EtiqClair] (5) at (-2, -4) {\Huge $x_2$};
        \node (8) at (-3, -1) {};
        \node (9) at (3, -1) {};
        \node[SArbre] (Sy) at (-2, -8) {$S_{x_1}$};
        \node[SArbre] (4) at (2, -4){$S_{x_\ell}$};
        \node[SArbre] (6) at (0, -6){$S_{x_2}$};
        \node[SArbre] (SAutre) at (-6, -8) {};
        \draw[Arete] (y) -- (Sy);
        \draw[Arete] (SAutre) -- (y);
        \draw[Arete] (3) -- (4);
        \draw[Arete] (5) -- (6);
        \draw[Arete] (0) -- (8);
        \draw[Arete] (0) -- (9);
        \draw[Arete, decorate, decoration = zigzag] (0) -- (3);
        \draw[Arete, decorate, decoration = zigzag] (3) -- (5);
        \draw[Arete] (y) -- (5);
        \node at (-6, -3) {\scalebox{2.2}{$T = $}};
    \end{tikzpicture}}
    \caption{The sequence $(S_{x_i})_{1 \leq i \leq \ell}$ associated with the node $x_1$.}
    \label{fig:ArbreDroite}
\end{figure}
\begin{figure}[ht]
    \centering
    \scalebox{.3}{
    \begin{tikzpicture}
        \node[Feuille](0)at(0,-3){};
        \node[Noeud, label=below:\scalebox{2.5}{$x$}](1)at(1,-2){};
        \node[Feuille](2)at(2,-5){};
        \node[Noeud, label=below:\scalebox{2.5}{$y$}](3)at(3,-4){};
        \node[Feuille](4)at(4,-5){};
        \draw[Arete](3)--(2);
        \draw[Arete](3)--(4);
        \node[Noeud](5)at(5,-3){};
        \node[Feuille](6)at(6,-4){};
        \draw[Arete](5)--(3);
        \draw[Arete](5)--(6);
        \draw[Arete](1)--(0);
        \draw[Arete](1)--(5);
        \node[Noeud](7)at(7,-1){};
        \node[Feuille](8)at(8,-3){};
        \node[Noeud](9)at(9,-2){};
        \node[Feuille](10)at(10,-4){};
        \node[Noeud, label=below:\scalebox{2.5}{$z$}](11)at(11,-3){};
        \node[Feuille](12)at(12,-4){};
        \draw[Arete](11)--(10);
        \draw[Arete](11)--(12);
        \draw[Arete](9)--(8);
        \draw[Arete](9)--(11);
        \draw[Arete](7)--(1);
        \draw[Arete](7)--(9);
        \node[Noeud](13)at(13,0){};
        \node[Feuille](14)at(14,-2){};
        \node[Noeud](15)at(15,-1){};
        \node[Feuille](16)at(16,-2){};
        \draw[Arete](15)--(14);
        \draw[Arete](15)--(16);
        \draw[Arete](13)--(7);
        \draw[Arete](13)--(15);
        \node at (-3, -1.5) {\scalebox{2.8}{$T = $}};
    \end{tikzpicture}}
    \caption{Examples of height words: $\MotHt_T(x) = 221$, $\MotHt_T(y) = 0021$, and $\MotHt_T(z) = 01$.}
    \label{fig:ExempleHeightWords}
\end{figure}

\subsubsection{Admissible words}
Let~$u := u_1 \dots u_n$ be a word. Let us denote by~$\ell(u)$ the
\emph{length}~$n$ of~$u$.
\medskip

Let $\Theta : \EnsNat^2 \rightarrow \EnsNat$ be the rewriting rule defined by
\begin{equation}
    \Theta(a.b) :=
    \begin{cases}
        \max\{a, b\}  + 1 & \mbox{if $b - a \in \{-1, 0, 1\}$,} \\
        \max\{a, b\}      & \mbox{otherwise.}
    \end{cases}
\end{equation}

Note that if $A \ABCons B$ is a balanced binary tree, then
$\Theta(\Ht(A).\Ht(B)) = \Ht(A \ABCons B)$. We shall use this simple
observation to establish the main result of this section.
\medskip

This rewriting rule is extended to words of $\EnsNat^*$ by
$\Theta(u) := \Theta(u_1 . u_2) . u_3 \dots u_{\ell(u)}$. If $0 \leq i \leq \ell(u) - 1$,
denote by $\Theta^i(u)$ the iterated application of $\Theta$ defined by
\begin{equation} \label{eq:DefThetaItere}
    \Theta^i(u) :=
    \begin{cases}
        u                                     & \mbox{if $i = 0$,} \\
        \Theta \left(\Theta^{i - 1}(u)\right) & \mbox{otherwise.}
    \end{cases}
\end{equation}

\begin{Definition} \label{def:AdmissibiliteMots}
    A word $u \in \EnsNat^*$ is \emph{admissible} if either $\ell(u) \leq 1$
    or all words $v$ of the set
    \begin{equation}
        \left\{ \Theta^i(u) : 0 \leq i \leq \ell(u) - 2 \right\} \label{eq:EnsembleTheta}
    \end{equation}
    satisfy $v_1 - 1 \leq v_2$.
\end{Definition}
The set of admissible words is denoted by $\EnsAdmissible$. To check
if a word $u$ is admissible, iteratively compute the elements of the
set~(\ref{eq:EnsembleTheta}) following~(\ref{eq:DefThetaItere}), and check
for each of these the inequality of the previous definition. For example,
by denoting by $\xrightarrow{\Theta}$ the rewriting rule $\Theta$, we can
check that $u := 00122$ is admissible. Indeed, we have
\begin{equation}
    00122 \xrightarrow{\Theta} 1122 \xrightarrow{\Theta} 222 \xrightarrow{\Theta} 32,
\end{equation}
and at each step, the condition $u_1 - 1 \leq u_2$ holds. The word $1234488$
is also admissible:
\begin{equation}
    01233778 \xrightarrow{\Theta} 2233778 \xrightarrow{\Theta}
    333778 \xrightarrow{\Theta} 43778 \xrightarrow{\Theta} 5778
    \xrightarrow{\Theta} 778 \xrightarrow{\Theta} 88.
\end{equation}
On the other hand, $3444$ is not admissible since we have
\begin{equation}
    3444 \xrightarrow{\Theta} 544 \xrightarrow{\Theta} 64,
\end{equation}
and $6 - 1 \nleq 4$.
\medskip

If $u$ is an nonempty word, let us denote by $\HtMot(u)$ the \emph{height}
of $u$, that is the one-letter word $\Theta^{\ell(u) - 1}(u)$. For example,
we have $\HtMot(00122) = 4$, $\HtMot(01233778) = 9$ and $\HtMot(3444) = 6$.
Note that one can deduce from Definition~\ref{def:AdmissibiliteMots} that
a word $u \in \EnsNat^*$ of length greater than $1$ is admissible if and
only if for each decomposition $u = v.a.w$ where $v \in \EnsNat^+$, $a \in \EnsNat$
and $w \in \EnsNat^*$, one has $\HtMot(v) - 1 \leq a$.

\subsubsection{Some properties of admissible words}
Let us establish three lemmas on admissible words that will be helpful later
to prove our main result.

\begin{Lemme} \label{lem:MotAdmCroissant}
    If $u$ is an admissible word, then, for all $1 \leq i \leq \ell(u) - 1$, one has
    $u_i - 1 \leq u_{i + 1}$.
\end{Lemme}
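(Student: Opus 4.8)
The plan is to prove the contrapositive-flavored statement directly by induction on the length $\ell(u)$ of the admissible word, using the characterization given right after Definition~\ref{def:AdmissibiliteMots}: a word $u$ of length greater than $1$ is admissible if and only if for every decomposition $u = v.a.w$ with $v \in \EnsNat^+$, $a \in \EnsNat$, $w \in \EnsNat^*$, one has $\HtMot(v) - 1 \leq a$. The key observation I would exploit is that $\HtMot(v) = \Theta^{\ell(v)-1}(v) \geq v_{\ell(v)}$, since $\Theta(a.b) \geq \max\{a,b\}$ always, and hence each step of the rewriting $\Theta$ can only increase (weakly) the first letter. In particular $\HtMot(v) \geq v_1$ as well.

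First I would handle the base cases: if $\ell(u) \leq 1$ there is nothing to prove, and if $\ell(u) = 2$ then admissibility directly gives $u_1 - 1 \leq u_2$, which is the only instance of the claimed inequality. For the inductive step, suppose $\ell(u) = n \geq 3$ and fix $i$ with $1 \leq i \leq n-1$. I would split into two cases. If $i = 1$, the inequality $u_1 - 1 \leq u_2$ is exactly the condition from Definition~\ref{def:AdmissibiliteMots} applied with $\Theta^0(u) = u$ (the case $i=0$ in the set~\eqref{eq:EnsembleTheta}), so it holds immediately. If $i \geq 2$, consider the decomposition $u = v.a.w$ where $v := u_1 \dots u_i$, $a := u_{i+1}$, and $w := u_{i+2} \dots u_n$; here $v \in \EnsNat^+$ has length $i \geq 2$. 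Admissibility of $u$ via the stated characterization gives $\HtMot(v) - 1 \leq a = u_{i+1}$. Since $\HtMot(v) \geq v_{\ell(v)} = u_i$, we conclude $u_i - 1 \leq \HtMot(v) - 1 \leq u_{i+1}$, as desired.

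Actually this gives the whole statement in one shot, without needing induction at all once the characterization and the inequality $\HtMot(v) \geq v_{\ell(v)}$ are in hand — the induction is only needed if one wants to prove $\HtMot(v) \geq v_{\ell(v)}$ from scratch rather than citing the monotonicity of $\Theta$. So the one genuine auxiliary fact to nail down is: \emph{for every nonempty word $v$, $\HtMot(v) \geq v_{\ell(v)}$}, and more generally $\Theta$ never strictly decreases the last letter of the prefix being collapsed. This follows by an easy induction on $\ell(v)$: for $\ell(v) = 1$ it is an equality; for $\ell(v) \geq 2$, writing $v = v_1.v'$ with $v' = v_2 \dots v_{\ell(v)}$, one step of $\Theta$ replaces the first two letters by $\Theta(v_1.v_2) \geq \max\{v_1, v_2\}$ while leaving $v_3, \dots, v_{\ell(v)}$ untouched, so $\HtMot(v) = \HtMot(\Theta(v))$ and the last letter of the argument is unchanged (if $\ell(v) \geq 3$) or is $\Theta(v_1.v_2) \geq v_2 = v_{\ell(v)}$ (if $\ell(v) = 2$); in both cases induction finishes.

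I do not anticipate a real obstacle here — the statement is essentially a direct unfolding of the definition once one records that $\Theta(a.b) \geq \max\{a,b\}$. The only mild care needed is handling the $i = 1$ case separately (it comes straight from the $i = 0$ member of~\eqref{eq:EnsembleTheta}, not from a decomposition $u = v.a.w$ with $v$ nonempty of length $\geq 2$), and making sure the characterization after Definition~\ref{def:AdmissibiliteMots} is being applied with $v$ of the right length. Everything else is bookkeeping.
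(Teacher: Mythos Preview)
Your proof is correct and follows essentially the same approach as the paper's: both use that $\Theta(a.b) \geq \max\{a,b\}$ to conclude $\HtMot(v) \geq v_{\ell(v)}$ (equivalently $\HtMot(v.u_i) \geq u_i$), then combine this with the admissibility condition $\HtMot(\text{prefix}) - 1 \leq \text{next letter}$ to get $u_i - 1 \leq u_{i+1}$. The paper phrases this as a contrapositive in one line, while you argue directly and split off $i=1$ unnecessarily (the characterization already covers it with $v = u_1$ of length~$1$), but these are cosmetic differences.
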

\begin{proof}
    Assume that $u$ is of the form $u = v.u_i.u_{i + 1}.w$ with $v, w \in \EnsNat^*$
    and $u_i - 1 > u_{i + 1}$. Since $\Theta$ changes a word $a.b \in \EnsNat^2$
    into a letter $c \in \EnsNat$ no smaller than both $a$ and $b$, we have
    $\HtMot(v.u_i) \geq u_i$. That implies that $\HtMot(v.u_i) - 1 > u_{i + 1}$,
    showing that $u \notin \EnsAdmissible$ and contradicting the hypothesis.
\end{proof}

\begin{Lemme} \label{lem:MotAdmPrefSuff}
    All prefixes and suffixes of an admissible word are admissible.
\end{Lemme}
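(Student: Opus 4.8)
The plan is to prove the statement about prefixes and suffixes separately, since the two cases behave differently under the rewriting rule $\Theta$.

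For \textbf{prefixes}, the key observation is that $\Theta$ acts only on the first two letters of a word, so appending or removing letters on the right does not interfere with the application of $\Theta$ to a prefix. More precisely, if $u = p.w$ is a decomposition into a prefix $p$ (of length $\ell(p) \geq 2$) and a tail $w$, then for every $i$ with $0 \leq i \leq \ell(p) - 2$ we have $\Theta^i(u) = \Theta^i(p).w$, because $\Theta$ only rewrites the leftmost pair and $\ell(p) - 2 < \ell(p) \leq \ell(u)$ keeps us inside the prefix region. In particular $\Theta^i(p)$ and $\Theta^i(u)$ have the same first two letters, so the admissibility inequality $v_1 - 1 \leq v_2$ holds for all the relevant $v$ coming from $p$ exactly when it holds for the corresponding ones coming from $u$. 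Since $u$ is admissible, all these inequalities hold, so $p$ is admissible. (If $\ell(p) \leq 1$, $p$ is admissible trivially.) I would phrase this using the alternative characterization given just before Lemma~\ref{lem:MotAdmCroissant}: $u$ is admissible iff for every decomposition $u = v.a.w$ with $v \in \EnsNat^+$ one has $\HtMot(v) - 1 \leq a$. A decomposition of a prefix $p = v.a.w'$ extends to a decomposition $u = v.a.(w'.w)$ of $u$ with the \emph{same} $v$ and $a$, so the inequality $\HtMot(v) - 1 \leq a$ is inherited directly.

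For \textbf{suffixes}, the argument is cleaner via that same characterization. Let $s$ be a suffix of $u$, so $u = t.s$ for some (possibly empty) $t$. If $\ell(s) \leq 1$ then $s$ is admissible. Otherwise take any decomposition $s = v.a.w$ with $v \in \EnsNat^+$; then $u = (t.v).a.w$ is a decomposition of $u$ with first block $t.v \in \EnsNat^+$, so admissibility of $u$ gives $\HtMot(t.v) - 1 \leq a$. It then suffices to show $\HtMot(v) \leq \HtMot(t.v)$, i.e.\ that prepending letters can only increase the height. This follows because $\HtMot(t.v) = \Theta^{\ell(t)+\ell(v)-1}(t.v)$ can be computed by first reducing the $v$-part: the first $\ell(t)-1$ applications of $\Theta$ to $t.v$ rewrite inside $t$ until only one letter $c$ remains in front of $v$, giving the word $c.v$; then $\HtMot(t.v) = \HtMot(c.v)$. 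Reducing $c.v$ from the right, one checks by induction on $\ell(v)$ that $\HtMot(c.v) \geq \HtMot(v)$, since at each step $\Theta$ replaces a pair by a letter no smaller than its right component, and the height of a word is monotone in its letters (another routine induction using that $\Theta(a.b)$ is monotone in both arguments). Combining, $\HtMot(v) - 1 \leq \HtMot(t.v) - 1 \leq a$, so $s$ is admissible.

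The \textbf{main obstacle} is the suffix case: one needs the monotonicity statement ``$\HtMot(c.v) \geq \HtMot(v)$'' (equivalently, prepending a letter does not decrease the height), which is not completely immediate and requires an induction together with the auxiliary fact that $\HtMot$ is monotone in each letter of its argument. I would isolate this as a short sub-claim. The prefix case, by contrast, is essentially bookkeeping: it only uses that $\Theta$ is a purely local left-end rewriting and that the admissibility conditions for a prefix form a subset of those for the whole word.
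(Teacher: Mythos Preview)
Your proof is correct and follows essentially the same approach as the paper. Both arguments treat prefixes as immediate from the definition (the admissibility conditions for a prefix are a subset of those for the whole word), and for suffixes both reduce to the inequality $\HtMot(t.v) \geq \HtMot(v)$ via the alternative characterization; the paper phrases the suffix case contrapositively and justifies this inequality in one line from $\Theta(a.b) \geq \max\{a,b\}$, whereas you unpack the needed monotonicity of $\Theta$ in its first argument more explicitly---a level of detail the paper leaves implicit.
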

\begin{proof}
    It is immediate, by definition, that all prefixes of an admissible word
    also are admissible.

    Let $u \in \EnsAdmissible$ such that $\ell(u) \geq 2$, and $w$ be a nonempty
    suffix of $u$. Assume that $w \notin \EnsAdmissible$. Hence, $w$ is of the
    form $w = x.a.y$ where $x \in \EnsNat^+$, $a \in \EnsNat$, $y \in \EnsNat^*$
    and $\HtMot(x) - 1 > a$. The word $u$ is of the form $u = v.x.a.y$ where
    $v \in \EnsNat^*$. Since $\Theta$ changes a word $a.b \in \EnsNat^2$
    into a letter $c \in \EnsNat$ no smaller than both $a$ and $b$, we have
    $\HtMot(v.x) \geq \HtMot(x)$. Therefore, we have $\HtMot(v.x) - 1 > a$,
    showing that $u \notin \EnsAdmissible$ and contradicting the hypothesis.
\end{proof}

\begin{Lemme} \label{lem:MotAdmSubs}
    If $u.v$ is an admissible word such that $\ell(v) \geq 2$, the word $u.\Theta(v)$
    is still admissible.
\end{Lemme}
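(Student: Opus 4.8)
If $u.v$ is an admissible word with $\ell(v) \geq 2$, then $u.\Theta(v)$ is still admissible.

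The plan is to reduce everything to the characterization of admissibility stated just after the definition of $\HtMot$: a word $w$ of length $\geq 2$ is admissible if and only if for every factorization $w = p.a.q$ with $p \in \EnsNat^+$, $a \in \EnsNat$ and $q \in \EnsNat^*$, one has $\HtMot(p) - 1 \leq a$. So I would write $v = v_1.v_2.v'$ with $v' \in \EnsNat^*$, and $w := u.\Theta(v) = u.c.v'$ where $c := \Theta(v_1.v_2)$. I want to verify the inequality $\HtMot(p) - 1 \leq a$ for every factorization $w = p.a.q$, distinguishing the position of the cut relative to the inserted letter $c$.

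First, the easy ranges. If the factorization $w = p.a.q$ cuts strictly inside $u$ (so $p.a$ is a proper prefix of $u$, hence a prefix of $u.v$), the required inequality is inherited directly from admissibility of $u.v$ by Lemma~\ref{lem:MotAdmPrefSuff} applied to the prefix $p.a$, or simply because $p.a$ is then a prefix of an admissible word. Symmetrically, if the cut lies strictly inside $v'$, then $q$ is a suffix of $v'$, hence a suffix of $v$, hence a suffix of $u.v$; here $p = u.c.v''$ for a prefix $v''$ of $v'$. The subtle point: I need $\HtMot(u.c.v'')$ versus $\HtMot(u.v_1.v_2.v'')$. The key observation is that $\HtMot$ is invariant under applying $\Theta$ to any adjacent pair, i.e. $\HtMot(x.y.z) = \HtMot(\Theta(x.y).z)$ whenever the left part has length $\geq 2$ — this is immediate from the definition of $\HtMot$ as the iterated $\Theta$ collapsing left to right, together with the fact that $\Theta$ acts on the first two letters. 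Applying this with $x.y = v_1.v_2$ gives $\HtMot(u.c.v'') = \HtMot(u.v_1.v_2.v'')$, so the inequality transfers verbatim from $u.v$.

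The two remaining boundary cases are where the cut is adjacent to $c$. Case $a = c$: then $p = u$ and I must show $\HtMot(u) - 1 \leq c = \Theta(v_1.v_2)$. Since $\Theta(v_1.v_2) \geq v_1$ always, it suffices to know $\HtMot(u) - 1 \leq v_1$; but $u.v_1$ is a prefix of $u.v$ and admissibility of $u.v$ gives exactly $\HtMot(u) - 1 \leq v_1$ (taking the factorization $u.v = u.v_1.(\text{rest})$). If $u = \epsilon$ this case is vacuous. Case where $c$ is the last letter of $p$, i.e. $p = u.c$ and $a = v_1'$ is the first letter of $v'$ (if $v'$ is nonempty): I must show $\HtMot(u.c) - 1 \leq v_1'$. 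By the invariance observation, $\HtMot(u.c) = \HtMot(u.v_1.v_2)$, and the factorization $u.v = (u.v_1.v_2).v_1'.(\text{rest})$ of the admissible word $u.v$ yields precisely $\HtMot(u.v_1.v_2) - 1 \leq v_1'$. That exhausts all factorizations, and one also checks $\ell(w) \geq 2$ unless $w$ is a single letter (which happens only if $u = \epsilon$ and $\ell(v) = 2$, in which case $w$ is trivially admissible). The main obstacle — and really the only content — is pinning down the invariance $\HtMot(x.y.z) = \HtMot(\Theta(x.y).z)$ and being careful with the empty-$u$ and empty-$v'$ degenerate cases; once that is in hand every instance of the admissibility inequality for $w$ is matched against an instance for $u.v$.
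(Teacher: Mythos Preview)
Your case analysis on the position of the factorization is sound and matches the paper's strategy, but the ``key observation'' is wrong as stated. You claim that $\HtMot$ is invariant under applying $\Theta$ to \emph{any} adjacent pair; the formula $\HtMot(x.y.z) = \HtMot(\Theta(x.y).z)$ you write is only the definition of $\Theta^1$ and applies to the \emph{first} two letters, not to a pair sitting after a nonempty prefix $u$. Concretely, take $u = 3$ and $v = 2.5$: the word $3.2.5$ is admissible, $c = \Theta(2.5) = 5$, and $\HtMot(u.c) = \HtMot(3.5) = 5$, whereas $\HtMot(u.v_1.v_2) = \HtMot(3.2.5) = \Theta(4.5) = 6$. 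So the claimed equality $\HtMot(u.c.v'') = \HtMot(u.v_1.v_2.v'')$ already fails for $v'' = \epsilon$, and your transfers in the last two cases do not go through as written.

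The fix is exactly what the paper does: replace the equality by the inequality $\HtMot(u.c) \leq \HtMot(u.v_1.v_2)$, i.e.\ $\Theta(d.\Theta(v_1.v_2)) \leq \Theta(\Theta(d.v_1).v_2)$ for $d := \HtMot(u)$, which holds for all $d, v_1, v_2 \in \EnsNat$. Combined with the easy monotonicity of $\Theta$ in its first argument, this yields $\HtMot(u.c.v'') \leq \HtMot(u.v_1.v_2.v'')$ for every prefix $v''$ of $v'$, and then each admissibility inequality for $u.\Theta(v)$ is bounded above by the corresponding one for $u.v$ exactly along the lines you wrote. So the architecture of your argument is correct and coincides with the paper's; only the key comparison must be weakened from an equality to a $\leq$, and that inequality (not the false invariance) is the actual content of the lemma.
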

\begin{proof}
    If $u$ is empty, the lemma follows immediately. Assume that $u$ is nonempty.
    The word $u.v$ is of the form $u.v = u.a.b.w$ where $a, b \in \EnsNat$
    and $w \in \EnsNat^*$. Set $c := \Theta(a.b) = \HtMot(a.b)$. The word
    $u.c.w = u.\Theta(v)$ is admissible if the two inequalities $\HtMot(u) - 1 \leq c$
    and $\HtMot(u.c) \leq \HtMot(u.a.b)$ hold. Since $u.a.b.w \in \EnsAdmissible$,
    we have $\HtMot(u) - 1 \leq a$, and since $c = \Theta(a.b)$, then $c \geq a$
    and thus, $\HtMot(u) - 1 \leq c$, showing the first inequality. Set $d := \HtMot(u)$.
    The second inequality amounts to prove that $\HtMot(d.c) \leq \HtMot(d.a.b)$,
    which is equivalent to prove $\HtMot(d.\HtMot(a.b)) \leq \HtMot(d.a.b)$.
    This relation holds in general for any letters $a, b, d \in \EnsNat$,
    showing that $u.\Theta(v) \in \EnsAdmissible$.
\end{proof}

\subsubsection{Admissible height words}
Let us prove two lemmas relating admissible words and height words.

\begin{Lemme} \label{lem:MotHauteursEquilibre}
    Let $T$ be a balanced binary tree, $x$ be a node of $T$, and $u$ be
    the height word of $x$. Then $u$ is admissible and $\HtMot(u) \leq \Ht(T)$.
\end{Lemme}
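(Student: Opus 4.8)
The plan is to argue by induction on the number of nodes of $T$. Let $x$ be a node of $T$ and let $(x = x_1, x_2, \dots, x_\ell)$ be its ancestors that are to the right w.r.t.\ $x$, ordered from bottom to top, with $S_{x_i}$ the right subtree of $x_i$, so that $u = \MotHt_T(x) = \Ht(S_{x_1}) \dots \Ht(S_{x_\ell})$. If $\ell \leq 1$ the word is admissible trivially, so assume $\ell \geq 2$. First I would observe that each $S_{x_i}$ is itself a subtree of the balanced tree $T$, hence balanced, so $\Ht(S_{x_i})$ is a legitimate entry and, crucially, for each $i$ the tree $S_{x_i} \ABCons S_{x_{i+1}}$ would be balanced \emph{if it occurred}; in fact the key structural fact is that $x_{i+1}$ is the parent of $x_i$ with $x_i$ as its left child (since $x_i$ lies to the left of $x_{i+1}$ and $x_{i+1}$ is an ancestor to the right), so the subtree rooted at $x_{i+1}$ has the form $(\text{subtree at }x_i) \ABCons S_{x_{i+1}}$, and balance of $T$ forces $|\Ht(\text{subtree at }x_i) - \Ht(S_{x_{i+1}})| \leq 1$.

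The main idea is then to show, by the recursive structure, that applying $\Theta$ to a prefix of $u$ computes exactly the height of the corresponding ancestor: precisely, $\HtMot(\Ht(S_{x_1}) \dots \Ht(S_{x_k}))$ equals $\Ht(\text{subtree of }T\text{ rooted at }x_k)$ for each $1 \leq k \leq \ell$. I would prove this by induction on $k$ using the observation recorded just before Definition~\ref{def:AdmissibiliteMots}, namely that $\Theta(\Ht(A).\Ht(B)) = \Ht(A \ABCons B)$ whenever $A \ABCons B$ is balanced: the subtree at $x_{k}$ is (subtree at $x_{k-1}$)$\,\ABCons S_{x_{k}}$, which is balanced, and by the inductive hypothesis its left part has height $\HtMot(\Ht(S_{x_1}) \dots \Ht(S_{x_{k-1}}))$, while $\Theta$ acting on the left two positions of the current word performs exactly this $\wedge$-combination. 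Granting this, admissibility of $u$ follows from the characterization at the end of the ``Admissible words'' subsection: for any decomposition $u = v.a.w$ with $v \in \EnsNat^+$, write $v = \Ht(S_{x_1})\dots\Ht(S_{x_k})$, so $\HtMot(v) = \Ht(\text{subtree at }x_k)$ and $a = \Ht(S_{x_{k+1}})$; since (subtree at $x_k$)$\,\ABCons S_{x_{k+1}}$ is a balanced subtree of $T$, we get $\HtMot(v) - 1 \leq a$, which is exactly the required inequality. Finally $\HtMot(u) = \HtMot(\Ht(S_{x_1})\dots\Ht(S_{x_\ell})) = \Ht(\text{subtree at }x_\ell)$, and since $x_\ell$ is a node of $T$ this is at most $\Ht(T)$.

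The step I expect to be the main obstacle — really the only nontrivial point — is nailing down the claim that the prefix-$\Theta$ of the height word computes the height of the corresponding ancestor subtree, i.e.\ making precise that the left-to-right reduction $\Theta$ performs on $\MotHt_T(x)$ mirrors, step by step, the bottom-to-top accumulation of subtree heights along the path of right-ancestors. This requires being careful that at stage $k$ the already-reduced prefix really is a single letter equal to $\Ht(\text{subtree at }x_{k})$ before the next letter $\Ht(S_{x_{k+1}})$ is absorbed, which is where the definition $\Theta(u) := \Theta(u_1.u_2).u_3\dots u_{\ell(u)}$ and the shape of $\Theta^i$ from~\eqref{eq:DefThetaItere} do their work. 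Everything else is bookkeeping: the balance hypothesis on $T$ is used precisely at each $\wedge$-combination to invoke $\Theta(\Ht(A).\Ht(B)) = \Ht(A\ABCons B)$ and to get the defining inequality of admissibility.
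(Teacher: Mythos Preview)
Your proof contains a genuine structural error. The claim that ``$x_{i+1}$ is the parent of $x_i$ with $x_i$ as its left child'' is false in general: along the path from the root to $x_1$ there may be ancestors $z$ at which the path turns \emph{right}, and such $z$ lie to the left of $x_1$ and are therefore omitted from the sequence $(x_1,\dots,x_\ell)$. (The paper's own Figure~\ref{fig:ArbreDroite} draws zigzag segments between consecutive $x_i$ for precisely this reason.) For a concrete instance, if $x_1$ is the right child of some $z$ and $z$ is the left child of $x_2$, then $x_2$ is the next term of the sequence but is \emph{not} the parent of $x_1$, and the subtree at $x_2$ is not of the form $(\text{subtree at }x_1)\ABCons S_{x_2}$. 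Your central equality $\HtMot(u_1\dots u_k)=\Ht(\text{subtree at }x_k)$ fails as well, already at $k=1$: $\HtMot(u_1)=\Ht(S_{x_1})$, whereas $\Ht(\text{subtree at }x_1)=1+\max\{\Ht(L_{x_1}),\Ht(S_{x_1})\}>\Ht(S_{x_1})$.

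The repair is to replace equality by inequality. One shows by induction on $k$ that $\HtMot(u_1\dots u_k)\le\Ht(\text{subtree at }x_k)$: the subtree at $x_k$ is contained in the left subtree $L'$ of $x_{k+1}$ (because $x_1$ lies in $L'$ and $x_k$ is on the path), so $\Theta\bigl(\HtMot(u_1\dots u_k).\,u_{k+1}\bigr)\le 1+\max\{\Ht(L'),\Ht(S_{x_{k+1}})\}=\Ht(\text{subtree at }x_{k+1})$. Balance of $T$ at $x_{k+1}$ then gives $\HtMot(u_1\dots u_k)-1\le\Ht(L')-1\le\Ht(S_{x_{k+1}})=u_{k+1}$, which is exactly the admissibility inequality via the characterization you cite, and $\HtMot(u)\le\Ht(\text{subtree at }x_\ell)\le\Ht(T)$ gives the height bound. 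With this correction your direct argument along the right-ancestor chain works and is a legitimate alternative to the paper's structural induction on $T=L\ABCons R$ (which builds the word from the right by appending $\Ht(R)$ when $x\in L$); but as written, both the parent claim and the equality are incorrect.
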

\begin{proof}
    We proceed by structural induction on the set of balanced binary trees.
    The lemma is true for the single element $T$ of the set $\EnsEq_1$ since
    by denoting $x$ its node, we have $u = \MotHt_T(x) = 0$ which is admissible
    and satisfies $0 = \HtMot(u) \leq \Ht(T) = 1$.

    Assume that $T = L \ABCons R$. If $x$ is a node of $R$, we have $u = \MotHt_T(x) = \MotHt_R(x)$,
    and by induction hypothesis, $u \in \EnsAdmissible$ and $\HtMot(u) \leq \Ht(R)$.
    Since $\Ht(R) < \Ht(T)$, the lemma is satisfied.

    If $x$ is a node of $L$, we have $u = \MotHt_T(x) = \MotHt_L(x) . \Ht(R)$.
    Since $T$ is balanced, $\Ht(R) - \Ht(L) \in \{-1, 0, 1\}$, and by
    induction hypothesis, $\HtMot\left(\MotHt_L(x)\right) \leq \Ht(L)$.
    Hence, $\HtMot\left(\MotHt_L(x)\right) - 1 \leq \Ht(R)$. Moreover,
    again by induction hypothesis, $\MotHt_L(x) \in \EnsAdmissible$, and hence,
    $u \in \EnsAdmissible$. Finally, since $\HtMot(u) \leq \Ht(R) + 1 \leq \Ht(T)$,
    the lemma is satisfied.
\end{proof}

\begin{Lemme} \label{lem:MotHauteursSousArbresDroits}
    Let $T$ be a binary tree and $y$ be a node of $T$ such that $\MotHt_T(y)$ is
    admissible and all subtrees of the sequence $(S_{y_i})_{1 \leq i \leq \ell}$
    are balanced. Then, for all node $x$ of $T$ such that $y \ADroite_T x$,
    the word $\MotHt_T(x)$ is admissible.
\end{Lemme}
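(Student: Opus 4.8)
The plan is to pin down where the node $x$ sits relative to $y$ in $T$, and then to read $\MotHt_T(x)$ off from $\MotHt_T(y)$ together with a height word computed inside one of the balanced subtrees $S_{y_k}$. Write $\MotHt_T(y) = u_1 u_2 \cdots u_\ell$, where $(y_1 = y, y_2, \dots, y_\ell)$ and $(S_{y_i})_{1 \leq i \leq \ell}$ are as in the definition of the height word and $u_i = \Ht(S_{y_i})$. Given a node $x$ with $y \ADroite_T x$, I would look at the deepest common ancestor $z$ of $x$ and $y$. Since $y$ precedes $x$ in the infix order, exactly one of the following holds: (a) $z = x$, so that $x$ is an ancestor of $y$ with $y$ in its left subtree; or (b) $x$ lies in the right subtree of a node $z'$ which is $y$ itself (when $z = y$) or a proper ancestor of $y$ with $y$ in its left subtree (when $z \notin \{x, y\}$, taking $z' = z$).

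In case (a), $x$ is one of $y_2, \dots, y_\ell$, say $x = y_i$, and an ancestor $a$ of $x$ satisfies $x \ADroite_T a$ exactly when $y \ADroite_T a$; hence the ancestors of $x$ to its right are $y_{i+1}, \dots, y_\ell$ and $\MotHt_T(x) = u_i u_{i+1} \cdots u_\ell$ is a suffix of $\MotHt_T(y)$, so it is admissible by Lemma~\ref{lem:MotAdmPrefSuff}. In case (b), the node $z'$ is $y$ or lies to its right, hence $z' = y_k$ for some $1 \leq k \leq \ell$, and $B := S_{y_k}$ (the right subtree of $z'$) is balanced by hypothesis and contains $x$. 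As subtrees and the infix order restrict from $T$ to $B$, the ancestors of $x$ inside $B$ that lie to its right contribute exactly the word $\MotHt_B(x)$; and a proper ancestor $a$ of $z'$ satisfies $x \ADroite_T a$ iff $z'$, hence $y$, lies in the left subtree of $a$, i.e.\ iff $y \ADroite_T a$, so the ancestors of $x$ outside $B$ that lie to its right are $y_{k+1}, \dots, y_\ell$. Since the former ancestors sit below $z'$, and hence below $y_{k+1}, \dots, y_\ell$, I obtain $\MotHt_T(x) = \MotHt_B(x)\, u_{k+1} u_{k+2} \cdots u_\ell$. By Lemma~\ref{lem:MotHauteursEquilibre} applied to the balanced tree $B$, the (nonempty) word $w := \MotHt_B(x)$ is admissible with $\HtMot(w) \leq \Ht(B) = u_k$, and $u_k u_{k+1} \cdots u_\ell$ is admissible as a suffix of $\MotHt_T(y)$ (Lemma~\ref{lem:MotAdmPrefSuff}).

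What is left --- and what I expect to be the crux --- is the following substitution claim: \emph{if $w$ is a nonempty admissible word, $c \in \EnsNat$ with $\HtMot(w) \leq c$, and $c\, r$ is admissible for some $r \in \EnsNat^*$, then $w\, r$ is admissible}; applied with $c = u_k$ and $r = u_{k+1} \cdots u_\ell$, it settles case (b). To prove the claim I would invoke the characterization recorded just after Definition~\ref{def:AdmissibiliteMots}: for every factorization $w\, r = p\, a\, q$ with $p$ nonempty one needs $\HtMot(p) - 1 \leq a$. If $p\, a$ is a prefix of $w$ this is the admissibility of $w$; otherwise $a$ is a letter $r_t$ of $r$ and $p = w\, r_1 \cdots r_{t-1}$, and since $\Theta$ rewrites the two leftmost letters, reducing the block $w$ first gives $\HtMot(p) = \HtMot(\HtMot(w)\, r_1 \cdots r_{t-1})$. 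Because $\HtMot(w) \leq c$ and $\Theta$ is nondecreasing in each of its arguments (a short case check on its definition), iteration yields $\HtMot(p) \leq \HtMot(c\, r_1 \cdots r_{t-1})$, and then the admissibility of $c\, r$ gives $\HtMot(c\, r_1 \cdots r_{t-1}) - 1 \leq r_t$ (for $t = 1$ one has directly $\HtMot(w) - 1 \leq c - 1 \leq r_1$). Thus the only real work lies in this substitution step, and inside it in isolating the monotonicity of $\Theta$, hence of $\HtMot$, in the leftmost letter; the bookkeeping that writes $\MotHt_T(x)$ through $\MotHt_T(y)$ and $\MotHt_B(x)$ is routine but requires care.
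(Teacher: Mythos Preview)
Your proposal is correct and follows essentially the same route as the paper: the same two-case split (either $x$ is an ancestor of $y$ with $y$ in its left subtree, or $x$ lies inside some $S_{y_k}$), the same use of Lemma~\ref{lem:MotAdmPrefSuff} for suffixes in the first case, and the same decomposition $\MotHt_T(x) = \MotHt_{S_{y_k}}(x)\cdot u_{k+1}\cdots u_\ell$ together with Lemma~\ref{lem:MotHauteursEquilibre} in the second case. The one difference is that the paper simply asserts the final implication --- from ``$\HtMot(\MotHt_S(x)) \leq \Ht(S)$ and $\Ht(S).v$ admissible'' to ``$\MotHt_S(x).v$ admissible'' --- without argument, whereas you isolate this as a separate substitution claim and prove it via the factorization characterization of admissibility and the monotonicity of $\Theta$ in its first argument; in that respect your write-up is more complete than the paper's.
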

\begin{proof}
    If $x$ is an ancestor of $y$, since $y \ADroite_T x$, $y$ belongs to
    the left subtree of $x$. Hence, $\MotHt_T(x)$ is a suffix of $\MotHt_T(y)$,
    and by Lemma~\ref{lem:MotAdmPrefSuff}, $\MotHt_T(x) \in \EnsAdmissible$.

    Otherwise, let $S$ be the subtree of $T$ such that $x$ is a node of
    $S$ and the parent of the root of $S$ in $T$ is an ancestor of $y$.
    The height word of $y$ is of the form $\MotHt_T(y) = u.\Ht(S).v$ where
    $u, v \in \EnsNat^*$. Since $y \ADroite_T S$, by hypothesis $S$ is balanced
    and thus by Lemma~\ref{lem:MotHauteursEquilibre}, $\MotHt_S(x) \in \EnsAdmissible$.
    Thanks to Lemma~\ref{lem:MotAdmPrefSuff}, $\Ht(S).v \in \EnsAdmissible$,
    and since, by Lemma~\ref{lem:MotHauteursEquilibre}, $\HtMot(\MotHt_S(x)) \leq \Ht(S)$,
    the word $\MotHt_T(x) = \MotHt_S(x).v$ is admissible too.
\end{proof}

\subsection{The main result}

We give and prove in this section the main result of this paper. For that,
we show through the next two Propositions, that the imbalance invariant
defined in Section~\ref{subsec:Invariant} is appropriate to prove that all
successors of a binary tree obtained from a balanced binary tree by an
unbalancing rotation cannot be rebalanced.
\medskip

Before going further, let us give one example of a binary that satisfies
the conservation condition. Let us consider the following binary tree~$T$~:
\begin{equation}\begin{split}
    \scalebox{.3}{\begin{tikzpicture}
        \node[Feuille](0)at(0.0,-3){};
        \node[Noeud](1)at(1.0,-2){};
        \node[Feuille](2)at(2.0,-3){};
        \draw[Arete](1)--(0);
        \draw[Arete](1)--(2);
        \node[Noeud](3)at(3.0,-1){};
        \node[Feuille](4)at(4.0,-4){};
        \node[Noeud](5)at(5.0,-3){};
        \node[below of=5]{\scalebox{3}{$y$}};
        \node[Feuille](6)at(6.0,-5){};
        \node[Noeud,Marque1](7)at(7.0,-4){};
        \node[Feuille](8)at(8.0,-5){};
        \node[draw=Marron!100,line width=3pt,dashed,fit=(6) (7) (8)] (Sy) {};
        \node[below of=Sy, node distance=2cm]{\scalebox{3}{$S_y$}};
        \draw[Arete](7)--(6);
        \draw[Arete](7)--(8);
        \draw[Arete](5)--(4);
        \draw[Arete](5)--(7);
        \node[Noeud](9)at(9.0,-2){};
        \node[below of=9]{\scalebox{3}{$x$}};
        \node[Feuille](10)at(10.0,-6){};
        \node[Noeud,Marque1](11)at(11.0,-5){};
        \node[Feuille](12)at(12.0,-6){};
        \draw[Arete](11)--(10);
        \draw[Arete](11)--(12);
        \node[Noeud,Marque1](13)at(13.0,-4){};
        \node[Feuille](14)at(14.0,-7){};
        \node[Noeud,Marque1](15)at(15.0,-6){};
        \node[Feuille](16)at(16.0,-7){};
        \draw[Arete](15)--(14);
        \draw[Arete](15)--(16);
        \node[Noeud,Marque1](17)at(17.0,-5){};
        \node[Feuille](18)at(18.0,-6){};
        \draw[Arete](17)--(15);
        \draw[Arete](17)--(18);
        \draw[Arete](13)--(11);
        \draw[Arete](13)--(17);
        \node[Noeud,Marque1](19)at(19.0,-3){};
        \node[Feuille](20)at(20.0,-6){};
        \node[Noeud,Marque1](21)at(21.0,-5){};
        \node[Feuille](22)at(22.0,-6){};
        \draw[Arete](21)--(20);
        \draw[Arete](21)--(22);
        \node[Noeud,Marque1](23)at(23.0,-4){};
        \node[Feuille](24)at(24.0,-5){};
        \draw[Arete](23)--(21);
        \draw[Arete](23)--(24);
        \draw[Arete](19)--(13);
        \draw[Arete](19)--(23);
        \draw[Arete](9)--(5);
        \draw[Arete](9)--(19);
        \draw[Arete](3)--(1);
        \draw[Arete](3)--(9);
        \node[draw=Marron!100,line width=3pt,dashed,fit=(10) (11) (12) (13) (14) (15) (16) (17) (18) (19) (20) (21) (22) (23) (24)] (Sx) {};
        \node[below of=Sx, node distance=3.5cm]{\scalebox{3}{$S_x$}};
        \node[Noeud](25)at(25.0,0){};
        \node[Feuille](26)at(26.0,-4){};
        \node[Noeud,Marque1](27)at(27.0,-3){};
        \node[Feuille](28)at(28.0,-4){};
        \draw[Arete](27)--(26);
        \draw[Arete](27)--(28);
        \node[Noeud,Marque1](29)at(29.0,-2){};
        \node[Feuille](30)at(30.0,-4){};
        \node[Noeud,Marque1](31)at(31.0,-3){};
        \node[Feuille](32)at(32.0,-5){};
        \node[Noeud,Marque1](33)at(33.0,-4){};
        \node[Feuille](34)at(34.0,-5){};
        \draw[Arete](33)--(32);
        \draw[Arete](33)--(34);
        \draw[Arete](31)--(30);
        \draw[Arete](31)--(33);
        \draw[Arete](29)--(27);
        \draw[Arete](29)--(31);
        \node[Noeud,Marque1](35)at(35.0,-1){};
        \node[Feuille](36)at(36.0,-4){};
        \node[Noeud,Marque1](37)at(37.0,-3){};
        \node[Feuille](38)at(38.0,-4){};
        \draw[Arete](37)--(36);
        \draw[Arete](37)--(38);
        \node[Noeud,Marque1](39)at(39.0,-2){};
        \node[Feuille](40)at(40.0,-3){};
        \draw[Arete](39)--(37);
        \draw[Arete](39)--(40);
        \draw[Arete](35)--(29);
        \draw[Arete](35)--(39);
        \draw[Arete](25)--(3);
        \draw[Arete](25)--(35);
        \node[draw=Marron!100, line width=3pt,dashed,fit=(26) (27) (28) (29) (30) (31) (32) (33) (34) (35) (36) (37) (38) (39) (40)] (Sx1) {};
        \node[below of=Sx1, node distance=3.5cm]{\scalebox{3}{$S_{x_1}$}};
    \end{tikzpicture}}
\end{split}\,.\end{equation}
One observes that the imbalance value of the node $x$ is $2$, that the left subtree
of $x$ is balanced, and that the subtrees to the right \Wrt $y$, namely
$S_y$, $S_x$, and $S_{x_1}$ are balanced. Hence, $x$ satisfies~\ref{item:W1},
\ref{item:W2}, and~\ref{item:W3} and is a witness
of imbalance of~$T$. Moreover, one has $\MotHt_T(y) = 144$. Since $144$
is an admissible word, $T$ satisfies the conservation condition~\ref{item:C}
and hence, has an imbalance invariant.

\begin{Proposition} \label{prop:ClotureCasInit}
    Let $T_0$ be a balanced binary tree and $T_1$ be an unbalanced binary
    tree such that $T_0 \CouvTam T_1$. Then, $T_1$ has an imbalance invariant.
\end{Proposition}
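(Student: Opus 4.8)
The plan is to extract the witness of imbalance of $T_1$ from Lemma~\ref{lem:RotationDes} and then to control, under the rotation, the two ingredients of an imbalance invariant: the balance of the subtrees lying to the right \Wrt the leftmost descendant of that witness, and the height word of this descendant. Let $(A \ABCons B) \ABCons C$ be the subtree of $T_0$ on which the rotation acts, with root $w$ and with $x$ the root of $A \ABCons B$; in $T_1$ the modified region is $A \ABCons (B \ABCons C)$, with $x$ on top and $w$ its right child. By Lemma~\ref{lem:RotationDes} there is a node $z$ of $T_1$ with $\Des_{T_1}(z) \geq 2$ and whose two subtrees are balanced, so $z$ satisfies~\ref{item:W1} and~\ref{item:W2}; moreover, as the proof of that lemma shows, $z = x$ when the rotation is simply unbalancing and $z = w$ when it is fully unbalancing. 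Let $y$ be the leftmost node of the subtree of root $z$ in $T_1$ (a node of $T_1$, hence also of $T_0$).

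For~\ref{item:W3}, I would take a subtree $U$ of $T_1$ that is to the right \Wrt $y$, so $y \notin U$. Since $y$ is the leftmost node of the subtree of root $z$, the subtree $U$ cannot strictly contain that subtree, hence $U$ is contained in one of the two subtrees of $z$ (balanced by Lemma~\ref{lem:RotationDes}), or disjoint from the subtree of root $z$. In the latter case $U$ is actually disjoint from the whole modified region $A \ABCons B \ABCons C$: this is clear when $z = x$, since then the subtree of root $z$ \emph{is} that region, and when $z = w$ the only alternative would be $U \subseteq A$, impossible because every node of $A$ precedes $y$ in the infix order whereas $U$ lies to the right \Wrt $y$. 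So $U$ is unchanged from $T_0$ and therefore balanced, and $z$ is a witness of imbalance of $T_1$.

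It remains to verify the conservation condition~\ref{item:C}, namely that $\MotHt_{T_1}(y)$ is admissible; this is the crux, and where I expect the real work to lie. Since a rotation preserves the infix order and only rearranges $A \ABCons B \ABCons C$, the ancestors of $y$ that lie to its right and are outside the modified region are exactly the ancestors of $w$ in $T_0$ having the region in their left subtree; their right subtrees are untouched and contribute a common suffix $\beta$ of the height word in both trees. When $z = w$, the part of the height word coming from inside the region is also unchanged — the right subtrees along the left spine of $B$ above $y$, followed by $\Ht(C)$, the right subtree of $w$ being $C$ in $T_0$ and in $T_1$ — so $\MotHt_{T_1}(y) = \MotHt_{T_0}(y)$, which is admissible by Lemma~\ref{lem:MotHauteursEquilibre} since $T_0$ is balanced. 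When $z = x$ and $A \neq \ArbreVide$, let $\alpha$ be the word of right subtree heights along the left spine of $A$ above $y$; then $\MotHt_{T_0}(y) = \alpha . \Ht(B) . \Ht(C) . \beta$, while in $T_1$ the node $w$ is no longer an ancestor of $y$ and the node $x$, now with right subtree $B \ABCons C$, contributes the single letter $\Ht(B \ABCons C) = \Theta(\Ht(B).\Ht(C))$ — the subtree $B \ABCons C$ being balanced because a simply unbalancing rotation keeps $w$ balanced — so $\MotHt_{T_1}(y) = \alpha . \Theta(\Ht(B).\Ht(C)) . \beta = \alpha . \Theta(\Ht(B).\Ht(C).\beta)$. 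As $\MotHt_{T_0}(y)$ is admissible by Lemma~\ref{lem:MotHauteursEquilibre} and $\Ht(B).\Ht(C).\beta$ has length at least $2$, Lemma~\ref{lem:MotAdmSubs} shows that $\MotHt_{T_1}(y)$ is admissible; in the remaining subcase $z = x$, $A = \ArbreVide$, one has $y = x$ and $\MotHt_{T_1}(y) = \beta$, a suffix of $\MotHt_{T_0}(y) = \Ht(C) . \beta$, hence admissible by Lemma~\ref{lem:MotAdmPrefSuff}. In all cases $z$ is a witness of imbalance of $T_1$ fulfilling the conservation condition, so $T_1$ has an imbalance invariant. The main obstacle is precisely this last paragraph: correctly tracking which ancestors of $y$ survive as right-ancestors after the rotation, handling the degenerate empty subtrees, and recognizing $\MotHt_{T_1}(y)$ as one step of $\Theta$-rewriting applied to (a suffix of) the admissible word $\MotHt_{T_0}(y)$, so that Lemma~\ref{lem:MotAdmSubs} applies.
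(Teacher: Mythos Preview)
Your argument follows essentially the same route as the paper's: the same split into simply and fully unbalancing rotations, the same identification of the witness via Lemma~\ref{lem:RotationDes}, and the same appeal to Lemma~\ref{lem:MotHauteursEquilibre} followed by Lemma~\ref{lem:MotAdmSubs} to handle the height word.

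There is one slip. In the subcase $z = x$, $A = \ArbreVide$ you claim $\MotHt_{T_1}(y) = \beta$ and $\MotHt_{T_0}(y) = \Ht(C).\beta$, but by definition the height word of a node begins with the height of that node's \emph{own} right subtree (see Figure~\ref{fig:ExempleHeightWords}). Since here $y = x$, the correct expressions are $\MotHt_{T_0}(y) = \Ht(B).\Ht(C).\beta$ and $\MotHt_{T_1}(y) = \Ht(B \ABCons C).\beta = \Theta(\Ht(B).\Ht(C)).\beta$; so $\MotHt_{T_1}(y)$ is not a suffix of $\MotHt_{T_0}(y)$ and Lemma~\ref{lem:MotAdmPrefSuff} does not apply. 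The fix is that this is precisely the $\alpha = \epsilon$ instance of your general formula: with $\alpha = \MotHt_A(y)$ (equal to $\epsilon$ when $A = \ArbreVide$, by the paper's convention), Lemma~\ref{lem:MotAdmSubs} applies uniformly and no separate treatment of $A = \ArbreVide$ is needed. The paper's proof is written exactly this way.
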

\begin{proof}
    Let $S_0 := (A \ABCons B) \ABCons C$ be the subtree of $T_0$ modified
    by the rotation transforming $T_0$ into $T_1$ and $S_1 := A \ABCons (B \ABCons C)$
    be the resulting subtree in $T_1$. Denote by $r$ the root of this rotation
    and by $q$ the left child of $r$ in $S_0$ (see Figure~\ref{fig:PreuveThmCasInitial}).
    We shall exhibit, in the rest of this proof, a witness of imbalance~$x$ of~$T_1$
    that satisfies the conservation condition.
    \begin{figure}[ht]
        \centering
        \scalebox{.4}{
        \begin{tikzpicture}
            \node[Noeud, EtiqClair] (racine) at (0, 0) {};
            \node (g) at (-3, -1) {};
            \node (d) at (3, -1) {};
            \node[Noeud, EtiqFonce] (r) at (0, -2) {\Huge $r$};
            \node[Noeud, EtiqClair] (q) at (-2, -4) {\Huge $q$};
            \node[SArbre] (A) at (-4, -6) {$A$};
            \node[SArbre] (B) at (0, -6) {$B$};
            \node[SArbre] (C) at (2, -4) {$C$};
            \draw[Arete] (racine) -- (g);
            \draw[Arete] (racine) -- (d);
            \draw[Arete, decorate, decoration = zigzag] (racine) -- (r);
            \draw[Arete] (r) -- (q);
            \draw[Arete] (r) -- (C);
            \draw[Arete] (q) -- (A);
            \draw[Arete] (q) -- (B);
            \node at (-4, -3) {\scalebox{2.2}{$T_0 = $}};
            \draw[line width=3pt, ->] (4, -3) -- (6, -3);
            \node[Noeud, EtiqClair] (racine') at (10, 0) {};
            \node (g') at (7, -1) {};
            \node (d') at (13, -1) {};
            \node[Noeud, EtiqFonce] (r') at (12, -4) {\Huge $r$};
            \node[Noeud, EtiqClair] (q') at (10, -2) {\Huge $q$};
            \node[SArbre] (A') at (8, -4) {$A$};
            \node[SArbre] (B') at (10, -6) {$B$};
            \node[SArbre] (C') at (14, -6) {$C$};
            \draw[Arete] (racine') -- (g');
            \draw[Arete] (racine') -- (d');
            \draw[Arete, decorate, decoration = zigzag] (racine') -- (q');
            \draw[Arete] (q') -- (r');
            \draw[Arete] (q') -- (A');
            \draw[Arete] (r') -- (B');
            \draw[Arete] (r') -- (C');
            \node at (14, -3) {\scalebox{2.2}{$ = T_1$}};
        \end{tikzpicture}}
        \caption{The initial case, an unbalancing rotation at root $r$ is
        performed into the balanced binary tree $T_0$.}
        \label{fig:PreuveThmCasInitial}
    \end{figure}
    By Lemma~\ref{lem:RotationDes}, $q$ or $r$ is unbalanced in $T_1$ and
    has a positive imbalance value. Therefore, we have to consider two
    cases, depending on the sort of unbalancing rotation which transforms
    $T_0$ into $T_1$.
    \begin{enumerate}[label = \underline{\bf Case \arabic*:}, fullwidth]
        \item If it is a simply unbalancing rotation, set $x := q$ and $y$
        as the leftmost node of the subtree of root $q$ in $T_1$. Since
        $\Des_{T_1}(x) \geq 2$,~\ref{item:W1} checks out. Moreover,
        since $T_0$ is balanced, by Lemma~\ref{lem:RotationNonModifDroite},
        the subtrees to the right \Wrt $r$ are balanced in $T_1$, and since
        $A$ and $B \ABCons C$ are balanced,~\ref{item:W2} and~\ref{item:W3}
        are established. Finally, since $T_0$ is balanced, Lemma~\ref{lem:MotHauteursEquilibre}
        shows that $\MotHt_{T_0}(y)$ is admissible. We have
        \begin{equation}
            \MotHt_{T_0}(y) = \MotHt_A(y) . \Ht(B) . \Ht(C) . v,
        \end{equation}
        where $v \in \EnsNat^*$. Besides, we have
        \begin{equation}
            \MotHt_{T_1}(y) = \MotHt_A(y) . \Ht(B \ABCons C) . v = \MotHt_A(y) . \Theta(\Ht(B) . \Ht(C)) . v,
        \end{equation}
        since $B \ABCons C$ is balanced. Hence, we have
        $\MotHt_{T_1}(y) = \MotHt_A(y) . \Theta(\Ht(B) . \Ht(C) . v)$, and
        since $\MotHt_A(y) . \Ht(B) . \Ht(C) . v$ is admissible,
        by Lemma~\ref{lem:MotAdmSubs}, $\MotHt_{T_1}(y)$ also is. That shows
        that~\ref{item:C} is satisfied.
        \smallskip

        \item Assume that the rotation is fully unbalancing. Set $x := r$
        and $y$ as the leftmost node of the subtree of root $r$ in $T_1$.
        Since $\Des_{T_1}(x) \geq 2$,~\ref{item:W1} checks out. Moreover,
        since $T_0$ is balanced, by Lemma~\ref{lem:RotationNonModifDroite},
        the subtrees to the right w.r.t $r$ are balanced in $T_1$, and since
        $B$ is balanced,~\ref{item:W2} and~\ref{item:W3} are established.
        Finally, since $T_0$ is balanced, Lemma~\ref{lem:MotHauteursEquilibre}
        shows that $\MotHt_{T_0}(y)$ is admissible. We have
        \begin{equation}
            \MotHt_{T_0}(y) = \MotHt_B(y) . \Ht(C) . v,
        \end{equation}
        where $v \in \EnsNat^*$. Besides,
        \begin{equation}
            \MotHt_{T_1}(y) = \MotHt_B(y) . \Ht(C) . v,
        \end{equation}
        and hence $\MotHt_{T_1}(y) = \MotHt_{T_0}(y)$, so that~\ref{item:C}
        checks out.
    \end{enumerate}
    \smallskip

    Thereby, we have shown that there exists a node $x$ in $T_1$ that is
    a witness of imbalance and satisfies the conservation condition in all
    case.
\end{proof}

\begin{Proposition} \label{prop:ClotureConservation}
    Let $T_1$ and $T_2$ be two binary trees such that~$T_1 \CouvTam T_2$
    and~$T_1$ has an imbalance invariant. Then,~$T_2$ has an imbalance
    invariant.
\end{Proposition}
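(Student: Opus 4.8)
The plan is to push the rotation $T_1 \CouvTam T_2$ through the witness structure of $T_1$ and to exhibit in every case a witness of imbalance of $T_2$ still satisfying the conservation condition. Fix a witness $x$ of $T_1$, let $y$ be the leftmost node of the subtree of root $x$, let $y = y_1, y_2, \dots, y_\ell$ be the ancestors of $y$ in $T_1$ that lie to the right \Wrt $y$ (so $x = y_m$ for some $m$), and write $S_{y_i}$ for the right subtree of $y_i$. Then by~\ref{item:W2} and~\ref{item:W3} every $S_{y_i}$ is balanced, the leftmost path of the left subtree $L_x$ of $x$ is $y_{m-1} \to \dots \to y_1 = y$ (when $L_x \neq \ArbreVide$; the case $x = y$ being similar and easier), the word $\MotHt_{T_1}(y)$ has letters $\Ht(S_{y_1}), \dots, \Ht(S_{y_\ell})$, and by~\ref{item:C} it is admissible. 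Write the rotation as the replacement of a subtree $(A \ABCons B) \ABCons C$ of some root $r$ by $A \ABCons (B \ABCons C)$, with $q$ the left child of $r$ in $T_1$.

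Then I would split into cases on the position of $r$. First, if the rotated region is disjoint from the subtree of $x$ and from every $S_{y_i}$ — which happens exactly when $r$ lies to the right \Wrt $y$ only through being in one of the subtrees whose internal structure is untouched, when $r$ lies to the left \Wrt $y$ without being a descendant of $x$, when $r$ is an ancestor of $x$ with $y$ in its right subtree, or when $r = y_j$ with left child not equal to $y_{j-1}$ — then, since a rotation relocates subtrees without altering them and preserves the infix order, $x$, $y$, the chain $(y_i)$ and the $S_{y_i}$ survive unchanged in $T_2$, so $x$ stays a witness and $\MotHt_{T_2}(y) = \MotHt_{T_1}(y)$ is still admissible. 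Secondly, if $r$ lies inside a balanced region — inside $L_x$, inside some $S_{y_i}$, or $r = y_j$ with left child $y_{j-1}$ (automatic for $2 \le j \le m$, the rotation then merging the consecutive right subtrees $S_{y_{j-1}}$ and $S_{y_j}$ into $S_{y_{j-1}} \ABCons S_{y_j}$) — I split once more: if the rotation is conservative, case~\ref{item:CasRot1} or~\ref{item:CasRot2}, Proposition~\ref{prop:HauteursEgalesEq} keeps that region balanced with the same height, so $\Des_{T_2}(x)$ is unchanged, $x$ remains a witness with leftmost node $y$, and $\MotHt_{T_2}(y)$ is obtained from $\MotHt_{T_1}(y)$ by at most one $\Theta$-step on two consecutive letters, hence is admissible by Lemma~\ref{lem:MotAdmSubs}; whereas if the rotation is unbalancing, or the merge $S_{y_{j-1}} \ABCons S_{y_j}$ is unbalanced, a previously balanced subtree $S$ becomes an unbalanced subtree $S'$ and I pass to the last case.

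In that last case I would apply Proposition~\ref{prop:ClotureCasInit} to the rotation $S \CouvTam S'$ inside $S$ to get an imbalance invariant of $S'$: a witness $z$, its leftmost node $y'$, conditions~\ref{item:W1},~\ref{item:W2},~\ref{item:W3} inside $S'$, and $\MotHt_{S'}(y')$ admissible, and I take $z$ as witness of $T_2$. Condition~\ref{item:W3} for $z$ in $T_2$ follows from~\ref{item:W3} inside $S'$ together with the fact that the subtrees of $T_2$ lying to the right \Wrt $y'$ and outside $S'$ are unchanged from $T_1$, where they were balanced, and when $y'$ lies strictly to the right \Wrt $y$ one can also invoke Lemma~\ref{lem:MotHauteursSousArbresDroits} to transport admissibility of height words rightwards. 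The conservation condition is where the admissible-word lemmas do the real work: $\MotHt_{T_2}(y')$ decomposes as $\MotHt_{S'}(y')$ followed by a tail $\Ht(S_{y_k}) \cdots \Ht(S_{y_\ell})$ which is a suffix of $\MotHt_{T_1}(y)$, admissible by Lemma~\ref{lem:MotAdmPrefSuff}; $\MotHt_{S'}(y')$ is admissible; and the two glue into an admissible word provided the junction inequality $\HtMot(\MotHt_{S'}(y')) - 1 \le \Ht(S_{y_k})$ holds, which I would derive from the height bound for balanced trees of Lemma~\ref{lem:MotHauteursEquilibre} (applied to the balanced left subtree of $z$, noting that $\Des_{S'}(z) \ge 2$ forces the $\Theta$-evaluation of $\MotHt_{S'}(y')$ to stay below the true height at $z$), from the inequality $\Ht(S_{y_{k-1}}) - 1 \le \Ht(S_{y_k})$ of Lemma~\ref{lem:MotAdmCroissant}, and, for the remaining constraints down the tail, from the monotonicity of $\Theta$ in its first argument.

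I expect the main obstacle to be twofold. First, organising the case analysis on the position of $r$ so that it is genuinely exhaustive and disjoint: many positions look dangerous but are in fact harmless precisely because a right rotation relocates whole subtrees without changing their shape, and deciding which is which requires care. Secondly, and more seriously, pushing through the height estimate for the glue in the last case: a single unbalancing rotation can raise a subtree's height by one, so the junction inequality $\HtMot(\MotHt_{S'}(y')) - 1 \le \Ht(S_{y_k})$ is tight, and closing it depends on choosing the new witness $z$ deep enough inside $S'$ that the balanced-tree height bound of Lemma~\ref{lem:MotHauteursEquilibre} controls $\HtMot(\MotHt_{S'}(y'))$, and on the fact — exactly the dichotomy encoded by $\Theta$ and used in Lemma~\ref{lem:MotAdmSubs} — that a balanced merge of two subtrees raises the relevant height by one whereas an unbalanced merge does not.
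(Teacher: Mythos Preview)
Your overall strategy matches the paper's: case-split on the position of $r$, keep $x$ as witness when the rotation leaves the witness structure intact or merely applies a single $\Theta$-step to the height word, and when a balanced subtree $S$ becomes unbalanced invoke Proposition~\ref{prop:ClotureCasInit} on $S\CouvTam S'$ and glue the resulting local height word onto the surviving tail. The paper organises the split differently (three cases: $r$ to the left of $y$; $r$ an ancestor of $y$ on the $y_i$-chain; the subtree of $r$ entirely to the right of $y$), but the substance and the key lemmas are the same, and your identification of the junction inequality $\HtMot(\MotHt_{S'}(y'))-1\le\Ht(S_{y_k})$ as the delicate point in the gluing is exactly right.

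There is, however, one genuine gap in your plan. When $r=y_j$ with left child $y_{j-1}$ and $j\ge m$ --- that is, when $r=x$ itself or $r$ is a proper ancestor of $x$ along the chain --- the subtree rooted at $r$ in $T_1$ already contains the unbalanced node $x$, so it is \emph{not} balanced, the classification (R1)--(R9) does not apply to the rotation at $r$, and there is no balanced $S$ with $S\CouvTam S'$ to which Proposition~\ref{prop:ClotureCasInit} could be fed. Your ``last case'' therefore does not cover this situation. The fix (the paper's Case~2.1) is to take $r$ itself as the new witness directly: admissibility of $\MotHt_{T_1}(y)$ and Lemma~\ref{lem:MotAdmCroissant} give $\Ht(S_{y_{j-1}})-1\le\Ht(S_{y_j})$, so if the merge $S_{y_{j-1}}\ABCons S_{y_j}$ is unbalanced one necessarily has $\Des_{T_2}(r)\ge 2$; the left subtree of $r$ in $T_2$ is $S_{y_{j-1}}$, balanced; and the conservation condition at the new leftmost node $y'$ (inside $S_{y_{j-1}}$) comes straight from Lemma~\ref{lem:MotHauteursSousArbresDroits} together with $\MotHt_{T_2}(y')=\MotHt_{T_1}(y')$. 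No appeal to Proposition~\ref{prop:ClotureCasInit} and no gluing estimate are needed in this sub-case.
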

\begin{proof}
    Let $x$ be a witness of imbalance of $T_1$ that satisfies the
    conservation condition, $y$ be the leftmost node of the subtree of root
    $x$ in $T_1$, $r$ be the root of the rotation that transforms $T_1$
    into $T_2$, and $q$ be the left child of $r$ in $T_1$. For all relative
    position of $r$ \Wrt $y$ in $T_1$, we shall exhibit a witness of imbalance
    $x'$ of $T_2$ that satisfies the conservation condition. If necessary,
    we shall also exhibit the node $y'$ of $T_2$ that is the leftmost node
    of the subtree of root $x'$.
    \smallskip

    There are exactly three cases to consider. Note first that since one
    can perform a rotation of root $r$, $r$ has a left son, and since $y$
    has no left son, $r \ne y$. The first case occurs when $r$ is to the
    left \Wrt $y$ (Case~1). Otherwise, when $r$ is to the right \Wrt $y$,
    the second case occurs when $r$ is a strict ancestor of $y$ (Case~2).
    In this case, $y$ is in the left subtree of $r$. Otherwise, when $r$
    is to the right \Wrt $y$ and $r$ is not a strict ancestor of $y$, the
    third case occurs (Case~3). In this last case, the subtree of root $r$
    is to the right \Wrt $y$.

    \begin{enumerate}[label = \underline{\bf Case \arabic*:}, fullwidth]
        \item If $r$ is to the left \Wrt $y$, the rotation of root $r$
        does not modify any of the subtrees to the right \Wrt $y$. Thus,
        $x' := x$ is a witness of imbalance of $T_2$ and satisfies the
        conservation condition.
        \smallskip

        \item If $r$ and $q$ are both ancestors of $y$ in $T_1$,
        set $C$ as the right subtree of $r$ and $B$ as the right subtree
        of $q$ in $T_1$. In this case, $T_2$ is obtained from $T_1$ by replacing
        the subtrees $B$ and $C$ by $B \ABCons C$ as shown in Figure~\ref{fig:PreuveThmCas2}.
        \begin{figure}[ht]
            \centering
            \scalebox{.40}{
            \begin{tikzpicture}
                \node[Noeud, EtiqClair](0) at (0, 0){};
                \node (8) at (-3, -1) {};
                \node (9) at (3, -1) {};
                \draw[Arete] (0) -- (8);
                \draw[Arete] (0) -- (9);
                \node[Noeud, EtiqFonce] (r) at (0, -2) {\Huge $r$};
                \draw[Arete, decorate, decoration = zigzag] (0) -- (r);
                \node[Noeud, EtiqClair] (fr) at (-4, -4) {\Huge $q$};
                \draw[Arete] (r) -- (fr);
                \node[Noeud, EtiqClair] (1) at (-4, -8){\Huge $y$};
                \draw[Arete, decorate, decoration = zigzag] (fr) -- (1);
                \node[SArbre] (B) at (2, -4){$C$};
                \node[SArbre] (A) at (-2, -6){$B$};
                \draw[Arete] (r) -- (B);
                \draw[Arete] (fr) -- (A);
                \node[SArbre] (Sx) at (-2, -10){};
                \draw[Arete] (1) -- (Sx);
                \node[Feuille] (feuille) at (-5, -9) {};
                \draw[Arete] (feuille) -- (1);
                \node at (-6, -3) {\scalebox{2.2}{$T_1 = $}};
                \draw[line width=3pt, ->] (4, -4) -- (7, -4);
                \node[Noeud, EtiqClair](0') at (10, 0){};
                \node (8') at (7, -1) {};
                \node (9') at (13, -1) {};
                \node[Noeud, EtiqFonce] (r') at (14, -4) {\Huge $r$};
                \node[Noeud, EtiqClair] (1') at (10, -8){\Huge $y$};
                \draw[Arete, decorate, decoration = zigzag] (0') -- (1');
                \node[Noeud, EtiqClair] (fr') at (10, -2) {\Huge $q$};
                \node[SArbre] (B') at (16, -6){$C$};
                \node[SArbre] (A') at (12, -6){$B$};
                \node[SArbre] (Sx') at (12, -10){};
                \draw[Arete] (0') -- (8');
                \draw[Arete] (0') -- (9');
                \draw[Arete] (r') -- (B');
                \draw[Arete] (r') -- (A');
                \draw[Arete] (r') -- (fr');
                \draw[Arete] (1') -- (Sx');
                \node[Feuille] (feuille') at (9, -9) {};
                \draw[Arete] (feuille') -- (1');
                \node at (16, -3) {\scalebox{2.2}{$ = T_2$}};
            \end{tikzpicture}}
            \caption{The second case, $r$ is an ancestor of $y$ and $y \ADroite_{T_1} r$.}
            \label{fig:PreuveThmCas2}
        \end{figure}
        We have now three possibilities whether $B \ABCons C$ is balanced
        and $r$ is an ancestor of $x$ in $T_1$.
        \smallskip

        \begin{enumerate}[label = \underline{\bf Case \arabic{enumi}.\arabic*:}, fullwidth]
            \item If $B \ABCons C$ is unbalanced, set $x' := r$ and $y'$
            as the leftmost node of $B \ABCons C$. One has
            \begin{equation}
                \MotHt_{T_1}(y) = u . \Ht(B) . \Ht(C) . v,
            \end{equation}
            where $u, v \in \EnsNat^*$.
            Since $x$ satisfies the conservation condition in $T_1$,
            $\MotHt_{T_1}(y) \in \EnsAdmissible$. Thus, by Lemma~\ref{lem:MotAdmCroissant},
            we have $\Ht(B) - 1 \leq \Ht(C)$ so that $\Des_{T_2}(x') \geq 2$
            and~\ref{item:W1} is satisfied. Moreover, since $B$ is balanced,
            and by Lemma~\ref{lem:RotationNonModifDroite}, all subtrees
            to the right \Wrt $x'$ are also balanced in $T_2$,~\ref{item:W2}
            and~\ref{item:W3} are established. Finally, by Lemma~\ref{lem:MotHauteursSousArbresDroits},
            $\MotHt_{T_1}(y') \in \EnsAdmissible$, and since $\MotHt_{T_2}(y') = \MotHt_{T_1}(y')$,~\ref{item:C}
            checks out.
            \smallskip

            \item If $B \ABCons C$ is balanced and $r$ is an ancestor of
            $x$ in $T_1$, set $x' := x$ and $y' := y$. One clearly has $\Des_{T_2}(x') \geq 2$,
            so that~\ref{item:W1} is satisfied. Moreover, since the left subtree
            of $x'$ in $T_2$ is not modified by the rotation and hence stays
            balanced, since $B \ABCons C$ is balanced, and since by Lemma~\ref{lem:RotationNonModifDroite},
            all subtrees to the right \Wrt $r$ are balanced in $T_2$,~\ref{item:W2}
            and~\ref{item:W3} check out. Finally, since $x$ satisfies
            the conservation condition in $T_1$, $\MotHt_{T_1}(y) \in \EnsAdmissible$
            and we have
            \begin{equation}
                \MotHt_{T_1}(y) = u . \Ht(B) . \Ht(C) . v,
            \end{equation}
            where $u, v \in \EnsNat^*$. Besides,
            \begin{equation}
                \MotHt_{T_2}(y') = u . \Ht(B \ABCons C) . v = u . \Theta(\Ht(B) . \Ht(C)) . v,
            \end{equation}
            since $B \ABCons C$ is balanced. Thus, by Lemma~\ref{lem:MotAdmSubs},
            $\MotHt_{T_2}(y') \in \EnsAdmissible$, so that~\ref{item:C}
            is satisfied.
            \smallskip

            \item If $B \ABCons C$ is balanced and $r$ is a descendant of
            $x$ in $T_1$, we have two possibilities whether $q$ is balanced
            in $T_2$. If it is, set $x' := x$. By Proposition~\ref{prop:HauteursEgalesEq},
            the left subtree of $x'$ stays balanced in $T_2$ and $\Des_{T_2}(x') \geq 2$.
            Thus,~\ref{item:W1} and~\ref{item:W2} are satisfied. Moreover,
            by Lemma~\ref{lem:RotationNonModifDroite}, all subtrees to the
            right \Wrt $x'$ stay balanced in $T_2$ so that~\ref{item:W3}
            checks out. Otherwise, if $q$ is not balanced, set $x' := q$.
            Since the left subtree of $x$ is balanced in $T_1$, by
            Lemma~\ref{lem:RotationDes}, $\Des_{T_2}(x') \geq 2$, and~\ref{item:W1} holds.
            Moreover, $q$ belongs to the left subtree of $x$ in $T_1$ which
            is balanced, and hence, the left subtree of $q$ is balanced in $T_2$,
            so that~\ref{item:W2} holds. Since $B \ABCons C$ is balanced
            and by Lemma~\ref{lem:RotationNonModifDroite},~\ref{item:W3}
            also holds. Set now for both cases $y'$ as the leftmost node
            of the subtree of root $x'$ in $T_2$. The word $\MotHt_{T_2}(y')$
            satisfies exactly same conditions as in the previous case, so that~\ref{item:C}
            is satisfied.
        \end{enumerate}
        \smallskip

        \item If the subtree $S_1 := (A \ABCons B) \ABCons C$ of root $r$
        in $T_1$ is to the right \Wrt $y$, set $S_2 := A \ABCons (B \ABCons C)$
        as the subtree of $T_2$ obtained by the rotation at root $r$
        which transforms $T_1$ into $T_2$ (see Figure~\ref{fig:PreuveThmCas3}).
        \begin{figure}[ht]
            \centering
            \scalebox{.40}{
            \begin{tikzpicture}
                \node[Noeud, EtiqClair](racine) at (0, 0){};
                \node (g) at (-3, -1) {};
                \node (d) at (3, -1) {};
                \draw[Arete] (racine) -- (g);
                \draw[Arete] (racine) -- (d);
                \node[Noeud, EtiqClair] (y) at (0, -9) {\Huge $y$};
                \draw[Arete, decorate, decoration = zigzag] (racine) -- (y);
                \node[Noeud, EtiqClair] (t1) at (0, -2) {};
                \node[SArbre] (Dy) at (2, -11){};
                \draw[Arete] (y) -- (Dy);
                \node[Feuille] (feuille) at (-1, -10) {};
                \draw[Arete] (y) -- (feuille);
                \node[Noeud, EtiqClair] (t2) at (5, -3) {};
                \draw[Arete] (t1) -- (t2);
                \node[Noeud, EtiqFonce] (r) at (5, -5) {\Huge $r$};
                \node[Noeud, EtiqClair] (p) at (3, -6) {\Huge $q$};
                \draw[Arete, decorate, decoration = zigzag] (t2) -- (r);
                \node[SArbre] (A) at (2, -8){$A$};
                \node[SArbre] (B) at (4, -8){$B$};
                \node[SArbre] (C) at (6, -7){$C$};
                \draw[Arete] (p) -- (A);
                \draw[Arete] (p) -- (B);
                \draw[Arete] (r) -- (C);
                \draw[Arete] (p) -- (r);
                \node at (-3, -4) {\scalebox{2.2}{$T_1 = $}};
                \draw[line width=3pt, ->] (8, -4) -- (11, -4);
                \node[Noeud, EtiqClair](racine') at (14, 0){};
                \node (g') at (11, -1) {};
                \node (d') at (17, -1) {};
                \draw[Arete] (racine') -- (g');
                \draw[Arete] (racine') -- (d');
                \node[Noeud, EtiqClair] (y') at (14, -9) {\Huge $y$};
                \draw[Arete, decorate, decoration = zigzag] (racine') -- (y');
                \node[Noeud, EtiqClair] (t1') at (14, -2) {};
                \node[SArbre] (Dy') at (16, -11){};
                \draw[Arete] (y') -- (Dy');
                \node[Feuille] (feuille') at (13, -10) {};
                \draw[Arete] (y') -- (feuille');
                \node[Noeud, EtiqClair] (t2') at (19, -3) {};
                \draw[Arete] (t1') -- (t2');
                \node[Noeud, EtiqFonce] (r') at (21, -6) {\Huge $r$};
                \node[Noeud, EtiqClair] (p') at (19, -5) {\Huge $q$};
                \draw[Arete, decorate, decoration = zigzag] (t2') -- (p');
                \node[SArbre] (A') at (18, -7){$A$};
                \node[SArbre] (B') at (20, -8){$B$};
                \node[SArbre] (C') at (22, -8){$C$};
                \draw[Arete] (p') -- (A');
                \draw[Arete] (r') -- (B');
                \draw[Arete] (r') -- (C');
                \draw[Arete] (p') -- (r');
                \node at (22, -4) {\scalebox{2.2}{$ = T_2$}};
            \end{tikzpicture}}
            \caption{The third case, $r$ is a node of a subtree $S_1$ of $T_1$ satisfying $y \ADroite_{T_1} S_1$.}
            \label{fig:PreuveThmCas3}
        \end{figure}
        We have now two cases to consider whether $S_2$ is balanced or not.
        \smallskip

        \begin{enumerate}[label = \underline{\bf Case \arabic{enumi}.\arabic*:}, fullwidth]
            \item If $S_2$ is balanced, by Proposition~\ref{prop:HauteursEgalesEq},
            $\Ht(S_2) = \Ht(S_1)$, and by setting $x' := x$ and $y' := y$
            one has $\Des_{T_2}(x') = \Des_{T_1}(x)$ so that~\ref{item:W1}
            is satisfied. Moreover, the left subtree of $x'$ stays balanced,
            and by Lemma~\ref{lem:RotationNonModifDroite}, the subtrees to the
            right \Wrt $x'$ in $T_2$ also, so that~\ref{item:W2} and~\ref{item:W3}
            check out. Finally, $x'$ also satisfies~\ref{item:C} in $T_2$
            since $\MotHt_{T_2}(y') = \MotHt_{T_1}(y)$.
            \smallskip

            \item If $S_2$ is not balanced, by Proposition~\ref{prop:ClotureCasInit},
            there exists a node $x'$ in $S_2$ which is a witness of imbalance
            satisfying the conservation condition, locally in $S_2$.
            Therefore, $x'$ satisfies~\ref{item:W1} and~\ref{item:W2}
            in $T_2$. It also satisfies~\ref{item:W3} in $T_2$ since, by
            Lemma~\ref{lem:RotationNonModifDroite}, the subtrees of $T_2$
            to the right \Wrt $r$ stay balanced. It remains to prove
            that $x'$ satisfies the conservation condition in the whole
            binary tree $T_2$. Set $y'$ as the leftmost node of the subtree
            of root $x'$ in $T_2$. By Proposition~\ref{prop:ClotureCasInit},
            $w := \MotHt_{S_2}(y') \in \EnsAdmissible$, and by Lemma~\ref{lem:MotHauteursEquilibre},
            $w$ satisfies $\HtMot(w) \leq \Ht(S_1)$. By hypothesis,
            $\MotHt_{T_1}(y) \in \EnsAdmissible$ and one has
            \begin{equation}
                \MotHt_{T_1}(y) = u . \Ht(S_1) . v,
            \end{equation}
            where $u, v \in \EnsNat^*$. Besides, since
            \begin{equation}
                \MotHt_{T_2}(y') = w . v,
            \end{equation}
            one has $\MotHt_{T_2}(y') \in \EnsAdmissible$, establishing~\ref{item:C}.
        \end{enumerate}
    \end{enumerate}
    \smallskip

    Thereby, we have shown that there exists a node $x'$ in $T_2$ that is
    a witness of imbalance and satisfies the conservation condition in all
    case.
\end{proof}

\begin{Theoreme} \label{thm:ClotureIntArbresEq}
    Let $T$ and $T'$ be two balanced binary trees such that $T \OrdTam T'$.
    Then, the interval $[T, T']$ only contains balanced binary trees. In
    other words, all successors of a binary tree obtained by an unbalancing
    rotation from a balanced binary tree are unbalanced.
\end{Theoreme}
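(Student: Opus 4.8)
\emph{Plan of proof.} The idea is to use the two preceding propositions as, respectively, a base case and an inductive step for an argument by contradiction. Suppose the interval $[T, T']$ contains a binary tree $T''$ that is \emph{not} balanced. Since $T \OrdTam T'' \OrdTam T'$, Definition~\ref{def:OrdreTamari} furnishes a chain of rotations
\begin{equation}
    T = U_0 \CouvTam U_1 \CouvTam \cdots \CouvTam U_p = T'' \CouvTam \cdots \CouvTam U_q = T'.
\end{equation}
First I would locate, among the trees $U_0, \dots, U_p$, the largest index $j$ for which $U_j$ is balanced. This index is well defined because $U_0 = T$ is balanced, and it satisfies $j < p$ because $U_p = T''$ is unbalanced; moreover, by maximality of $j$, the tree $U_{j+1}$ is unbalanced.

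Next I would apply Proposition~\ref{prop:ClotureCasInit} to the cover $U_j \CouvTam U_{j+1}$: as $U_j$ is balanced and $U_{j+1}$ is not, $U_{j+1}$ has an imbalance invariant. From this starting point I would run a straightforward induction on $i$, for $j+1 \leq i \leq q$: at each cover $U_i \CouvTam U_{i+1}$, Proposition~\ref{prop:ClotureConservation} transports the imbalance invariant from $U_i$ to $U_{i+1}$. Hence every $U_i$ with $i \geq j+1$ has an imbalance invariant, and in particular so does $U_q = T'$.

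Finally, having an imbalance invariant means, by definition, having a witness of imbalance, so Remark~\ref{rem:TemoinDesImpliqueDes} forces $T'$ to be unbalanced, contradicting the hypothesis that $T'$ is balanced. Therefore $[T, T']$ contains only balanced binary trees. The equivalent reformulation stated in the theorem follows at once: if $T_1$ is obtained from a balanced $T_0$ by an unbalancing rotation, then $T_1$ is unbalanced (Remark~\ref{rem:TemoinDesImpliqueDes} applied to the witness produced by Proposition~\ref{prop:ClotureCasInit}), and any $T_2$ with $T_1 \OrdTam T_2$ inherits an imbalance invariant through repeated use of Proposition~\ref{prop:ClotureConservation}, hence is unbalanced as well.

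I do not expect a genuine obstacle at this stage, since all the combinatorial substance is carried by Propositions~\ref{prop:ClotureCasInit} and~\ref{prop:ClotureConservation}. The only points demanding a little care are the bookkeeping that lets one splice a rotation chain through the hypothetical unbalanced tree $T''$ and extract the first unbalancing rotation along it, and the observation that ``imbalance invariant'' is exactly the property that Proposition~\ref{prop:ClotureCasInit} produces and that Proposition~\ref{prop:ClotureConservation} preserves along a single cover, so that induction on the length of the chain is legitimate.
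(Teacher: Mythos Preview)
Your proposal is correct and follows the same approach as the paper: a contradiction argument that uses Proposition~\ref{prop:ClotureCasInit} as the base case and Proposition~\ref{prop:ClotureConservation} as the inductive step to propagate an imbalance invariant along the chain to $T'$. Your version is in fact more carefully written than the paper's, since you explicitly isolate the last balanced tree $U_j$ in the chain before applying Proposition~\ref{prop:ClotureCasInit}, a detail the paper leaves implicit.
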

\begin{proof}
    Let $T_0$ and $T_2$ be two balanced binary trees and $T_1$ be an
    unbalanced binary tree. Assume that
    \begin{equation}
        T_0 \CouvTam \cdots \CouvTam T_1 \CouvTam \cdots \CouvTam T_2.
    \end{equation}
    By Proposition~\ref{prop:ClotureCasInit}, $T_1$ satisfies the
    conservation condition. Moreover, by Proposition~\ref{prop:ClotureConservation},
    $T_2$ also satisfies the conservation condition. Hence, $T_2$ has
    a witness of imbalance and by Remark~\ref{rem:TemoinDesImpliqueDes},
    $T_2$ is unbalanced. This is contradictory with our hypothesis.
    \smallskip

    Therefore, the notion of imbalance invariant
    defined in Section~\ref{subsec:Invariant} is appropriate and hence
    the set of balanced binary trees is closed by interval in the Tamari lattice.
\end{proof}

\section{Synchronous grammars} \label{sec:GramSync}

In this section, we introduce synchronous grammars. These grammars allow
to generate planar rooted tree-like structures by allowing these to grow
from the root to the leaves step by step. Such trees grow from a single
node, the root, and by simultaneously substituting its nodes with no children
by new tree-like structures following some fixed substitution rules.
\smallskip

As we shall see, synchronous grammars are convenient tools to enumerate
some specified families of planar rooted tree-like structures. Indeed,
one can extract a fixed-point functional equation for the generating series
enumerating the specified objects from a synchronous grammar subject to
two precise conditions that we shall expose. We also present an algorithm to
compute the coefficients of this generating series.

\subsection{Definitions}

\subsubsection{Bud trees}
\begin{Definition}
    Let $B$ be a nonempty finite alphabet. A $B$-\emph{bud tree}, or simply
    a \emph{bud tree} if $B$ is fixed, is a nonempty incomplete rooted
    planar tree where the leaves, namely the \emph{buds}, are labeled on $B$.
\end{Definition}
Set for the sequel $B := \{\LettreB_1, \dots, \LettreB_k\}$ as a nonempty
finite alphabet. Denote by $\EnsBud_n$ the set of $B$-bud trees with $n$
buds and by $\EnsBud$ the set of all $B$-bud trees. The set of all buds of a
bud tree $D$ is denoted by $\Buds(D)$ and the \emph{frontier} of $D$ is
the sequence $(b_1, \dots, b_n)$ of its buds, read from left to right.
If $b$ is a bud, we shall denote by $\Eval(b)$ the \emph{evaluation} of
$b$, that is the element of $B$ labeling $b$. Moreover, the \emph{evaluation}
$\Eval(D)$ of $D$ is the monomial of $\EnsRel [B]$ defined by
\begin{equation}
    \Eval(D) := \prod_{b \in \Buds(D)} \Eval(b).
\end{equation}
For example,
\begin{equation}
    \Eval \left(
    \begin{split}
    \scalebox{0.45}{
    \begin{tikzpicture}
        \node[Noeud] (r) at (0, .25) {};
        \node[Bourgeon] (1) at (0, -1) {\huge $z$};
        \draw[Arete] (r) -- (1);
    \end{tikzpicture}}
    \end{split}
    \right) = z
    \qquad \mbox{and} \qquad
    \Eval \left(
    \begin{split}
    \scalebox{0.45}{
    \begin{tikzpicture}
        \node[Noeud] (r) at (0, .25) {};
        \node[Noeud] (fr) at (1, -1) {};
        \node[Bourgeon] (1) at (-1, -1) {\huge $x$};
        \node[Bourgeon] (2) at (.35, -2.25) {\huge $y$};
        \node[Bourgeon] (3) at (1.65, -2.25) {\huge $x$};
        \draw[Arete] (r) -- (fr);
        \draw[Arete] (r) -- (1);
        \draw[Arete] (fr) -- (2);
        \draw[Arete] (fr) -- (3);
    \end{tikzpicture}}
    \end{split}
    \right) = x^2 y.
\end{equation}

\subsubsection{Synchronous grammars}
\begin{Definition}
    A \emph{synchronous grammar} $S$ is a triple $(B, a, R)$ where:
    \begin{itemize}
        \item $B$ is a nonempty alphabet, the \emph{set of bud labels};
        \item $a$ is a bud labeled on $B$, the \emph{axiom} of $S$;
        \item $R \subseteq B \times \EnsBud$ is a finite set such that for
        all $\LettreB \in B$, there is at least one bud tree $D$ such
        that $(\LettreB, D) \in R$. This is the set of \emph{substitution rules}
        of $S$.
    \end{itemize}
\end{Definition}

Let $S := (B, a, R)$ be a synchronous grammar. For the sake of readability,
we will make use of the following notation for substitution rules: If $(\LettreB, D)$
is a substitution rule of $S$, we shall denote it by $\LettreB \longmapsto_S D$
or by $\LettreB \longmapsto D$ if $S$ is fixed. Moreover, we will abbreviate
the substitutions rules $\LettreB \longmapsto_S D_1$, \dots, $\LettreB \longmapsto_S D_n$
by
\begin{equation}
    \LettreB \longmapsto_S D_1 + \cdots + D_n.
\end{equation}

\begin{Definition} \label{def:Derivation}
    Let $S := (B, a, R)$ be a synchronous grammar and $D_0$ be a bud tree
    with frontier $(b_1, \dots, b_n)$ where $\Eval(b_i) = \LettreB_i$ for
    all $1 \leq i \leq n$. We say that the bud tree $D_1$ is \emph{derivable}
    from $D_0$ in $S$, and we denote that by $D_0 \xrightarrow{S} D_1$,
    if there exists a sequence of substitution rules
    $(\LettreB_1 \longmapsto T_1, \dots, \LettreB_n \longmapsto T_n)$ of
    $R^n$ such that, by simultaneously substituting the bud $b_i$ of $D_0$
    by the root of $T_i$ for all $1 \leq i \leq n$, one obtains $D_1$.
\end{Definition}

\begin{Definition} \label{def:Generation}
    A bud tree $D$ is \emph{generated} by a synchronous grammar $S := (B, a, R)$
    if there exists a sequence $(D_1, \dots, D_{\ell - 1})$ of bud trees such that
    \begin{equation} \label{eq:GramSyncGeneration}
        a \xrightarrow{S} D_1 \xrightarrow{S} \cdots \xrightarrow{S} D_{\ell - 1} \xrightarrow{S} D.
    \end{equation}
    Moreover, we say that $D$ is generated by a \emph{$\ell$-steps derivation}.
\end{Definition}

We denote by $\Lang_S^{(\ell)}$ the set of the bud trees generated by $\ell$-steps
derivations and by $\Lang_S$ the \emph{language} of $S$, that is the set of
all bud trees generated by $S$. We also say that $S$ is \emph{trim} if
for all $\LettreB \in B$ there exists at least one bud tree $D$ generated
by $S$ that contains a bud labeled by $\LettreB$. In the sequel, we shall
only consider trim synchronous grammars without mentioning it explicitly.
\medskip

We will illustrate most of the next definitions through the synchronous
grammar
\begin{equation}
    S_\textnormal{epl} := \left(\{x, y\}, \Bourgeon{.4}{$x$}, R\right),
\end{equation}
where $R$ contains the substitution rules
\begin{align}
    x & \longmapsto \raisebox{-0.65em}{\BourgeonA{.45}{$x$}{$2$}{$y$}} \quad + \quad
    \raisebox{-0.65em}{%
    \scalebox{.45}{%
    \begin{tikzpicture}
        \node[Bourgeon](1)at(-.15,0){\huge $x$};
        \node[Noeud,EtiqClair](2)at(1,1.25){$3$};
        \node[Bourgeon](3)at(2.15,0){\huge $x$};
        \draw[Arete](1)--(2);
        \draw[Arete](2)--(3);
        \node[Bourgeon](4)at(1,0){\huge $y$};
        \draw[Arete](2)--(4);
    \end{tikzpicture}}}\, , \label{eq:Regle1SEpl} \\
    y & \longmapsto \Bourgeon{.45}{$x$}\,. \label{eq:Regle2SEpl}
\end{align}
Figure~\ref{fig:UneDerivation} shows a derivation in $S_\textnormal{epl}$.
\begin{figure}[ht]
    \scalebox{.45}{%
    \raisebox{1em}{%
    \begin{tikzpicture}
        \node[Bourgeon](1)at(-.15,0){\huge $x$};
        \node[Noeud,EtiqClair](2)at(1,1.25){\huge $3$};
        \node[Bourgeon](3)at(2.15,0){\huge $x$};
        \draw[Arete](1)--(2);
        \draw[Arete](2)--(3);
        \node[Bourgeon](4)at(1,0){\huge $y$};
        \draw[Arete](2)--(4);
    \end{tikzpicture}}}
    \raisebox{1.5em}{$\xrightarrow{S_\textnormal{epl}}$}
    \scalebox{.45}{%
    \begin{tikzpicture}
        \node[Noeud,EtiqClair](1)at(-1,.25){\huge $2$};
        \node[Noeud,EtiqClair](2)at(1,1.5){\huge $3$};
        \node[Bourgeon](3)at(1,0){\huge $x$};
        \draw[Arete](1)--(2);
        \draw[Arete](2)--(3);
        \node[Noeud,EtiqClair](4)at(3,.25){\huge $3$};
        \draw[Arete](2)--(4);
        \node[Bourgeon](5)at(-1.65,-1){\huge $x$};
        \node[Bourgeon](6)at(-.35,-1){\huge $y$};
        \node[Bourgeon](7)at(1.85,-1){\huge $x$};
        \node[Bourgeon](8)at(3,-1){\huge $y$};
        \node[Bourgeon](9)at(4.15,-1){\huge $x$};
        \draw[Arete](1)--(5);
        \draw[Arete](1)--(6);
        \draw[Arete](4)--(7);
        \draw[Arete](4)--(8);
        \draw[Arete](4)--(9);
    \end{tikzpicture}}
    \caption{A $1$-step derivation in $S_\textnormal{epl}$.}
    \label{fig:UneDerivation}
\end{figure}

\subsubsection{Generating graph}
The \emph{$\ell$-generating graph} $\GenGraph_S^{(\ell)} := (V, E)$ of a
synchronous grammar $S$ is the directed graph defined by
\begin{equation}
    V := \bigcup_{0 \leq i \leq \ell} \Lang_S^{(i)},
\end{equation}
and
\begin{equation}
    E := \left\{(D_0, D_1) \in V^2: D_0 \xrightarrow{S} D_1\right\}.
\end{equation}
The \emph{generating graph} of $S$ is the possibly infinite graph $\GenGraph_S$
defined as above where $V := \Lang_S$. This graph is connected and has at
most one source, the axiom $a$. Figure~\ref{fig:GenGraphe} shows an example
of a $2$-generating graph.

\begin{figure}[ht]
    \centering
    \scalebox{.37}{%
    \begin{tikzpicture}
        \node[Bourgeon](a1n1)at(0,0){\huge $x$};
        \node[fit=(a1n1)] (a1) {};

        \node[Noeud,EtiqClair](a2n1)at(6,5){\huge $2$};
        \node[Bourgeon](a2n2)at(5,4){\huge $x$};
        \node[Bourgeon](a2n3)at(7,4){\huge $y$};
        \draw[Arete](a2n1)--(a2n2);
        \draw[Arete](a2n1)--(a2n3);
        \node[fit=(a2n1) (a2n2) (a2n3)] (a2) {};

        \node[Bourgeon](a3n1)at(4.85,-10){\huge $x$};
        \node[Noeud,EtiqClair](a3n2)at(6,-8.75){\huge $3$};
        \node[Bourgeon](a3n3)at(7.15,-10){\huge $x$};
        \node[Bourgeon](a3n4)at(6,-10){\huge $y$};
        \draw[Arete](a3n1)--(a3n2);
        \draw[Arete](a3n2)--(a3n3);
        \draw[Arete](a3n2)--(a3n4);
        \node[fit=(a3n1) (a3n2) (a3n3) (a3n4)] (a3) {};

        \node[Noeud,EtiqClair](a4n1)at(15,8){\huge $2$};
        \node[Noeud,EtiqClair](a4n2)at(14,7){\huge $2$};
        \node[Bourgeon](a4n3)at(16,7){\huge $x$};
        \draw[Arete](a4n1)--(a4n2);
        \draw[Arete](a4n1)--(a4n3);
        \node[Bourgeon](a4n4)at(13.25,6){\huge $x$};
        \node[Bourgeon](a4n5)at(14.75,6){\huge $y$};
        \draw[Arete](a4n2)--(a4n4);
        \draw[Arete](a4n2)--(a4n5);
        \node[fit=(a4n1) (a4n2) (a4n3) (a4n4) (a4n5)] (a4) {};

        \node[Noeud,EtiqClair](a5n1)at(15,4.25){\huge $2$};
        \node[Noeud,EtiqClair](a5n2)at(14,3.25){\huge $3$};
        \node[Bourgeon](a5n3)at(16,3){\huge $x$};
        \draw[Arete](a5n1)--(a5n2);
        \draw[Arete](a5n1)--(a5n3);
        \node[Bourgeon](a5n4)at(12.85,2){\huge $x$};
        \node[Bourgeon](a5n5)at(15.15,2){\huge $x$};
        \draw[Arete](a5n2)--(a5n4);
        \draw[Arete](a5n2)--(a5n5);
        \node[Bourgeon](a5n6)at(14,2){\huge $y$};
        \draw[Arete](a5n2)--(a5n6);
        \node[fit=(a5n1) (a5n2) (a5n3) (a5n4) (a5n5) (a5n6)] (a5) {};

        \node[Noeud,EtiqClair](a6n1)at(13,-.75){\huge $2$};
        \node[Noeud,EtiqClair](a6n2)at(15,.5){\huge $3$};
        \node[Bourgeon](a6n3)at(15,-1){\huge $x$};
        \draw[Arete](a6n1)--(a6n2);
        \draw[Arete](a6n2)--(a6n3);
        \node[Noeud,EtiqClair](a6n4)at(17,-.75){\huge $2$};
        \draw[Arete](a6n2)--(a6n4);
        \node[Bourgeon](a6n5)at(12.35,-2){\huge $x$};
        \node[Bourgeon](a6n6)at(13.65,-2){\huge $y$};
        \node[Bourgeon](a6n7)at(16.35,-2){\huge $x$};
        \node[Bourgeon](a6n8)at(17.65,-2){\huge $y$};
        \draw[Arete](a6n1)--(a6n5);
        \draw[Arete](a6n1)--(a6n6);
        \draw[Arete](a6n4)--(a6n7);
        \draw[Arete](a6n4)--(a6n8);
        \node[fit=(a6n1) (a6n2) (a6n3) (a6n4) (a6n5) (a6n6) (a6n7) (a6n8)] (a6) {};

        \node[Noeud,EtiqClair](a7n1)at(13,-6){\huge $3$};
        \node[Noeud,EtiqClair](a7n2)at(15,-5){\huge $3$};
        \node[Bourgeon](a7n3)at(15,-6.25){\huge $x$};
        \draw[Arete](a7n1)--(a7n2);
        \draw[Arete](a7n2)--(a7n3);
        \node[Noeud,EtiqClair](a7n4)at(17,-6){\huge $2$};
        \draw[Arete](a7n2)--(a7n4);
        \node[Bourgeon](a7n5)at(11.85,-7.25){\huge $x$};
        \node[Bourgeon](a7n6)at(14.15,-7.25){\huge $x$};
        \node[Bourgeon](a7n7)at(16.35,-7.25){\huge $x$};
        \node[Bourgeon](a7n8)at(17.65,-7.25){\huge $y$};
        \node[Bourgeon](a7n9)at(13,-7.25){\huge $y$};
        \draw[Arete](a7n1)--(a7n5);
        \draw[Arete](a7n1)--(a7n6);
        \draw[Arete](a7n1)--(a7n9);
        \draw[Arete](a7n4)--(a7n7);
        \draw[Arete](a7n4)--(a7n8);
        \node[fit=(a7n1) (a7n2) (a7n3) (a7n4) (a7n5) (a7n6) (a7n7) (a7n8) (a7n9)] (a7) {};

        \node[Noeud,EtiqClair](a8n1)at(13,-11.75){\huge $2$};
        \node[Noeud,EtiqClair](a8n2)at(15,-10.5){\huge $3$};
        \node[Bourgeon](a8n3)at(15,-12){\huge $x$};
        \draw[Arete](a8n1)--(a8n2);
        \draw[Arete](a8n2)--(a8n3);
        \node[Noeud,EtiqClair](a8n4)at(17,-11.75){\huge $3$};
        \draw[Arete](a8n2)--(a8n4);
        \node[Bourgeon](a8n5)at(12.35,-13){\huge $x$};
        \node[Bourgeon](a8n6)at(13.65,-13){\huge $y$};
        \node[Bourgeon](a8n7)at(15.85,-13){\huge $x$};
        \node[Bourgeon](a8n8)at(17,-13){\huge $y$};
        \node[Bourgeon](a8n9)at(18.15,-13){\huge $x$};
        \draw[Arete](a8n1)--(a8n5);
        \draw[Arete](a8n1)--(a8n6);
        \draw[Arete](a8n4)--(a8n7);
        \draw[Arete](a8n4)--(a8n8);
        \draw[Arete](a8n4)--(a8n9);
        \node[fit=(a8n1) (a8n2) (a8n3) (a8n4) (a8n5) (a8n6) (a8n7) (a8n8) (a8n9)] (a8) {};

        \node[Noeud,EtiqClair](a9n1)at(13,-17.75){\huge $3$};
        \node[Noeud,EtiqClair](a9n2)at(15,-16.5){\huge $3$};
        \node[Bourgeon](a9n3)at(15,-18){\huge $x$};
        \draw[Arete](a9n1)--(a9n2);
        \draw[Arete](a9n2)--(a9n3);
        \node[Noeud,EtiqClair](a9n4)at(17,-17.75){\huge $3$};
        \draw[Arete](a9n2)--(a9n4);
        \node[Bourgeon](a9n5)at(11.85,-19){\huge $x$};
        \node[Bourgeon](a9n6)at(13,-19){\huge $y$};
        \node[Bourgeon](a9n7)at(15.85,-19){\huge $x$};
        \node[Bourgeon](a9n8)at(17,-19){\huge $y$};
        \node[Bourgeon](a9n9)at(18.15,-19){\huge $x$};
        \node[Bourgeon](a9n10)at(14.15,-19){\huge $x$};
        \draw[Arete](a9n1)--(a9n5);
        \draw[Arete](a9n1)--(a9n6);
        \draw[Arete](a9n4)--(a9n7);
        \draw[Arete](a9n4)--(a9n8);
        \draw[Arete](a9n4)--(a9n9);
        \draw[Arete](a9n1)--(a9n10);
        \node[fit=(a9n1) (a9n2) (a9n3) (a9n4) (a9n5) (a9n6) (a9n7) (a9n8) (a9n9) (a9n10)] (a9) {};

        \draw[AreteGen] (a1)--(a2);
        \draw[AreteGen] (a1)--(a3);
        \draw[AreteGen] (a2)--(a4);
        \draw[AreteGen] (a2)--(a5);
        \draw[AreteGen] (a3)--(a6);
        \draw[AreteGen] (a3)--(a7);
        \draw[AreteGen] (a3)--(a8);
        \draw[AreteGen] (a3)--(a9);
    \end{tikzpicture}}

    \caption{The $2$-generating graph of $S_\textnormal{epl}$.}
    \label{fig:GenGraphe}
\end{figure}

\subsubsection{Strict and unambiguous synchronous grammars}

\begin{Definition} \label{def:GSStricte}
    A synchronous grammar $S := (B, a, R)$ is \emph{strict} if there exists
    a monomial order $\leq_B$ on the set of monomials of $\EnsRel [B]$ such
    that, for all bud tree $D_0$ generated by $S$ and all bud tree $D_1$
    derivable from $D_0$, we have $\Eval(D_0) <_B \Eval(D_1)$.
\end{Definition}

If $S$ is strict, since its set of substitution rules is finite, $S$ generates
only finitely many bud trees with a given evaluation, and since its set
of buds is finite, $S$ also generates only finitely many bud trees with a
given number of buds. Moreover, if $S$ is strict, its generating graph
$\GenGraph_S$ is acyclic.

\begin{Lemme} \label{lem:ConditionLocFin}
    Let $S := (B, a, R)$ be a synchronous grammar. If there exists a total
    order $\leq_B$ on $B$ such that, for all substitution rule $\LettreB \longmapsto D$
    of $R$ where $D \in \EnsBud_1$ we have $\LettreB <_B \Eval(D)$, then
    $S$ is strict.
\end{Lemme}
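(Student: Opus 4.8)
The plan is to promote the total order $\leq_B$ on $B$ supplied by the hypothesis to the \emph{graded lexicographic} monomial order on the monomials of $\EnsRel[B]$, which I will still denote by $\leq_B$: two monomials are compared first by their total degree, ties being broken by the lexicographic order that $\leq_B$ induces on $B$. Since $B$ is finite, this is a bona fide monomial order, hence an admissible choice for Definition~\ref{def:GSStricte}. The only two features of $\leq_B$ that I will use are that $m_1 <_B m_2$ whenever $\deg(m_1) < \deg(m_2)$, and that $m_1 <_B m_2$ implies $m_1 m <_B m_2 m$ for every monomial $m$; the former is the defining property of the grading, the latter the compatibility of a monomial order with multiplication.

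Next I would unwind a single derivation step. Let $D_0$ be a bud tree generated by $S$, with frontier $(b_1, \dots, b_n)$ and $\Eval(b_i) = \LettreB_i$, and let $D_1$ be derivable from $D_0$ via rules $(\LettreB_1 \longmapsto T_1, \dots, \LettreB_n \longmapsto T_n)$, that is, $D_1$ is obtained by substituting each bud $b_i$ of $D_0$ by the root of $T_i$. The buds of $D_1$ are then exactly the buds of $T_1, \dots, T_n$ taken together, so that
\begin{equation}
    \Eval(D_0) = \prod_{i = 1}^n \LettreB_i
    \qquad \text{and} \qquad
    \Eval(D_1) = \prod_{i = 1}^n \Eval(T_i).
\end{equation}
Moreover $n \geq 1$ since bud trees are nonempty, and each $T_i$ has at least one bud, whence $\deg \Eval(D_1) \geq n = \deg \Eval(D_0)$.

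I would then split into two cases. If some $T_{i_0}$ has at least two buds, then $D_1$ has at least $n + 1$ buds, so $\deg \Eval(D_1) \geq n + 1 > n = \deg \Eval(D_0)$, and the first feature of $\leq_B$ gives $\Eval(D_0) <_B \Eval(D_1)$. Otherwise every $T_i$ lies in $\EnsBud_1$; then each rule $\LettreB_i \longmapsto T_i$ has its right-hand side in $\EnsBud_1$, so the hypothesis yields $\LettreB_i <_B \Eval(T_i)$ in $B$ for all $i$. Replacing the factors $\LettreB_1, \dots, \LettreB_n$ by $\Eval(T_1), \dots, \Eval(T_n)$ one at a time, each such replacement being a strict increase by the second feature of $\leq_B$, and chaining these $n \geq 1$ strict inequalities by transitivity, one gets $\Eval(D_0) <_B \Eval(D_1)$ as well. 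In both cases $\Eval(D_0) <_B \Eval(D_1)$, which is exactly the condition making $S$ strict.

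I expect no genuine obstacle here; the two points deserving a little care are that graded lexicographic order really is a monomial order (a standard fact since $B$ is finite), and the elementary but crucial observation that a derivation step either strictly increases the total number of buds or, when this number is unchanged, strictly increases every bud label — the latter being precisely what the hypothesis on rules with a single‑bud right‑hand side ensures.
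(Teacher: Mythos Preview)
Your proof is correct and follows essentially the same approach as the paper: extend $\leq_B$ to the graded lexicographic monomial order, then split according to whether some substituted bud tree has at least two buds (in which case the degree strictly increases) or all substituted bud trees lie in $\EnsBud_1$ (in which case every factor strictly increases by hypothesis). Your treatment of the second case is in fact a bit more careful than the paper's, spelling out how compatibility of the monomial order with multiplication lets one chain the $n$ single-factor increases.
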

\begin{proof}
    We extend the total order $\leq_B$ defined on $B$ into a monomial order
    on the set of monomials of $\EnsRel [B]$ by considering the graded lexicographic
    order on monomials.

    Consider now a bud tree $D_0$ generated by $S$ and a bud tree $D_1$
    derivable from $D_0$. If there exists at least one bud of $D_0$ that
    is substituted by a bud tree with more than one bud, one has
    $\ell\left(\Eval(D_0)\right) < \ell\left(\Eval(D_1)\right)$ and hence $\Eval(D_0) <_B \Eval(D_1)$.
    Otherwise, $D_0$ and $D_1$ have the same number of buds. By hypothesis,
    all buds of the frontier $(b_1, \dots, b_n)$ of $D_0$ are substituted
    by $n$ bud trees each containing the buds $c_1$, \dots, $c_n$ such that
    $\Eval(b_i) <_B \Eval(c_i)$ for all $1 \leq i \leq n$. Hence, $\Eval(D_0) <_B \Eval(D_1)$,
    implying that $S$ is strict.
\end{proof}

For instance, $S_\textnormal{epl}$ is strict since the order $y \leq_B x$
meets the assumptions of Lemma~\ref{lem:ConditionLocFin}. This order can
be extended into the monomial order defined by
\begin{equation}
    x^i y^j \leq_B x^k y^\ell \qquad \mbox{if $i + j < k + l$ \quad or \quad $i + j = k + l$ and $i \leq k$}.
\end{equation}
\medskip

\begin{Definition} \label{def:GSNAmb}
    A synchronous grammar $S$ is \emph{unambiguous} if for all bud tree $D$,
    there exists at most one integer $\ell \geq 0$ and one sequence
    $(D_1, \dots, D_{\ell - 1})$ such that~(\ref{eq:GramSyncGeneration})
    holds.
\end{Definition}

The generating graph $\GenGraph_S$ is a tree if and only if $S$ is unambiguous.

\begin{Lemme} \label{lem:ConditionNAmb}
    Let $S := (B, a, R)$ be a strict synchronous grammar. If for
    all $\LettreB \in B$ and for all substitution rules $\LettreB \longmapsto T_0$
    and $\LettreB \longmapsto T_1$ of $R$ where $T_0 \ne T_1$ there are
    at the same location in $T_0$ and $T_1$ two non-bud nodes that are
    different, then $S$ is unambiguous.
\end{Lemme}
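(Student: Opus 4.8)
The plan is to show directly that any two derivations producing the same bud tree $D$ must coincide, by exploiting the key structural feature of synchronous grammars: a derivation step never alters an existing non-bud node — it only substitutes buds, and substitution attaches new material strictly \emph{below} a bud. Hence once two bud trees of $\Lang_S$ disagree at an address carrying non-bud nodes in both, no further derivation can ever reconcile them at that address.

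First I would set up the comparison. Suppose $a \xrightarrow{S} D_1 \xrightarrow{S} \cdots \xrightarrow{S} D_p = D$ and $a \xrightarrow{S} E_1 \xrightarrow{S} \cdots \xrightarrow{S} E_q = D$ are two derivations of $D$, write $D_0 = E_0 = a$, and let $i$ be the largest index with $D_0 = E_0, \dots, D_i = E_i$. I claim $i = p = q$, which makes the two derivations equal. If this fails and, say, $i = p < q$, then $E_i = D_i = D_p = D$ while $E_i \xrightarrow{S} E_{i+1}$ is a genuine step, so strictness gives $\Eval(E_i) <_B \Eval(E_{i+1}) \leq_B \Eval(E_q) = \Eval(D) = \Eval(E_i)$, a contradiction; the symmetric case $i = q < p$ is identical. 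So $i < p$ and $i < q$, and with $X := D_i = E_i$ we have $X \xrightarrow{S} D_{i+1}$ and $X \xrightarrow{S} E_{i+1}$ with $D_{i+1} \neq E_{i+1}$, both of $D_{i+1}$ and $E_{i+1}$ reaching $D$.

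Next I would localize the disagreement. Let $(b_1, \dots, b_m)$ be the frontier of $X$. The step $X \xrightarrow{S} D_{i+1}$ substitutes each $b_k$ by the root of a bud tree $T_k$ with $(\Eval(b_k), T_k) \in R$, and $X \xrightarrow{S} E_{i+1}$ likewise uses bud trees $U_1, \dots, U_m$ with $(\Eval(b_k), U_k) \in R$. Since $D_{i+1} \neq E_{i+1}$, there is an index $k$ with $T_k \neq U_k$; as $T_k$ and $U_k$ are right-hand sides of rules with the common left-hand side $\LettreB := \Eval(b_k)$, the hypothesis supplies an address $p_0$ which is occupied by a non-bud node in $T_k$ and by a non-bud node in $U_k$, and these two nodes are different. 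Writing $\pi$ for the address of the bud $b_k$ inside $X$, it follows that $D_{i+1}$ carries at address $\pi p_0$ the (non-bud) node of $T_k$, while $E_{i+1}$ carries at address $\pi p_0$ the (non-bud) node of $U_k$, and these differ.

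Finally I would propagate this difference down to $D$. Because a derivation step only replaces buds and grafts new subtrees strictly below them, it leaves every already-present non-bud node — together with its label, its arity, and its address relative to the root — unchanged. Hence from $D_{i+1} \xrightarrow{S} \cdots \xrightarrow{S} D$ the tree $D$ carries at address $\pi p_0$ exactly the non-bud node it had in $D_{i+1}$, and from $E_{i+1} \xrightarrow{S} \cdots \xrightarrow{S} D$ it carries there exactly the non-bud node it had in $E_{i+1}$; these being different contradicts the fact that $D$ has a single node at each address. So no such $i$ exists, the two derivations coincide, and $S$ is unambiguous. I do not expect a real obstacle here: the only delicate points are the bookkeeping of node addresses under substitution and the precise reading of the hypothesis (the two differing non-bud nodes must genuinely sit at a common address valid in both right-hand sides), after which the persistence of non-bud nodes closes the argument; strictness intervenes only to dispose of the degenerate case where one derivation is a proper prefix of the other.
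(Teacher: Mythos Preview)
Your proof is correct and follows essentially the same line as the paper's: locate the first step where two derivations of the same bud tree diverge, use the hypothesis to find a common address carrying distinct non-bud nodes in the two resulting trees, and observe that subsequent derivation steps never touch existing non-bud nodes, so the discrepancy persists to the end --- a contradiction. The paper's version is terser (it phrases the persistence argument as ``the generating graph is a tree'') and invokes strictness only to say the generating graph is acyclic, whereas you use strictness explicitly to exclude the proper-prefix case; these are the same content.
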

\begin{proof}
    Let $D$ be a bud tree generated by $S$ and $D_0$ and $D_1$ be two different
    bud trees derivable from $D$. Among other substitutions, the bud tree
    $D_0$ (resp. $D_1$) is obtained by replacing one of its buds by a bud
    tree $T_0$ (resp. $T_1$), and by hypothesis, there are at the same location
    in $T_0$ and $T_1$ two non-bud nodes that are different. Hence, there
    are at the same location in $D_0$ and $D_1$ two non-bud nodes that are
    different. This shows that all bud trees obtained by performing any
    sequence of derivations from $D_0$ and from $D_1$ are different since
    they differ by a non-bud node. Moreover, since $S$ is strict, its generating
    graph contains no cycle, and hence, $S$ is unambiguous.
\end{proof}

For instance, Lemma~\ref{lem:ConditionNAmb} shows that $S_\textnormal{epl}$
is unambiguous since it is strict and the bud \Bourgeon{.4}{$x$} can be
substituted by two buds trees with different roots: One of these is of
arity $2$ while the other one is of arity $3$.

\subsection{Synchronous grammars and generating series}

\begin{Definition}
    Let $S := (B, a, R)$ be a synchronous grammar. The \emph{$\ell$-generating series}
    $\GramSerie_S^{(\ell)}$ of $S$ is the polynomial of $\EnsRel [B]$ defined by
    \begin{equation}
        \GramSerie_S^{(\ell)}(\LettreB_1, \dots, \LettreB_k) := \sum_{
                a \xrightarrow{S} D_1 \xrightarrow{S} \cdots \xrightarrow{S} D_\ell} \Eval(D_\ell).
    \end{equation}
    Moreover, if $S$ is strict, the \emph{generating series} $\GramSerie_S$
    of $S$ is the element of $\EnsRel [[B]]$ defined by
    \begin{equation}
        \GramSerie_S(\LettreB_1, \dots, \LettreB_k) := \sum_{\ell \geq 0} \GramSerie_S^{(\ell)}(\LettreB_1, \dots, \LettreB_k).
    \end{equation}
\end{Definition}

Let $S := (B, a, R)$ be a strict synchronous grammar. The series $\GramSerie_S$
is well-defined since $S$ is strict. Moreover, if $S$ is also unambiguous, we have
\begin{equation}
    \GramSerie_S(\LettreB_1, \dots, \LettreB_k) = \sum_{D \; \in \; \Lang_S} \Eval(D),
\end{equation}
and for all monomial $u := \LettreB_1^{\alpha_1} \dots \LettreB_k^{\alpha_k}$,
the coefficient $[u] \GramSerie_S$ is the number of bud trees generated
by $S$ that have $u$ as evaluation, \emph{i.e.}, a frontier made of $\alpha_i$
occurrences of buds labeled by $\LettreB_i$, for all $1 \leq i \leq k$.

For example, the first $\ell$-generating series of $S_\textnormal{epl}$ are
\begin{align}
    \GramSerie_{S_\textnormal{epl}}^{(0)}(x, y) & = x, \\
    \GramSerie_{S_\textnormal{epl}}^{(1)}(x, y) & = xy + x^2y, \\
    \GramSerie_{S_\textnormal{epl}}^{(2)}(x, y) & = x^2y + x^3y + x^3y^2 + 2x^4y^2 + x^5y^2.
\end{align}
and its generating series is of the form
\begin{equation}
    \GramSerie_{S_\textnormal{epl}}(x, y) = x + xy + 2x^2y + x^3y + x^3y^2 + 2x^4y^2 + x^5y^2 + \cdots
\end{equation}
\medskip

For all $\LettreB \in B$ let us define the polynomials $\Expr(\LettreB)$
of $\EnsRel [B]$ by
\begin{equation}
    \Expr(\LettreB) := \sum_{(\LettreB, D) \; \in \; R} \Eval(D).
\end{equation}

For instance, for $S_\textnormal{epl}$ one directly obtains from~(\ref{eq:Regle1SEpl})
and~(\ref{eq:Regle2SEpl})
\begin{align}
    \Expr(x) & = xy + x^2y, \\
    \Expr(y) & = x.
\end{align}

\begin{Lemme} \label{lem:IterationGS}
    Let $S := (B, a, R)$ be a synchronous grammar. For all $\ell \geq 0$,
    $\GramSerie_S^{(\ell)}$ satisfies
    \begin{equation} \label{eq:IterationGS}
        \GramSerie_S^{(\ell)}(\LettreB_1, \dots, \LettreB_k) =
        \begin{cases}
            \Eval(a)                                                               & \mbox{if $\ell = 0$,} \\
            \GramSerie_S^{(\ell - 1)}(\Expr(\LettreB_1), \dots, \Expr(\LettreB_k)) & \mbox{otherwise.}
        \end{cases}
    \end{equation}
\end{Lemme}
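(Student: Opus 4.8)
The plan is to prove the two branches of the recurrence directly; no genuine induction is needed, only the fact that $\GramSerie_S^{(\ell-1)}$ is a polynomial in the $\LettreB_i$. For $\ell = 0$, the only $0$-step derivation is the one reduced to the axiom $a$, so by definition $\GramSerie_S^{(0)}(\LettreB_1, \dots, \LettreB_k) = \Eval(a)$, settling the first case.

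The heart of the argument is to understand how a single derivation step acts on evaluations. Fix a bud tree $D$ with frontier $(b_1, \dots, b_n)$ and set $\LettreB_{j_i} := \Eval(b_i)$. By Definition~\ref{def:Derivation}, the bud trees $D'$ with $D \xrightarrow{S} D'$ are exactly those produced by choosing, for each $1 \leq i \leq n$, a rule $\LettreB_{j_i} \longmapsto T_i$ in $R$ and grafting the root of $T_i$ in place of the bud $b_i$. The internal nodes of $D$ are unchanged and contribute nothing to $\Eval$, while $\Buds(D')$ is the concatenation of the frontiers of $T_1, \dots, T_n$; hence $\Eval(D') = \prod_{i=1}^{n} \Eval(T_i)$. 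Summing over all choices and distributing the product over the sums gives
\[
    \sum_{D \xrightarrow{S} D'} \Eval(D')
    = \prod_{i=1}^{n} \left( \sum_{(\LettreB_{j_i}, T) \; \in \; R} \Eval(T) \right)
    = \prod_{i=1}^{n} \Expr(\LettreB_{j_i}).
\]
Since $\Eval(D) = \prod_{i=1}^{n} \LettreB_{j_i}$, the right-hand side is exactly the image of the monomial $\Eval(D)$ under the ring endomorphism $\sigma$ of $\EnsRel[B]$ determined by $\sigma(\LettreB_m) := \Expr(\LettreB_m)$ for $1 \leq m \leq k$.

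To finish, for $\ell \geq 1$ I would partition the $\ell$-step derivations of $S$ by their $(\ell-1)$-step prefixes: every such prefix $a \xrightarrow{S} D_1 \xrightarrow{S} \cdots \xrightarrow{S} D_{\ell-1}$ ends in a bud tree $D_{\ell-1}$, and the one-step extensions $D_{\ell-1} \xrightarrow{S} D_\ell$ depend only on $D_{\ell-1}$. Therefore
\[
    \GramSerie_S^{(\ell)}(\LettreB_1, \dots, \LettreB_k)
    = \sum_{a \xrightarrow{S} D_1 \xrightarrow{S} \cdots \xrightarrow{S} D_{\ell-1}}
        \; \sum_{D_{\ell-1} \xrightarrow{S} D_\ell} \Eval(D_\ell)
    = \sum_{a \xrightarrow{S} D_1 \xrightarrow{S} \cdots \xrightarrow{S} D_{\ell-1}}
        \sigma\!\left(\Eval(D_{\ell-1})\right),
\]
by the previous paragraph. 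As $\sigma$ is a ring homomorphism and $\GramSerie_S^{(\ell-1)}(\LettreB_1, \dots, \LettreB_k) = \sum \Eval(D_{\ell-1})$, this last expression is $\sigma\bigl(\GramSerie_S^{(\ell-1)}(\LettreB_1, \dots, \LettreB_k)\bigr) = \GramSerie_S^{(\ell-1)}(\Expr(\LettreB_1), \dots, \Expr(\LettreB_k))$, which is the claimed identity. The only delicate point is the bookkeeping in the middle step — verifying that one derivation step multiplies evaluations precisely by $\sigma$, and keeping in mind that the sums defining $\GramSerie_S^{(\ell)}$ run over derivation sequences counted with multiplicity rather than over distinct generated bud trees; once this is pinned down, the rest is a routine rearrangement.
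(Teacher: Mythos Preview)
Your proof is correct and follows the same approach as the paper: both handle $\ell=0$ trivially and, for $\ell\geq 1$, factor an $\ell$-step derivation as an $(\ell-1)$-step prefix followed by one step, observing that the last step acts on evaluations by the substitution $\LettreB_m \mapsto \Expr(\LettreB_m)$. Your version is in fact more careful than the paper's, which leaves the key computation $\sum_{D \xrightarrow{S} D'} \Eval(D') = \prod_i \Expr(\LettreB_{j_i})$ implicit; your explicit use of the ring endomorphism $\sigma$ and your remark about summing over derivation sequences (not distinct trees) are exactly the points the paper glosses over.
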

\begin{proof}
    If $\ell = 0$, the only bud tree generated by $0$-step derivations is
    the axiom $a$ of $S$. Hence, the lemma is satisfied.

    Let $\ell \geq 1$. Assume that there exists the following sequence
    of derivations in $S$:
    \begin{equation}
        a \xrightarrow{S} D_1 \xrightarrow{S} \cdots \xrightarrow{S} D_{\ell-1} \xrightarrow{S} D_\ell.
    \end{equation}
    Then, by definition, $D_\ell$ is obtained by substituting the buds
    $b_i$ of $D_{\ell-1}$ by some buds trees $T_i$. From the polynomial
    point of view, the monomial $\Eval(D_\ell)$ is obtained by the polynomial
    substitutions $\Eval(b_i) \mapsfrom \Eval(T_i)$ in $\GramSerie_S^{(\ell-1)}$.
    Hence, $\GramSerie_S^{(\ell)}$ is obtained from $\GramSerie_S^{(\ell-1)}$
    by performing the polynomial substitution $\LettreB \leftarrow \Expr(\LettreB)$
    for each $\LettreB \in B$, showing~(\ref{eq:IterationGS}).
\end{proof}

\begin{Proposition} \label{prop:EquationFoncGS}
    Let $S := (B, a, R)$ be a strict synchronous grammar. The generating
    series $\GramSerie_S$ satisfies the fixed-point functional equation
    \begin{equation}
        \GramSerie_S(\LettreB_1, \dots, \LettreB_k) = \Eval(a) + \GramSerie_S(\Expr(\LettreB_1), \dots, \Expr(\LettreB_k)).
    \end{equation}
\end{Proposition}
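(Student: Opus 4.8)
The plan is to unfold the definition $\GramSerie_S = \sum_{\ell \geq 0} \GramSerie_S^{(\ell)}$, isolate the $\ell = 0$ term, rewrite each remaining term by Lemma~\ref{lem:IterationGS}, and re-sum. Since $S$ is strict, $\GramSerie_S$ is a well-defined element of $\EnsRel[[B]]$, so I may write
\begin{equation}
    \GramSerie_S(\LettreB_1, \dots, \LettreB_k) = \GramSerie_S^{(0)}(\LettreB_1, \dots, \LettreB_k) + \sum_{\ell \geq 1} \GramSerie_S^{(\ell)}(\LettreB_1, \dots, \LettreB_k).
\end{equation}
By Lemma~\ref{lem:IterationGS}, the first summand equals $\Eval(a)$ and, for each $\ell \geq 1$, one has $\GramSerie_S^{(\ell)}(\LettreB_1, \dots, \LettreB_k) = \GramSerie_S^{(\ell - 1)}(\Expr(\LettreB_1), \dots, \Expr(\LettreB_k))$. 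Re-indexing the summation over $\ell$ then turns the tail into $\sum_{\ell \geq 0} \GramSerie_S^{(\ell)}(\Expr(\LettreB_1), \dots, \Expr(\LettreB_k))$, and it remains to recognize this expression as $\GramSerie_S(\Expr(\LettreB_1), \dots, \Expr(\LettreB_k))$.

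The crux — and the main obstacle — is to justify that the substitution $\LettreB_i \leftarrow \Expr(\LettreB_i)$ may be pulled through the infinite sum. This works because every substitution rule $\LettreB \longmapsto D$ of $R$ uses a nonempty bud tree $D$, so $D$ has at least one bud and $\Eval(D)$ is a monomial of total degree at least $1$; hence each polynomial $\Expr(\LettreB) = \sum_{(\LettreB, D) \in R} \Eval(D)$ has zero constant term. Consequently the map $\sigma \colon f \mapsto f(\Expr(\LettreB_1), \dots, \Expr(\LettreB_k))$ is a well-defined continuous $\EnsRel$-algebra endomorphism of $\EnsRel[[B]]$ for the formal power series topology (a monomial of total degree $d$ is sent to a series all of whose monomials have total degree at least $d$). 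A continuous linear map commutes with convergent series, so
\begin{equation}
    \sum_{\ell \geq 0} \GramSerie_S^{(\ell)}(\Expr(\LettreB_1), \dots, \Expr(\LettreB_k)) = \sigma\!\left( \sum_{\ell \geq 0} \GramSerie_S^{(\ell)} \right) = \GramSerie_S(\Expr(\LettreB_1), \dots, \Expr(\LettreB_k)),
\end{equation}
the right-hand series being well-defined for the same reason. Combining this with the isolated $\ell = 0$ term yields the announced fixed-point equation.

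As a preliminary one should record that strictness does make all of these series summable: a strict grammar generates only finitely many bud trees with a given number of buds, so $\GramSerie_S$ has finitely many monomials of each total degree, and since $\sigma$ is continuous the same holds after substitution. With these finiteness facts the manipulations above are purely formal and no estimates are needed; the only genuinely load-bearing ingredients are Lemma~\ref{lem:IterationGS} and the zero-constant-term property of the $\Expr(\LettreB_i)$. Note in particular that unambiguity of $S$ plays no role here — strictness alone suffices.
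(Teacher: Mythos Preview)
Your proof is correct and follows essentially the same route as the paper: split off the $\ell=0$ term, apply Lemma~\ref{lem:IterationGS} to each remaining summand, re-index, and recognize $\GramSerie_S$ evaluated at the substituted variables. The paper's proof is a bare five-line chain of equalities that does not pause to justify the final step; your additional discussion of why the substitution $\LettreB_i \leftarrow \Expr(\LettreB_i)$ is well-defined and commutes with the infinite sum (via the zero-constant-term observation and continuity in the formal topology) is correct and adds rigor the paper omits.
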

\begin{proof}
    Using Lemma~\ref{lem:IterationGS}, we obtain
    \begin{align}
        \GramSerie_S(\LettreB_1, \dots, \LettreB_k) & = \sum_{\ell \geq 0} \GramSerie_S^{(\ell)}(\LettreB_1, \dots, \LettreB_k) \\
                     & = \GramSerie_S^{(0)}(\LettreB_1, \dots, \LettreB_k) +
                         \sum_{\ell \geq 1} \GramSerie_S^{(\ell)}(\LettreB_1, \dots, \LettreB_k) \\
                     & = \Eval(a) + \sum_{\ell \geq 0} \GramSerie_S^{(\ell + 1)}(\LettreB_1, \dots, \LettreB_k) \\
                     & = \Eval(a) + \sum_{\ell \geq 0} \GramSerie_S^{(\ell)}(\Expr(\LettreB_1), \dots, \Expr(\LettreB_k)) \\
                     & = \Eval(a) + \GramSerie_S(\Expr(\LettreB_1), \dots, \Expr(\LettreB_k)). \qedhere
    \end{align}
\end{proof}

Proposition~\ref{prop:EquationFoncGS} gives a formula to extract a fixed-point
functional equation for the generating series of a given strict synchronous grammar
$S := (B, a, R)$ and Lemma~\ref{lem:IterationGS} gives an algorithm to compute
its coefficients by \emph{iteration}, \emph{i.e.}, by computing the polynomials
$\GramSerie_S^{(\ell)}$ for $0 \leq \ell \leq n$ where $n$ is a desired order,
and then, by summing its terms.
\medskip

In our example, the generating series of $S_\textnormal{epl}$ satisfies
the fixed-point functional equation
\begin{equation}
    \GramSerie_{S_\textnormal{epl}}(x, y) = x + \GramSerie_{S_\textnormal{epl}}(xy + x^2y, x).
\end{equation}
\medskip

Note that in some cases it is useful to specialize the generating series
$\GramSerie_S$ associated with~$S$. For example, the specialization
of an element $\LettreB$ of $B$ to $0$ allows to annihilate some terms
of $\GramSerie_S$ corresponding to bud trees which have buds labeled by~$\LettreB$.
In this way, the enumeration provided by $\GramSerie_S$ with this specialization
takes into account only bud trees generated by $S$ that have no bud labeled by $\LettreB$.
\medskip

In the same way, it is possible to add some parameters to the substitution
rules of $S$ in order to refine the generating series $\GramSerie_S$. For
instance, to take into account the number of application of the substitution rule
\begin{equation}
    x \longmapsto
    \raisebox{-0.65em}{%
    \scalebox{.45}{%
    \begin{tikzpicture}
        \node[Bourgeon](1)at(-.15,0){\huge $x$};
        \node[Noeud,EtiqClair](2)at(1,1.25){$3$};
        \node[Bourgeon](3)at(2.15,0){\huge $x$};
        \draw[Arete](1)--(2);
        \draw[Arete](2)--(3);
        \node[Bourgeon](4)at(1,0){\huge $y$};
        \draw[Arete](2)--(4);
    \end{tikzpicture}}}\, ,
\end{equation}
in the bud trees generated by $S_\textnormal{epl}$, one has just to set
\begin{equation}
    \Expr(x) := xy + x^2y\xi,
\end{equation}
so that the parameter $\xi$ counts the number of application of this substitution
rule. In this way, one can enumerate tree-like structures according to some
statistics.

\subsection{Examples} \label{subsec:ExemplesGrammaires}
Let us consider three examples of synchronous grammars to illustrate
the concepts that we have presented. Let us start with a very simple example.

\subsubsection{Perfect binary trees}
Let the synchronous grammar $S_\textnormal{perf} := \left(\{x\}, \Bourgeon{.4}{$x$}, R\right)$ where
$R$ contains the unique substitution rule
\begin{equation}
    x \longmapsto \raisebox{-0.65em}{\BourgeonA{.45}{$x$}{}{$x$}}\, .
\end{equation}

By identifying the buds \Bourgeon{.4}{$x$} with leaves, the language
$\Lang_{S_\textnormal{perf}}$ is the set of \emph{perfect binary trees},
that are binary trees of the sequence $\left(T_i\right)_{i \geq 0}$ defined by
$T_0 := \ArbreVide$ and $T_{i + 1} := T_i \ABCons T_i$.
\medskip

This synchronous grammar is strict since the number of buds of all bud
trees generated by $S_\textnormal{perf}$ increases after each derivation.
Besides, since $S_\textnormal{perf}$ is strict and $R$ only contains one
substitution rule, the generating graph $\GenGraph_{S_\textnormal{perf}}$
only contains one maximal path and hence, $S_\textnormal{perf}$ is unambiguous.
Therefore, the series $\GramSerie_{S_\textnormal{perf}}$ is well-defined
and by Proposition~\ref{prop:EquationFoncGS}, it satisfies the fixed-point
functional equation
\begin{equation}
    \GramSerie_{S_\textnormal{perf}}(x) = x + \GramSerie_{S_\textnormal{perf}}(x^2),
\end{equation}
and enumerate perfect binary trees according to their number of leaves.
First $\GramSerie_{S_\textnormal{perf}}^{(\ell)}$ polynomials are
\begin{equation}
    \GramSerie_{S_\textnormal{perf}}^{(0)}(x) = x, \qquad
    \GramSerie_{S_\textnormal{perf}}^{(1)}(x) = x^2, \qquad
    \GramSerie_{S_\textnormal{perf}}^{(2)}(x) = x^4, \qquad
    \GramSerie_{S_\textnormal{perf}}^{(3)}(x) = x^8,
\end{equation}
so that
\begin{equation}
    \GramSerie_{S_\textnormal{perf}}(x) = \sum_{n \geq 0} x^{2^n} = x + x^2 + x^4 + x^8 + \cdots
\end{equation}

\subsubsection{Balanced 2-3 trees}
Let the synchronous grammar
$S_\textnormal{23} := \left(\{x\}, \Bourgeon{.4}{$x$}, R\right)$
where $R$ contains the substitution rules
\begin{equation}
    x \longmapsto \raisebox{-0.65em}{\BourgeonA{.45}{$x$}{$2$}{$x$}} \quad + \quad
    \raisebox{-0.65em}{%
    \scalebox{.45}{%
    \begin{tikzpicture}
        \node[Bourgeon](1)at(-.15,0){\huge $x$};
        \node[Noeud,EtiqClair](2)at(1,1.25){\huge $3$};
        \node[Bourgeon](3)at(2.15,0){\huge $x$};
        \draw[Arete](1)--(2);
        \draw[Arete](2)--(3);
        \node[Bourgeon](4)at(1,0){\huge $x$};
        \draw[Arete](2)--(4);
    \end{tikzpicture}}}\, .
\end{equation}

By identifying the buds \Bourgeon{.4}{$x$} with leaves, the language of
$S_\textnormal{23}$ is the set of \emph{balanced 2-3 trees}, that are complete
rooted planar trees such that each internal node has $2$ or $3$ children and all
paths leading to their leaves have same length (see~\cite{Odl82, FS09}).
\medskip

Since each step of derivation increases the number of buds of the generated
bud tree, $S_\textnormal{23}$ is strict. Moreover, $S_\textnormal{23}$ satisfies
the hypothesis of Lemma~\ref{lem:ConditionNAmb} and hence is unambiguous.
Indeed, the two bud trees appearing in the two substitution rules have a
different root: One is of arity $2$ and the other of arity $3$. Thus, the
series $\GramSerie_{S_\textnormal{23}}$ is well-defined and by
Proposition~\ref{prop:EquationFoncGS}, it satisfies the fixed-point functional equation
\begin{equation}
    \GramSerie_{S_\textnormal{23}}(x) = x + \GramSerie_{S_\textnormal{23}}(x^2 + x^3),
\end{equation}
and enumerate balanced 2-3 trees according to their number of leaves.
First polynomials $\GramSerie_{S_\textnormal{23}}^{(\ell)}$ are
\begin{align}
    \GramSerie_{S_\textnormal{23}}^{(0)}(x) & = x, \\
    \GramSerie_{S_\textnormal{23}}^{(1)}(x) & = x^2 + x^3, \\
    \GramSerie_{S_\textnormal{23}}^{(2)}(x) & = x^4 + 2x^5 + 2x^6 + 3x^7 + 3x^8 + x^9.
\end{align}
An interpretation of the polynomial $\GramSerie_{S_\textnormal{23}}^{(2)}(x)$ is
the following: By performing $2$-steps derivations, $S_\textnormal{23}$
generates one bud tree with 4 buds, two bud trees with 5 buds, two bud trees with 6 buds,
three bud trees with 7 buds, three bud trees with 8 buds and one bud tree with 9 buds.

\subsubsection{Balanced binary trees}
Consider now the synchronous grammar $S_\textnormal{bal} := \left(\{x, y\}, \Bourgeon{.4}{$x$}, R\right)$
where $R$ contains the substitution rules
\begin{align}
    x & \longmapsto
            \raisebox{-0.65em}{\BourgeonA{.45}{$x$}{$-1$}{$y$}} \quad + \quad
            \raisebox{-0.65em}{\BourgeonA{.45}{$x$}{$0$}{$x$}} \quad + \quad
            \raisebox{-0.65em}{\BourgeonA{.45}{$y$}{$1$}{$x$}}\, , \\
    y & \longmapsto \Bourgeon{.45}{$x$}\, .
\end{align}

As we shall show, by annihilating the bud trees containing some buds
\Bourgeon{.4}{$y$} and by replacing the buds \Bourgeon{.4}{$x$} by leaves,
the language of $S_\textnormal{bal}$ is the set of balanced binary trees.

\begin{Proposition} \label{prop:GrammaireEq}
    Let $D$ be a bud tree generated by $S_\textnormal{bal}$ only containing
    buds \Bourgeon{.4}{$x$}. Then, the non-bud nodes of $D$ are labeled
    by their imbalance value.
\end{Proposition}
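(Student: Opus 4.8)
The plan is to follow each derivation with an auxiliary integer $\rho$ attached to every node and bud of a bud tree generated by $S_\textnormal{bal}$, defined recursively by $\rho(b) := 0$ if $b$ is a bud labelled by $x$, by $\rho(b) := -1$ if $b$ is a bud labelled by $y$, and by $\rho(v) := 1 + \max\{\rho(c_L), \rho(c_R)\}$ if $v$ is a non-bud node with left child $c_L$ and right child $c_R$. Assigning $-1$ to a $y$-bud reflects the fact that a $y$-bud is replaced by an $x$-bud at the next derivation step, so it behaves like an $x$-bud lagging one step behind; this is exactly what makes it possible to argue over \emph{all} bud trees generated by $S_\textnormal{bal}$ and not only over those without $y$-buds. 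Observe moreover that on a bud tree all of whose buds are labelled by $x$, the map $\rho$ coincides with the height map $\Ht$ taken by regarding every bud as a leaf, since both obey the same recursion and $\rho(b) = 0 = \Ht(\ArbreVide)$ for an $x$-bud $b$.

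First I would record how $\rho$ behaves under a single derivation step. Given $D_0 \xrightarrow{S_\textnormal{bal}} D_1$ and a node or bud $v$ of $D_0$, write $\tilde v$ for $v$ itself when $v$ is a non-bud node (non-bud nodes are left unchanged by a derivation step) and for the root of the bud tree substituted to $v$ when $v$ is a bud. A structural induction on the subtree of $D_0$ rooted at $v$ shows that $\rho_{D_1}(\tilde v) = \rho_{D_0}(v) + 1$: the case $y \longmapsto x$ reads $0 = -1 + 1$; each of the three rules $x \longmapsto \cdot$ creates a node whose two children carry $\rho$-values whose maximum is $0$, hence $\rho$-value $1 = 0 + 1$; and the non-bud case follows from the induction hypothesis applied to the two children together with the defining recursion of $\rho$.

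Next I would extract the invariant that settles the statement. For a non-bud node $v$ with children $c_L$ and $c_R$, set $\delta(v) := \rho(c_R) - \rho(c_L)$. Under a step $D_0 \xrightarrow{S_\textnormal{bal}} D_1$, the children of $v$ in $D_1$ are $\tilde{c_L}$ and $\tilde{c_R}$, and by the previous paragraph both their $\rho$-values increase by $1$, so $\delta(v)$ is unchanged. Besides, the three substitution rules create their new node with children whose pair $(\rho(c_L), \rho(c_R))$ equals $(0, -1)$, $(0, 0)$, $(-1, 0)$ respectively, so at birth $\delta(v)$ equals the label $\lambda(v)$ of $v$. An induction on the number of derivation steps (the axiom has no non-bud node) then gives $\lambda(v) = \delta(v)$ for every non-bud node $v$ of every bud tree generated by $S_\textnormal{bal}$.

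It remains to specialise to the bud tree $D$ of the statement. Since all buds of $D$ are labelled by $x$, we have $\rho_D = \Ht$ (buds seen as leaves), so for each non-bud node $v$ of $D$ with left subtree $L$ and right subtree $R$ we obtain $\lambda(v) = \delta(v) = \rho_D(R) - \rho_D(L) = \Ht(R) - \Ht(L) = \Des_D(v)$, which is precisely the asserted equality. The only slightly delicate point is the one-step lemma of the second paragraph: its inductive statement has to be phrased so as to cover uniformly non-bud nodes (which persist), $x$-buds (which become non-bud nodes) and $y$-buds (which become $x$-buds), and one must check that all three rules rewriting $x$ produce children whose $\rho$-maximum is $0$; everything else is routine bookkeeping.
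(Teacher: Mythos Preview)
Your proof is correct and follows essentially the same idea as the paper's, namely that each derivation step raises the ``height'' of every subtree by one, with $y$-buds lagging one step behind $x$-buds. The paper states this informally; you have made it precise by packaging the idea into the auxiliary function $\rho$ and the invariant $\delta(v)=\rho(c_R)-\rho(c_L)$, which is a clean way to carry out what the paper only sketches.
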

\begin{proof}
    Each step of derivation leading to $D$ substitutes each \Bourgeon{.4}{$x$}
    by new bud trees of height two, and each \Bourgeon{.4}{$y$} by
    new bud trees of height one. Thus, each step of derivation increases
    by one the height of the subtrees containing a \Bourgeon{.4}{$x$}.
    Besides, the role of the \Bourgeon{.4}{$y$} is to delay, during
    one step of derivation, the growth of the branch containing these, to
    enable the creation of the imbalance values $-1$ and $1$. Since $D$
    does not have any \Bourgeon{.4}{$y$}, every growing delay is
    respected, so that imbalance values are its labels.
\end{proof}

Proposition~\ref{prop:GrammaireEq} shows that the bud trees generated by
$S_\textnormal{bal}$ only containing buds \Bourgeon{.4}{$x$} are balanced
binary trees. Moreover, a simple structural induction on balanced binary
trees shows that every balanced binary tree can be generated by $S_\textnormal{bal}$.
Indeed, the empty tree can be generated, and, if $T$ is a balanced binary
tree and $z$ its root, by induction hypothesis, its left subtree and its
right subtree can be generated by $S_\textnormal{bal}$. To generate $T$,
one just have to make the first derivation according to the imbalance value
of $z$. Figure~\ref{fig:ExempleGenerationEq} shows an example of generation
of a balanced binary tree.
\begin{figure}[ht]
    \centering
        \raisebox{1.9em}{%
        \scalebox{.4}{%
        \begin{tikzpicture}
            \node[Bourgeon]{\Huge $x$};
        \end{tikzpicture}}}
        \quad \raisebox{2em}{$\xrightarrow{S_\textnormal{bal}}$} \quad
        \raisebox{1.3em}{%
        \scalebox{.4}{%
        \begin{tikzpicture}
            \node[Bourgeon](1)at(0,0){\Huge $y$};
            \node[Noeud,EtiqClair](2)at(1,1){\LARGE $1$};
            \node[Bourgeon](3)at(2,0){\Huge $x$};
            \draw[Arete](1)--(2);
            \draw[Arete](2)--(3);
        \end{tikzpicture}}}
        \quad \raisebox{2em}{$\xrightarrow{S_\textnormal{bal}}$} \quad
        \raisebox{1.2em}{%
        \scalebox{.4}{%
        \begin{tikzpicture}
            \node[Bourgeon](0)at(0,-1){\Huge $x$};
            \node[Noeud,EtiqClair](1)at(1,0){\LARGE $1$};
            \draw[Arete](1)--(0);
            \node[Bourgeon](2)at(2,-2){\Huge $x$};
            \node[Noeud,EtiqClair](3)at(3,-1){\LARGE $-1$};
            \draw[Arete](3)--(2);
            \node[Bourgeon](4)at(4,-2){\Huge $y$};
            \draw[Arete](3)--(4);
            \draw[Arete](1)--(3);
        \end{tikzpicture}}}
        \quad \raisebox{2em}{$\xrightarrow{S_\textnormal{bal}}$} \quad
        \raisebox{0.7em}{%
        \scalebox{.4}{%
        \begin{tikzpicture}
            \node[Bourgeon](0)at(0,-2){\Huge $x$};
            \node[Noeud,EtiqClair](1)at(1,-1){\LARGE $-1$};
            \draw[Arete](1)--(0);
            \node[Bourgeon](2)at(2,-2){\Huge $y$};
            \draw[Arete](1)--(2);
            \node[Noeud,EtiqClair](3)at(3,0){\LARGE $1$};
            \draw[Arete](3)--(1);
            \node[Bourgeon](4)at(4,-3){\Huge $y$};
            \node[Noeud,EtiqClair](5)at(5,-2){\LARGE $1$};
            \draw[Arete](5)--(4);
            \node[Bourgeon](6)at(6,-3){\Huge $x$};
            \draw[Arete](5)--(6);
            \node[Noeud,EtiqClair](7)at(7,-1){\LARGE $-1$};
            \draw[Arete](7)--(5);
            \node[Bourgeon](8)at(8,-2){\Huge $x$};
            \draw[Arete](7)--(8);
            \draw[Arete](3)--(7);
        \end{tikzpicture}}} \\
        \quad \raisebox{2em}{$\xrightarrow{S_\textnormal{bal}}$} \quad
        \scalebox{.4}{%
        \begin{tikzpicture}
            \node[Bourgeon](0)at(0,-3){\Huge $x$};
            \node[Noeud,EtiqClair](1)at(1,-2){\LARGE $0$};
            \draw[Arete](1)--(0);
            \node[Bourgeon](2)at(2,-3){\Huge $x$};
            \draw[Arete](1)--(2);
            \node[Noeud,EtiqClair](3)at(3,-1){\LARGE $-1$};
            \draw[Arete](3)--(1);
            \node[Bourgeon](4)at(4,-2){\Huge $x$};
            \draw[Arete](3)--(4);
            \node[Noeud,EtiqClair](5)at(5,0){\LARGE $1$};
            \draw[Arete](5)--(3);
            \node[Bourgeon](6)at(6,-3){\Huge $x$};
            \node[Noeud,EtiqClair](7)at(7,-2){\LARGE $1$};
            \draw[Arete](7)--(6);
            \node[Bourgeon](8)at(8,-4){\Huge $x$};
            \node[Noeud,EtiqClair](9)at(9,-3){\LARGE $0$};
            \draw[Arete](9)--(8);
            \node[Bourgeon](10)at(10,-4){\Huge $x$};
            \draw[Arete](9)--(10);
            \draw[Arete](7)--(9);
            \node[Noeud,EtiqClair](11)at(11,-1){\LARGE $-1$};
            \draw[Arete](11)--(7);
            \node[Bourgeon](12)at(12,-3){\Huge $x$};
            \node[Noeud,EtiqClair](13)at(13,-2){\LARGE $0$};
            \draw[Arete](13)--(12);
            \node[Bourgeon](14)at(14,-3){\Huge $x$};
            \draw[Arete](13)--(14);
            \draw[Arete](11)--(13);
            \draw[Arete](5)--(11);
        \end{tikzpicture}}
        \quad \raisebox{2em}{$\approx$} \quad
        \scalebox{.25}{%
        \raisebox{2.5em}{%
        \begin{tikzpicture}
            \node[Feuille](0)at(0,-3){};
            \node[Noeud](1)at(1,-2){};
            \node[Feuille](2)at(2,-3){};
            \draw[Arete](1)--(0);
            \draw[Arete](1)--(2);
            \node[Noeud](3)at(3,-1){};
            \node[Feuille](4)at(4,-2){};
            \draw[Arete](3)--(1);
            \draw[Arete](3)--(4);
            \node[Noeud](5)at(5,0){};
            \node[Feuille](6)at(6,-3){};
            \node[Noeud](7)at(7,-2){};
            \node[Feuille](8)at(8,-4){};
            \node[Noeud](9)at(9,-3){};
            \node[Feuille](10)at(10,-4){};
            \draw[Arete](9)--(8);
            \draw[Arete](9)--(10);
            \draw[Arete](7)--(6);
            \draw[Arete](7)--(9);
            \node[Noeud](11)at(11,-1){};
            \node[Feuille](12)at(12,-3){};
            \node[Noeud](13)at(13,-2){};
            \node[Feuille](14)at(14,-3){};
            \draw[Arete](13)--(12);
            \draw[Arete](13)--(14);
            \draw[Arete](11)--(7);
            \draw[Arete](11)--(13);
            \draw[Arete](5)--(3);
            \draw[Arete](5)--(11);
        \end{tikzpicture}}}
    \caption{Generation of a balanced binary tree by the synchronous grammar $S_\textnormal{bal}$.}
    \label{fig:ExempleGenerationEq}
\end{figure}
\medskip

By setting $y \leq_B x$, $S_\textnormal{bal}$ satisfies the hypothesis of
Lemma~\ref{lem:ConditionLocFin} and hence, is strict. Moreover, Lemma~\ref{lem:ConditionNAmb}
shows that $S_\textnormal{bal}$ is unambiguous since all bud trees arising
in a right member of the substitution rules of $S_\textnormal{bal}$ have
a different root since their labeling differ. Proposition~\ref{prop:GrammaireEq}
shows that this labeling is consistent. Hence $\GramSerie_{S_\textnormal{bal}}$
is well-defined. By Proposition~\ref{prop:EquationFoncGS}, the generating
series enumerating the elements of $\Lang_{S_\textnormal{bal}}$ satisfies
the fixed-point functional equation
\begin{equation}
    \GramSerie_{S_\textnormal{bal}}(x, y) = x + \GramSerie_{S_\textnormal{bal}}(x^2 + 2xy, x).
\end{equation}
First $\GramSerie_{S_\textnormal{bal}}^{(\ell)}$ polynomials are
\begin{align}
    \GramSerie_{S_\textnormal{bal}}^{(0)}(x, y) & = x, \\
    \GramSerie_{S_\textnormal{bal}}^{(1)}(x, y) & = 2xy + x^2, \\
    \GramSerie_{S_\textnormal{bal}}^{(2)}(x, y) & = 4x^2y + 2x^3 + 4x^2y^2 + 4x^3y + x^4.
\end{align}
As already mentioned, to enumerate balanced binary trees, we have to discard the
elements of $\Lang_{S_\textnormal{bal}}$ that contain a bud labelled by $y$.
Thus, the generating series enumerating balanced binary trees according to their
number of leaves is given by the specialization $\GramSerie_{S_\textnormal{bal}}(x, 0)$.
Note that this fixed-point functional equation is obtained in~\cite{BLR88, BLL94, KNU398}
by other methods.

\section{Imbalance tree patterns and balanced binary trees} \label{sec:Motifs}

Word patterns and permutations patterns are usually used to describe
languages or sets of permutations by considering the set of elements avoiding
these ones. We use the same idea to describe sets of binary trees by introducing
a notion of binary tree patterns and pattern avoidance.
\medskip

We show that we can describe in this way some interesting subsets of
the set of balanced binary trees according to their particular position
in the Tamari lattice. Afterwards, we use the methods developed in the
previous section to construct synchronous grammar generating the maximal
balanced binary trees in the Tamari lattice and get fixed-point functional
equation for the generating series enumerating these.

\subsection{Imbalance tree patterns}

\begin{Definition}
    An \emph{imbalance tree pattern} is a nonempty incomplete rooted planar
    binary tree with labels in $\EnsRel$.
\end{Definition}

Let $T$ be a binary tree. We denote by $T^{\Des}$ the labeled binary tree
of shape $T$ whose nodes are labeled by their imbalance value. We say that
$T$ admits an \emph{occurrence} of the imbalance tree pattern $p$ if a
connected component of $T^{\Des}$ has the same shape and same labels as $p$.
\medskip

Now, given a set $P$ of imbalance tree patterns, we can define the set
composed of the binary trees that \emph{avoid} $P$, \emph{i.e.}, the binary trees
that do not admit any occurrence of the elements of~$P$. For example, the set
\begin{equation}
    \left \{
        \scalebox{.45}{%
        \raisebox{-0.5em}{%
        \begin{tikzpicture}
            \node[Noeud,EtiqClair]{\huge $i$};
        \end{tikzpicture}}}
        : i \notin \{-1, 0, 1\}
    \right \}
\end{equation}
describes the set of balanced binary trees, the set
\begin{equation}
    \left \{
        \scalebox{.45}{%
        \raisebox{-0.5em}{%
            \begin{tikzpicture}
                \node[Noeud,EtiqClair]{\huge $i$};
            \end{tikzpicture}}}
        : i \ne 0
    \right \}
\end{equation}
describes the set of perfect binary trees and
\begin{equation}
    \left \{ \raisebox{-0.75em}{\MotifA{.45}{$i$}{$j$}} : i, j \in \EnsRel \right \}
\end{equation}
describes the set of \emph{right comb binary trees}, that are binary trees
such that each node has an empty left subtree.
\medskip

As exposed in Section~\ref{subsec:ExemplesGrammaires}, synchronous grammars
allow to generate binary trees by controlling the imbalance values of their
nodes. Hence, they allow to generate binary trees that avoid some imbalance
tree patterns.

\subsection{Minimal and maximal balanced binary trees in the Tamari lattice}

\subsubsection{Minimal and maximal balanced binary trees}
Let us first describe a set of balanced binary trees and its counterpart
whose elements are, roughly speaking, at the end of the balanced
binary trees subposet of the Tamari lattice.

\begin{Definition}
    A balanced binary tree $T_0$ (resp. $T_1$) is \emph{maximal} (resp. \emph{minimal})
    if, for all binary tree $T_1$ (resp. $T_0$) such that $T_0 \CouvTam T_1$,
    we have $T_1$ (resp. $T_0$) unbalanced.
\end{Definition}

By Theorem~\ref{thm:ClotureIntArbresEq}, if $T_0$ (resp. $T_1$) is a maximal
(resp. minimal) balanced binary tree, then there does not exist any balanced
binary tree $T_1$ (resp. $T_0$) such that $T_0 \OrdTam T_1$. Maximal (resp. minimal)
balanced binary trees are thus maximal (resp. minimal) elements in the Tamari
lattice restricted to balanced binary trees.

\begin{Proposition} \label{prop:MotifsArbresEqMax}
    A balanced binary tree $T$ is maximal if and only if it avoids the set
    of imbalance tree patterns
    \begin{equation}
        P_{\textnormal{max}} :=
        \left \{
            \raisebox{-0.75em}{\MotifA{.45}{$-1$}{$-1$}},\quad \raisebox{-0.75em}{\MotifA{.45}{$0$}{$-1$}}
        \right \}.
    \end{equation}
    Similarly, a balanced binary tree $T$ is minimal if and only if it
    avoids the set of imbalance tree patterns
    \begin{equation}
        P_{\textnormal{min}} :=
        \left \{
            \raisebox{-0.75em}{\MotifB{.45}{$1$}{$1$}},\quad \raisebox{-0.75em}{\MotifB{.45}{$1$}{$0$}}
        \right \}.
    \end{equation}
\end{Proposition}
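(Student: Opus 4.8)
The plan is to reduce maximality to a purely local condition on imbalance values, using the nine‑case analysis \ref{item:CasRot1}--\ref{item:CasRot9}, and then to read off that this condition is exactly the avoidance of $P_{\textnormal{max}}$. The key auxiliary statement I would prove first is: if $T$ is a balanced binary tree, $y$ a node of $T$ with nonempty left subtree, $x$ its left child, and $T'$ the binary tree obtained from $T$ by the right rotation of root $y$, then $T'$ is balanced if and only if $\Des_T(y) = -1$ and $\Des_T(x) \in \{-1, 0\}$. For the implication from right to left, these two values are exactly cases \ref{item:CasRot1} and \ref{item:CasRot2}, so $x$ and $y$ stay balanced in $T'$; moreover, the height computation carried out in the proof of Proposition~\ref{prop:HauteursEgalesEq} shows that in these two cases the modified subtree keeps the same height, so the imbalance values of all ancestors of $y$ are unchanged, and since the subtrees $A$, $B$, $C$ are untouched, $T'$ is balanced. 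For the converse, if $(\Des_T(x), \Des_T(y))$ is any of the other seven admissible pairs we are in one of \ref{item:CasRot3}--\ref{item:CasRot9}, and then $x$ or $y$ has imbalance at least $2$ in $T'$, so $T'$ is unbalanced.

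Given this lemma, the characterization of maximal trees is immediate. A balanced binary tree $T$ is not maximal precisely when some binary tree obtained from $T$ by a right rotation is still balanced; since a right rotation of root $y$ exists exactly when $y$ has a nonempty left subtree, this happens precisely when $T$ has a node $y$ with $\Des_T(y) = -1$ whose left child $x$ satisfies $\Des_T(x) \in \{-1, 0\}$. Unfolding the definition of the pattern \MotifA{.4}{}{} (whose labelled non‑root node is a \emph{left} child), this is exactly the assertion that $T$ admits an occurrence of \raisebox{-0.75em}{\MotifA{.45}{$-1$}{$-1$}} or of \raisebox{-0.75em}{\MotifA{.45}{$0$}{$-1$}}, i.e.\ that $T$ does not avoid $P_{\textnormal{max}}$. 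Hence $T$ is maximal if and only if it avoids $P_{\textnormal{max}}$.

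For the minimal case I would argue by symmetry. Let $\omega$ denote the mirror‑image involution on $\EnsAB$ obtained by recursively exchanging the left and right subtree of every node. Then $\Des_{\omega(T)}(\omega(v)) = -\Des_T(v)$, so $\omega$ preserves the set of balanced trees; $\omega$ turns right rotations into left rotations, so $T_0 \CouvTam T_1$ if and only if $\omega(T_1) \CouvTam \omega(T_0)$, whence $T$ is maximal if and only if $\omega(T)$ is minimal; and $\omega$ carries an occurrence of \raisebox{-0.75em}{\MotifA{.45}{$j$}{$i$}} to an occurrence of \raisebox{-0.75em}{\MotifB{.45}{$-i$}{$-j$}}, so $\omega$ sends $P_{\textnormal{max}}$ to $P_{\textnormal{min}}$ and $T$ avoids $P_{\textnormal{max}}$ if and only if $\omega(T)$ avoids $P_{\textnormal{min}}$. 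Feeding this into the maximal case and using that $\omega$ is a bijection yields that a balanced binary tree is minimal if and only if it avoids $P_{\textnormal{min}}$. (Alternatively, one can simply redo the two previous paragraphs with left rotations, which are the inverses of right rotations, in place of right rotations.)

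The bulk of the work, and the only place needing care, is the auxiliary lemma of the first paragraph: one must check not only that \ref{item:CasRot1} and \ref{item:CasRot2} leave $x$ and $y$ balanced but also that they do not perturb the imbalance of any ancestor of $y$, which is precisely the height invariance already recorded in Proposition~\ref{prop:HauteursEgalesEq}. Everything else is bookkeeping; the one pitfall is to keep straight that the distinguished child in \MotifA{.4}{}{} is the left child while in \MotifB{.4}{}{} it is the right child, so that the pairs $(-1,-1)$ and $(0,-1)$ of cases \ref{item:CasRot1}, \ref{item:CasRot2} match $P_{\textnormal{max}}$ read with the \emph{parent} on top, and dually for $P_{\textnormal{min}}$.
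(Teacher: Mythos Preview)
Your proof is correct and follows essentially the same approach as the paper: both reduce the question to the nine-case analysis \ref{item:CasRot1}--\ref{item:CasRot9}, observing that exactly \ref{item:CasRot1} and \ref{item:CasRot2} yield a balanced tree after rotation, which is precisely the occurrence of a pattern in $P_{\textnormal{max}}$. You are in fact more careful than the paper on one point---you explicitly invoke the height computation inside the proof of Proposition~\ref{prop:HauteursEgalesEq} to ensure the ancestors of $y$ stay balanced---and your mirror-involution argument for the minimal case is just a packaged version of the paper's ``analogous, with left rotations''.
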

\begin{proof}
    Assume that $T$ is maximal. Then, for all binary tree $T_1$ such that
    $T \CouvTam T_1$ we have $T_1$ unbalanced. Thus, it is impossible to
    do a conservative balancing rotation into $T$ and, looking at the
    different sorts of  rotations studied in Section~\ref{subsec:RotationsEquilibre}
    it avoids the set $P_{\textnormal{max}}$.

    Conversely, assume that $T$ is a balanced binary tree that avoids the
    two patterns of $P_{\textnormal{max}}$, then, for every binary tree
    $T_1$ such that $T \CouvTam T_1$, $T_1$ is unbalanced since for all
    node $y$ which has a left child $x$ in $T$, the imbalance values of
    $x$ and $y$ satisfy one of the seven cases~\ref{item:CasRot3}--\ref{item:CasRot9}
    of Section~\ref{subsec:RotationsEquilibre}. Thus, we can only do unbalancing
    rotations into $T$, implying that $T$ is maximal.

    The second part of the proposition is done in an analogous way, considering
    left rotations instead of right rotations.
\end{proof}

\begin{Proposition} \label{prop:SGenArbresEqMax}
    The generating series enumerating maximal balanced binary trees according
    to the number of leaves of the trees is $\GramSerie_{\textnormal{max}}(x, 0, 0)$
    where
    \begin{equation}
        \GramSerie_{\textnormal{max}}(x, y, z) = x + \GramSerie_{\textnormal{max}}(x^2 + xy + yz, x, xy).
    \end{equation}
\end{Proposition}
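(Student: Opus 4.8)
The plan is to build a synchronous grammar $S_{\textnormal{max}}$ whose bud trees all of whose buds are \Bourgeon{.4}{$x$} are exactly the maximal balanced binary trees (reading \Bourgeon{.4}{$x$} as a leaf), and then to read off the announced equation from Proposition~\ref{prop:EquationFoncGS}. By Proposition~\ref{prop:MotifsArbresEqMax} a maximal balanced binary tree is a balanced binary tree in which the left child of every node of imbalance $-1$ (which is a genuine node, since imbalance $-1$ forces a nonempty left subtree) has imbalance $1$: the values $-1$ and $0$ are precisely the two cases excluded by $P_{\textnormal{max}}$. So, starting from $S_{\textnormal{bal}}$, I would add a third bud label \Bourgeon{.4}{$z$} whose only job is to force the node it produces to have imbalance $1$, and take $S_{\textnormal{max}} := \left(\{x, y, z\}, \Bourgeon{.4}{$x$}, R\right)$ where $R$ consists of
\begin{align}
    x & \longmapsto
            \raisebox{-0.65em}{\BourgeonA{.45}{$x$}{$0$}{$x$}} \quad + \quad
            \raisebox{-0.65em}{\BourgeonA{.45}{$y$}{$1$}{$x$}} \quad + \quad
            \raisebox{-0.65em}{\BourgeonA{.45}{$z$}{$-1$}{$y$}}\, , \\
    y & \longmapsto \Bourgeon{.45}{$x$}\, , \\
    z & \longmapsto \raisebox{-0.65em}{\BourgeonA{.45}{$y$}{$1$}{$x$}}\, .
\end{align}

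The main step is to prove that $S_{\textnormal{max}}$ is correct. For the inclusion ``generated with only \Bourgeon{.4}{$x$} buds $\Rightarrow$ maximal balanced'': relabeling every \Bourgeon{.4}{$z$} bud into an \Bourgeon{.4}{$x$} bud turns each rule of $S_{\textnormal{max}}$ into a rule of $S_{\textnormal{bal}}$, so the relabeled generating derivation of such a $D$ is a generating derivation of the same bud tree $D$ in $S_{\textnormal{bal}}$; by Proposition~\ref{prop:GrammaireEq} its non-bud nodes are then labeled by their imbalance values, hence $D$ is a balanced binary tree. The only rule of $S_{\textnormal{max}}$ creating a node of imbalance $-1$ attaches a \Bourgeon{.4}{$z$} bud as its left child, and since $D$ contains no \Bourgeon{.4}{$z$} bud, that \Bourgeon{.4}{$z$} has been rewritten by the unique rule for \Bourgeon{.4}{$z$}, producing a left child of imbalance $1$; thus $D$ avoids $P_{\textnormal{max}}$ and is maximal by Proposition~\ref{prop:MotifsArbresEqMax}. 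For the converse I would run the structural induction on balanced binary trees sketched for $S_{\textnormal{bal}}$ after Proposition~\ref{prop:GrammaireEq}: the empty tree is the axiom, and a tree $L \ABCons R$ of root imbalance $0$, $1$ or $-1$ is obtained by applying the corresponding rule and then generating $L$ and $R$ — which are again maximal, since the imbalance value of a node depends only on the subtree it roots, so $P_{\textnormal{max}}$-avoidance passes to connected subtrees — the \Bourgeon{.4}{$y$} bud playing its usual growth-delay role so that the two branches terminate synchronously. The one new point is that when the root of $T$ has imbalance $-1$, maximality forces $L$ to have root imbalance $1$; since the rule for \Bourgeon{.4}{$z$} coincides with the imbalance-$1$ rule for \Bourgeon{.4}{$x$}, $L$ is generated from a \Bourgeon{.4}{$z$} bud exactly as it would be from an \Bourgeon{.4}{$x$} bud, and the derivation goes through. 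I expect this correctness step to be the only real obstacle.

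It then remains to run the routine checks. The grammar $S_{\textnormal{max}}$ is strict by Lemma~\ref{lem:ConditionLocFin} with any total order satisfying $y <_B x$, the only rule whose right-hand side lies in $\EnsBud_1$ being $y \longmapsto \Bourgeon{.4}{$x$}$; it is unambiguous by Lemma~\ref{lem:ConditionNAmb}, since the three right-hand sides of the rule for \Bourgeon{.4}{$x$} have pairwise distinct roots, labeled $0$, $1$ and $-1$, while \Bourgeon{.4}{$y$} and \Bourgeon{.4}{$z$} each admit a single rule. Hence $\GramSerie_{S_{\textnormal{max}}}$ is well-defined and equals $\sum_{D \in \Lang_{S_{\textnormal{max}}}} \Eval(D)$. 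Computing $\Expr(x) = x^2 + xy + yz$, $\Expr(y) = x$, $\Expr(z) = xy$ and $\Eval(a) = x$, Proposition~\ref{prop:EquationFoncGS} yields
\begin{equation}
    \GramSerie_{S_{\textnormal{max}}}(x, y, z) = x + \GramSerie_{S_{\textnormal{max}}}(x^2 + xy + yz,\, x,\, xy),
\end{equation}
so $\GramSerie_{\textnormal{max}} := \GramSerie_{S_{\textnormal{max}}}$ is the announced series. By the correctness step, the bud trees of $\Lang_{S_{\textnormal{max}}}$ with no \Bourgeon{.4}{$y$} and no \Bourgeon{.4}{$z$} bud are exactly the maximal balanced binary trees, so the coefficient of $x^n$ in the specialization $\GramSerie_{\textnormal{max}}(x, 0, 0)$ counts those with $n$ leaves, which is the claim.
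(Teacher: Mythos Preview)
Your proposal is correct and follows essentially the same approach as the paper: you build the identical synchronous grammar $S_{\textnormal{max}}$, verify strictness and unambiguity via Lemmas~\ref{lem:ConditionLocFin} and~\ref{lem:ConditionNAmb}, and read off the functional equation from Proposition~\ref{prop:EquationFoncGS}. Your correctness argument is in fact slightly more detailed than the paper's --- the relabeling trick that reduces to $S_{\textnormal{bal}}$ and invokes Proposition~\ref{prop:GrammaireEq} directly, and the explicit remark that $P_{\textnormal{max}}$-avoidance passes to subtrees, make the induction cleaner than the paper's sketch.
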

\begin{proof}
    To obtain this fixed-point functional equation, let us consider the synchronous grammar
    $S_{\textnormal{max}} := \left(\{x, y, z\}, \Bourgeon{.4}{$x$}, R\right)$ where $R$ contains the
    substitution rules
    \begin{align}
        x & \longmapsto \raisebox{-.5em}{\BourgeonA{.45}{$x$}{$0$}{$x$}} \quad \raisebox{0.75em}{$+$} \quad
                        \raisebox{-.5em}{\BourgeonA{.45}{$y$}{$1$}{$x$}} \quad \raisebox{0.75em}{$+$} \quad
                        \raisebox{-.5em}{\BourgeonA{.45}{$z$}{$-1$}{$y$}}\, , \\
        y & \longmapsto \Bourgeon{.45}{$x$}\, , \\
        z & \longmapsto \raisebox{-.5em}{\BourgeonA{.45}{$y$}{$1$}{$x$}}\, .
    \end{align}

    We can apply the same idea developed in the proof of Proposition~\ref{prop:GrammaireEq}
    to show that the bud trees generated by $S_{\textnormal{max}}$ that
    only contain buds \Bourgeon{.4}{$x$} have non-bud nodes labeled by
    their imbalance values. Hence, by identifying in such trees the
    \Bourgeon{.4}{$x$} with leaves, $S_{\textnormal{max}}$ only generates
    maximal balanced binary trees. Indeed, by Proposition~\ref{prop:MotifsArbresEqMax},
    the generated trees must avoid the two patterns of $P_{\textnormal{max}}$.
    To do that, we have to control the growth of the \Bourgeon{.4}{$x$}
    when they are substituted by bud trees $D$ whose roots have imbalance
    values of $-1$. Indeed, if the root of the left subtree of $D$ grows
    with an imbalance value of $-1$ or $0$, one of the two patterns is not
    avoided. The idea is to force the imbalance value of the root of the
    left subtree of $D$ to be $1$, role played by the bud \Bourgeon{.4}{$z$}.
    The role of the bud \Bourgeon{.4}{$y$} is to delay the growth of a branch
    of the generated bud tree in order to create the imbalance values $-1$
    and $1$. Moreover, by structural induction on maximal balanced binary trees,
    one can also prove that all maximal balanced binary trees can be generated
    by $S_{\textnormal{max}}$.

    By setting $y \leq_B x$, $S_{\textnormal{max}}$ satisfies the hypothesis
    of Lemma~\ref{lem:ConditionLocFin}, and hence, is strict. This synchronous
    grammar is also unambiguous since it satisfies the hypothesis of
    Lemma~\ref{lem:ConditionNAmb}. Indeed, the roots of all bud trees
    appearing in a right member of the substitution rules of $R$ are different
    to one other, due to their labeling.

    Finally, since $S_{\textnormal{max}}$ is strict and unambiguous, by
    Proposition~\ref{prop:EquationFoncGS}, we obtain the stated fixed-point
    functional equation, and the generating series is obtained by the
    specialization $y = 0$ and $z = 0$ in order to ignore bud trees containing
    a bud labelled by $y$ or by~$z$.
\end{proof}

The solution of this fixed-point functional equation gives us the following
first values for the number of maximal balanced binary trees in the Tamari lattice:
\begin{equation}
    \begin{split}
        & 1, 1, 1, 1, 2, 2, 2, 4, 6, 9, 11, 13, 22, 38, 60, 89, 128, 183, 256, 353, 512, 805, 1336, 2221, 3594, \\
        & 5665, 8774, 13433, 20359, 30550, 45437, 67086, 98491, 144492, 213876.
    \end{split}
\end{equation}

\subsubsection{Interior balanced binary trees}
Let us now describe a set of balanced binary trees and its counterpart
whose elements are, roughly speaking, in the heart of the balanced binary
trees subposet of the Tamari lattice.

\begin{Definition}
    A balanced binary tree $T_0$ (resp. $T_1$) is \emph{right interior}
    (resp. \emph{left interior}) if all binary tree $T_1$ (resp. $T_0$)
    such that $T_0 \CouvTam T_1$ is balanced.
\end{Definition}

\begin{Proposition} \label{prop:ArbresEqInterieurs}
    A balanced binary tree $T$ is right interior if and only if it avoids
    the set of imbalance tree patterns
    \begin{equation}
        P_{\textnormal{rint}} := \left\{
            \raisebox{-0.75em}{\MotifA{.45}{$-1$}{$0$}},\quad
            \raisebox{-0.75em}{\MotifA{.45}{$-1$}{$1$}},\quad
            \raisebox{-0.75em}{\MotifA{.45}{$0$}{$0$}},\quad
            \raisebox{-0.75em}{\MotifA{.45}{$0$}{$1$}},\quad
            \raisebox{-0.75em}{\MotifA{.45}{$1$}{$-1$}},\quad
            \raisebox{-0.75em}{\MotifA{.45}{$1$}{$0$}},\quad
            \raisebox{-0.75em}{\MotifA{.45}{$1$}{$1$}}
        \right\}.
    \end{equation}
    Similarly, a balanced binary tree $T$ is left interior if and only if
    it avoids the set of imbalance tree patterns
    \begin{equation}
        P_{\textnormal{lint}} := \left\{
            \raisebox{-0.75em}{\MotifB{.45}{$0$}{$1$}},\quad
            \raisebox{-0.75em}{\MotifB{.45}{$-1$}{$1$}},\quad
            \raisebox{-0.75em}{\MotifB{.45}{$0$}{$0$}},\quad
            \raisebox{-0.75em}{\MotifB{.45}{$-1$}{$0$}},\quad
            \raisebox{-0.75em}{\MotifB{.45}{$1$}{$-1$}},\quad
            \raisebox{-0.75em}{\MotifB{.45}{$0$}{$-1$}},\quad
            \raisebox{-0.75em}{\MotifB{.45}{$-1$}{$-1$}}
        \right\}.
    \end{equation}
\end{Proposition}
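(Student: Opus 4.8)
The plan is to follow closely the proof of Proposition~\ref{prop:MotifsArbresEqMax}, using the classification of the nine sorts of right rotations from Section~\ref{subsec:RotationsEquilibre}. The key observation is purely bookkeeping: recording each pattern of $P_{\textnormal{rint}}$ by the pair (imbalance of the left child, imbalance of the parent) yields exactly the seven pairs $(-1,0)$, $(-1,1)$, $(0,0)$, $(0,1)$, $(1,-1)$, $(1,0)$ and $(1,1)$, and these are precisely the pairs $(\Des_T(x),\Des_T(y))$ — with $x$ the left child of $y$ — for which the right rotation of root $y$ is unbalancing, that is, falls into one of the cases~\ref{item:CasRot3}--\ref{item:CasRot9}. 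The two complementary pairs $(-1,-1)$ and $(0,-1)$ are precisely the conservative balancing cases~\ref{item:CasRot1} and~\ref{item:CasRot2}.

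For the direct implication I would argue contrapositively. If the balanced binary tree $T$ admits an occurrence of some pattern of $P_{\textnormal{rint}}$, there is a node $y$ of $T$ with a nonempty left subtree, of root $x$, such that $(\Des_T(x),\Des_T(y))$ is one of the seven listed pairs. Letting $T_1$ be obtained from $T$ by the right rotation of root $y$, we have $T \CouvTam T_1$, and by one of the cases~\ref{item:CasRot3}--\ref{item:CasRot9} either $x$ or $y$ has imbalance at least $2$ in $T_1$; hence $T_1$ is unbalanced and $T$ is not right interior. Conversely, assume $T$ is balanced and avoids $P_{\textnormal{rint}}$, and let $T_1$ be any binary tree with $T \CouvTam T_1$, obtained by the right rotation of root $y$ with left child $x$. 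Since $T$ is balanced, $(\Des_T(x),\Des_T(y)) \in \{-1,0,1\}^2$, and avoidance of the seven patterns forces this pair to be $(-1,-1)$ or $(0,-1)$. The rotation is then conservative balancing (case~\ref{item:CasRot1} or~\ref{item:CasRot2}): only the imbalance values of $x$ and $y$ change, and they remain in $\{-1,0,1\}$; since a rotation acts locally, $T_1$ is balanced. So every $T_1$ with $T \CouvTam T_1$ is balanced, that is, $T$ is right interior.

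For the statement about left interior trees and $P_{\textnormal{lint}}$, I would run the mirror-image of the same argument. A balanced tree $T$ is left interior precisely when every left rotation applicable to $T$ (that is, at a node with a nonempty right subtree) yields a balanced tree. Passing to the left--right mirror of a binary tree negates every imbalance value and exchanges left and right rotations, so the classification of Section~\ref{subsec:RotationsEquilibre} translates to: a left rotation at a node $x$ with right child $y$ preserves balance exactly when $(\Des_T(x),\Des_T(y)) \in \{(1,1),(1,0)\}$; recording the patterns of $P_{\textnormal{lint}}$ by the pair (imbalance of the parent, imbalance of the right child), these are precisely the two pairs that do not occur. The remainder of the argument is then verbatim the previous one with ``left'' substituted for ``right''.

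I do not expect a genuine obstacle here; the proposition is in essence a repackaging of the rotation case analysis of Section~\ref{subsec:RotationsEquilibre} together with the definition of pattern occurrence. The two points needing a little care are: (i) verifying once and for all that the explicit pair lists of $P_{\textnormal{rint}}$ and $P_{\textnormal{lint}}$ are exactly the complements of the ``balance-preserving'' pairs for right and left rotations respectively; and (ii) using the notion of occurrence correctly — an occurrence of a one-edge imbalance tree pattern constrains only the imbalance values of a node and of one designated child, imposing nothing on the other subtrees involved, which is exactly why the purely local rotation data of Section~\ref{subsec:RotationsEquilibre} suffices.
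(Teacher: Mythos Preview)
Your proposal is correct and follows essentially the same approach as the paper: both arguments reduce the characterization to the nine-case rotation analysis of Section~\ref{subsec:RotationsEquilibre}, observing that the seven patterns of $P_{\textnormal{rint}}$ correspond exactly to the unbalancing cases~\ref{item:CasRot3}--\ref{item:CasRot9} while the complementary pairs give the conservative balancing cases~\ref{item:CasRot1} and~\ref{item:CasRot2}, and then handle the left-interior part by the mirror argument. Your contrapositive phrasing of the forward implication is logically equivalent to the paper's direct one, and your remark that in cases~\ref{item:CasRot1} and~\ref{item:CasRot2} only the imbalance values of $x$ and $y$ change (because the height of the rotated subtree is preserved, cf.\ Proposition~\ref{prop:HauteursEgalesEq}) makes explicit a step the paper leaves implicit.
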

\begin{proof}
    Assume that $T$ is right interior. Then, for all binary tree $T_1$ such
    that $T \CouvTam T_1$, $T_1$ is balanced. Thus, for every node $y$ and
    its left child $x$ in $T$, the imbalance values of $x$ and $y$
    satisfy~\ref{item:CasRot1} or~\ref{item:CasRot2} of Section~\ref{subsec:RotationsEquilibre}
    since one can only do conservative balancing rotations in $T$. Hence,
    $T$ must avoid the seven given patterns.

    Conversely, assume that $T$ is a balanced binary tree that avoids the
    patterns of $P_{\textnormal{rint}}$. For every node $y$ which has a
    left child $x$ in $T$, the imbalance values of $x$ and $y$ satisfy~\ref{item:CasRot1}
    or~\ref{item:CasRot2}. Thus, the rotation of root $y$ in $T$ produces
    a balanced binary tree and implies that $T$ is interior.

    The second part of the proposition is done in an analogous way, considering
    left rotations instead of right rotations.
\end{proof}

In the sequel, we shall only consider right interior balanced binary trees
so we call these \emph{interior} balanced binary trees. This family of binary
trees is easily enumerable according to their height:
\begin{Proposition}
    The number $a_h$ of interior balanced binary trees of height $h$ is
    \begin{equation} \label{eq:ArbresInterieurs}
        a_h =
        \begin{cases}
            1                    & \mbox{if $h \in \{0, 1, 3\}$,} \\
            2                    & \mbox{if $h = 2$,} \\
            a_{h - 1} a_{h - 2}  & \mbox{otherwise.}
        \end{cases}
    \end{equation}
\end{Proposition}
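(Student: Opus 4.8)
The plan is to convert the pattern characterization of Proposition~\ref{prop:ArbresEqInterieurs} into a purely local condition on imbalance values and then run a recursion on the root. By Proposition~\ref{prop:ArbresEqInterieurs}, a balanced binary tree $T$ is interior if and only if, whenever a node $y$ has a nonempty left subtree with root $x$, the pair $(\Des_T(x), \Des_T(y))$ lies outside $P_{\textnormal{rint}}$; since $x$ and $y$ are balanced, the only surviving pairs are $(-1,-1)$ and $(0,-1)$. Equivalently, $T$ is interior exactly when it is balanced, every node with a nonempty left subtree has imbalance $-1$, and every node that is the left child of some node has imbalance different from $1$. I would also record the elementary stability remark that every subtree of an interior balanced tree is again interior balanced, since the imbalance of a node and both conditions above depend only on that node's own subtree.

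Next I would analyze the root $\rho$ of an interior balanced tree $T$ of height $h \ge 2$. Root imbalance $0$ is impossible: it forces the two subtrees of $\rho$ to have the same height; a positive common height would give $\rho$ a left child and hence imbalance $-1$, while height $0$ gives $h = 1$, contradicting $h \ge 2$ in both cases. Root imbalance $1$ forces the left subtree of $\rho$ to be empty (else $\rho$ would have a left child and imbalance $-1$), hence its right subtree has height $1$, so $T = \ArbreVide \ABCons (\ArbreVide \ABCons \ArbreVide)$ with $h = 2$. Therefore, for $h \ge 3$ the root imbalance is necessarily $-1$, and then $T = L \ABCons R$ with $\Ht(L) = h - 1$ and $\Ht(R) = h - 2$, where $L$ and $R$ are interior balanced and the root of $L$ has imbalance different from $1$.

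With this structure at hand, the small cases are settled by inspection: $a_0 = a_1 = 1$ are immediate. For $h = 2$, the only admissible root imbalances are $1$ and $-1$, giving exactly the two trees $\ArbreVide \ABCons (\ArbreVide \ABCons \ArbreVide)$ and $(\ArbreVide \ABCons \ArbreVide) \ABCons \ArbreVide$, so $a_2 = 2$. For $h = 3$, the decomposition above forces $R = \ArbreVide \ABCons \ArbreVide$ and forces $L$ to be the unique interior balanced tree of height $2$ whose root imbalance is not $1$, namely $(\ArbreVide \ABCons \ArbreVide) \ABCons \ArbreVide$; hence $a_3 = 1$.

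Finally, for $h \ge 4$ I would show that the recursion closes up. Writing $T = L \ABCons R$ as above, the left factor $L$ now has height $h - 1 \ge 3$, so by the previous paragraph its root imbalance is automatically $-1$, whence the side condition that the root of $L$ have imbalance $\ne 1$ becomes vacuous. Conversely, for any interior balanced $L$ of height $h - 1$ and any interior balanced $R$ of height $h - 2$, the tree $L \ABCons R$ is balanced, its root has a left child of imbalance $-1$ (so the only new left-child pair is $(-1,-1)$, which is allowed), and every other node that is a left child lies in $L$ or in $R$; hence $L \ABCons R$ is interior balanced of height $h$. This bijection yields $a_h = a_{h-1}\, a_{h-2}$ and establishes~(\ref{eq:ArbresInterieurs}). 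The one subtle point --- and the reason $h = 3$ must be singled out rather than absorbed into the product formula --- is that at $h = 3$ the left factor has height $2$, where root imbalance $1$ genuinely occurs and must be excluded by hand, so that $a_2\, a_1$ would overcount by exactly one.
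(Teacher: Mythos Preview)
Your proof is correct and follows essentially the same route as the paper's: reduce Proposition~\ref{prop:ArbresEqInterieurs} to the two allowed left-child pairs $(-1,-1)$ and $(0,-1)$, deduce that the root imbalance is $-1$ for $h\ge 3$, and close the recursion for $h\ge 4$ by observing that the root of $L$ then automatically has imbalance $-1$. Your treatment is slightly more explicit than the paper's (you spell out the root case analysis for $h\ge 2$ and articulate precisely why $h=3$ cannot be absorbed into the product), but the argument is the same.
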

\begin{proof}
    The values of $a_h$ for $0 \leq h \leq 3$ can easily be computed by hand.

    Let us first observe that if $T := L \ABCons R$ is an interior balanced
    binary tree of height $h \geq 3$, then $L$ and $R$ also are interior
    balanced binary trees and the imbalance value of the root of $T$ is $-1$.
    Indeed, if $L$ or $R$ is not an interior balanced binary tree, then,
    by Proposition~\ref{prop:ArbresEqInterieurs}, $L$ or $R$ would admit
    an occurrence of a pattern of $P_{\textnormal{rint}}$ and hence, would
    $T$. Moreover, if the imbalance value of $T$ is not $-1$, since $T$
    is balanced and $\Ht(T) \geq 3$, its left subtree $L$ is nonempty and
    $T$ would admit an occurrence of a pattern of $P_{\textnormal{rint}}$.

    Let us finally show that for all integer $h \geq 4$ and all interior
    balanced binary trees $L$ and $R$ such that $\Ht(L) = h - 1$ and $\Ht(R) = h - 2$,
    the binary tree $T := L \ABCons R$ is an interior balanced binary tree.
    Since $\Ht(L) \geq 3$, according to what we have just shown, the imbalance
    value of the root $x$ of $L$ is $-1$. The imbalance value of the root
    $y$ of $T$ also is $-1$ and thus, $x$ and $y$ do not form a pattern
    of $P_{\textnormal{rint}}$ in $T$. Moreover, the root of $R$ and the
    node $x$ in $T$ do neither form a pattern of $P_{\textnormal{rint}}$.
    Hence, $T$ is an interior balanced binary tree. That proves~(\ref{eq:ArbresInterieurs}).
\end{proof}

The first values of $(a_h)_{h \geq 0}$ are
\begin{equation}
    1, 1, 2, 1, 2, 2, 4, 8, 32, 256, 8192, 2097152, 17179869184.
\end{equation}
By forgetting the first three values, this is Sequence \Sloane{A000301}
of~\cite{SLO08}. Moreover, one has $a_{h} = 2^{f_{h-3}}$ for all $h \geq 3$,
where $f_i$ is the $i$-th Fibonacci number, defined by $f_i := i$ if
$i \in \{0, 1\}$, and $f_i := f_{i-1} + f_{i-2}$ otherwise.
\medskip

Recall that the set of \emph{Fibonacci binary trees}~\cite{CLR04} is formed
of the elements of the sequence $(T_i)_{i \geq 0}$ where $T_0 := T_1 := \ArbreVide$
and $T_{i + 2} := T_{i + 1} \ABCons T_i$. One can prove by structural induction on
the set of Fibonacci binary trees that these also are interior balanced binary trees.

\subsubsection{Mixed balanced binary trees}
Let us finally characterize balanced binary trees which are neither maximal nor interior.

\begin{Definition}
    A balanced binary tree $T_0$ is \emph{right mixed} (resp. \emph{left mixed})
    if there exists a balanced binary tree $T_1$ and an unbalanced binary
    tree $T'_1$ such that $T_0 \CouvTam T_1$ and $T_0 \CouvTam T'_1$
    (resp. $T_1 \CouvTam T_0$ and $T'_1 \CouvTam T_0$).
\end{Definition}

\begin{Proposition}
    A balanced binary tree $T$ is right mixed (resp. left mixed) if and only if it
    admits at least one occurrence of an imbalance tree pattern of the set
    $P_{\textnormal{max}}$ (resp. $P_{\textnormal{min}}$) and at least one
    occurrence of an imbalance tree pattern of the set $P_{\textnormal{rint}}$
    (resp. $P_{\textnormal{lint}}$).
\end{Proposition}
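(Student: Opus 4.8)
The plan is to derive this proposition directly from Propositions~\ref{prop:MotifsArbresEqMax} and~\ref{prop:ArbresEqInterieurs}, after observing that being right mixed is, by definition, nothing but being neither maximal nor right interior. First I would unwind the definitions: by the definition of the cover relation $\CouvTam$, the binary trees $T_1$ with $T \CouvTam T_1$ are exactly those obtained from $T$ by a single right rotation at a node of $T$ having a nonempty left subtree. A balanced binary tree $T$ fails to be maximal precisely when at least one such cover is balanced, and it fails to be right interior precisely when at least one such cover is unbalanced. The definition of a right mixed balanced binary tree requires the simultaneous existence of a balanced cover $T_1$ and of an unbalanced cover $T_1'$; hence $T$ is right mixed if and only if it is at once not maximal and not right interior.

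Next I would translate each of these two conditions into a pattern occurrence. Taking the contrapositive of Proposition~\ref{prop:MotifsArbresEqMax}, a balanced binary tree $T$ is not maximal if and only if it admits at least one occurrence of an imbalance tree pattern of $P_{\textnormal{max}}$; taking the contrapositive of Proposition~\ref{prop:ArbresEqInterieurs}, $T$ is not right interior if and only if it admits at least one occurrence of an imbalance tree pattern of $P_{\textnormal{rint}}$. Conjoining these two equivalences with the reduction of the previous paragraph gives exactly the stated characterization of right mixed trees. The left mixed case is then obtained by the symmetric argument: one replaces right rotations by left rotations and the sets $P_{\textnormal{max}}$ and $P_{\textnormal{rint}}$ by $P_{\textnormal{min}}$ and $P_{\textnormal{lint}}$, invoking the \emph{minimal} and \emph{left interior} halves of Propositions~\ref{prop:MotifsArbresEqMax} and~\ref{prop:ArbresEqInterieurs}.

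There is no genuine computational difficulty here; the result is essentially a corollary. The only point deserving care, and it has already been settled inside the two cited propositions, is the dichotomy coming from Section~\ref{subsec:RotationsEquilibre}: at a node $y$ of a balanced binary tree with left child $x$, the pair $(\Des_T(x), \Des_T(y))$ ranges over the nine cases~\ref{item:CasRot1}--\ref{item:CasRot9}, which split into the two conservative balancing cases, encoded by $P_{\textnormal{max}}$, and the seven unbalancing cases, encoded by $P_{\textnormal{rint}}$. Thus the patterns of $P_{\textnormal{max}}$ and of $P_{\textnormal{rint}}$ are complementary on the configurations that can occur at a rotation site, so no cover of $T$ escapes this classification, and the equivalence of the proposition follows immediately.
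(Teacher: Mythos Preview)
Your proof is correct and follows essentially the same logic as the paper's own argument: both rest on the dichotomy that, at a node with a left child, the pair of imbalance values falls either into the two conservative cases \ref{item:CasRot1}--\ref{item:CasRot2} (encoded by $P_{\textnormal{max}}$) or into the seven unbalancing cases \ref{item:CasRot3}--\ref{item:CasRot9} (encoded by $P_{\textnormal{rint}}$). The only difference is packaging: the paper re-derives this directly from the case analysis of Section~\ref{subsec:RotationsEquilibre}, whereas you factor the argument cleanly through the contrapositives of Propositions~\ref{prop:MotifsArbresEqMax} and~\ref{prop:ArbresEqInterieurs} via the observation that ``right mixed'' is precisely ``neither maximal nor right interior'' --- a slightly more modular presentation of the same content.
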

\begin{proof}
    Assume that $T$ is a mixed balanced binary tree. By definition, it is
    possible to perform a conservative balancing rotation into $T$. Hence,
    there are two nodes $x$ and $y$ in $T$ satisfying~\ref{item:CasRot1}
    or~\ref{item:CasRot2} of Section~\ref{subsec:RotationsEquilibre} and
    form an occurrence of a pattern of $P_{\textnormal{max}}$.
    Moreover, again by definition, it is possible to perform an unbalancing
    rotation into $T$. Hence, there are two nodes $x'$ and $y'$ in $T$ satisfying
    one of the seven cases ~\ref{item:CasRot3}--\ref{item:CasRot9} of Section~\ref{subsec:RotationsEquilibre}
    and form an occurrence of a pattern of $P_{\textnormal{rint}}$.

    Conversely, if $T$ admits some occurrences of patterns of both
    $P_{\textnormal{max}}$ and $P_{\textnormal{rint}}$, considering
    the nine cases of rotation in a balanced binary tree studied in
    Section~\ref{subsec:RotationsEquilibre}, we see that it is possible
    to make both a conservative and an unbalancing rotation into $T$, and
    hence $T$ is a right mixed balanced binary tree.

    The second part of the proposition is done in an analogous way, considering
    left rotations instead of right rotations.
\end{proof}

In the sequel, we shall only consider right mixed balanced binary trees,
so we call these \emph{mixed} balanced binary trees.
\medskip

Note that, for $n \geq 3$, the set $\EnsEq_n$ is a disjoint union of the
set $M$ of maximal balanced binary trees, the set $N$ of interior balanced
binary trees and the set $X$ of mixed balanced binary trees with $n$ nodes.
Indeed, by definition, $M$ and $X$ are disjoint, and in the same way, $N$
and $X$ also are. Consider now a balanced binary tree $T$ which is both
maximal and interior. That implies that $T$ is the maximal element of its
Tamari lattice, and hence, $T$ is a right comb binary tree. Since $T$ is
also balanced, it cannot have more than two nodes.

\section{The subposet of the Tamari lattice of balanced binary trees} \label{sec:Shape}

\subsection{Isomorphism between balanced binary tree intervals and hypercubes}

\begin{Lemme} \label{lem:CreationRotationEqImpossible}
    Let $T_0$ and $T_1$ be two balanced binary trees such that $T_0 \OrdTam T_1$
    and $y$ be a node of $T_0$. Then:
    \begin{enumerate}[label = {\it (\roman*)}]
        \item If the rotation of root $y$ in $T_0$ is an unbalancing rotation,
        then, if it exists, the rotation of root $y$ in $T_1$ is still an
        unbalancing rotation; \label{item:L1}
        \item If $y$ has no left child in $T_0$, then $y$ has no
        left child in $T_1$. \label{item:L2}
    \end{enumerate}
\end{Lemme}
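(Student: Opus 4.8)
The plan is to prove both statements by induction on the length of a chain of covering rotations $T_0 \CouvTam \cdots \CouvTam T_1$, so that it suffices to treat the case $T_0 \CouvTam T_1$, i.e.\ $T_1$ is obtained from $T_0$ by a single right rotation of root some node $r$ having left child $q$. Throughout, since both $T_0$ and $T_1$ are balanced, by the analysis of Section~\ref{subsec:RotationsEquilibre} the rotation at $r$ must be a conservative balancing rotation, hence of type~\ref{item:CasRot1} or~\ref{item:CasRot2}; in particular, by Proposition~\ref{prop:HauteursEgalesEq}, $\Ht(T_0) = \Ht(T_1)$, and more precisely the rotation only changes the heights of subtrees strictly inside the subtree of root $r$, leaving unchanged the height of that subtree itself and the heights of all subtrees to the right \Wrt $r$. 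The key quantitative fact I would extract is this: a right rotation of root $r$ on a balanced tree that keeps the tree balanced can only \emph{decrease} the height of the left subtree of any given node, or leave it unchanged, while the height of the corresponding right subtree can only stay the same or increase by an amount bounded so that balancedness is preserved. From the case list~\ref{item:CasRot1}--\ref{item:CasRot2}, the only node whose left-subtree height strictly drops is $q$ (from $\Des_{T_0}(q)\in\{-1,0\}$ to $\Des_{T_1}(q)=1$), and all ancestors of $r$ keep their subtree heights because $\Ht$ of the modified subtree is unchanged.

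For~\ref{item:L2}: if $y$ has no left child in $T_0$, then $y$ is a leftmost node, its left subtree is $\ArbreVide$ of height $0$. A rotation can only add nodes to left subtrees at the specific place dictated by the rotation, namely the left child of the rotation root $r$ becomes $A$ where $S_0 = (A\ABCons B)\ABCons C$; but then $r$ already had a nonempty left subtree $A\ABCons B$ in $T_0$, so $r\ne y$, and $A$ is itself nonempty (it was the left subtree of the former left child $q$, which exists). Any other node retains its left subtree verbatim — the rotation is a local modification of the subtree rooted at $r$ and merely reattaches $B$ from the left side of $q$ to the left side of $r$'s new right child. So no node that had an empty left subtree acquires a nonempty one. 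This handles the single-step case; the general case follows since the property "$y$ has no left child" is preserved along the whole chain.

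For~\ref{item:L1}: suppose the rotation of root $y$ in $T_0$ is unbalancing, i.e.\ writing the subtree of root $y$ in $T_0$ as $(A'\ABCons B')\ABCons C'$ with left child $x$ of $y$, the pair $(\Des_{T_0}(x),\Des_{T_0}(y))$ falls into one of~\ref{item:CasRot3}--\ref{item:CasRot9}. I must show that, if $y$ still admits a right rotation in $T_1$ (equivalently, $y$ still has a nonempty left subtree in $T_1$, which by~\ref{item:L2} applied contrapositively is automatic as long as it had one), then that rotation is again unbalancing. The cleanest route is via the rewriting rule $\Theta$ and the height word machinery: the condition that the rotation of root $y$ is unbalancing is exactly the statement that, with $\ell$ the leftmost node of the subtree of root $y$, a certain two-letter factor of $\MotHt(\ell)$ — namely the heights of $B'$ and $C'$ — fails the local condition $v_1-1\le v_2$ of Definition~\ref{def:AdmissibiliteMots}, or rather that the resulting $\Des$ value at $x$ or $y$ after rotation is $\ge 2$. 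More directly: I would track how $\Ht$ of the left subtree $A'\ABCons B'$ of $y$ and $\Ht$ of its right subtree $C'$ change from $T_0$ to $T_1$. Since $T_0\OrdTam T_1$ with both balanced, I showed above that $\Ht$ of any left subtree does not increase and $\Ht$ of the corresponding right subtree does not decrease; consequently $\Des_{T_1}(y) \ge \Des_{T_0}(y)$ for every node $y$ that persists with the same left/right subtree roots. But an unbalancing rotation at $y$ is characterised by $\Ht(A') \ge \Ht(B')$ together with a gap making $x$ or $y$ unbalanced after rotation; monotone non-decrease of the relevant differences then shows the rotation at $y$ in $T_1$ is still of an unbalancing type.

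The main obstacle I anticipate is making precise and correctly bookkeeping the monotonicity claim "heights of left subtrees do not increase, heights of right subtrees do not decrease under a balance-preserving chain of rotations" — this needs the case table~\ref{item:CasRot1}--\ref{item:CasRot2} together with the observation that $\Ht$ of the rotated subtree is invariant (so ancestors are unaffected) and a careful treatment of what happens when $y$ lies \emph{inside} the rotated subtree $S_0$, where it could be one of $r$, $q$, or a node of $A,B,C$. For nodes in $A$, $B$, $C$ nothing changes; for $y=q$ the left subtree shrinks (consistent with the claimed monotonicity) and one checks directly from~\ref{item:CasRot1}--\ref{item:CasRot2} that if the rotation at $q$ in $T_0$ was unbalancing it remains so; the node $y=r$ is the rotation root itself, and one verifies the three cases directly. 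I expect this case analysis at the rotation root to be the only genuinely fiddly part; everything else is a routine induction plus an appeal to Proposition~\ref{prop:HauteursEgalesEq} and the case table already established.
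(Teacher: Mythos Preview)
Your plan is essentially correct but takes a different route from the paper's. The paper argues~\ref{item:L1} globally in two lines: for the rotation at~$y$ to become conservative one would need $\Des(y)$ and $\Des(x)$ to change, hence the heights of their left/right subtrees to change; by Proposition~\ref{prop:HauteursEgalesEq} a rotation inside one of these subtrees changes its height only if it is unbalancing, and then Theorem~\ref{thm:ClotureIntArbresEq} forbids ever returning to a balanced tree. No induction, no single-step case split. Your route---reduce to a single covering step and track what a conservative rotation at some~$r$ does to $\Des(y)$ and to the identity of $y$'s left child---also works and is in fact more explicit about the corner cases the paper's argument glosses over (e.g.\ when $y$'s left child changes because $y$ is the parent of~$r$, or $y\in\{r,q\}$). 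What your approach buys is a cleaner verification that conservative rotations are \emph{monotone} for~$\Des$ at every node; what the paper's buys is brevity.

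Two small points to fix in your write-up. First, it is~$r$, not~$q$, whose left-subtree height strictly drops under the rotation at~$r$: the left subtree of~$q$ is~$A$ before and after. Second, your induction on chain length silently assumes every intermediate tree is balanced---this is precisely Theorem~\ref{thm:ClotureIntArbresEq}, and you should invoke it explicitly, just as the paper does. The detour through height words and~$\Theta$ is unnecessary here and you rightly drop it; the monotonicity of~$\Des$ under conservative rotations is the correct lever. For~\ref{item:L2}, the paper simply notes that rotations preserve infix order and the claim is immediate; your direct argument is also fine.
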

\begin{proof}
    \ref{item:L1}: If the rotation of root $y$ in $T_0$ is an unbalancing
    rotation, it is because the imbalance values of $y$ and its left child
    $x$ do not satisfy~\ref{item:CasRot1} or~\ref{item:CasRot2} of
    Section~\ref{subsec:RotationsEquilibre}. Thus, to change these imbalance
    values, one has to perform rotations to change the height of some subtrees
    of $x$ and $y$. By Proposition~\ref{prop:HauteursEgalesEq}, these rotations
    necessarily unbalance the obtained binary tree. Moreover, by
    Theorem~\ref{thm:ClotureIntArbresEq}, it is impossible to make rotations
    to balance it again. This shows that if $y$ has a left child in $T_1$,
    it is necessarily a root of an unbalancing rotation.

    \ref{item:L2}: This is immediate from the definition of the rotation operation and
    by the fact that the rotation operation does not change the infix order
    of the nodes of a binary tree.
\end{proof}

Lemma~\ref{lem:CreationRotationEqImpossible} shows that for all balanced
binary trees $T_0$ and $T_1$ such that $T_0 \OrdTam T_1$, a node $y$
cannot become a root of a conservative balancing rotation in $T_1$ if
it is not a root of a conservative balancing rotation in $T_0$.

\begin{Lemme} \label{lem:RotationsEqIndependantes}
    Let $T_0$ and $T_1$ be two balanced binary trees and $y$ be a node of
    $T_0$ such that $T_1$ is obtained from $T_0$ by a rotation of root $y$.
    Then, denoting by $x$ the left child of $y$ in $T_0$, for all balanced
    binary tree $T_2$ such that $T_1 \OrdTam T_2$, $x$ and $y$
    cannot be roots of conservative balancing rotations in $T_2$.
\end{Lemme}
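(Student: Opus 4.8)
The plan is to push essentially all the work onto the already-proved Lemma~\ref{lem:CreationRotationEqImpossible}; the only thing to do by hand is to understand the local picture immediately after the rotation $T_0 \CouvTam T_1$. First I would note that, since $T_0$ and $T_1$ are both balanced, the rotation of root $y$ turning $T_0$ into $T_1$ must be a conservative balancing rotation: among the nine possibilities~\ref{item:CasRot1}--\ref{item:CasRot9} of Section~\ref{subsec:RotationsEquilibre}, only~\ref{item:CasRot1} and~\ref{item:CasRot2} keep both $x$ and $y$ balanced, so $(\Des_{T_0}(x), \Des_{T_0}(y))$ equals $(-1,-1)$ or $(0,-1)$. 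Reading off the right-hand sides of~\ref{item:CasRot1} and~\ref{item:CasRot2} yields $\Des_{T_1}(x) = 1$ in both cases, while $\Des_{T_1}(y)$ equals $1$ in case~\ref{item:CasRot1} and $0$ in case~\ref{item:CasRot2}; in particular neither $x$ nor $y$ has imbalance value $-1$ in $T_1$.

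Next I would record, for each of $x$ and $y$, the following dichotomy in $T_1$: either that node has no left child in $T_1$, or the rotation of that root in $T_1$ is an unbalancing rotation. This is immediate from the previous step, because a conservative balancing rotation (case~\ref{item:CasRot1} or~\ref{item:CasRot2}) requires its root to have imbalance value $-1$: so if $x$ (resp. $y$) has a left child in $T_1$, then, $T_1$ being balanced, the corresponding pair of imbalance values falls into one of the cases~\ref{item:CasRot3}--\ref{item:CasRot9}, each of which is a simply or fully unbalancing rotation.

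Finally I would apply Lemma~\ref{lem:CreationRotationEqImpossible}, now with $T_1$ playing the role of $T_0$ and $T_2$ that of $T_1$ (legitimate since $T_1$ and $T_2$ are balanced and $T_1 \OrdTam T_2$). If $x$ (resp. $y$) has no left child in $T_1$, then part~\ref{item:L2} gives that it has no left child in $T_2$, so it is the root of no rotation there. Otherwise the rotation of that root in $T_1$ is an unbalancing rotation, and part~\ref{item:L1} gives that the rotation of that root in $T_2$, if it exists, is again unbalancing, hence certainly not a conservative balancing rotation. In every case $x$ and $y$ fail to be roots of conservative balancing rotations in $T_2$, which is exactly the claim. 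I do not expect a genuine obstacle: the only point needing a little care is to formulate the $T_1$-dichotomy precisely in the shape of the two hypotheses of Lemma~\ref{lem:CreationRotationEqImpossible}, so that the transfer from $T_1$ to $T_2$ goes through without further argument.
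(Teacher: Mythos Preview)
Your proof is correct and follows essentially the same approach as the paper: compute the imbalance values of $x$ and $y$ in $T_1$ from cases~\ref{item:CasRot1} and~\ref{item:CasRot2}, observe that neither can be the root of a conservative balancing rotation in $T_1$, and invoke Lemma~\ref{lem:CreationRotationEqImpossible} to propagate this to $T_2$. The only difference is that you spell out the ``no left child / unbalancing rotation'' dichotomy explicitly to match the two parts~\ref{item:L1} and~\ref{item:L2} of that lemma, whereas the paper applies it in one breath.
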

\begin{proof}
    Since $T_1$ is obtained by performing a conservative balancing
    rotation of root $y$ into $T_0$, we have two cases to consider, following
    the imbalance values of $x$ and $y$ in $T_0$. If $\Des_{T_0}(x) = \Des_{T_0}(y) = -1$,
    then $\Des_{T_1}(x) = \Des_{T_1}(y) = 1$ and $x$ and $y$ are not
    roots of conservative balancing rotations in $T_1$, so that, by
    Lemma~\ref{lem:CreationRotationEqImpossible}, $x$ and $y$ cannot be
    roots of conservative balancing rotations in $T_2$. If $\Des_{T_0}(x) = 0$
    and $\Des_{T_0}(y) -1$, then $\Des_{T_1}(x) = 1$ and $\Des_{T_1}(y) = 0$.
    For the same reason, $x$ and $y$ cannot be roots of conservative balancing
    rotations in $T_2$.
\end{proof}

A hypercube of dimension $k$ can be seen as a poset whose elements are
subsets of a set $\{e_1, \dots, e_k \}$, and ordered by the relation of inclusion.
Let us denote by $\HyperCube_k$ the hypercube poset of dimension $k$.
\medskip

We have the following characterization of the shape of balanced binary tree intervals:
\begin{Theoreme} \label{thm:IntEqHypercube}
    Let $T_0$ and $T_1$ be two balanced binary trees such that $T_0 \OrdTam T_1$.
    Then, the poset $([T_0, T_1], \OrdTam)$ is isomorphic to the hypercube $\HyperCube_k$,
    where $k$ is the number of rotations needed to transform $T_0$ into $T_1$.
\end{Theoreme}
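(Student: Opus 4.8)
The plan is to exhibit an explicit bijection between $[T_0, T_1]$ and the subsets of a set of size $k$, and to check that it is an order isomorphism. First I would fix a shortest sequence of rotations $T_0 = R_0 \CouvTam R_1 \CouvTam \cdots \CouvTam R_k = T_1$ realizing $T_0 \OrdTam T_1$; by Theorem \ref{thm:ClotureIntArbresEq} every $R_i$ is balanced, and by Proposition \ref{prop:HauteursEgalesEq} every rotation here is a conservative balancing rotation, i.e.\ of type \ref{item:CasRot1} or \ref{item:CasRot2}. Let $y_i$ be the node at which the $i$-th rotation is performed. The key claim I would prove is that the set $\{y_1, \dots, y_k\}$ consists of $k$ pairwise distinct nodes, and that the set of conservative balancing rotations available along \emph{any} maximal chain from $T_0$ to $T_1$ is always exactly this same set: each node $y_i$ is a root of a conservative balancing rotation in $T_0$, applying it does not affect whether the others are (Lemma \ref{lem:RotationsEqIndependantes} guarantees that once a node is ``used'' it and its former left child can never again become a root of a conservative balancing rotation, and Lemma \ref{lem:CreationRotationEqImpossible} guarantees that no new conservative-balancing-rotation site is ever created), and the node set is unchanged.

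With this in hand, define the map $\Phi : [T_0, T_1] \to \HyperCube_k$ sending a balanced binary tree $T \in [T_0, T_1]$ to the subset $\{i : y_i \text{ is a root of a conservative balancing rotation already performed to reach } T \text{ from } T_0\}$. The commutativity/independence of the rotations at the nodes $y_1,\dots,y_k$ means this is well defined (does not depend on the order in which one reaches $T$): applying the rotation at $y_i$ changes $\Des_T(y_i)$ and $\Des_T$ of its left child from the values of \ref{item:CasRot1} or \ref{item:CasRot2} on the left-hand side to those on the right-hand side, and locally affects nothing relevant to the other $y_j$'s, so the subtrees can be rebalanced in any order and $T$ is determined by which of the $y_i$ have been ``flipped''. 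Conversely every subset $S \subseteq \{1,\dots,k\}$ yields a balanced binary tree $T_S$ by performing exactly the rotations $\{y_i : i \in S\}$ in any order, and these $T_S$ are pairwise distinct (they differ in the imbalance value, hence in the shape, at some $y_i$). So $\Phi$ is a bijection. Finally, monotonicity: $T \CouvTam T'$ inside $[T_0,T_1]$ forces the rotation to be at some unused $y_i$ (by Theorem \ref{thm:ClotureIntArbresEq}, any other rotation would leave the interval of balanced trees), so $\Phi(T') = \Phi(T) \cup \{i\}$; hence $T \OrdTam T'$ iff $\Phi(T) \subseteq \Phi(T')$, and $\Phi$ is a poset isomorphism. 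This also shows $k$ equals the number of rotations needed to transform $T_0$ into $T_1$, as every maximal chain has length $k$.

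The main obstacle is the \emph{independence/commutativity} step: proving that the nodes $y_1,\dots,y_k$ are distinct and that performing the conservative balancing rotation at one of them neither destroys nor creates the possibility of such a rotation at the others, so that the $2^k$ trees $T_S$ are all distinct, all balanced, and all in $[T_0,T_1]$. I expect this to rest on combining Lemma \ref{lem:CreationRotationEqImpossible} (no new conservative-balancing sites appear) with Lemma \ref{lem:RotationsEqIndependantes} (a used site stays used, together with its old left child) and a local analysis: two distinct conservative balancing rotations in a balanced tree act on node pairs $(x_i,y_i)$ whose interaction, if any, is confined to a parent/child or disjoint-subtree relation, and in each configuration one checks directly that the imbalance changes commute. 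Once that local case analysis is settled, the rest of the argument is bookkeeping.
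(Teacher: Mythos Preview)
Your plan is correct and follows essentially the same route as the paper's own proof: both arguments use Theorem~\ref{thm:ClotureIntArbresEq} to guarantee that every tree in $[T_0,T_1]$ is balanced, define the set $R=\{y_1,\dots,y_k\}$ of rotation roots, invoke Lemma~\ref{lem:RotationsEqIndependantes} to see that $R$ is a genuine set with no parent/left-child pair in it, invoke Lemma~\ref{lem:CreationRotationEqImpossible} to see that no new conservative balancing site is ever created, and then set up the bijection $T\mapsto\{\text{rotations already performed}\}\subseteq R$. The independence/commutativity step you flag as the main obstacle is exactly what the paper reduces to ``$y\in R$ implies its left child $x\notin R$, hence the rotations can be done in any order''; the paper is somewhat terser there than your proposed local case analysis, but the content is the same.
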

\begin{proof}
    First, note by Theorem~\ref{thm:ClotureIntArbresEq}, that the interval
    $I := [T_0, T_1]$ only contains balanced binary trees. Hence, all
    covering relations in $I$ are conservative balancing rotations.

    Denote by $R$ the set of nodes $y$ of $T_0$ such that $y$ is a root of a
    rotation needed to transform $T_0$ into $T_1$. By Lemma~\ref{lem:RotationsEqIndependantes},
    $R$ is well defined --- it is not a multiset --- and if $y \in R$ then,
    denoting by $x$ the left child of $y$ in $T_0$, we have $x \notin R$.
    That implies that $T_1$ can be obtained from $T_0$ by performing, for
    all $y \in R$, a rotation of root $y$, independently of the order.

    Let us now define a bijection between the elements of $I$ and the set
    of the subsets of $R$. Let $T \in I$. By definition, it is possible to
    obtain $T$ by performing some rotations from $T_0$. Let $R_0$ be the set
    of nodes which are roots of these rotations. Besides, it is possible
    to obtain $T_1$ by performing some rotations from $T$. Let $R_1$ be the
    set of nodes which are roots of these rotations. By Lemma~\ref{lem:CreationRotationEqImpossible},
    we have $R = R_0 \uplus R_1$ and thus $R_0 \subset R$. The set $R_0$
    characterizes $T$. Conversely, for each subset $R_0 \subseteq R$ we
    can construct a unique binary tree $T \in I$. Indeed, $T$ is obtained
    by doing the rotations of root $y$ for all $y \in R_0$ into $T_0$, in
    any order. This is well-defined, by definition of $R$.

    This shows that the interval $I$ is isomorphic to the poset $\HyperCube_k$
    where $k$ is the number of rotations needed to transform $T_0$ into $T_1$.
\end{proof}

The first subposets of the Tamari lattice of balanced binary trees are depicted
in Figure~\ref{fig:InterEq}.

\begin{figure}[ht]
    \centering
    \subfigure[$(\EnsEq_0, \OrdTam)$]{\makebox[2cm]{\scalebox{.23}{
        \begin{tikzpicture}
            \node[Cercle] at (0, 0)  (000) {};
        \end{tikzpicture}
    }}}
    \subfigure[$(\EnsEq_1, \OrdTam)$]{\makebox[2cm]{\scalebox{.23}{
        \begin{tikzpicture}
            \node[Cercle] at (0, 0)  (000) {};
        \end{tikzpicture}
    }}}
    \subfigure[$(\EnsEq_2, \OrdTam)$]{\makebox[2cm]{\scalebox{.23}{
        \begin{tikzpicture}
            \node[Cercle] at (0, 0)  (000) {};
            \node[Cercle] at (0, 1)  (010) {};
            \draw[AreteCube] (000) -- (010);
        \end{tikzpicture}
    }}}
    \subfigure[$(\EnsEq_3, \OrdTam)$]{\makebox[2cm]{\scalebox{.23}{
        \begin{tikzpicture}
            \node[Cercle] at (0, 0)  (000) {};
        \end{tikzpicture}
    }}}
    \subfigure[$(\EnsEq_4, \OrdTam)$]{\makebox[2cm]{\scalebox{.23}{
        \begin{tikzpicture}
            \node[Cercle] at (0, 0) (1) {};
            \node[Cercle] at (1, 0) (2) {};
            \node[Cercle] at (0, 1) (3) {};
            \node[Cercle] at (1, 1) (4) {};
            \draw[AreteCube] (1) -- (3);
            \draw[AreteCube] (1) -- (4);
            \draw[AreteCube] (2) -- (4);
        \end{tikzpicture}
    }}}
    \subfigure[$(\EnsEq_5, \OrdTam)$]{\makebox[2cm]{\scalebox{.23}{
        \begin{tikzpicture}
            \node[Cercle] at (0, 0)  (1) {};
            \node[Cercle] at (-1, 1) (2) {};
            \node[Cercle] at (1, 1)  (3) {};
            \node[Cercle] at (0, 2)  (4) {};
            \draw[AreteCube] (1) -- (2);
            \draw[AreteCube] (1) -- (3);
            \draw[AreteCube] (2) -- (4);
            \draw[AreteCube] (3) -- (4);
            \node[Cercle] at (2, 0)  (000) {};
            \node[Cercle] at (2, 1)  (010) {};
            \draw[AreteCube] (000) -- (010);
        \end{tikzpicture}
    }}}
    \subfigure[$(\EnsEq_6, \OrdTam)$]{\makebox[2cm]{\scalebox{.23}{
        \begin{tikzpicture}
            \node[Cercle] at (0, 0)  (000) {};
            \node[Cercle] at (0, 1)  (010) {};
            \draw[AreteCube] (000) -- (010);
            \node[Cercle] at (1, 0)  (1) {};
            \node[Cercle] at (1, 1)  (2) {};
            \draw[AreteCube] (1) -- (2);
        \end{tikzpicture}
    }}}
    \subfigure[$(\EnsEq_7, \OrdTam)$]{\makebox[4cm]{\scalebox{.23}{
        \begin{tikzpicture}
            \node[Cercle] at (0, 0)  (000) {};
            \node[Cercle] at (-1, 1) (100) {};
            \node[Cercle] at (0, 1)  (010) {};
            \node[Cercle] at (1, 1)  (001) {};
            \node[Cercle] at (-1, 2) (110) {};
            \node[Cercle] at (1, 2)  (011) {};
            \node[Cercle] at (0, 2)  (101) {};
            \node[Cercle] at (0, 3)  (111) {};
            \draw[AreteCube] (000) -- (100);
            \draw[AreteCube] (000) -- (010);
            \draw[AreteCube] (000) -- (001);
            \draw[AreteCube] (100) -- (110);
            \draw[AreteCube] (010) -- (110);
            \draw[AreteCube] (010) -- (011);
            \draw[AreteCube] (001) -- (011);
            \draw[AreteCube] (100) -- (101);
            \draw[AreteCube] (001) -- (101);
            \draw[AreteCube] (101) -- (111);
            \draw[AreteCube] (110) -- (111);
            \draw[AreteCube] (011) -- (111);
            \node[Cercle] at (-2.5, 1.5)  (1) {};
            \node[Cercle] at (-3.5, 2.5) (2) {};
            \node[Cercle] at (-1.5, 2.5)  (3) {};
            \node[Cercle] at (-2.5, 3.5)  (4) {};
            \draw[AreteCube] (1) -- (2);
            \draw[AreteCube] (1) -- (3);
            \draw[AreteCube] (2) -- (4);
            \draw[AreteCube] (3) -- (4);
            \draw[AreteCube] (1) -- (110);
            \draw[AreteCube] (3) -- (111);
            \node[Cercle] at (2.5, -0.5)  (1a) {};
            \node[Cercle] at (1.5, 0.5) (2a) {};
            \node[Cercle] at (3.5, 0.5)  (3a) {};
            \node[Cercle] at (2.5, 1.5)  (4a) {};
            \draw[AreteCube] (1a) -- (2a);
            \draw[AreteCube] (1a) -- (3a);
            \draw[AreteCube] (2a) -- (4a);
            \draw[AreteCube] (3a) -- (4a);
            \draw[AreteCube] (000) -- (2a);
            \draw[AreteCube] (001) -- (4a);
            \node[Cercle] at (-2.5, 0)  (b) {};
        \end{tikzpicture}
    }}}
    \subfigure[$(\EnsEq_8, \OrdTam)$]{\makebox[4cm]{\scalebox{.23}{
        \begin{tikzpicture}
            \node[Cercle] at (4, 0)  (000) {};
            \node[Cercle] at (3, 1) (100) {};
            \node[Cercle] at (4, 1)  (010) {};
            \node[Cercle] at (5, 1)  (001) {};
            \node[Cercle] at (3, 2) (110) {};
            \node[Cercle] at (5, 2)  (011) {};
            \node[Cercle] at (4, 2)  (101) {};
            \node[Cercle] at (4, 3)  (111) {};
            \draw[AreteCube] (000) -- (100);
            \draw[AreteCube] (000) -- (010);
            \draw[AreteCube] (000) -- (001);
            \draw[AreteCube] (100) -- (110);
            \draw[AreteCube] (010) -- (110);
            \draw[AreteCube] (010) -- (011);
            \draw[AreteCube] (001) -- (011);
            \draw[AreteCube] (100) -- (101);
            \draw[AreteCube] (001) -- (101);
            \draw[AreteCube] (101) -- (111);
            \draw[AreteCube] (110) -- (111);
            \draw[AreteCube] (011) -- (111);
            \node[Cercle] at (7, 1)  (000b) {};
            \node[Cercle] at (6, 2) (100b) {};
            \node[Cercle] at (7, 2)  (010b) {};
            \node[Cercle] at (8, 2)  (001b) {};
            \node[Cercle] at (6, 3) (110b) {};
            \node[Cercle] at (8, 3)  (011b) {};
            \node[Cercle] at (7, 3)  (101b) {};
            \node[Cercle] at (7, 4)  (111b) {};
            \draw[AreteCube] (000b) -- (100b);
            \draw[AreteCube] (000b) -- (010b);
            \draw[AreteCube] (000b) -- (001b);
            \draw[AreteCube] (100b) -- (110b);
            \draw[AreteCube] (010b) -- (110b);
            \draw[AreteCube] (010b) -- (011b);
            \draw[AreteCube] (001b) -- (011b);
            \draw[AreteCube] (100b) -- (101b);
            \draw[AreteCube] (001b) -- (101b);
            \draw[AreteCube] (101b) -- (111b);
            \draw[AreteCube] (110b) -- (111b);
            \draw[AreteCube] (011b) -- (111b);
            \draw[AreteCube] (100) -- (100b);
            \draw[AreteCube] (010) -- (010b);
            \draw[AreteCube] (001) -- (001b);
            \draw[AreteCube] (110) -- (110b);
            \draw[AreteCube] (011) -- (011b);
            \draw[AreteCube] (101) -- (101b);
            \draw[AreteCube] (111) -- (111b);
            \draw[AreteCube] (000) -- (000b);
            \node[Cercle] at (0, 0)  (0) {};
            \node[Cercle] at (1, 1)  (1) {};
            \node[Cercle] at (2, 0)  (2) {};
            \node[Cercle] at (0, 2)  (3) {};
            \node[Cercle] at (2, 2)  (4) {};
            \node[Cercle] at (1, 3)  (5) {};
            \node[Cercle] at (-1, 1)  (6) {};
            \node[Cercle] at (-2, 0)  (7) {};
            \draw[AreteCube] (0) -- (1);
            \draw[AreteCube] (2) -- (1);
            \draw[AreteCube] (0) -- (3);
            \draw[AreteCube] (2) -- (4);
            \draw[AreteCube] (1) -- (5);
            \draw[AreteCube] (3) -- (5);
            \draw[AreteCube] (4) -- (5);
            \draw[AreteCube] (0) -- (6);
            \draw[AreteCube] (7) -- (6);
            \node[Cercle] at (11, 3)  (0a) {};
            \node[Cercle] at (10, 2) (1a) {};
            \node[Cercle] at (9, 3) (2a) {};
            \node[Cercle] at (11, 1)  (3a) {};
            \node[Cercle] at (9, 1) (4a) {};
            \node[Cercle] at (10, 0) (5a) {};
            \node[Cercle] at (12, 2)  (6a) {};
            \node[Cercle] at (13, 3)  (7a) {};
            \draw[AreteCube] (0a) -- (1a);
            \draw[AreteCube] (2a) -- (1a);
            \draw[AreteCube] (0a) -- (3a);
            \draw[AreteCube] (2a) -- (4a);
            \draw[AreteCube] (1a) -- (5a);
            \draw[AreteCube] (3a) -- (5a);
            \draw[AreteCube] (4a) -- (5a);
            \draw[AreteCube] (0a) -- (6a);
            \draw[AreteCube] (7a) -- (6a);
        \end{tikzpicture}
    }}}
    \subfigure[$(\EnsEq_9, \OrdTam)$]{\makebox[4cm]{\scalebox{.23}{
        \begin{tikzpicture}
            \node[Cercle] at (0, -2)  (1) {};
            \node[Cercle] at (-1, -1) (2) {};
            \node[Cercle] at (1, -1)  (3) {};
            \node[Cercle] at (0, 0)  (4) {};
            \draw[AreteCube] (1) -- (2);
            \draw[AreteCube] (1) -- (3);
            \draw[AreteCube] (2) -- (4);
            \draw[AreteCube] (3) -- (4);
            \node[Cercle] at (-2, 0) (1a) {};
            \node[Cercle] at (-3, 1) (2a) {};
            \node[Cercle] at (-1, 1) (3a) {};
            \node[Cercle] at (-2, 2) (4a) {};
            \draw[AreteCube] (1a) -- (2a);
            \draw[AreteCube] (1a) -- (3a);
            \draw[AreteCube] (2a) -- (4a);
            \draw[AreteCube] (3a) -- (4a);
            \node[Cercle] at (2, 0) (1b) {};
            \node[Cercle] at (1, 1) (2b) {};
            \node[Cercle] at (3, 1) (3b) {};
            \node[Cercle] at (2, 2) (4b) {};
            \draw[AreteCube] (1b) -- (2b);
            \draw[AreteCube] (1b) -- (3b);
            \draw[AreteCube] (2b) -- (4b);
            \draw[AreteCube] (3b) -- (4b);
            \draw[AreteCube] (1) -- (3a);
            \draw[AreteCube] (2) -- (4a);
            \draw[AreteCube] (1) -- (2b);
            \draw[AreteCube] (3) -- (4b);
            \node[Cercle] at (0, 2)  (1c) {};
            \node[Cercle] at (-1, 3) (2c) {};
            \node[Cercle] at (1, 3)  (3c) {};
            \node[Cercle] at (0, 4)  (4c) {};
            \draw[AreteCube] (1c) -- (2c);
            \draw[AreteCube] (1c) -- (3c);
            \draw[AreteCube] (2c) -- (4c);
            \draw[AreteCube] (3c) -- (4c);
            \draw[AreteCube] (1a) -- (2c);
            \draw[AreteCube] (3a) -- (4c);
            \draw[AreteCube] (1b) -- (3c);
            \draw[AreteCube] (2b) -- (4c);
            \node[Cercle] at (6, -4)  (000a) {};
            \node[Cercle] at (5, -3) (100a) {};
            \node[Cercle] at (6, -3)  (010a) {};
            \node[Cercle] at (7, -3)  (001a) {};
            \node[Cercle] at (5, -2) (110a) {};
            \node[Cercle] at (7, -2)  (011a) {};
            \node[Cercle] at (6, -2)  (101a) {};
            \node[Cercle] at (6, -1)  (111a) {};
            \draw[AreteCube] (000a) -- (100a);
            \draw[AreteCube] (000a) -- (010a);
            \draw[AreteCube] (000a) -- (001a);
            \draw[AreteCube] (100a) -- (110a);
            \draw[AreteCube] (010a) -- (110a);
            \draw[AreteCube] (010a) -- (011a);
            \draw[AreteCube] (001a) -- (011a);
            \draw[AreteCube] (100a) -- (101a);
            \draw[AreteCube] (001a) -- (101a);
            \draw[AreteCube] (101a) -- (111a);
            \draw[AreteCube] (110a) -- (111a);
            \draw[AreteCube] (011a) -- (111a);
            \node[Cercle] at (-3, -4)  (000b) {};
            \node[Cercle] at (-4, -3) (100b) {};
            \node[Cercle] at (-3, -3)  (010b) {};
            \node[Cercle] at (-2, -3)  (001b) {};
            \node[Cercle] at (-4, -2) (110b) {};
            \node[Cercle] at (-2, -2)  (011b) {};
            \node[Cercle] at (-3, -2)  (101b) {};
            \node[Cercle] at (-3, -1)  (111b) {};
            \draw[AreteCube] (000b) -- (100b);
            \draw[AreteCube] (000b) -- (010b);
            \draw[AreteCube] (000b) -- (001b);
            \draw[AreteCube] (100b) -- (110b);
            \draw[AreteCube] (010b) -- (110b);
            \draw[AreteCube] (010b) -- (011b);
            \draw[AreteCube] (001b) -- (011b);
            \draw[AreteCube] (100b) -- (101b);
            \draw[AreteCube] (001b) -- (101b);
            \draw[AreteCube] (101b) -- (111b);
            \draw[AreteCube] (110b) -- (111b);
            \draw[AreteCube] (011b) -- (111b);
            \node[Cercle] at (3, -4)  (000c) {};
            \node[Cercle] at (2, -3) (100c) {};
            \node[Cercle] at (3, -3)  (010c) {};
            \node[Cercle] at (4, -3)  (001c) {};
            \node[Cercle] at (2, -2) (110c) {};
            \node[Cercle] at (4, -2)  (011c) {};
            \node[Cercle] at (3, -2)  (101c) {};
            \node[Cercle] at (3, -1)  (111c) {};
            \draw[AreteCube] (000c) -- (100c);
            \draw[AreteCube] (000c) -- (010c);
            \draw[AreteCube] (000c) -- (001c);
            \draw[AreteCube] (100c) -- (110c);
            \draw[AreteCube] (010c) -- (110c);
            \draw[AreteCube] (010c) -- (011c);
            \draw[AreteCube] (001c) -- (011c);
            \draw[AreteCube] (100c) -- (101c);
            \draw[AreteCube] (001c) -- (101c);
            \draw[AreteCube] (101c) -- (111c);
            \draw[AreteCube] (110c) -- (111c);
            \draw[AreteCube] (011c) -- (111c);
            \node[Cercle] at (-.5, -4) (1d) {};
            \node[Cercle] at (.5, -4) (2d) {};
            \node[Cercle] at (-.5, -3) (3d) {};
            \node[Cercle] at (.5, -3) (4d) {};
            \draw[AreteCube] (1d) -- (3d);
            \draw[AreteCube] (1d) -- (4d);
            \draw[AreteCube] (2d) -- (4d);
        \end{tikzpicture}
    }}}
    \subfigure[$(\EnsEq_{10}, \OrdTam)$]{\makebox[4cm]{\scalebox{.23}{
        \begin{tikzpicture}
            \node[Cercle] at (0, 0)  (000a) {};
            \node[Cercle] at (-1, 1) (100a) {};
            \node[Cercle] at (0, 1)  (010a) {};
            \node[Cercle] at (1, 1)  (001a) {};
            \node[Cercle] at (-1, 2) (110a) {};
            \node[Cercle] at (1, 2)  (011a) {};
            \node[Cercle] at (0, 2)  (101a) {};
            \node[Cercle] at (0, 3)  (111a) {};
            \draw[AreteCube] (000a) -- (100a);
            \draw[AreteCube] (000a) -- (010a);
            \draw[AreteCube] (000a) -- (001a);
            \draw[AreteCube] (100a) -- (110a);
            \draw[AreteCube] (010a) -- (110a);
            \draw[AreteCube] (010a) -- (011a);
            \draw[AreteCube] (001a) -- (011a);
            \draw[AreteCube] (100a) -- (101a);
            \draw[AreteCube] (001a) -- (101a);
            \draw[AreteCube] (101a) -- (111a);
            \draw[AreteCube] (110a) -- (111a);
            \draw[AreteCube] (011a) -- (111a);
            \node[Cercle] at (3, 0.5)  (000b) {};
            \node[Cercle] at (2, 1.5) (100b) {};
            \node[Cercle] at (3, 1.5)  (010b) {};
            \node[Cercle] at (4, 1.5)  (001b) {};
            \node[Cercle] at (2, 2.5) (110b) {};
            \node[Cercle] at (4, 2.5)  (011b) {};
            \node[Cercle] at (3, 2.5)  (101b) {};
            \node[Cercle] at (3, 3.5)  (111b) {};
            \draw[AreteCube] (000b) -- (100b);
            \draw[AreteCube] (000b) -- (010b);
            \draw[AreteCube] (000b) -- (001b);
            \draw[AreteCube] (100b) -- (110b);
            \draw[AreteCube] (010b) -- (110b);
            \draw[AreteCube] (010b) -- (011b);
            \draw[AreteCube] (001b) -- (011b);
            \draw[AreteCube] (100b) -- (101b);
            \draw[AreteCube] (001b) -- (101b);
            \draw[AreteCube] (101b) -- (111b);
            \draw[AreteCube] (110b) -- (111b);
            \draw[AreteCube] (011b) -- (111b);
            \draw[AreteCube] (001a) -- (000b);
            \draw[AreteCube] (111a) -- (110b);
            \draw[AreteCube] (010b) -- (011a);
            \draw[AreteCube] (100b) -- (101a);
            \node[Cercle] at (6, 0)  (000a2) {};
            \node[Cercle] at (5, 1) (100a2) {};
            \node[Cercle] at (6, 1)  (010a2) {};
            \node[Cercle] at (7, 1)  (001a2) {};
            \node[Cercle] at (5, 2) (110a2) {};
            \node[Cercle] at (7, 2)  (011a2) {};
            \node[Cercle] at (6, 2)  (101a2) {};
            \node[Cercle] at (6, 3)  (111a2) {};
            \draw[AreteCube] (000a2) -- (100a2);
            \draw[AreteCube] (000a2) -- (010a2);
            \draw[AreteCube] (000a2) -- (001a2);
            \draw[AreteCube] (100a2) -- (110a2);
            \draw[AreteCube] (010a2) -- (110a2);
            \draw[AreteCube] (010a2) -- (011a2);
            \draw[AreteCube] (001a2) -- (011a2);
            \draw[AreteCube] (100a2) -- (101a2);
            \draw[AreteCube] (001a2) -- (101a2);
            \draw[AreteCube] (101a2) -- (111a2);
            \draw[AreteCube] (110a2) -- (111a2);
            \draw[AreteCube] (011a2) -- (111a2);
            \node[Cercle] at (9, 0.5)  (000b2) {};
            \node[Cercle] at (8, 1.5) (100b2) {};
            \node[Cercle] at (9, 1.5)  (010b2) {};
            \node[Cercle] at (10, 1.5)  (001b2) {};
            \node[Cercle] at (8, 2.5) (110b2) {};
            \node[Cercle] at (10, 2.5)  (011b2) {};
            \node[Cercle] at (9, 2.5)  (101b2) {};
            \node[Cercle] at (9, 3.5)  (111b2) {};
            \draw[AreteCube] (000b2) -- (100b2);
            \draw[AreteCube] (000b2) -- (010b2);
            \draw[AreteCube] (000b2) -- (001b2);
            \draw[AreteCube] (100b2) -- (110b2);
            \draw[AreteCube] (010b2) -- (110b2);
            \draw[AreteCube] (010b2) -- (011b2);
            \draw[AreteCube] (001b2) -- (011b2);
            \draw[AreteCube] (100b2) -- (101b2);
            \draw[AreteCube] (001b2) -- (101b2);
            \draw[AreteCube] (101b2) -- (111b2);
            \draw[AreteCube] (110b2) -- (111b2);
            \draw[AreteCube] (011b2) -- (111b2);
            \draw[AreteCube] (001a2) -- (000b2);
            \draw[AreteCube] (111a2) -- (110b2);
            \draw[AreteCube] (010b2) -- (011a2);
            \draw[AreteCube] (100b2) -- (101a2);
            \node[Cercle] at (0, 4)  (0) {};
            \node[Cercle] at (1, 5)  (1) {};
            \node[Cercle] at (2, 4)  (2) {};
            \node[Cercle] at (0, 6)  (3) {};
            \node[Cercle] at (2, 6)  (4) {};
            \node[Cercle] at (1, 7)  (5) {};
            \node[Cercle] at (3, 5)  (6) {};
            \node[Cercle] at (3, 7)  (7) {};
            \draw[AreteCube] (0) -- (1);
            \draw[AreteCube] (2) -- (1);
            \draw[AreteCube] (0) -- (3);
            \draw[AreteCube] (2) -- (4);
            \draw[AreteCube] (1) -- (5);
            \draw[AreteCube] (3) -- (5);
            \draw[AreteCube] (4) -- (5);
            \draw[AreteCube] (6) -- (7);
            \draw[AreteCube] (2) -- (6);
            \draw[AreteCube] (4) -- (7);
            \node[Cercle] at (5, 4)  (0a) {};
            \node[Cercle] at (6, 5)  (1a) {};
            \node[Cercle] at (7, 4)  (2a) {};
            \node[Cercle] at (5, 6)  (3a) {};
            \node[Cercle] at (7, 6)  (4a) {};
            \node[Cercle] at (6, 7)  (5a) {};
            \node[Cercle] at (8, 5)  (6a) {};
            \node[Cercle] at (8, 7)  (7a) {};
            \draw[AreteCube] (0a) -- (1a);
            \draw[AreteCube] (2a) -- (1a);
            \draw[AreteCube] (0a) -- (3a);
            \draw[AreteCube] (2a) -- (4a);
            \draw[AreteCube] (1a) -- (5a);
            \draw[AreteCube] (3a) -- (5a);
            \draw[AreteCube] (4a) -- (5a);
            \draw[AreteCube] (6a) -- (7a);
            \draw[AreteCube] (2a) -- (6a);
            \draw[AreteCube] (4a) -- (7a);
            \node[Cercle] at (1, 8)  (1b) {};
            \node[Cercle] at (0, 9) (2b) {};
            \node[Cercle] at (2, 9)  (3b) {};
            \node[Cercle] at (1, 10)  (4b) {};
            \draw[AreteCube] (1b) -- (2b);
            \draw[AreteCube] (1b) -- (3b);
            \draw[AreteCube] (2b) -- (4b);
            \draw[AreteCube] (3b) -- (4b);
            \node[Cercle] at (4, 8)  (1c) {};
            \node[Cercle] at (3, 9) (2c) {};
            \node[Cercle] at (5, 9)  (3c) {};
            \node[Cercle] at (4, 10)  (4c) {};
            \draw[AreteCube] (1c) -- (2c);
            \draw[AreteCube] (1c) -- (3c);
            \draw[AreteCube] (2c) -- (4c);
            \draw[AreteCube] (3c) -- (4c);
            \node[Cercle] at (7, 8)  (1d) {};
            \node[Cercle] at (6, 9) (2d) {};
            \node[Cercle] at (8, 9)  (3d) {};
            \node[Cercle] at (7, 10)  (4d) {};
            \draw[AreteCube] (1d) -- (2d);
            \draw[AreteCube] (1d) -- (3d);
            \draw[AreteCube] (2d) -- (4d);
            \draw[AreteCube] (3d) -- (4d);
        \end{tikzpicture}
    }}}
    \subfigure[$(\EnsEq_{11}, \OrdTam)$]{\makebox[4cm]{\scalebox{.23}{
        \begin{tikzpicture}
            \node[Cercle] at (4, 0)  (000) {};
            \node[Cercle] at (3, 1) (100) {};
            \node[Cercle] at (4, 1)  (010) {};
            \node[Cercle] at (5, 1)  (001) {};
            \node[Cercle] at (3, 2) (110) {};
            \node[Cercle] at (5, 2)  (011) {};
            \node[Cercle] at (4, 2)  (101) {};
            \node[Cercle] at (4, 3)  (111) {};
            \draw[AreteCube] (000) -- (100);
            \draw[AreteCube] (000) -- (010);
            \draw[AreteCube] (000) -- (001);
            \draw[AreteCube] (100) -- (110);
            \draw[AreteCube] (010) -- (110);
            \draw[AreteCube] (010) -- (011);
            \draw[AreteCube] (001) -- (011);
            \draw[AreteCube] (100) -- (101);
            \draw[AreteCube] (001) -- (101);
            \draw[AreteCube] (101) -- (111);
            \draw[AreteCube] (110) -- (111);
            \draw[AreteCube] (011) -- (111);
            \node[Cercle] at (7, 1)  (000b) {};
            \node[Cercle] at (6, 2) (100b) {};
            \node[Cercle] at (7, 2)  (010b) {};
            \node[Cercle] at (8, 2)  (001b) {};
            \node[Cercle] at (6, 3) (110b) {};
            \node[Cercle] at (8, 3)  (011b) {};
            \node[Cercle] at (7, 3)  (101b) {};
            \node[Cercle] at (7, 4)  (111b) {};
            \draw[AreteCube] (000b) -- (100b);
            \draw[AreteCube] (000b) -- (010b);
            \draw[AreteCube] (000b) -- (001b);
            \draw[AreteCube] (100b) -- (110b);
            \draw[AreteCube] (010b) -- (110b);
            \draw[AreteCube] (010b) -- (011b);
            \draw[AreteCube] (001b) -- (011b);
            \draw[AreteCube] (100b) -- (101b);
            \draw[AreteCube] (001b) -- (101b);
            \draw[AreteCube] (101b) -- (111b);
            \draw[AreteCube] (110b) -- (111b);
            \draw[AreteCube] (011b) -- (111b);
            \draw[AreteCube] (100) -- (100b);
            \draw[AreteCube] (010) -- (010b);
            \draw[AreteCube] (001) -- (001b);
            \draw[AreteCube] (110) -- (110b);
            \draw[AreteCube] (011) -- (011b);
            \draw[AreteCube] (101) -- (101b);
            \draw[AreteCube] (111) -- (111b);
            \draw[AreteCube] (000) -- (000b);
            \node[Cercle] at (1, 0)  (000c) {};
            \node[Cercle] at (0, 1) (100c) {};
            \node[Cercle] at (1, 1)  (010c) {};
            \node[Cercle] at (2, 1)  (001c) {};
            \node[Cercle] at (0, 2) (110c) {};
            \node[Cercle] at (2, 2)  (011c) {};
            \node[Cercle] at (1, 2)  (101c) {};
            \node[Cercle] at (1, 3)  (111c) {};
            \draw[AreteCube] (000c) -- (100c);
            \draw[AreteCube] (000c) -- (010c);
            \draw[AreteCube] (000c) -- (001c);
            \draw[AreteCube] (100c) -- (110c);
            \draw[AreteCube] (010c) -- (110c);
            \draw[AreteCube] (010c) -- (011c);
            \draw[AreteCube] (001c) -- (011c);
            \draw[AreteCube] (100c) -- (101c);
            \draw[AreteCube] (001c) -- (101c);
            \draw[AreteCube] (101c) -- (111c);
            \draw[AreteCube] (110c) -- (111c);
            \draw[AreteCube] (011c) -- (111c);
            \node[Cercle] at (10, 0)  (000d) {};
            \node[Cercle] at (9, 1) (100d) {};
            \node[Cercle] at (10, 1)  (010d) {};
            \node[Cercle] at (11, 1)  (001d) {};
            \node[Cercle] at (9, 2) (110d) {};
            \node[Cercle] at (11, 2)  (011d) {};
            \node[Cercle] at (10, 2)  (101d) {};
            \node[Cercle] at (10, 3)  (111d) {};
            \draw[AreteCube] (000d) -- (100d);
            \draw[AreteCube] (000d) -- (010d);
            \draw[AreteCube] (000d) -- (001d);
            \draw[AreteCube] (100d) -- (110d);
            \draw[AreteCube] (010d) -- (110d);
            \draw[AreteCube] (010d) -- (011d);
            \draw[AreteCube] (001d) -- (011d);
            \draw[AreteCube] (100d) -- (101d);
            \draw[AreteCube] (001d) -- (101d);
            \draw[AreteCube] (101d) -- (111d);
            \draw[AreteCube] (110d) -- (111d);
            \draw[AreteCube] (011d) -- (111d);
            \node[Cercle] at (0, 4)  (0e) {};
            \node[Cercle] at (1, 5)  (1e) {};
            \node[Cercle] at (2, 4)  (2e) {};
            \node[Cercle] at (0, 6)  (3e) {};
            \node[Cercle] at (2, 6)  (4e) {};
            \node[Cercle] at (1, 7)  (5e) {};
            \node[Cercle] at (3, 5)  (6e) {};
            \node[Cercle] at (3, 7)  (7e) {};
            \draw[AreteCube] (0e) -- (1e);
            \draw[AreteCube] (2e) -- (1e);
            \draw[AreteCube] (0e) -- (3e);
            \draw[AreteCube] (2e) -- (4e);
            \draw[AreteCube] (1e) -- (5e);
            \draw[AreteCube] (3e) -- (5e);
            \draw[AreteCube] (4e) -- (5e);
            \draw[AreteCube] (6e) -- (7e);
            \draw[AreteCube] (2e) -- (6e);
            \draw[AreteCube] (4e) -- (7e);
            \node[Cercle] at (4, 4)  (0f) {};
            \node[Cercle] at (5, 5)  (1f) {};
            \node[Cercle] at (6, 4)  (2f) {};
            \node[Cercle] at (4, 6)  (3f) {};
            \node[Cercle] at (6, 6)  (4f) {};
            \node[Cercle] at (5, 7)  (5f) {};
            \node[Cercle] at (7, 5)  (6f) {};
            \node[Cercle] at (7, 7)  (7f) {};
            \draw[AreteCube] (0f) -- (1f);
            \draw[AreteCube] (2f) -- (1f);
            \draw[AreteCube] (0f) -- (3f);
            \draw[AreteCube] (2f) -- (4f);
            \draw[AreteCube] (1f) -- (5f);
            \draw[AreteCube] (3f) -- (5f);
            \draw[AreteCube] (4f) -- (5f);
            \draw[AreteCube] (6f) -- (7f);
            \draw[AreteCube] (2f) -- (6f);
            \draw[AreteCube] (4f) -- (7f);
            \node[Cercle] at (8, 4)  (0g) {};
            \node[Cercle] at (9, 5)  (1g) {};
            \node[Cercle] at (10, 4)  (2g) {};
            \node[Cercle] at (8, 6)  (3g) {};
            \node[Cercle] at (10, 6)  (4g) {};
            \node[Cercle] at (9, 7)  (5g) {};
            \node[Cercle] at (11, 5)  (6g) {};
            \node[Cercle] at (11, 7)  (7g) {};
            \draw[AreteCube] (0g) -- (1g);
            \draw[AreteCube] (2g) -- (1g);
            \draw[AreteCube] (0g) -- (3g);
            \draw[AreteCube] (2g) -- (4g);
            \draw[AreteCube] (1g) -- (5g);
            \draw[AreteCube] (3g) -- (5g);
            \draw[AreteCube] (4g) -- (5g);
            \draw[AreteCube] (6g) -- (7g);
            \draw[AreteCube] (2g) -- (6g);
            \draw[AreteCube] (4g) -- (7g);
            \node[Cercle] at (0, 7)  (0h) {};
            \node[Cercle] at (1, 8)  (1h) {};
            \node[Cercle] at (2, 7)  (2h) {};
            \node[Cercle] at (0, 9)  (3h) {};
            \node[Cercle] at (2, 9)  (4h) {};
            \node[Cercle] at (1, 10)  (5h) {};
            \node[Cercle] at (3, 8)  (6h) {};
            \node[Cercle] at (3, 10)  (7h) {};
            \draw[AreteCube] (0h) -- (1h);
            \draw[AreteCube] (2h) -- (1h);
            \draw[AreteCube] (0h) -- (3h);
            \draw[AreteCube] (2h) -- (4h);
            \draw[AreteCube] (1h) -- (5h);
            \draw[AreteCube] (3h) -- (5h);
            \draw[AreteCube] (4h) -- (5h);
            \draw[AreteCube] (6h) -- (7h);
            \draw[AreteCube] (2h) -- (6h);
            \draw[AreteCube] (4h) -- (7h);
            \node[Cercle] at (6, 7)  (1i) {};
            \node[Cercle] at (5, 8) (2i) {};
            \node[Cercle] at (7, 8)  (3i) {};
            \node[Cercle] at (6, 9)  (4i) {};
            \draw[AreteCube] (1i) -- (2i);
            \draw[AreteCube] (1i) -- (3i);
            \draw[AreteCube] (2i) -- (4i);
            \draw[AreteCube] (3i) -- (4i);
            \node[Cercle] at (10, 7)  (000j) {};
            \node[Cercle] at (10, 8)  (010j) {};
            \draw[AreteCube] (000j) -- (010j);
        \end{tikzpicture}
    }}}
    \caption{Hasse diagrams of the first $(\EnsEq_n, \OrdTam)$ posets.}
    \label{fig:InterEq}
\end{figure}

\subsection{Enumeration of balanced binary tree intervals}

Let us make use again of the synchronous grammars to enumerate balanced
binary trees intervals.

\begin{Proposition} \label{prop:SerieGenIntEq}
    The generating series enumerating balanced binary tree intervals in the
    Tamari lattice according to the number of leaves of the trees is
    $\GramSerie_{\textnormal{bi}}(x, 0, 0)$ where
    \begin{equation}
        \GramSerie_{\textnormal{bi}}(x, y, z) = x + \GramSerie_{\textnormal{bi}}(x^2 + 2xy + yz, x, x^2 + xy).
    \end{equation}
\end{Proposition}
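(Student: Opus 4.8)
The plan is to follow the method of Propositions~\ref{prop:GrammaireEq} and~\ref{prop:SGenArbresEqMax}: encode balanced binary tree intervals as a family of tree-like structures, build a synchronous grammar generating exactly this family, and read off the fixed-point equation from Proposition~\ref{prop:EquationFoncGS}. The first task is to translate Theorem~\ref{thm:IntEqHypercube} into a combinatorial encoding. Since $T_0$ is the minimum of $[T_0, T_1]$, and since by the proof of Theorem~\ref{thm:IntEqHypercube} the tree $T_1$ is recovered from $T_0$ by simultaneously performing a set $R$ of pairwise independent conservative balancing rotations, a balanced binary tree interval is the same datum as a pair $(T_0, R)$ where $T_0$ is a balanced binary tree and $R$ is a set of nodes of $T_0$ such that: \emph{(i)} every $y \in R$ is a root of a conservative balancing rotation in $T_0$, equivalently, by cases~\ref{item:CasRot1} and~\ref{item:CasRot2} of Section~\ref{subsec:RotationsEquilibre}, $\Des_{T_0}(y) = -1$ and the left child of $y$ has imbalance $-1$ or $0$; and \emph{(ii)} for every $y \in R$, the left child of $y$ in $T_0$ does not belong to $R$ (this uses Lemmas~\ref{lem:CreationRotationEqImpossible} and~\ref{lem:RotationsEqIndependantes}). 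I would call such an $R$ a \emph{valid marking} and check, using the proof of Theorem~\ref{thm:IntEqHypercube}, that $(T_0, R) \mapsto [T_0, T_1]$ is a bijection between valid markings and balanced binary tree intervals, and that all trees of $[T_0, T_1]$ have the same number of leaves as $T_0$.

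Next I would introduce the synchronous grammar $S_{\textnormal{bi}} := \left(\{x, y, z\}, \Bourgeon{.4}{$x$}, R\right)$ whose substitution rules are
\begin{align*}
    x & \longmapsto
        \raisebox{-0.65em}{\BourgeonA{.45}{$x$}{$0$}{$x$}} \quad + \quad
        \raisebox{-0.65em}{\BourgeonA{.45}{$x$}{$-1$}{$y$}} \quad + \quad
        \raisebox{-0.65em}{\BourgeonA{.45}{$y$}{$1$}{$x$}} \quad + \quad
        \raisebox{-0.65em}{\BourgeonACarre{.45}{$z$}{$-1$}{$y$}}\,, \\
    y & \longmapsto \Bourgeon{.45}{$x$}\,, \\
    z & \longmapsto
        \raisebox{-0.65em}{\BourgeonA{.45}{$x$}{$0$}{$x$}} \quad + \quad
        \raisebox{-0.65em}{\BourgeonA{.45}{$x$}{$-1$}{$y$}}\,,
\end{align*}
where the square node denotes a node of imbalance $-1$ that is \emph{marked}. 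The bud $\Bourgeon{.4}{$x$}$ behaves as in $S_{\textnormal{bal}}$ and may grow into any node, the bud $\Bourgeon{.4}{$y$}$ plays the same one-step delaying role, the fourth rule for $\Bourgeon{.4}{$x$}$ attaches a marked node of imbalance $-1$, and the auxiliary bud $\Bourgeon{.4}{$z$}$ stands for ``left child of a marked node'': its two rules force the node it grows into to have imbalance $0$ or $-1$ and to be unmarked, while leaving everything below it unconstrained, which is exactly what conditions \emph{(i)}--\emph{(ii)} require. One computes $\Expr(x) = x^2 + 2xy + yz$, $\Expr(y) = x$ and $\Expr(z) = x^2 + xy$.

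Then I would run the same argument as in Proposition~\ref{prop:GrammaireEq}: by the delaying mechanism, the bud trees generated by $S_{\textnormal{bi}}$ containing only buds $\Bourgeon{.4}{$x$}$ are balanced binary trees whose non-bud nodes carry their imbalance values together with a marked/unmarked attribute, and a double structural induction (on balanced binary trees for surjectivity, on generated bud trees for the validity of the marking) shows that these bud trees, with the $\Bourgeon{.4}{$x$}$ identified with leaves, are precisely the valid markings. Finally, $S_{\textnormal{bi}}$ is strict by Lemma~\ref{lem:ConditionLocFin} with the order $y \leq_B z \leq_B x$ (the only one-bud rule is $y \longmapsto \Bourgeon{.4}{$x$}$), and unambiguous by Lemma~\ref{lem:ConditionNAmb} since in each rule the right-hand trees have pairwise distinct roots --- distinct imbalance values, or, for the two imbalance $-1$ rules of $\Bourgeon{.4}{$x$}$, the marked versus unmarked attribute. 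Proposition~\ref{prop:EquationFoncGS} then gives $\GramSerie_{\textnormal{bi}}(x,y,z) = x + \GramSerie_{\textnormal{bi}}(x^2 + 2xy + yz,\, x,\, x^2 + xy)$, and unambiguity makes $[x^n]\GramSerie_{\textnormal{bi}}(x, 0, 0)$ count the generated bud trees having only $\Bourgeon{.4}{$x$}$ buds and $n$ leaves, hence the balanced binary tree intervals whose trees have $n$ leaves. The step I expect to be delicate is the first one: getting the notion of valid marking exactly right so that it is both \emph{forced} (no spurious marking is ever produced by the grammar) and \emph{complete}, and matching conditions \emph{(i)}--\emph{(ii)} precisely with the auxiliary bud $\Bourgeon{.4}{$z$}$; once the encoding is pinned down, the grammar-theoretic verifications are routine applications of Section~\ref{sec:GramSync}.
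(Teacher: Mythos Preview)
Your proposal is correct and follows essentially the same approach as the paper: the same encoding of intervals by marked balanced binary trees, the identical synchronous grammar $S_{\textnormal{bi}}$ on $\{x,y,z\}$ with the same rules, and the same verifications of strictness (via $y \leq_B z \leq_B x$ and Lemma~\ref{lem:ConditionLocFin}) and unambiguity (via Lemma~\ref{lem:ConditionNAmb}). Your formulation of the valid-marking conditions \emph{(i)}--\emph{(ii)} is a bit more explicit than the paper's, but the content is the same.
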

\begin{proof}
    Let $I := [T_0, T_1]$ be a balanced binary tree interval and $R$ be the set
    of nodes defined as in the proof of Theorem~\ref{thm:IntEqHypercube}
    associated with $I$. The proof of this theorem also shows
    that $I$ can be encoded by $T_0$ in which the nodes
    of $R$ are marked. To generate these objects, we consider a synchronous
    grammar which generates bud trees where (non-marked) nodes are \Noeud{.4}{}
    and marked nodes are \NoeudR{.4}{}. Let us consider the synchronous
    grammar $S_{\textnormal{bi}} := \left(\{x, y, z\}, \Bourgeon{.4}{$x$}, L\right)$ where $L$ contains
    the substitution rules
    \begin{align}
        x & \longmapsto
                \raisebox{-.7em}{\BourgeonA{.45}{$x$}{$-1$}{$y$}} \quad + \quad
                \raisebox{-.7em}{\BourgeonA{.45}{$x$}{$0$}{$x$}} \quad + \quad
                \raisebox{-.7em}{\BourgeonA{.45}{$y$}{$1$}{$x$}} \quad + \quad
                \raisebox{-.7em}{\BourgeonACarre{.45}{$z$}{$-1$}{$y$}}\, , \\
        y & \longmapsto \Bourgeon{.45}{$x$}\, , \\
        z & \longmapsto
                \raisebox{-.7em}{\BourgeonA{.45}{$x$}{$0$}{$x$}} \quad + \quad
                \raisebox{-.7em}{\BourgeonA{.45}{$x$}{$-1$}{$y$}}\, .
    \end{align}

    We can apply the same idea developed in the proof of Proposition~\ref{prop:GrammaireEq}
    to show that the bud trees generated by $S_{\textnormal{bi}}$ that only contain
    buds \Bourgeon{.4}{$x$} have non-bud nodes labeled by their imbalance
    values. Hence, identifying in such trees the \Bourgeon{.4}{$x$} with
    leaves, $S_{\textnormal{bi}}$ only generates balanced binary trees
    such that each of its node $r$ with $-1$ as imbalance value can be marked
    provided that its left child has $-1$ or $0$ as imbalance value and is
    not marked (recall that in this way, $r$ is a root of a conservative
    balancing rotation). Indeed, if a \Bourgeon{.4}{$x$} is substituted
    by a marked node, this marked node has a bud \Bourgeon{.4}{$z$} as
    left child and \Bourgeon{.4}{$z$} can only be substituted by a non-marked
    node with $-1$ or $0$ as imbalance value. The role of the bud \Bourgeon{.4}{$y$}
    is to delay the growth of a branch of the generated bud tree in order
    to create the imbalance values $-1$ and $1$.

    By setting $y \leq_B z \leq_B x$, $S_{\textnormal{bi}}$ satisfies the
    hypothesis of Lemma~\ref{lem:ConditionLocFin}, and hence, is strict.
    This synchronous grammar also is unambiguous since it satisfies the
    hypothesis of Lemma~\ref{lem:ConditionNAmb}. Indeed, the roots of the
    bud trees arising in a right member of the substitution rules of $L$
    are pairwise different, due to their labeling and their marking.

    Finally, since $S_{\textnormal{bi}}$ is strict and unambiguous, by
    Proposition~\ref{prop:EquationFoncGS}, we obtain the stated fixed-point
    functional equation, and the generating series is obtained by the specialization
    $y = 0$ and $z = 0$ in order to ignore bud trees that contain a bud
    labelled by~$y$ or by~$z$.
\end{proof}

The solution of this fixed-point functional equation gives us the following
first values for the number of balanced binary tree intervals in the Tamari lattice:
\begin{equation}
    \begin{split}
        & 1, 1, 3, 1, 7, 12, 6, 52, 119, 137, 195, 231, 1019, 3503, 6593, 12616, 26178, 43500, 64157, 94688, \\
        & 232560, 817757, 2233757, 5179734, 11676838, 24867480.
    \end{split}
\end{equation}
\medskip

The interval $[T_0, T_1]$ is a \emph{maximal balanced binary tree interval}
if $T_0$ (resp. $T_1$) is a minimal (resp. maximal) balanced binary tree.

\begin{Proposition} \label{prop:SerieGenIntArbresEqMax}
    The generating series enumerating maximal balanced binary tree intervals
    in the Tamari lattice according to the number of leaves of the trees
    is $\GramSerie_{\textnormal{mbi}}(x, 0, 0, 0)$ where
    \begin{equation}
        \GramSerie_{\textnormal{mbi}}(x, y, z, t) = x + \GramSerie_{\textnormal{mbi}}(x^2 + 2yt + yz, x, x^2 + xy, yt + yz).
    \end{equation}
\end{Proposition}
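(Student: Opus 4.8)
The plan is to mimic the constructions of Propositions~\ref{prop:SGenArbresEqMax} and~\ref{prop:SerieGenIntEq}: I will exhibit a synchronous grammar $S_{\textnormal{mbi}} := \left(\{x, y, z, t\}, \Bourgeon{.4}{$x$}, L\right)$ whose language, after identifying the buds \Bourgeon{.4}{$x$} with leaves and discarding the bud trees still carrying a bud \Bourgeon{.4}{$y$}, \Bourgeon{.4}{$z$} or \Bourgeon{.4}{$t$}, is exactly the set of encodings of maximal balanced binary tree intervals; then Proposition~\ref{prop:EquationFoncGS} yields the fixed-point functional equation and the specialization $y = z = t = 0$ produces the announced generating series.

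The first step is to reduce ``maximal balanced binary tree interval'' to a local condition on labelled trees. By Theorem~\ref{thm:IntEqHypercube}, an interval $[T_0, T_1]$ is encoded by $T_0$ together with a marking of the set $R$ of rotation roots, which are exactly the nodes $v$ of $T_0$ with $\Des_{T_0}(v) = -1$ whose left child has imbalance $-1$ or $0$. Using the classification of conservative balancing rotations in Section~\ref{subsec:RotationsEquilibre} together with Lemmas~\ref{lem:CreationRotationEqImpossible} and~\ref{lem:RotationsEqIndependantes}, one checks that the maximal balanced binary tree above a minimal $T_0$ is unique and obtained by performing \emph{all} these rotations, so the interval is maximal if and only if $T_0$ is minimal, the marking is the full set of conservative-rotation-roots, and the tree reached after performing all these rotations is maximal. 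By Proposition~\ref{prop:MotifsArbresEqMax} and a small computation of how a conservative rotation at a marked node reshapes the imbalance values of the node and its left child (the rest being untouched by Proposition~\ref{prop:HauteursEgalesEq}), this translates into: $T_0$ avoids $P_{\textnormal{min}}$; every node of imbalance $-1$ of $T_0$ whose left child has imbalance $-1$ or $0$ is marked (so that a non-marked node of imbalance $-1$ has a left child of imbalance $1$); and a marked node has an unmarked left child of imbalance $-1$ or $0$.

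Next I would design the substitution rules of $L$ so that this local invariant is preserved at each derivation step, exactly as in the proofs of Propositions~\ref{prop:GrammaireEq} and~\ref{prop:SerieGenIntEq}: the buds \Bourgeon{.4}{$x$} grow to full height, the delaying bud \Bourgeon{.4}{$y$} is what creates the imbalances $-1$ and $1$, the bud \Bourgeon{.4}{$z$} forces the left child of a marked node \NoeudR{.4}{} to be an unmarked node of imbalance $-1$ or $0$, and the new bud \Bourgeon{.4}{$t$} carries the minimality constraint, forcing the right child of an imbalance-$1$ node and the left child of a non-marked imbalance-$-1$ node to have the prescribed imbalance; marked nodes are produced only with a \Bourgeon{.4}{$z$} left child. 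One then proves, by the structural-induction argument of Proposition~\ref{prop:GrammaireEq}, that the bud trees generated by $S_{\textnormal{mbi}}$ containing only buds \Bourgeon{.4}{$x$} have their non-bud nodes labelled by their imbalance values, and, by a structural induction on minimal balanced binary trees equipped with their full marking, that every maximal balanced binary tree interval is generated. Ordering the alphabet with $y$ smallest, $S_{\textnormal{mbi}}$ satisfies the hypotheses of Lemma~\ref{lem:ConditionLocFin} and hence is strict, and since the roots of the bud trees occurring in the right-hand sides of distinct rules differ by their label or by being marked or not, Lemma~\ref{lem:ConditionNAmb} gives unambiguity; Proposition~\ref{prop:EquationFoncGS} then finishes the argument.

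The main obstacle is the correctness of the grammar. Unlike the case of arbitrary intervals, the marking must be forced to be the maximal one, and the requirement that the \emph{rotated} tree $T_1$ be maximal is a priori not a local property of $T_0$: it becomes local only after one understands precisely how a conservative rotation at a marked node changes the imbalance of that node, of its left child, and of their neighbours. Getting the interaction between the \Bourgeon{.4}{$z$} bud (which forbids a marked node immediately below another marked node along a left spine, reflecting Lemma~\ref{lem:RotationsEqIndependantes}) and the \Bourgeon{.4}{$t$} bud (which enforces both minimality of $T_0$ and, together with the maximality of the marking, the absence of $P_{\textnormal{max}}$-patterns after rotation) exactly right — so that the generated language is neither too small nor too large — is the delicate point; once the rules are correct, everything else is a routine application of the synchronous-grammar machinery of Section~\ref{sec:GramSync}.
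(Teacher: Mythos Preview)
Your overall plan --- encode a maximal interval $[T_0,T_1]$ by the minimal tree $T_0$ with the rotation roots marked, then build a synchronous grammar generating exactly these marked trees and apply Proposition~\ref{prop:EquationFoncGS} --- is exactly the paper's approach. But the local characterization you write down is wrong, and this error propagates to the grammar.

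You assert that the maximal balanced tree above a minimal $T_0$ is unique and obtained by marking \emph{every} conservative-rotation root. This fails already in $\EnsEq_4$: the tree $T_0$ whose root and left child both have imbalance $-1$ (with a leaf node below on the left and a leaf node on the right) has two conservative-rotation roots, namely the root and its left child, and by Lemma~\ref{lem:RotationsEqIndependantes} they cannot both belong to $R$. Marking the root gives one maximal interval; marking its left child gives another; so your condition ``every node of imbalance $-1$ whose left child has imbalance $-1$ or $0$ is marked'' is self-contradictory here. The correct local condition is subtler: a conservative-rotation root may remain unmarked provided its parent or its left child \emph{is} marked, because after performing that neighbouring rotation the node in question ceases to be a conservative-rotation root in $T_1$.

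This subtlety is also why four bud labels do not suffice. The roles ``right child of an unmarked imbalance-$1$ node'' and ``left child of an unmarked imbalance-$(-1)$ node'' demand different substitution rules: the first must grow into a node of imbalance $-1$ (to avoid $P_{\textnormal{min}}$), the second must grow into a node of imbalance $1$ or a marked node of imbalance $-1$ (to keep the marking maximal). A single bud $t$ carrying both constraints would overgenerate. The paper uses a five-letter alphabet $\{x,y,z,u,v\}$ with $u$ and $v$ playing these two roles separately; the four-variable functional equation of the statement arises only \emph{a posteriori}, from the observation that $\Expr(u) = yv + yz$ and $\Expr(v) = yu + yz$ are exchanged by $u \leftrightarrow v$, so that the diagonal specialization $u = v =: t$ of the five-variable series satisfies the announced equation.
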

\begin{proof}
    Let $I := [T_0, T_1]$ be a maximal balanced binary tree interval. This
    interval can be encoded by the minimal balanced binary tree $T_0$ in
    which the nodes that are roots of the conservative balancing rotations
    needed to transform $T_0$ into $T_1$ are marked. Moreover, since $T_1$
    is a maximal balanced binary tree, by Proposition~\ref{prop:MotifsArbresEqMax},
    it avoids the patterns of $P_{\textnormal{max}}$. Hence, the tree-like
    structure that encodes $I$ must avoid the patterns of $P_{\textnormal{min}}$
    and not have a node which is root of a conservative balancing rotation
    not marked if its parent or its left child is not marked. To generate
    these objects, we use the synchronous grammar $S_{\textnormal{mbi}} := \left(\{x, y, z, u, v\}, \Bourgeon{.4}{$x$}, R\right)$
    where $R$ contains the substitution rules
    \begin{align}
        x & \longmapsto
                \raisebox{-.7em}{\BourgeonA{.45}{$v$}{$-1$}{$y$}} \quad + \quad
                \raisebox{-.7em}{\BourgeonA{.45}{$x$}{$0$}{$x$}} \quad + \quad
                \raisebox{-.7em}{\BourgeonA{.45}{$y$}{$1$}{$u$}} \quad + \quad
                \raisebox{-.7em}{\BourgeonACarre{.45}{$z$}{$-1$}{$y$}}\, , \\
        y & \longmapsto \Bourgeon{.45}{$x$}\, , \\
        z & \longmapsto
                \raisebox{-.7em}{\BourgeonA{.45}{$x$}{$-1$}{$y$}} \quad + \quad
                \raisebox{-.7em}{\BourgeonA{.45}{$x$}{$0$}{$x$}}\, , \\
        u & \longmapsto
                \raisebox{-.7em}{\BourgeonA{.45}{$v$}{$-1$}{$y$}} \quad + \quad
                \raisebox{-.7em}{\BourgeonACarre{.45}{$z$}{$-1$}{$y$}}\, , \\
        v & \longmapsto
                \raisebox{-1.4em}{\BourgeonA{.45}{$y$}{$1$}{$u$}} \quad + \quad
                \raisebox{-1.4em}{\BourgeonACarre{.45}{$z$}{$-1$}{$y$}}\, .
    \end{align}

    We can apply the same idea developed in the proof of Proposition~\ref{prop:GrammaireEq}
    to show that the bud trees generated by $S_{\textnormal{mbi}}$ that
    only contain buds \Bourgeon{.4}{$x$} have non-bud nodes labeled by
    their imbalance values. Hence, identifying in such trees the \Bourgeon{.4}{$x$}
    with leaves, $S_{\textnormal{mbi}}$ only generates minimal balanced binary trees
    that are maximally marked. Indeed, by Proposition~\ref{prop:MotifsArbresEqMax},
    the generated tree-like structures must avoid the two patterns of
    $P_{\textnormal{min}}$. To do that, we have to control the growth of
    the \Bourgeon{.4}{$x$} when they are substituted by bud trees $D$
    whose roots are not marked and have an imbalance value of $1$. Indeed,
    if the root of the right subtree of $D$ grows with an imbalance value
    of $1$ or $0$, one of the two patterns is not avoided. The idea is to
    force the imbalance value of the root of the right subtree of $D$ to
    be $-1$, role played by the bud \Bourgeon{.4}{$u$}. Moreover, if the
    \Bourgeon{.4}{$x$} are substituted by non-marked nodes $a$ labeled
    by $-1$, to generate trees that are maximally marked, the left child
    of $a$ has to be marked, or labeled by $1$ (in this case, $a$ is not
    root of a conservative balancing rotation). This is the role played
    by the bud \Bourgeon{.4}{$v$}. The bud \Bourgeon{.4}{$z$} appears
    in these substitution rules only as a left child of a marked node and
    it is substituted only by nodes with $-1$ or $0$ as imbalance value,
    that are the only ones authorized for a left child of a root of a
    conservative balancing rotation. As usual, the role of the bud
    \Bourgeon{.4}{$y$} is to delay the growth of a branch of the generated
    bud tree in order to create the imbalance values $-1$ and $1$.

    By setting $y \leq_B v \leq_B u \leq_B z \leq_B x$, $S_{\textnormal{mbi}}$
    satisfies the hypothesis of Lemma~\ref{lem:ConditionLocFin}, and hence,
    is strict. This synchronous grammar also is unambiguous since it satisfies
    the hypothesis of Lemma~\ref{lem:ConditionNAmb}. Indeed, the roots of
    each bud trees arising in a right member of the substitution rules of
    $R$ are different to one other, due to their labeling and their marking.

    By Proposition~\ref{prop:EquationFoncGS}, the fixed-point functional
    equation $F$ associated with $\GramSerie_{\textnormal{mbi}}$ is
    \begin{equation}
        F(x, y, z, u, v) = x + F(x^2 + yu + yv + yz, x, x^2 + xy, yv + yz, yu + yz),
    \end{equation}
    and, since the variables $u$ and $v$ play the same role, we obtain the
    stated fixed-point functional equation. The generating series is obtained
    by the specialization $y = 0$, $z = 0$ and $t = 0$ in order to ignore bud
    trees that contain a bud labelled by $y$, $z$, $u$, or by~$v$.
\end{proof}

The solution of this fixed-point functional equation gives us the following
first values for the number of maximal balanced binary tree intervals in
the Tamari lattice:
\begin{equation}
    \begin{split}
        & 1, 1, 1, 1, 3, 2, 2, 6, 9, 15, 15, 17, 41, 77, 125, 178, 252, 376, 531, 740, 1192, 2179, 4273, 7738, \\
        & 13012, 20776, 32389, 49841, 75457, 113011, 168888, 252881, 379348.
    \end{split}
\end{equation}
\medskip

We can slightly modify $S_{\textnormal{mbi}}$
to take into consideration the dimensions of the hypercubes isomorphic to
the enumerated maximal balanced binary tree intervals. For that, we have to
count the number of applications of substitution rules that generate
a marked node. Let us use for that a parameter $\xi$. Whence we obtain
the generating series defined by the fixed-point functional equation
\begin{equation}
    \GramSerie_{\textnormal{mbi}}(x, y, z, t, \xi) = x + \GramSerie_{\textnormal{mbi}}(x^2 + 2yt + yz\xi, x, x^2 + xy, yt + yz\xi, \xi).
\end{equation}
First coefficients of $x^i$ in $P := \GramSerie_{\textnormal{mbi}}(x, 0, 0, 0, \xi)$ are
\begin{multicols}{2}
    \small
    \begin{equation}
    \left[x^1\right] P  = 1,
    \end{equation}
    \begin{equation}
    \left[x^2\right] P = 1,
    \end{equation}
    \begin{equation}
    \left[x^3\right] P  = \xi,
    \end{equation}
    \begin{equation}
    \left[x^4\right] P  = 1,
    \end{equation}
    \begin{equation}
    \left[x^5\right] P = 3\xi,
    \end{equation}
    \begin{equation}
    \left[x^6\right] P = \xi + \xi^2,
    \end{equation}
    \begin{equation}
    \left[x^7\right] P = 2\xi,
    \end{equation}

    \begin{equation}
    \left[x^8\right] P = 1 + 4\xi^2 + \xi^3,
    \end{equation}
    \begin{equation}
    \left[x^9\right] P = 4\xi + 4\xi^2 + \xi^4,
    \end{equation}
    \begin{equation}
    \left[x^{10}\right] P = 3\xi + 9\xi^2 + 3\xi^3,
    \end{equation}
    \begin{equation}
    \left[x^{11}\right] P = 9\xi^2 + 6\xi^3,
    \end{equation}
    \begin{equation}
    \left[x^{12}\right] P = \xi + 13\xi^2 + 2\xi^3 + \xi^4,
    \end{equation}
    \begin{equation}
    \left[x^{13}\right] P = 6\xi + 4\xi^2 + 16\xi^3 + 15\xi^4,
    \end{equation}
    \begin{equation}
    \left[x^{14}\right] P = 2\xi + 18\xi^2 + 31\xi^3 + 12\xi^4 + 14\xi^5.
    \end{equation}
\end{multicols}
As example, the coefficient of $x^{12}$ of $\GramSerie_{\textnormal{mbi}}(x, 0, 0, 0, \xi)$
says that in the poset $(\EnsEq_{11}, \OrdTam)$, there is one maximal $1$-dimensional
hypercube, thirteen maximal $2$-dimensional hypercubes, two maximal $3$-dimensional
hypercubes and one maximal $4$-dimensional hypercube (see Figure~\ref{fig:InterEq}).
\medskip

Note that Proposition~\ref{prop:HauteursEgalesEq} implies that all binary
trees of the connected components of the posets $(\EnsEq_n, \OrdTam)$
have same height. However, the converse is false: There is two connected
components in the poset $(\EnsEq_5, \OrdTam)$ and each binary tree of $\EnsEq_5$
has same height.

\section{Intervals of other binary trees families in the Tamari lattice} \label{sec:AutresFamillesCloses}

\subsection{Generalized balanced binary trees}

\subsubsection{Definitions}
Let $V$ be a subset of $\EnsRel$. We say that a binary tree $T$ is \emph{$V$-balanced}
if for all node $x$ of $T$, $\Des_T(x) \in V$. Let us denote by
$\EnsEq^V$ the set of $V$-balanced binary trees. Note that the set of
balanced binary trees is $\EnsEq^{[-1, 1]}$. It is clear that $0$ must always
belongs to $V$ since a binary tree necessarily has a node with both
empty left and right subtrees; Otherwise, $\EnsEq^V$ would be empty.
A natural question about $V$-balanced binary trees demands to characterize
the sets $V$ such that $\EnsEq^V$ is closed by interval in the Tamari lattice.
\medskip

Let $T$ be a binary tree. Denote by $T^\InvolAB$ the binary tree obtained by
exchanging the right and left subtrees of each of its nodes. More formally,
\begin{equation}
    T^\InvolAB :=
    \begin{cases}
        R^\InvolAB \ABCons L^\InvolAB & \mbox{if $T = L \ABCons R$,} \\
        \ArbreVide                    & \mbox{otherwise ($T = \ArbreVide$).}
    \end{cases}
\end{equation}
For instance, one has
\begin{equation}
    \scalebox{.2}{\raisebox{-7.5em}{
    \begin{tikzpicture}
        \node[Feuille](0)at(0,-3){};
        \node[Noeud](1)at(1,-2){};
        \node[Feuille](2)at(2,-3){};
        \draw[Arete](1)--(0);
        \draw[Arete](1)--(2);
        \node[Noeud](3)at(3,-1){};
        \node[Feuille](4)at(4,-2){};
        \draw[Arete](3)--(1);
        \draw[Arete](3)--(4);
        \node[Noeud](5)at(5,0){};
        \node[Feuille](6)at(6,-3){};
        \node[Noeud](7)at(7,-2){};
        \node[Feuille](8)at(8,-4){};
        \node[Noeud](9)at(9,-3){};
        \node[Feuille](10)at(10,-4){};
        \draw[Arete](9)--(8);
        \draw[Arete](9)--(10);
        \draw[Arete](7)--(6);
        \draw[Arete](7)--(9);
        \node[Noeud](11)at(11,-1){};
        \node[Feuille](12)at(12,-5){};
        \node[Noeud](13)at(13,-4){};
        \node[Feuille](14)at(14,-5){};
        \draw[Arete](13)--(12);
        \draw[Arete](13)--(14);
        \node[Noeud](15)at(15,-3){};
        \node[Feuille](16)at(16,-4){};
        \draw[Arete](15)--(13);
        \draw[Arete](15)--(16);
        \node[Noeud](17)at(17,-2){};
        \node[Feuille](18)at(18,-4){};
        \node[Noeud](19)at(19,-3){};
        \node[Feuille](20)at(20,-4){};
        \draw[Arete](19)--(18);
        \draw[Arete](19)--(20);
        \draw[Arete](17)--(15);
        \draw[Arete](17)--(19);
        \draw[Arete](11)--(7);
        \draw[Arete](11)--(17);
        \draw[Arete](5)--(3);
        \draw[Arete](5)--(11);
    \end{tikzpicture}}}
    \quad \overset{\InvolAB}{\longleftrightarrow} \quad
    \scalebox{.2}{\raisebox{-7.5em}{
    \begin{tikzpicture}
        \node[Feuille](0)at(0,-4){};
        \node[Noeud](1)at(1,-3){};
        \node[Feuille](2)at(2,-4){};
        \draw[Arete](1)--(0);
        \draw[Arete](1)--(2);
        \node[Noeud](3)at(3,-2){};
        \node[Feuille](4)at(4,-4){};
        \node[Noeud](5)at(5,-3){};
        \node[Feuille](6)at(6,-5){};
        \node[Noeud](7)at(7,-4){};
        \node[Feuille](8)at(8,-5){};
        \draw[Arete](7)--(6);
        \draw[Arete](7)--(8);
        \draw[Arete](5)--(4);
        \draw[Arete](5)--(7);
        \draw[Arete](3)--(1);
        \draw[Arete](3)--(5);
        \node[Noeud](9)at(9,-1){};
        \node[Feuille](10)at(10,-4){};
        \node[Noeud](11)at(11,-3){};
        \node[Feuille](12)at(12,-4){};
        \draw[Arete](11)--(10);
        \draw[Arete](11)--(12);
        \node[Noeud](13)at(13,-2){};
        \node[Feuille](14)at(14,-3){};
        \draw[Arete](13)--(11);
        \draw[Arete](13)--(14);
        \draw[Arete](9)--(3);
        \draw[Arete](9)--(13);
        \node[Noeud](15)at(15,0){};
        \node[Feuille](16)at(16,-2){};
        \node[Noeud](17)at(17,-1){};
        \node[Feuille](18)at(18,-3){};
        \node[Noeud](19)at(19,-2){};
        \node[Feuille](20)at(20,-3){};
        \draw[Arete](19)--(18);
        \draw[Arete](19)--(20);
        \draw[Arete](17)--(16);
        \draw[Arete](17)--(19);
        \draw[Arete](15)--(9);
        \draw[Arete](15)--(17);
    \end{tikzpicture}}}\, .
\end{equation}
If $V$ is a subset of $\EnsRel$, let us also denote by $V^\InvolAB$ the set
$\{-v : v \in V\}$.

\subsubsection{A symmetry}

\begin{Lemme} \label{lem:OrdreDual}
    Let $T_0$ and $T_1$ be two binary trees such that $T_0 \OrdTam T_1$.
    Then, $T_1^\InvolAB \OrdTam T_0^\InvolAB$.
\end{Lemme}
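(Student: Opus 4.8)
The plan is to reduce to the covering case and then check that a single right rotation is turned into a single right rotation by the involution $\InvolAB$, with the roles of the two trees swapped. First I would recall that $\OrdTam$ is, by Definition~\ref{def:OrdreTamari}, the reflexive–transitive closure of $\CouvTam$, so it suffices to prove the statement when $T_0 \CouvTam T_1$: if $T_0 = U_0 \CouvTam U_1 \CouvTam \cdots \CouvTam U_k = T_1$, then applying the covering case to each step gives $U_k^\InvolAB \CouvTam U_{k-1}^\InvolAB \CouvTam \cdots \CouvTam U_0^\InvolAB$, whence $T_1^\InvolAB \OrdTam T_0^\InvolAB$ by transitivity and reflexivity.

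So the heart of the argument is: if $T_1$ is obtained from $T_0$ by a right rotation of root $y$, then $T_0^\InvolAB$ is obtained from $T_1^\InvolAB$ by a right rotation. Here I would work locally, exactly as the rotation is defined. Let $S_0 = (A \ABCons B) \ABCons C$ be the subtree of $T_0$ of root $y$ (with $x$ the left child of $y$), so that the subtree of $T_1$ at the same position is $S_1 = A \ABCons (B \ABCons C)$. One computes
\begin{equation}
    S_0^\InvolAB = C^\InvolAB \ABCons (B^\InvolAB \ABCons A^\InvolAB)
    \qquad \text{and} \qquad
    S_1^\InvolAB = (C^\InvolAB \ABCons B^\InvolAB) \ABCons A^\InvolAB,
\end{equation}
and these two patterns are precisely related by a right rotation: $S_1^\InvolAB$ is the ``$(A' \ABCons B') \ABCons C'$'' shape with $A' = C^\InvolAB$, $B' = B^\InvolAB$, $C' = A^\InvolAB$, and $S_0^\InvolAB$ is the corresponding $A' \ABCons (B' \ABCons C')$. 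Since $\InvolAB$ is a shape-preserving recursive operation on binary trees that commutes with grafting (i.e. $(L \ABCons R)^\InvolAB = R^\InvolAB \ABCons L^\InvolAB$), applying $\InvolAB$ to the whole tree $T_0$ replaces every subtree by its mirror and, in particular, replaces $S_0$ by $S_0^\InvolAB$ sitting at the mirror position inside $T_0^\InvolAB$; the rest of $T_0^\InvolAB$ is identical to the rest of $T_1^\InvolAB$. Hence $T_1^\InvolAB \CouvTam T_0^\InvolAB$, as desired.

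I do not expect a serious obstacle here; the only point requiring a little care is the bookkeeping that $\InvolAB$, being defined recursively on the shape, indeed acts locally — that is, that replacing the subtree $S_0$ of $T_0$ by $S_1$ to get $T_1$ corresponds, after applying $\InvolAB$, to replacing the subtree $S_0^\InvolAB$ of $T_0^\InvolAB$ (at the image position) by $S_1^\InvolAB$. This follows by a straightforward structural induction on the path from the root of $T_0$ down to $y$: at each step the node either keeps its two children mirrored, or is $y$ itself, where the computation above applies. Once this local claim is in place, the global statement and then the closure under $\OrdTam$ follow immediately as described.
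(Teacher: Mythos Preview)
Your proposal is correct and follows essentially the same approach as the paper: reduce to the covering relation and verify by direct computation that $(A \ABCons B) \ABCons C$ and $A \ABCons (B \ABCons C)$ are exchanged under $\InvolAB$ into a pair again related by a right rotation, then extend to $\OrdTam$ by transitivity. You are in fact slightly more careful than the paper, which treats the rotation as if it happened at the root and leaves implicit the locality argument you spell out.
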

\begin{proof}
    Assume that $S_0 \CouvTam S_1$ where $S_0 = (A \ABCons B) \ABCons C$
    and $S_1 = A \ABCons (B \ABCons C)$. Hence, we have
    $S_1^\InvolAB = (C^\InvolAB \ABCons B^\InvolAB) \ABCons A^\InvolAB$
    and $S_0^\InvolAB = C^\InvolAB \ABCons (B^\InvolAB \ABCons A^\InvolAB)$.
    Thus, $S_1^\InvolAB \CouvTam S_0^\InvolAB$, and the result follows
    from the fact that $\OrdTam$ is the reflexive and transitive closure of $\CouvTam$.
\end{proof}

\begin{Lemme} \label{lem:BijectionInvol}
    For all $V \subseteq \EnsRel$, the application $\InvolAB$ yields a bijection
    between the sets $\EnsEq^V$ and $\EnsEq^{V^\InvolAB}$.
\end{Lemme}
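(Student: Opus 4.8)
The plan is to show that the involution $\InvolAB$ defined on binary trees restricts to a bijection between $\EnsEq^V$ and $\EnsEq^{V^\InvolAB}$. The key observation is the relationship between the imbalance mapping and the involution: for any binary tree $T$ and any node $x$ of $T$, I would show that $\Des_{T^\InvolAB}(x') = -\Des_T(x)$, where $x'$ is the node of $T^\InvolAB$ corresponding to $x$ under the structural correspondence induced by $\InvolAB$. This is essentially immediate from the definitions: if $x$ has left subtree $L$ and right subtree $R$ in $T$, then in $T^\InvolAB$ the corresponding node has left subtree $R^\InvolAB$ and right subtree $L^\InvolAB$, and since $\Ht(S^\InvolAB) = \Ht(S)$ for every binary tree $S$ (the involution clearly preserves heights, by a trivial structural induction), we get $\Des_{T^\InvolAB}(x') = \Ht(L^\InvolAB) - \Ht(R^\InvolAB) = \Ht(L) - \Ht(R) = -\Des_T(x)$.

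Given this, the argument runs as follows. First, I would note that $\InvolAB$ is an involution on $\EnsAB$, i.e., $(T^\InvolAB)^\InvolAB = T$, which follows by a straightforward structural induction on binary trees using the recursive definition. Hence $\InvolAB : \EnsAB \to \EnsAB$ is a bijection. Next, I would establish that $\InvolAB$ maps $\EnsEq^V$ into $\EnsEq^{V^\InvolAB}$: if $T \in \EnsEq^V$, then for every node $x$ of $T$ we have $\Des_T(x) \in V$, so for the corresponding node $x'$ of $T^\InvolAB$ we have $\Des_{T^\InvolAB}(x') = -\Des_T(x) \in V^\InvolAB$; since every node of $T^\InvolAB$ arises this way, $T^\InvolAB \in \EnsEq^{V^\InvolAB}$. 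Symmetrically, $\InvolAB$ maps $\EnsEq^{V^\InvolAB}$ into $\EnsEq^{(V^\InvolAB)^\InvolAB} = \EnsEq^V$, using $(V^\InvolAB)^\InvolAB = V$. Since $\InvolAB$ is an involution and maps each of these two sets into the other, it restricts to a bijection between them.

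I do not anticipate any real obstacle here; the statement is essentially bookkeeping once the height-preservation and imbalance-negation facts are recorded. The only point requiring any care is being precise about the ``corresponding node'' under $\InvolAB$ — one should say clearly that the bijection on binary trees induces a bijection on the sets of nodes (preserving the parent/child tree structure up to the left/right swap), so that quantifying ``for all nodes'' on one side transfers correctly to the other. This is intuitively obvious from the recursive definition of $T^\InvolAB$ and can be made formal by the same structural induction that proves $\InvolAB$ is an involution.

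Concretely, the proof would read roughly: ``By structural induction one checks that $\Ht(T^\InvolAB) = \Ht(T)$ and $(T^\InvolAB)^\InvolAB = T$ for all $T \in \EnsAB$; in particular $\InvolAB$ is a bijection on $\EnsAB$. Moreover, the node of $T^\InvolAB$ corresponding to a node $x$ of $T$ with subtrees $L$ and $R$ has subtrees $R^\InvolAB$ and $L^\InvolAB$, whence its imbalance value equals $\Ht(L^\InvolAB) - \Ht(R^\InvolAB) = \Ht(L) - \Ht(R) = -\Des_T(x)$. Thus $T \in \EnsEq^V$ if and only if every node of $T^\InvolAB$ has imbalance value in $V^\InvolAB$, i.e. $T^\InvolAB \in \EnsEq^{V^\InvolAB}$. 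Since $\InvolAB$ is an involution, it restricts to a bijection between $\EnsEq^V$ and $\EnsEq^{V^\InvolAB}$.''
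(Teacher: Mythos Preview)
Your proposal is correct and follows essentially the same approach as the paper: note that $\InvolAB$ is an involution, observe that the imbalance value of a node is negated under $\InvolAB$ (via $\Ht(S^\InvolAB)=\Ht(S)$), and conclude that $T\in\EnsEq^V$ iff $T^\InvolAB\in\EnsEq^{V^\InvolAB}$. Your write-up is slightly more explicit about the node correspondence and height preservation, but the argument is the same.
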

\begin{proof}
    It is immediate, from the definition of $\InvolAB$, that the application
    $\InvolAB$ is an involution. It then remains to show that if
    $T \in \EnsEq^V$, then $T^\InvolAB \in \EnsEq^{V^\InvolAB}$. Let $x$
    be a node of $T$ and $L$ (resp. $R$) be the left (resp. right) subtree
    of $x$. We have $v := \Des_T(x) = \Ht(R) - \Ht(L) \in V$. In $T^\InvolAB$,
    one has $\Des_{T^\InvolAB}(x) = \Ht(L^\InvolAB) - \Ht(R^\InvolAB) = \Ht(L) - \Ht(R) = -v \in V^\InvolAB$.
    Hence, $T^\InvolAB \in \EnsEq^{V^\InvolAB}$.
\end{proof}

\begin{Proposition} \label{prop:EnsEqVrV}
    For all $V \subseteq \EnsRel$, the set $\EnsEq^V$ is closed by interval
    in the Tamari lattice if and only if the set $\EnsEq^{V^\InvolAB}$ also is.
\end{Proposition}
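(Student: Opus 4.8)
The plan is to exploit the two lemmas just established, namely that $\InvolAB$ reverses the Tamari order (Lemma~\ref{lem:OrdreDual}) and that it restricts to a bijection $\EnsEq^V \to \EnsEq^{V^\InvolAB}$ (Lemma~\ref{lem:BijectionInvol}). Since $\InvolAB$ is an involution one has $\left(V^\InvolAB\right)^\InvolAB = V$, so the statement to prove is symmetric in $V$ and $V^\InvolAB$; hence it suffices to establish the single implication ``$\EnsEq^V$ closed by interval $\Rightarrow$ $\EnsEq^{V^\InvolAB}$ closed by interval'', and then to apply it with $V^\InvolAB$ in the role of $V$ to get the converse.

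For the implication, I would take $T, T' \in \EnsEq^{V^\InvolAB}$ with $T \OrdTam T'$ and an arbitrary $U$ in the interval $[T, T']$, that is $T \OrdTam U \OrdTam T'$. Applying Lemma~\ref{lem:OrdreDual} twice gives $T'^\InvolAB \OrdTam U^\InvolAB \OrdTam T^\InvolAB$, so $U^\InvolAB$ lies in the interval $[T'^\InvolAB, T^\InvolAB]$. By Lemma~\ref{lem:BijectionInvol} applied with the subset $V^\InvolAB$ (and using $\left(V^\InvolAB\right)^\InvolAB = V$), both $T^\InvolAB$ and $T'^\InvolAB$ belong to $\EnsEq^V$. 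Since $\EnsEq^V$ is closed by interval, this forces $U^\InvolAB \in \EnsEq^V$, and applying $\InvolAB$ once more via Lemma~\ref{lem:BijectionInvol} yields $U = \left(U^\InvolAB\right)^\InvolAB \in \EnsEq^{V^\InvolAB}$. As $U$ was arbitrary in $[T, T']$, this interval is contained in $\EnsEq^{V^\InvolAB}$, so $\EnsEq^{V^\InvolAB}$ is closed by interval.

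There is essentially no hard step here: the proof is a formal transport of the closure property along the involution $\InvolAB$, and the only point requiring a little care is to keep track of the order reversal — so that an interval $[T, T']$ becomes $[T'^\InvolAB, T^\InvolAB]$ — and to invoke Lemma~\ref{lem:BijectionInvol} with the correct subset so that the identity $\left(V^\InvolAB\right)^\InvolAB = V$ closes the loop. In writing this up I would simply display the two chains of $\OrdTam$-relations and cite the two lemmas at each transfer step.
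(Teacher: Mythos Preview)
Your proof is correct and follows essentially the same approach as the paper: both transport the closure property along the involution $\InvolAB$ by combining Lemma~\ref{lem:OrdreDual} (order reversal) with Lemma~\ref{lem:BijectionInvol} (bijection between $\EnsEq^V$ and $\EnsEq^{V^\InvolAB}$), and both observe that by symmetry only one implication needs to be proved. The only cosmetic difference is that the paper phrases the argument by contradiction whereas you give it directly.
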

\begin{proof}
    Assume that $\EnsEq^{V^\InvolAB}$ is closed by interval in the
    Tamari lattice. By contradiction, assume that there exist
    $T_0, T_2 \in \EnsEq^V$ and $T_1 \notin \EnsEq^V$ such that
    $T_0 \OrdTam T_1 \OrdTam T_2$. By Lemma~\ref{lem:OrdreDual}, we have
    $T_2^\InvolAB \OrdTam T_1^\InvolAB \OrdTam T_0^\InvolAB$, and by
    Lemma~\ref{lem:BijectionInvol}, $T_0^\InvolAB, T_2^\InvolAB \in \EnsEq^{V^\InvolAB}$
    and $T_1^\InvolAB \notin \EnsEq^{V^\InvolAB}$. That implies that $\EnsEq^{V^\InvolAB}$
    is not closed by interval in the Tamari lattice, which is contradictory
    with our hypothesis.
\end{proof}

\subsubsection{\texorpdfstring{$\{0, 1\}$-balanced binary trees}{0, 1-balanced binary trees}}
Using the methods developed in Section~\ref{sec:GramSync}, one can enumerate
$\{0, 1\}$-balanced binary trees according to their number of leaves, and
obtain the fixed-point functional equation
\begin{equation}
    \GramSerie_{01}(x, y) = x + \GramSerie_{01}(x^2 + xy, x),
\end{equation}
where the generating series of $\{0, 1\}$-balanced binary trees is the specialization
$\GramSerie_{01}(x, 0)$. First values are
\begin{equation}
    1, 1, 1, 1, 1, 2, 2, 2, 3, 5, 7, 9, 11, 13, 17, 26, 42, 66, 97, 134, 180, 241, 321, 424, 564, 774, 1111.
\end{equation}

\begin{Proposition} \label{prop:Arbres01Clos}
    The set of $\{0, 1\}$-balanced binary trees is closed by interval in
    The Tamari lattice.
\end{Proposition}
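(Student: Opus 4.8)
The plan is to observe that every $\{0,1\}$-balanced binary tree is already a \emph{maximal} balanced binary tree in the sense of Section~\ref{sec:Motifs}, which makes the closure statement essentially vacuous. Since $\{0,1\} \subseteq \{-1,0,1\}$ we have $\EnsEq^{\{0,1\}} \subseteq \EnsEq$, so for $T, T' \in \EnsEq^{\{0,1\}}$ with $T \OrdTam T'$ Theorem~\ref{thm:ClotureIntArbresEq} already tells us that $[T, T']$ contains only balanced binary trees. What remains is therefore to show that this interval is never proper, i.e. that $T \OrdTam T'$ with $T, T' \in \EnsEq^{\{0,1\}}$ forces $T = T'$; then $[T, T'] = \{T\} \subseteq \EnsEq^{\{0,1\}}$ and we are done.

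The heart of the argument is the claim: if $T \in \EnsEq^{\{0,1\}}$ and $T \CouvTam T_1$, then $T_1$ is unbalanced. A covering $T \CouvTam T_1$ is a right rotation of root $y$ for some node $y$ having a left child $x$, and since all imbalance values in $T$ lie in $\{0,1\}$ we have $(\Des_T(x), \Des_T(y)) \in \{(0,0),(1,0),(0,1),(1,1)\}$; thus the rotation is one of the cases~\ref{item:CasRot3},~\ref{item:CasRot5},~\ref{item:CasRot8},~\ref{item:CasRot9} of Section~\ref{subsec:RotationsEquilibre}, each of which is an unbalancing rotation creating a node of imbalance at least $2$. (Equivalently, a $\{0,1\}$-balanced binary tree has no node of imbalance $-1$, hence trivially avoids the two patterns of $P_{\textnormal{max}}$, so it is maximal by Proposition~\ref{prop:MotifsArbresEqMax}.) If $T$ has no node admitting a left child — for instance when $T$ has at most one node — it has no cover at all, and the claim holds vacuously.

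To conclude, suppose $T, T' \in \EnsEq^{\{0,1\}}$ with $T \OrdTam T'$ and $T \ne T'$. By Definition~\ref{def:OrdreTamari} there is a binary tree $T_1$ with $T \CouvTam T_1 \OrdTam T'$; by the claim $T_1$ is unbalanced. But $T_1 \in [T, T']$, which by Theorem~\ref{thm:ClotureIntArbresEq} consists only of balanced binary trees since $T$ and $T'$ are balanced — a contradiction. Hence $T = T'$ and $\EnsEq^{\{0,1\}}$ is closed by interval. I do not foresee any genuine obstacle here: once it is noticed that $\{0,1\}$-balanced binary trees lie at the very top of the balanced-tree subposet, the proof amounts to a short check of the four relevant rotation cases together with an appeal to the main theorem.
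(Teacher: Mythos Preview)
Your proof is correct and follows essentially the same route as the paper's: both check that for $T \in \EnsEq^{\{0,1\}}$ the only applicable rotation cases are \ref{item:CasRot3}, \ref{item:CasRot5}, \ref{item:CasRot8}, \ref{item:CasRot9}, all unbalancing, and then invoke Theorem~\ref{thm:ClotureIntArbresEq} to conclude. The paper phrases the conclusion as ``any successor of an unbalanced $T_1$ stays unbalanced'' while you phrase it as ``the interval $[T,T']$ must be a singleton'', but these are the same argument; the paper in fact remarks immediately after its proof that this shows any two elements of $\EnsEq^{\{0,1\}}$ are incomparable, which is exactly your formulation.
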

\begin{proof}
    Let $T_0 \in \EnsEq^{\{0, 1\}}$. Since $T_0$ is only composed of nodes
    with $0$ or $1$ as imbalance value, one can only perform into $T_0$ rotations
    of the kind~\ref{item:CasRot3},~\ref{item:CasRot5},~\ref{item:CasRot8}
    or~\ref{item:CasRot9} studied in Section~\ref{subsec:RotationsEquilibre}.
    Since these rotations are unbalancing rotations, for all binary tree
    $T_1$ such that $T_0 \CouvTam T_1$, $T_1$ is not balanced and hence,
    $T_1 \notin \EnsEq^{\{0, 1\}}$. By Theorem~\ref{thm:ClotureIntArbresEq},
    for all binary tree $T_2$ such that $T_1 \OrdTam T_2$, $T_2$ is not
    balanced, and with greater reason, $T_2 \notin \EnsEq^{\{0, 1\}}$.
    Therefore, $\EnsEq^{\{0, 1\}}$ is closed by interval in the Tamari lattice.
\end{proof}

The proof of Proposition~\ref{prop:Arbres01Clos} also shows that every
rotation performed into a $\{0, 1\}$-balanced binary tree gives a
$\{0, 1\}$-unbalanced binary tree. That implies that any pair of elements
of $\EnsEq^{\{0, 1\}}$ is incomparable.
\medskip

Computer trials suggest that for all $\beta \in \EnsRel$, any pair of elements
of $\EnsEq^{\{0, \beta\}}$ is incomparable. Hence, the sets $\EnsEq^{\{0, \beta\}}$
seem to be closed by interval in the Tamari lattice.

\subsubsection{\texorpdfstring{$[-\alpha, \beta]$-balanced binary trees}{-a, b-balanced binary trees}}
\begin{Lemme} \label{lem:MoinsV0NonClos}
    For all $\alpha \geq 2$, the sets $\EnsEq^{[-\alpha, 0]}$ and $\EnsEq^{]-\infty, 0]}$
    are not closed by interval in the Tamari lattice.
\end{Lemme}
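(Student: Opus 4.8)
The plan is to give explicit counterexamples, one family for each of the two statements, organised around a single elementary computation describing how one right rotation changes imbalance values. If a right rotation of root $y$ turns a subtree $(A \ABCons B) \ABCons C$ into $A \ABCons (B \ABCons C)$ (whose new root is the former left child $x$ of $y$), then: $\Des(x)$ increases by $\Ht(B \ABCons C) - \Ht(B) = 1 + \max\{0, \Ht(C) - \Ht(B)\} \geq 1$; $\Des(y)$ increases by $\Ht(A \ABCons B) - \Ht(B) = 1 + \max\{0, \Ht(A) - \Ht(B)\} \geq 1$; every imbalance value strictly inside $A$, $B$, $C$ is left unchanged; and the imbalance value of each strict ancestor of $y$ changes only through the height variation $\Ht\!\left((A \ABCons B) \ABCons C\right) - \Ht\!\left(A \ABCons (B \ABCons C)\right)$, which equals $1$ if $\Ht(A) \geq \max\{\Ht(B), \Ht(C)\} + 1$ and is $\leq 0$ otherwise. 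All of this is immediate from the recursive definitions of $\Ht$ and $\Des$. In both constructions below the idea is the same: perform a deep rotation that shifts the height of one subtree by exactly one, and compose it with a rotation higher up, so that the imbalance value of one chosen intermediate node leaves the prescribed set and then re-enters it.

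For $\EnsEq^{[-\alpha,0]}$ with $\alpha \geq 2$ I would let $F_m$ be the Fibonacci binary tree of height $m$ (every inner node of $F_m$ has imbalance value in $\{-1,0\}$), let $L_3 := ((\ArbreVide \ABCons \ArbreVide) \ABCons \ArbreVide) \ABCons \ArbreVide$ be the left comb on three nodes, and set $T_0 := F_{\alpha+3} \ABCons L_3$. The root of $T_0$ has imbalance value $3 - (\alpha+3) = -\alpha$ and the root of $L_3$ has imbalance value $-2 \geq -\alpha$, so $T_0 \in \EnsEq^{[-\alpha,0]}$. A right rotation at the root of the subtree $L_3$ replaces it by $(\ArbreVide \ABCons \ArbreVide) \ABCons (\ArbreVide \ABCons \ArbreVide)$, of height $2$; the resulting tree $T_1$ then has root imbalance value $-(\alpha+1)$, so $T_1 \notin \EnsEq^{[-\alpha,0]}$. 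A right rotation at the root of $T_1$, using $F_{\alpha+3} = F_{\alpha+2} \ABCons F_{\alpha+1}$, yields $T_2 := F_{\alpha+2} \ABCons \!\left(F_{\alpha+1} \ABCons \!\left((\ArbreVide \ABCons \ArbreVide) \ABCons (\ArbreVide \ABCons \ArbreVide)\right)\right)$, whose root has imbalance value $0$ and whose unique ``new'' node has imbalance value $1 - \alpha \in [-\alpha,0]$; thus $T_2 \in \EnsEq^{[-\alpha,0]}$. Since $T_0 \CouvTam T_1 \CouvTam T_2$, we get $T_0 \OrdTam T_2$ with $T_1 \in [T_0, T_2]$ and $T_1 \notin \EnsEq^{[-\alpha,0]}$, which is the assertion.

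For $\EnsEq^{]-\infty,0]}$ the obstructing tree must instead carry a node of \emph{positive} imbalance value, so the previous example (whose $T_1$ is too left-heavy but still has all imbalance values $\leq 0$) does not serve; here I would exploit that the left comb is the minimum element of $\Tam_n$, so that only the top endpoint has to be chosen carefully. Take $L_2 := (\ArbreVide \ABCons \ArbreVide) \ABCons \ArbreVide$ and set $T_2 := L_2 \ABCons \!\left((\ArbreVide \ABCons \ArbreVide) \ABCons (\ArbreVide \ABCons \ArbreVide)\right)$, whose imbalance values are all in $\{-1,0\}$, so $T_2 \in \EnsEq^{]-\infty,0]}$. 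A right rotation at the root of the right subtree of $T_1 := L_2 \ABCons L_3$ turns $L_3$ into $(\ArbreVide \ABCons \ArbreVide) \ABCons (\ArbreVide \ABCons \ArbreVide)$, hence $T_1 \CouvTam T_2$; but the root of $T_1$ has imbalance value $3 - 2 = 1$, so $T_1 \notin \EnsEq^{]-\infty,0]}$. Taking $T_0$ to be the five-node left comb, which satisfies $T_0 \OrdTam T_1$ because it is the minimum of $\Tam_5$ and lies in $\EnsEq^{]-\infty,0]}$, we obtain $T_0 \OrdTam T_1 \OrdTam T_2$ with $T_1 \notin \EnsEq^{]-\infty,0]}$, proving the second assertion.

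The only computations involved are the verifications that the trees displayed really are $[-\alpha,0]$- or $]-\infty,0]$-balanced, that the prescribed moves are right rotations, and that the covering and order relations hold; these are routine. The genuinely delicate point is the \emph{design} of the examples: one must arrange the shapes so that the single subtree whose height changes lies exactly on the height-determining path feeding the imbalance value of the node one wishes to perturb, while keeping every other imbalance value inside the allowed set both before the perturbation and after the correction. This is why the $[-\alpha,0]$ construction relies on Fibonacci trees, as a uniform source of $\{-1,0\}$-balanced trees of every prescribed height, and why it requires $\alpha \geq 2$: for $\alpha = 1$ no such example can exist, that case being precisely the closure theorem, Theorem~\ref{thm:ClotureIntArbresEq}.
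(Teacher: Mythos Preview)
Your argument is essentially correct, with one arithmetic slip: in your second construction, $T_1 = L_2 \ABCons L_3$ has $2 + 1 + 3 = 6$ nodes, not $5$, so $T_0$ should be the six-node left comb and the minimum of $\Tam_6$. With that correction both constructions go through exactly as you describe.

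Your route differs from the paper's in a way worth noting. You give two separate counterexamples: for $\EnsEq^{[-\alpha,0]}$, a family whose size grows with $\alpha$ (built on $F_{\alpha+3}$) in which the intermediate tree $T_1$ fails by having root imbalance $-(\alpha+1)$, i.e.\ too \emph{negative}; and then, precisely because that $T_1$ still lies in $\EnsEq^{]-\infty,0]}$, a second construction for the unbounded case. The paper instead exhibits a single chain $T_0 \CouvTam T_1 \CouvTam T_2$ on seven nodes, independent of $\alpha$, in which $T_1$ carries a node of imbalance $+1$: since $+1 \notin [-\alpha,0]$ for every $\alpha \geq 2$ and $+1 \notin\, ]-\infty,0]$, this one example handles both sets at once. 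The upshot is that arranging for a \emph{positive} imbalance in the middle tree is the more economical design choice; your idea of violating the lower bound is perfectly valid for $\EnsEq^{[-\alpha,0]}$ but is exactly what forces you into the case split.
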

\begin{proof}
    It is enough to exhibit a chain of the sort $T_0 \CouvTam T_1 \CouvTam T_2$
    where $T_0, T_2 \in \EnsEq^{[-\alpha, 0]} \cap \EnsEq^{]-\infty, 0]}$ and
    $T_1 \notin \EnsEq^{[-\alpha, 0]} \cup \EnsEq^{]-\infty, 0]}$. The following chain,
    where nodes are labeled by their imbalance values, is the case:
    \begin{equation}
        \scalebox{.25}{\raisebox{-4.5em}{
        \begin{tikzpicture}
            \node[Feuille](0)at(0,-4){};
            \node[Noeud,EtiqClair,minimum size = 12mm](1)at(1,-3){\Huge $0$};
            \node[Feuille](2)at(2,-4){};
            \draw[Arete](1)--(0);
            \draw[Arete](1)--(2);
            \node[Noeud,EtiqClair,minimum size = 12mm](3)at(3,-2){\Huge $-1$};
            \node[Feuille](4)at(4,-3){};
            \draw[Arete](3)--(1);
            \draw[Arete](3)--(4);
            \node[Noeud,EtiqClair,minimum size = 12mm](5)at(5,-1){\Huge $-2$};
            \node[Feuille](6)at(6,-2){};
            \draw[Arete](5)--(3);
            \draw[Arete](5)--(6);
            \node[Noeud,EtiqClair,minimum size = 12mm](7)at(7,0){\Huge $0$};
            \node[Feuille](8)at(8,-4){};
            \node[Noeud,EtiqClair,minimum size = 12mm](9)at(9,-3){\Huge $0$};
            \node[Feuille](10)at(10,-4){};
            \draw[Arete](9)--(8);
            \draw[Arete](9)--(10);
            \node[Noeud,EtiqClair,minimum size = 12mm](11)at(11,-2){\Huge $-1$};
            \node[Feuille](12)at(12,-3){};
            \draw[Arete](11)--(9);
            \draw[Arete](11)--(12);
            \node[Noeud,EtiqClair,minimum size = 12mm](13)at(13,-1){\Huge $-2$};
            \node[Feuille](14)at(14,-2){};
            \draw[Arete](13)--(11);
            \draw[Arete](13)--(14);
            \draw[Arete](7)--(5);
            \draw[Arete](7)--(13);
        \end{tikzpicture}}}
        \quad \CouvTam \quad
        \scalebox{.25}{\raisebox{-4.5em}{
        \begin{tikzpicture}
            \node[Feuille](0)at(0,-3){};
            \node[Noeud,EtiqClair,minimum size = 12mm](1)at(1,-2){\Huge $0$};
            \node[Feuille](2)at(2,-3){};
            \draw[Arete](1)--(0);
            \draw[Arete](1)--(2);
            \node[Noeud,EtiqClair,minimum size = 12mm](3)at(3,-1){\Huge $0$};
            \node[Feuille](4)at(4,-3){};
            \node[Noeud,EtiqClair,minimum size = 12mm](5)at(5,-2){\Huge $0$};
            \node[Feuille](6)at(6,-3){};
            \draw[Arete](5)--(4);
            \draw[Arete](5)--(6);
            \draw[Arete](3)--(1);
            \draw[Arete](3)--(5);
            \node[Noeud,EtiqFonce,minimum size = 12mm](7)at(7,0){\Huge $1$};
            \node[Feuille](8)at(8,-4){};
            \node[Noeud,EtiqClair,minimum size = 12mm](9)at(9,-3){\Huge $0$};
            \node[Feuille](10)at(10,-4){};
            \draw[Arete](9)--(8);
            \draw[Arete](9)--(10);
            \node[Noeud,EtiqClair,minimum size = 12mm](11)at(11,-2){\Huge $-1$};
            \node[Feuille](12)at(12,-3){};
            \draw[Arete](11)--(9);
            \draw[Arete](11)--(12);
            \node[Noeud,EtiqClair,minimum size = 12mm](13)at(13,-1){\Huge $-2$};
            \node[Feuille](14)at(14,-2){};
            \draw[Arete](13)--(11);
            \draw[Arete](13)--(14);
            \draw[Arete](7)--(3);
            \draw[Arete](7)--(13);
        \end{tikzpicture}}}
        \quad \CouvTam \quad
        \scalebox{.25}{\raisebox{-3.5em}{
        \begin{tikzpicture}
            \node[Feuille](0)at(0,-3){};
            \node[Noeud,EtiqClair,minimum size = 12mm](1)at(1,-2){\Huge $0$};
            \node[Feuille](2)at(2,-3){};
            \draw[Arete](1)--(0);
            \draw[Arete](1)--(2);
            \node[Noeud,EtiqClair,minimum size = 12mm](3)at(3,-1){\Huge $0$};
            \node[Feuille](4)at(4,-3){};
            \node[Noeud,EtiqClair,minimum size = 12mm](5)at(5,-2){\Huge $0$};
            \node[Feuille](6)at(6,-3){};
            \draw[Arete](5)--(4);
            \draw[Arete](5)--(6);
            \draw[Arete](3)--(1);
            \draw[Arete](3)--(5);
            \node[Noeud,EtiqClair,minimum size = 12mm](7)at(7,0){\Huge $0$};
            \node[Feuille](8)at(8,-3){};
            \node[Noeud,EtiqClair,minimum size = 12mm](9)at(9,-2){\Huge $0$};
            \node[Feuille](10)at(10,-3){};
            \draw[Arete](9)--(8);
            \draw[Arete](9)--(10);
            \node[Noeud,EtiqClair,minimum size = 12mm](11)at(11,-1){\Huge $0$};
            \node[Feuille](12)at(12,-3){};
            \node[Noeud,EtiqClair,minimum size = 12mm](13)at(13,-2){\Huge $0$};
            \node[Feuille](14)at(14,-3){};
            \draw[Arete](13)--(12);
            \draw[Arete](13)--(14);
            \draw[Arete](11)--(9);
            \draw[Arete](11)--(13);
            \draw[Arete](7)--(3);
            \draw[Arete](7)--(11);
        \end{tikzpicture}}}\, .
    \end{equation}
\end{proof}

\begin{Lemme} \label{lem:MoinsV1NonClos}
    For all $\alpha \geq 2$, the sets $\EnsEq^{[-\alpha, 1]}$ and $\EnsEq^{]-\infty, 1]}$
    are not closed by interval in the Tamari lattice.
\end{Lemme}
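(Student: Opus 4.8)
The plan is to proceed exactly as in the proof of Lemma~\ref{lem:MoinsV0NonClos}, namely to exhibit an explicit chain $T_0 \CouvTam T_1 \CouvTam T_2$ of binary trees in which $T_0$ and $T_2$ lie in $\EnsEq^{[-2, 1]}$ while $T_1$ has a node whose imbalance value equals $2$. Since $[-2, 1] \subseteq [-\alpha, 1] \subseteq {]-\infty, 1]}$ for every $\alpha \geq 2$, one such chain settles all the stated cases simultaneously: the endpoints satisfy $T_0, T_2 \in \EnsEq^{[-2, 1]} \subseteq \EnsEq^{[-\alpha, 1]} \subseteq \EnsEq^{]-\infty, 1]}$, whereas $T_1$ belongs to none of these sets because $2 \notin {]-\infty, 1]}$.

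To build the chain, write $N := \ArbreVide \ABCons \ArbreVide$ for the one-node tree, let $K := ((N) \ABCons \ArbreVide) \ABCons \ArbreVide$ be the left comb on three nodes (of height $3$, with imbalance values $-2$, $-1$, $0$), and let $P := N \ABCons N$ be the complete tree on three nodes (of height $2$, all imbalance values $0$). Put $B := K \ABCons P$, so that $\Ht(B) = 4$ and the root of $B$ has imbalance value $-1$, and set $T_0 := K \ABCons B$; its root then has imbalance value $\Ht(B) - \Ht(K) = 1$, and every imbalance value occurring in $T_0$ lies in $[-2, 1]$. A right rotation at the root of the left factor $K$ of $T_0$ replaces $K$ by $P$, hence lowers its height to $2$, and yields $T_1 := P \ABCons B$, whose root acquires the imbalance value $\Ht(B) - \Ht(P) = 2$. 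Next, a right rotation at the root of the left child $K$ of the root of $B$ inside $T_1$ likewise replaces that copy of $K$ by $P$, hence turns $B$ into $B' := P \ABCons P$ of height $3$, and yields $T_2 := P \ABCons B'$; the root of $T_2$ now has imbalance value $1$ and all its remaining nodes have imbalance value $0$, so $T_2 \in \EnsEq^{[-2, 1]}$.

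What then remains is routine verification: both pivot nodes indeed have a nonempty left subtree, so $T_0 \CouvTam T_1 \CouvTam T_2$ is a bona fide chain (all three trees have $11$ nodes, so it lives in $\Tam_{11}$), and the decompositions above show directly that $T_0$ and $T_2$ use only imbalance values in $[-2, 1]$ while $T_1$ has the forbidden value $2$ at its root. The one mildly delicate point, which dictates the shape of the example, is that a right rotation can only increase the imbalance value at its pivot node: a too-positive imbalance value in $T_1$ therefore cannot be removed by a rotation at that node, and must instead be created by shrinking the left subtree of the relevant node and later healed by shrinking its right subtree — which is why the two subtrees hanging from the root of $T_0$ are chosen to have heights $3$ and $4$ rather than anything smaller.
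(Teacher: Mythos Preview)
Your proof is correct and follows essentially the same approach as the paper: exhibit an explicit chain $T_0 \CouvTam T_1 \CouvTam T_2$ with $T_0, T_2 \in \EnsEq^{[-2,1]}$ and $T_1$ having a node of imbalance value~$2$. Your concrete example differs from the paper's (you use $11$-node trees built from two copies of the left comb $K$, while the paper uses an $8$-node example), but the strategy and the verification are identical.
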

\begin{proof}
    It is enough to exhibit a chain of the sort $T_0 \CouvTam T_1 \CouvTam T_2$
    where $T_0, T_2 \in \EnsEq^{[-\alpha, 1]} \cap \EnsEq^{]-\infty, 1]}$ and
    $T_1 \notin \EnsEq^{[-\alpha, 1]} \cup \EnsEq^{]-\infty, 1]}$. The following
    chain, where nodes are labeled by their imbalance values, is the case:
    \begin{equation}
        \scalebox{.25}{\raisebox{-4.5em}{
        \begin{tikzpicture}
            \node[Feuille](0)at(0,-4){};
            \node[Noeud,EtiqClair,minimum size = 12mm](1)at(1,-3){\Huge $0$};
            \node[Feuille](2)at(2,-4){};
            \draw[Arete](1)--(0);
            \draw[Arete](1)--(2);
            \node[Noeud,EtiqClair,minimum size = 12mm](3)at(3,-2){\Huge $-1$};
            \node[Feuille](4)at(4,-3){};
            \draw[Arete](3)--(1);
            \draw[Arete](3)--(4);
            \node[Noeud,EtiqClair,minimum size = 12mm](5)at(5,-1){\Huge $-1$};
            \node[Feuille](6)at(6,-3){};
            \node[Noeud,EtiqClair,minimum size = 12mm](7)at(7,-2){\Huge $0$};
            \node[Feuille](8)at(8,-3){};
            \draw[Arete](7)--(6);
            \draw[Arete](7)--(8);
            \draw[Arete](5)--(3);
            \draw[Arete](5)--(7);
            \node[Noeud,EtiqClair,minimum size = 12mm](9)at(9,0){\Huge $0$};
            \node[Feuille](10)at(10,-4){};
            \node[Noeud,EtiqClair,minimum size = 12mm](11)at(11,-3){\Huge $0$};
            \node[Feuille](12)at(12,-4){};
            \draw[Arete](11)--(10);
            \draw[Arete](11)--(12);
            \node[Noeud,EtiqClair,minimum size = 12mm](13)at(13,-2){\Huge $-1$};
            \node[Feuille](14)at(14,-3){};
            \draw[Arete](13)--(11);
            \draw[Arete](13)--(14);
            \node[Noeud,EtiqClair,minimum size = 12mm](15)at(15,-1){\Huge $-2$};
            \node[Feuille](16)at(16,-2){};
            \draw[Arete](15)--(13);
            \draw[Arete](15)--(16);
            \draw[Arete](9)--(5);
            \draw[Arete](9)--(15);
        \end{tikzpicture}}}
        \quad \CouvTam \quad
        \scalebox{.25}{\raisebox{-5.5em}{
        \begin{tikzpicture}
            \node[Feuille](0)at(0,-3){};
            \node[Noeud,EtiqClair,minimum size = 12mm](1)at(1,-2){\Huge $0$};
            \node[Feuille](2)at(2,-3){};
            \draw[Arete](1)--(0);
            \draw[Arete](1)--(2);
            \node[Noeud,EtiqClair,minimum size = 12mm](3)at(3,-1){\Huge $-1$};
            \node[Feuille](4)at(4,-2){};
            \draw[Arete](3)--(1);
            \draw[Arete](3)--(4);
            \node[Noeud,EtiqFonce,minimum size = 12mm](5)at(5,0){\Huge $2$};
            \node[Feuille](6)at(6,-3){};
            \node[Noeud,EtiqClair,minimum size = 12mm](7)at(7,-2){\Huge $0$};
            \node[Feuille](8)at(8,-3){};
            \draw[Arete](7)--(6);
            \draw[Arete](7)--(8);
            \node[Noeud,EtiqFonce,minimum size = 12mm](9)at(9,-1){\Huge $2$};
            \node[Feuille](10)at(10,-5){};
            \node[Noeud,EtiqClair,minimum size = 12mm](11)at(11,-4){\Huge $0$};
            \node[Feuille](12)at(12,-5){};
            \draw[Arete](11)--(10);
            \draw[Arete](11)--(12);
            \node[Noeud,EtiqClair,minimum size = 12mm](13)at(13,-3){\Huge $-1$};
            \node[Feuille](14)at(14,-4){};
            \draw[Arete](13)--(11);
            \draw[Arete](13)--(14);
            \node[Noeud,EtiqClair,minimum size = 12mm](15)at(15,-2){\Huge $-2$};
            \node[Feuille](16)at(16,-3){};
            \draw[Arete](15)--(13);
            \draw[Arete](15)--(16);
            \draw[Arete](9)--(7);
            \draw[Arete](9)--(15);
            \draw[Arete](5)--(3);
            \draw[Arete](5)--(9);
        \end{tikzpicture}}}
        \quad \CouvTam \quad
        \scalebox{.25}{\raisebox{-3.5em}{
        \begin{tikzpicture}
            \node[Feuille](0)at(0,-3){};
            \node[Noeud,EtiqClair,minimum size = 12mm](1)at(1,-2){\Huge $0$};
            \node[Feuille](2)at(2,-3){};
            \draw[Arete](1)--(0);
            \draw[Arete](1)--(2);
            \node[Noeud,EtiqClair,minimum size = 12mm](3)at(3,-1){\Huge $-1$};
            \node[Feuille](4)at(4,-2){};
            \draw[Arete](3)--(1);
            \draw[Arete](3)--(4);
            \node[Noeud,EtiqClair,minimum size = 12mm](5)at(5,0){\Huge $1$};
            \node[Feuille](6)at(6,-3){};
            \node[Noeud,EtiqClair,minimum size = 12mm](7)at(7,-2){\Huge $0$};
            \node[Feuille](8)at(8,-3){};
            \draw[Arete](7)--(6);
            \draw[Arete](7)--(8);
            \node[Noeud,EtiqClair,minimum size = 12mm](9)at(9,-1){\Huge $1$};
            \node[Feuille](10)at(10,-4){};
            \node[Noeud,EtiqClair,minimum size = 12mm](11)at(11,-3){\Huge $0$};
            \node[Feuille](12)at(12,-4){};
            \draw[Arete](11)--(10);
            \draw[Arete](11)--(12);
            \node[Noeud,EtiqClair,minimum size = 12mm](13)at(13,-2){\Huge $0$};
            \node[Feuille](14)at(14,-4){};
            \node[Noeud,EtiqClair,minimum size = 12mm](15)at(15,-3){\Huge $0$};
            \node[Feuille](16)at(16,-4){};
            \draw[Arete](15)--(14);
            \draw[Arete](15)--(16);
            \draw[Arete](13)--(11);
            \draw[Arete](13)--(15);
            \draw[Arete](9)--(7);
            \draw[Arete](9)--(13);
            \draw[Arete](5)--(3);
            \draw[Arete](5)--(9);
        \end{tikzpicture}}}\, .
    \end{equation}
\end{proof}

\begin{Lemme} \label{lem:Moins22NonClos}
    For all $\alpha \geq 2$, the sets $\EnsEq^{[-\alpha, 2]}$ and $\EnsEq^{]-\infty, 2]}$
    are not closed by interval in the Tamari lattice.
\end{Lemme}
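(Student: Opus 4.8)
The plan is to reuse the strategy of Lemmas~\ref{lem:MoinsV0NonClos} and~\ref{lem:MoinsV1NonClos}: since $[-2, 2] \subseteq [-\alpha, 2] \subseteq {]-\infty, 2]}$ for every $\alpha \geq 2$, it is enough to exhibit a single chain $T_0 \CouvTam T_1 \CouvTam T_2$ such that every node of $T_0$ and of $T_2$ has imbalance value in $[-2, 2]$, while $T_1$ has a node of imbalance value at least $3$. Such a $T_1$ lies outside $\EnsEq^{]-\infty, 2]}$, hence also outside $\EnsEq^{[-\alpha, 2]}$ for every $\alpha$, whereas $T_0$ and $T_2$ lie in $\EnsEq^{[-\alpha, 2]} \cap \EnsEq^{]-\infty, 2]}$ for all $\alpha \geq 2$; this immediately yields the statement.

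To build the chain I would not start from a $[-1, 1]$-balanced $T_0$: once an unbalancing rotation in the spirit of Section~\ref{subsec:RotationsEquilibre} (cases~\ref{item:CasRot3}--\ref{item:CasRot9}) has pushed some node to imbalance $3$, its taller subtree is itself too rigid to be shortened afterwards. The design is instead to take a $T_0$ already carrying a node $x$ of imbalance $2$ whose taller (right) subtree contains, near its root, a node of imbalance $-2$. Concretely one may take the $8$-node tree $T_0 = (A \ABCons B) \ABCons C$ with $A = \ArbreVide \ABCons \ArbreVide$, with $B = ((\ArbreVide \ABCons \ArbreVide) \ABCons \ArbreVide) \ABCons \ArbreVide$ (of height $3$ and root imbalance $-2$), and with $C = (\ArbreVide \ABCons \ArbreVide) \ABCons \ArbreVide$ (of height $2$). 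Here $x := A \ABCons B$ has imbalance $2$ and the root $y$ has imbalance $-2$, so $T_0$ has all its imbalance values in $[-2,2]$. A right rotation at $y$ turns $T_0$ into $T_1 = A \ABCons (B \ABCons C)$, whose root now has imbalance $1 + \Ht(B) - \Ht(A) = 3$: this is the bad intermediate tree. A right rotation at the root of $B$ replaces $B$ by $(\ArbreVide \ABCons \ArbreVide) \ABCons (\ArbreVide \ABCons \ArbreVide)$, of height $2$; this shortens $B \ABCons C$ by one, the root's imbalance drops back to $2$, and all nodes return to $[-2,2]$, giving $T_2$. Keeping $x$ as the root of the whole tree makes this purely local. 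I would present $T_0 \CouvTam T_1 \CouvTam T_2$ as one displayed picture with every node labeled by its imbalance value, exactly as in the two preceding lemmas.

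The only real difficulty is bookkeeping: the heights must be tuned so that the second rotation genuinely lowers $\Ht(B \ABCons C)$ (otherwise the root stays at imbalance $3$ and $T_2$ is still bad), which forces $\Ht(C) < \Ht(B)$ together with $\Ht(C)$ large enough that $y$ keeps imbalance $\geq -2$ in $T_0$; and one must check node by node that $T_0$ and $T_2$ have no imbalance value outside $[-2, 2]$. Since a rotation changes only two imbalance values locally and the trees involved are small, this reduces to a short finite verification on the explicit example above.
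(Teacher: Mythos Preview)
Your proposal is correct and follows essentially the same approach as the paper: exhibit a single explicit chain $T_0 \CouvTam T_1 \CouvTam T_2$ with $T_0, T_2 \in \EnsEq^{[-2,2]}$ and $T_1$ containing a node of imbalance~$3$. The paper's counterexample is slightly more economical (six nodes rather than eight, with $T_0 = ((\ArbreVide \ABCons \ArbreVide) \ABCons \ArbreVide) \ABCons (((\ArbreVide \ABCons \ArbreVide) \ABCons \ArbreVide) \ABCons \ArbreVide)$), but the mechanism is the same --- a first rotation at the root produces imbalance~$3$, and a second rotation inside the tall right branch collapses a height-$3$ left comb to a height-$2$ perfect tree, restoring all imbalances to $[-2,2]$.
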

\begin{proof}
    It is enough to exhibit a chain of the sort $T_0 \CouvTam T_1 \CouvTam T_2$
    where $T_0, T_2 \in \EnsEq^{[-\alpha, 2]} \cap \EnsEq^{]-\infty, 2]}$ and
    $T_1 \notin \EnsEq^{[-\alpha, 2]} \cup \EnsEq^{]-\infty, 2]}$. The following
    chain, where nodes are labeled by their imbalance values, is the case:
    \begin{equation}
        \scalebox{.25}{\raisebox{-3em}{
        \begin{tikzpicture}
            \node[Feuille](0)at(0,-3){};
            \node[Noeud,EtiqClair,minimum size = 12mm](1)at(1,-2){\Huge $0$};
            \node[Feuille](2)at(2,-3){};
            \draw[Arete](1)--(0);
            \draw[Arete](1)--(2);
            \node[Noeud,EtiqClair,minimum size = 12mm](3)at(3,-1){\Huge $-1$};
            \node[Feuille](4)at(4,-2){};
            \draw[Arete](3)--(1);
            \draw[Arete](3)--(4);
            \node[Noeud,EtiqClair,minimum size = 12mm](5)at(5,0){\Huge $1$};
            \node[Feuille](6)at(6,-4){};
            \node[Noeud,EtiqClair,minimum size = 12mm](7)at(7,-3){\Huge $0$};
            \node[Feuille](8)at(8,-4){};
            \draw[Arete](7)--(6);
            \draw[Arete](7)--(8);
            \node[Noeud,EtiqClair,minimum size = 12mm](9)at(9,-2){\Huge $-1$};
            \node[Feuille](10)at(10,-3){};
            \draw[Arete](9)--(7);
            \draw[Arete](9)--(10);
            \node[Noeud,EtiqClair,minimum size = 12mm](11)at(11,-1){\Huge $-2$};
            \node[Feuille](12)at(12,-2){};
            \draw[Arete](11)--(9);
            \draw[Arete](11)--(12);
            \draw[Arete](5)--(3);
            \draw[Arete](5)--(11);
        \end{tikzpicture}}}
        \quad \CouvTam \quad
        \scalebox{.25}{\raisebox{-4em}{
        \begin{tikzpicture}
            \node[Feuille](0)at(0,-2){};
            \node[Noeud,EtiqClair,minimum size = 12mm](1)at(1,-1){\Huge $0$};
            \node[Feuille](2)at(2,-2){};
            \draw[Arete](1)--(0);
            \draw[Arete](1)--(2);
            \node[Noeud,EtiqFonce,minimum size = 12mm](3)at(3,0){\Huge $3$};
            \node[Feuille](4)at(4,-2){};
            \node[Noeud,EtiqFonce,minimum size = 12mm](5)at(5,-1){\Huge $3$};
            \node[Feuille](6)at(6,-5){};
            \node[Noeud,EtiqClair,minimum size = 12mm](7)at(7,-4){\Huge $0$};
            \node[Feuille](8)at(8,-5){};
            \draw[Arete](7)--(6);
            \draw[Arete](7)--(8);
            \node[Noeud,EtiqClair,minimum size = 12mm](9)at(9,-3){\Huge $-1$};
            \node[Feuille](10)at(10,-4){};
            \draw[Arete](9)--(7);
            \draw[Arete](9)--(10);
            \node[Noeud,EtiqClair,minimum size = 12mm](11)at(11,-2){\Huge $-2$};
            \node[Feuille](12)at(12,-3){};
            \draw[Arete](11)--(9);
            \draw[Arete](11)--(12);
            \draw[Arete](5)--(4);
            \draw[Arete](5)--(11);
            \draw[Arete](3)--(1);
            \draw[Arete](3)--(5);
        \end{tikzpicture}}}
        \quad \CouvTam \quad
        \scalebox{.25}{\raisebox{-3em}{
        \begin{tikzpicture}
            \node[Feuille](0)at(0,-2){};
            \node[Noeud,EtiqClair,minimum size = 12mm](1)at(1,-1){\Huge $0$};
            \node[Feuille](2)at(2,-2){};
            \draw[Arete](1)--(0);
            \draw[Arete](1)--(2);
            \node[Noeud,EtiqClair,minimum size = 12mm](3)at(3,0){\Huge $2$};
            \node[Feuille](4)at(4,-2){};
            \node[Noeud,EtiqClair,minimum size = 12mm](5)at(5,-1){\Huge $2$};
            \node[Feuille](6)at(6,-4){};
            \node[Noeud,EtiqClair,minimum size = 12mm](7)at(7,-3){\Huge $0$};
            \node[Feuille](8)at(8,-4){};
            \draw[Arete](7)--(6);
            \draw[Arete](7)--(8);
            \node[Noeud,EtiqClair,minimum size = 12mm](9)at(9,-2){\Huge $0$};
            \node[Feuille](10)at(10,-4){};
            \node[Noeud,EtiqClair,minimum size = 12mm](11)at(11,-3){\Huge $0$};
            \node[Feuille](12)at(12,-4){};
            \draw[Arete](11)--(10);
            \draw[Arete](11)--(12);
            \draw[Arete](9)--(7);
            \draw[Arete](9)--(11);
            \draw[Arete](5)--(4);
            \draw[Arete](5)--(9);
            \draw[Arete](3)--(1);
            \draw[Arete](3)--(5);
        \end{tikzpicture}}}\, .
    \end{equation}
\end{proof}

\begin{Lemme} \label{lem:MoinsVWNonClos}
    For all $\alpha \geq 2$ and $\beta \geq 3$, the sets $\EnsEq^{[-\alpha, \beta]}$ and
    $\EnsEq^{]-\infty, \beta]}$ are not closed by interval in the Tamari lattice.
\end{Lemme}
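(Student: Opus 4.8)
The plan is to imitate the proofs of Lemmas~\ref{lem:MoinsV0NonClos}--\ref{lem:Moins22NonClos} and, for each $\beta \geq 3$, produce one covering chain $T_0 \CouvTam T_1 \CouvTam T_2$ such that $T_0, T_2 \in \EnsEq^{[-\alpha,\beta]} \cap \EnsEq^{]-\infty,\beta]}$ for every $\alpha \geq 2$, while $T_1$ has a node of imbalance $\beta+1$. Since $\beta+1 > \beta$ and since every imbalance occurring in $T_0$ and $T_2$ will be at least $-2 \geq -\alpha$, this single chain exhibits $T_1$ as an element of $[T_0,T_2]$ lying in neither $\EnsEq^{[-\alpha,\beta]}$ nor $\EnsEq^{]-\infty,\beta]}$, so neither family is closed by interval; exactly as in the earlier lemmas, the two statements are thus handled at once.

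Concretely, write $P_h$ for the perfect binary tree of height $h$ (all its nodes are balanced, so $P_h$ belongs to every $\EnsEq^V$ with $0 \in V$), and set
\[
    C := (P_{\beta-1} \ABCons P_{\beta-2}) \ABCons P_{\beta-2},
    \qquad T_0 := (P_1 \ABCons P_1) \ABCons C .
\]
Using only $\Ht(L \ABCons R) = 1 + \max\{\Ht(L), \Ht(R)\}$, the identity $P_h \ABCons P_h = P_{h+1}$, and $\Des_T(x) = \Ht(R) - \Ht(L)$, one checks that $\Ht(C) = \beta+1$ and that the imbalances of $T_0$ all lie in $\{0,-1,-2,\beta-1\}$, whence $T_0 \in \EnsEq^{[-\alpha,\beta]} \cap \EnsEq^{]-\infty,\beta]}$ for all $\alpha \geq 2$. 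Performing the right rotation at the root of $T_0$ yields $T_1 := P_1 \ABCons (P_1 \ABCons C)$, and the root of $T_1$ then has imbalance $\Ht(P_1 \ABCons C) - \Ht(P_1) = (\beta+2) - 1 = \beta+1$, so $T_1$ is out of both families. Finally I would perform the right rotation at the root of the copy of $C$ sitting inside $T_1$ (its left subtree $P_{\beta-1} \ABCons P_{\beta-2}$ is nonempty, so the rotation is legal): because $C = (P_{\beta-1} \ABCons P_{\beta-2}) \ABCons P_{\beta-2}$, this replaces $C$ by $P_{\beta-1} \ABCons (P_{\beta-2} \ABCons P_{\beta-2}) = P_{\beta-1} \ABCons P_{\beta-1} = P_\beta$, which has height $\beta$ and only balanced nodes. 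Hence $T_2 := P_1 \ABCons (P_1 \ABCons P_\beta)$ has imbalances in $\{0, \beta-1, \beta\}$, so $T_2 \in \EnsEq^{[-\alpha,\beta]} \cap \EnsEq^{]-\infty,\beta]}$, and $T_0 \CouvTam T_1 \CouvTam T_2$ is the required chain.

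Everything after the construction is a routine height bookkeeping, mirroring line by line the computations already carried out for $\beta = 2$ in Lemma~\ref{lem:Moins22NonClos}; indeed, specializing $\beta = 2$ above ($P_0 = \ArbreVide$, $P_1 = \ArbreVide \ABCons \ArbreVide$) recovers precisely the chain used there. The one point that genuinely needs care --- and the natural place to get it wrong --- is to make the second, \emph{corrective} rotation land the top imbalance back in range rather than overshoot it or leave it unchanged: this is why $C$ is built so that a single rotation at its root shortens it by exactly one level, and why the left extremal subtree is kept equal to $P_1$, so that the root imbalances of $T_1$ and $T_2$ equal $\Ht(P_1 \ABCons C) - 1$ and $\Ht(P_1 \ABCons P_\beta) - 1$ respectively. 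I would also double-check that the borderline values $\beta-1$ and $\beta$ really lie in $[-\alpha,\beta]$, which is immediate since $\beta \geq 3$ forces $\beta-1 \geq 2 > -2 \geq -\alpha$.
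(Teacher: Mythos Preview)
Your proof is correct: the chain $T_0 \CouvTam T_1 \CouvTam T_2$ you build does the job, and the height bookkeeping checks out for every $\beta \geq 3$ and $\alpha \geq 2$.

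Your construction is genuinely different from the paper's. The paper builds a \emph{small} witness: a single node with imbalance $\beta-1$ whose left child is a one-node tree and whose right child is a right comb of $\beta-3$ nodes ending in the three-node subtree with imbalances $0,-1,-2$; the two rotations are at the root and then at the bottom of that right branch. This uses $O(\beta)$ nodes. You instead pad with perfect trees $P_{\beta-1}$, $P_{\beta-2}$, so your witness has $\Theta(2^{\beta})$ nodes but the argument becomes essentially arithmetic-free: the corrective rotation collapses $(P_{\beta-1}\ABCons P_{\beta-2})\ABCons P_{\beta-2}$ to $P_{\beta}$ in one step, which is pleasant. Either route is fine for the lemma; the paper's is more parsimonious, yours is conceptually cleaner.

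One small inaccuracy worth noting: your claim that specializing $\beta=2$ ``recovers precisely the chain used'' in Lemma~\ref{lem:Moins22NonClos} is not quite right. Your $T_0$ at $\beta=2$ is $P_2 \ABCons C$ with $7$ nodes, while the paper's $T_0$ there has $6$ nodes (its left subtree has imbalances $0,-1$ rather than being a full $P_2$). Your $\beta=2$ specialization still \emph{works} as a counterexample, it just is not the same tree. This does not affect the validity of your argument for $\beta \geq 3$.
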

\begin{proof}
    It is enough to exhibit a chain of the sort $T_0 \CouvTam T_1 \CouvTam T_2$
    where $T_0, T_2 \in \EnsEq^{[-\alpha, \beta]} \cap \EnsEq^{]-\infty, \beta]}$
    and $T_1 \notin \EnsEq^{[-\alpha, \beta]} \cup \EnsEq^{]-\infty, \beta]}$.
    By setting $\beta' := \beta - 1$ and $\beta'' := \beta + 1$, the following generic chain,
    where nodes are labeled by their imbalance values, and where the edges
    depicted by
    \scalebox{.4}{
    \begin{tikzpicture}
        \draw[Arete, decorate, decoration = zigzag](0,0)--(1,-.6);
    \end{tikzpicture}}
    denote a right comb binary tree with $\beta - 3$ nodes, is the case:
    \begin{equation}
        \scalebox{.25}{\raisebox{-3em}{
        \begin{tikzpicture}
            \node[Feuille](0)at(0.0,-2){};
            \node[Noeud,EtiqClair,minimum size = 12mm](1)at(1.0,-1){\Huge $0$};
            \node[Feuille](2)at(2.0,-2){};
            \draw[Arete](1)--(0);
            \draw[Arete](1)--(2);
            \node[Noeud,EtiqClair,minimum size = 12mm](3)at(3.0,0){\Huge $\beta'$};
            \node[Feuille](4)at(4.0,-4){};
            \node[Noeud,EtiqClair,minimum size = 12mm](5)at(5.0,-3){\Huge $0$};
            \node[Feuille](6)at(6.0,-4){};
            \draw[Arete](5)--(4);
            \draw[Arete](5)--(6);
            \node[Noeud,EtiqClair,minimum size = 12mm](7)at(7.0,-2){\Huge $-1$};
            \node[Feuille](8)at(8.0,-3){};
            \draw[Arete](7)--(5);
            \draw[Arete](7)--(8);
            \node[Noeud,EtiqClair,minimum size = 12mm](9)at(9.0,-1){\Huge $-2$};
            \node[Feuille](10)at(10.0,-2){};
            \draw[Arete](9)--(7);
            \draw[Arete](9)--(10);
            \draw[Arete](3)--(1);
            \draw[Arete, decorate, decoration = zigzag](3)--(9);
        \end{tikzpicture}}}
        \quad \CouvTam \quad
        \scalebox{.25}{\raisebox{-5em}{
        \begin{tikzpicture}
            \node[Feuille](0)at(0.0,-1){};
            \node[Noeud,minimum size = 12mm](1)at(1.0,0){\Huge $\beta''$};
            \node[Feuille](2)at(2.0,-2){};
            \node[Noeud,EtiqClair,minimum size = 12mm](3)at(3.0,-1){\Huge $\beta$};
            \node[Feuille](4)at(4.0,-5){};
            \node[Noeud,EtiqClair,minimum size = 12mm](5)at(5.0,-4){\Huge $0$};
            \node[Feuille](6)at(6.0,-5){};
            \draw[Arete](5)--(4);
            \draw[Arete](5)--(6);
            \node[Noeud,EtiqClair,minimum size = 12mm](7)at(7.0,-3){\Huge $-1$};
            \node[Feuille](8)at(8.0,-4){};
            \draw[Arete](7)--(5);
            \draw[Arete](7)--(8);
            \node[Noeud,EtiqClair,minimum size = 12mm](9)at(9.0,-2){\Huge $-2$};
            \node[Feuille](10)at(10.0,-3){};
            \draw[Arete](9)--(7);
            \draw[Arete](9)--(10);
            \draw[Arete](3)--(2);
            \draw[Arete, decorate, decoration = zigzag](3)--(9);
            \draw[Arete](1)--(0);
            \draw[Arete](1)--(3);
        \end{tikzpicture}}}
        \quad \CouvTam \quad
        \scalebox{.25}{\raisebox{-3.5em}{
        \begin{tikzpicture}
            \node[Feuille](0)at(0.0,-1){};
            \node[Noeud,EtiqClair,minimum size = 12mm](1)at(1.0,0){\Huge $\beta$};
            \node[Feuille](2)at(2.0,-2){};
            \node[Noeud,EtiqClair,minimum size = 12mm](3)at(3.0,-1){\Huge $\beta'$};
            \node[Feuille](4)at(4.0,-4){};
            \node[Noeud,EtiqClair,minimum size = 12mm](5)at(5.0,-3){\Huge $0$};
            \node[Feuille](6)at(6.0,-4){};
            \draw[Arete](5)--(4);
            \draw[Arete](5)--(6);
            \node[Noeud,EtiqClair,minimum size = 12mm](7)at(7.0,-2){\Huge $0$};
            \node[Feuille](8)at(8.0,-4){};
            \node[Noeud,EtiqClair,minimum size = 12mm](9)at(9.0,-3){\Huge $0$};
            \node[Feuille](10)at(10.0,-4){};
            \draw[Arete](9)--(8);
            \draw[Arete](9)--(10);
            \draw[Arete](7)--(5);
            \draw[Arete](7)--(9);
            \draw[Arete](3)--(2);
            \draw[Arete, decorate, decoration = zigzag](3)--(7);
            \draw[Arete](1)--(0);
            \draw[Arete](1)--(3);
        \end{tikzpicture}}}\, .
    \end{equation}
\end{proof}

\begin{Theoreme} \label{thm:EnsClosInterDeZ}
    Let $V$ be an interval of $\EnsRel$ containing $0$. The set $\EnsEq^V$
    is closed by interval in the Tamari lattice if and only if
    $V \in \left\{ \{0\}, \{-1, 0\}, \{0, 1\}, \{-1, 0, 1\}, \EnsRel\right\}$.
\end{Theoreme}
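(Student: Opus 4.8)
The plan is to prove the two implications of Theorem~\ref{thm:EnsClosInterDeZ} separately. The positive direction rests on Theorem~\ref{thm:ClotureIntArbresEq}, on Proposition~\ref{prop:Arbres01Clos}, and on the symmetry Proposition~\ref{prop:EnsEqVrV}; the negative direction rests on the explicit counterexample chains of Lemmas~\ref{lem:MoinsV0NonClos}--\ref{lem:MoinsVWNonClos}, again combined with Proposition~\ref{prop:EnsEqVrV} in order to cut the case analysis roughly in half. The only genuinely new ingredient is an elementary reduction that brings an arbitrary interval of $\EnsRel$ to a small family of normal forms; everything else is assembly of already established facts.

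For the ``if'' direction I would dispose of the five listed intervals one at a time. If $V = \EnsRel$, then $\EnsEq^\EnsRel = \EnsAB$, which is trivially closed by interval since every interval of the Tamari order is contained in it. If $V = \{-1, 0, 1\}$, then $\EnsEq^V$ is the set of balanced binary trees, closed by interval by Theorem~\ref{thm:ClotureIntArbresEq}. If $V = \{0, 1\}$, closure is Proposition~\ref{prop:Arbres01Clos}, and since $\{-1, 0\} = \{0, 1\}^\InvolAB$, the case $V = \{-1, 0\}$ follows by Proposition~\ref{prop:EnsEqVrV}. Finally, if $V = \{0\}$, a straightforward structural induction shows that $\EnsEq^{\{0\}}$ is exactly the set of perfect binary trees; since there is at most one perfect binary tree of each given size and trees with different numbers of nodes are incomparable for $\OrdTam$, no two distinct elements of $\EnsEq^{\{0\}}$ are comparable, hence every interval $[T_0, T_1]$ with $T_0, T_1 \in \EnsEq^{\{0\}}$ reduces to a singleton and $\EnsEq^{\{0\}}$ is closed by interval.

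For the ``only if'' direction I would argue by contraposition: let $V$ be an interval of $\EnsRel$ with $0 \in V$ and $V \notin \{\{0\}, \{-1, 0\}, \{0, 1\}, \{-1, 0, 1\}, \EnsRel\}$, and show that $\EnsEq^V$ is not closed by interval. The key claim is that at least one of $V$ and $V^\InvolAB$ has the form $]-\infty, \beta]$ with $\beta \geq 0$, or the form $[-\alpha, \beta]$ with $\alpha \geq 2$ and $\beta \geq 0$. Indeed, since $V \ne \EnsRel$, $V$ is bounded on at least one side. If $V$ is unbounded below, then, as $0 \in V$, $V = ]-\infty, \beta]$ for some $\beta \geq 0$. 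If $V$ is bounded below with minimum $-\alpha$ and unbounded above, then $V^\InvolAB = ]-\infty, \alpha]$ with $\alpha \geq 0$. If $V$ is bounded on both sides, write $V = [-\alpha, \beta]$ with $\alpha, \beta \geq 0$: if $\alpha \geq 2$ the claim holds for $V$, if $\beta \geq 2$ it holds for $V^\InvolAB = [-\beta, \alpha]$, and if $\alpha \leq 1$ and $\beta \leq 1$ then $V$ is one of the four excluded small intervals, a contradiction. Once the claim is proved, Proposition~\ref{prop:EnsEqVrV} reduces the statement to showing that $\EnsEq^{]-\infty, \beta]}$ (for $\beta \geq 0$) and $\EnsEq^{[-\alpha, \beta]}$ (for $\alpha \geq 2$ and $\beta \geq 0$) are never closed by interval, which is precisely the content of Lemma~\ref{lem:MoinsV0NonClos} (case $\beta = 0$), Lemma~\ref{lem:MoinsV1NonClos} (case $\beta = 1$), Lemma~\ref{lem:Moins22NonClos} (case $\beta = 2$) and Lemma~\ref{lem:MoinsVWNonClos} (case $\beta \geq 3$), each of which exhibits an explicit chain $T_0 \CouvTam T_1 \CouvTam T_2$ witnessing the failure of closure.

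The argument carries no deep difficulty; its only delicate point is bookkeeping, namely checking that the normal-form reduction exhausts all intervals of $\EnsRel$ containing $0$ and that the four lemmas, together with their left-unbounded variants, cover exactly the family of normal forms produced, with no overlap or gap. All the structural work — the closure theorem, the $\{0,1\}$-case, the mirror symmetry and the counterexample chains — has already been done in the preceding sections, so this theorem is essentially their synthesis.
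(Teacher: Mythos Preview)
Your proof is correct and follows the same approach as the paper: the ``if'' direction cites exactly the same results (Theorem~\ref{thm:ClotureIntArbresEq}, Proposition~\ref{prop:Arbres01Clos}, Proposition~\ref{prop:EnsEqVrV}, and the triviality of the perfect-tree and $\EnsRel$ cases), and the ``only if'' direction reduces via Proposition~\ref{prop:EnsEqVrV} to the four lemmas. The only difference is that the paper states the reduction in one sentence (``necessarily $V$ or $V^\InvolAB$ satisfies the assumptions of Lemma~\ref{lem:MoinsV0NonClos},~\ref{lem:MoinsV1NonClos},~\ref{lem:Moins22NonClos}, or~\ref{lem:MoinsVWNonClos}'') while you spell out the bounded/unbounded case analysis explicitly, which is a welcome addition of detail rather than a change of strategy.
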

\begin{proof}
    Since $\EnsEq^{\{0\}}$ only contains perfect binary trees and there
    is at most one such element with a given number of nodes, $\EnsEq^{\{0\}}$
    is closed by interval. Moreover, by Proposition~\ref{prop:Arbres01Clos},
    $\EnsEq^{\{0, 1\}}$ is closed by interval, and by Proposition~\ref{prop:EnsEqVrV},
    $\EnsEq^{\{-1, 0\}}$ also is. By Theorem~\ref{thm:ClotureIntArbresEq},
    $\EnsEq^{\{-1, 0, 1\}}$ is closed by interval. Finally, since
    $\EnsEq^\EnsRel = \EnsAB$, $\EnsEq^\EnsRel$ is obviously closed
    by interval.

    If $V$ is an interval of $\EnsRel$ containing $0$ and that does not fit
    into the previous cases, necessarily $V$ or $V^\InvolAB$ satisfies the
    assumptions of Lemma~\ref{lem:MoinsV0NonClos},~\ref{lem:MoinsV1NonClos},~\ref{lem:Moins22NonClos},
    or~\ref{lem:MoinsVWNonClos}. Thus, by Proposition~\ref{prop:EnsEqVrV},
    $\EnsEq^V$ is not closed by interval.
\end{proof}

Theorem~\ref{thm:EnsClosInterDeZ} emphasizes the special role played by
balanced binary trees in the Tamari lattice. Indeed, the interval $V := [-1, 1]$
of $\EnsRel$ is the only interval different from $\EnsRel$ such that $\EnsEq^V$ is closed
by interval in the Tamari lattice and such that the subposet of the Tamari
lattice induced by $\EnsEq^V$ contains nontrivial intervals (see Theorem~\ref{thm:IntEqHypercube}
and Figure~\ref{fig:InterEq}).

\subsection{Weight balanced binary trees}

Denote by $\Nd(T)$ the number of nodes of the binary tree $T$. Let us define
the \emph{weight imbalance mapping} $\WDes_T$ which associates an element
of $\EnsRel$ with a node $x$ of $T$. It is defined by
\begin{equation}
    \WDes_T(x) := \Nd(R) - \Nd(L),
\end{equation}
where $L$ (resp. $R$) is the left (resp. right) subtree of $x$. A node $x$
is \emph{weight balanced} if
\begin{equation}
    \WDes_T(x) \in \{-1, 0, 1\}.
\end{equation}

\begin{Definition}
    A binary tree $T$ is \emph{weight balanced} if all nodes of $T$ are weight balanced.
\end{Definition}

The sequence $(w_n)_{n \geq 0}$ of the number of weight balanced binary
trees with $n$ nodes satisfies straightforwardly the recurrence relation
\begin{equation}
    w_n =
    \begin{cases}
        1               & \mbox{if $n \in \{0, 1\}$}, \\
        2 w_k w_{k - 1} & \mbox{if $n = 2k$,} \\
        w^2_k           & \mbox{where $n = 2k + 1$, otherwise.}
    \end{cases}
\end{equation}
This is Sequence \Sloane{A110316} of~\cite{SLO08}. First values are
\begin{equation}
    1, 1, 2, 1, 4, 4, 4, 1, 8, 16, 32, 16, 32, 16, 8, 1, 16, 64, 256, 256, 1024, 1024.
\end{equation}

\begin{Lemme} \label{lem:RelationHtNdWEq}
    For all nonempty weight balanced binary tree $T$, the following relation
    between its height and its number of nodes holds
    \begin{equation} \label{eq:RelationHtNdWEq}
        \Ht(T) = \left \lfloor \log_2(\Nd(T)) \right \rfloor + 1.
    \end{equation}
\end{Lemme}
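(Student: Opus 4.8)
The plan is to proceed by structural induction on the set of nonempty weight balanced binary trees (equivalently, by strong induction on $\Nd(T)$). The base case $\Nd(T) = 1$ is immediate: there $\Ht(T) = 1$ and $\left\lfloor \log_2 1 \right\rfloor + 1 = 1$, so~\eqref{eq:RelationHtNdWEq} holds.

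For the inductive step I would write $T = L \ABCons R$ with $\Nd(T) = \Nd(L) + \Nd(R) + 1 \geq 2$, and first dispose of the degenerate case where one of $L$, $R$ is empty. Since the root of $T$ is weight balanced, $\WDes_T$ of that root lies in $\{-1, 0, 1\}$; so if, say, $L = \ArbreVide$, then $\Nd(R) \leq 1$, hence $\Nd(R) = 1$, $\Nd(T) = 2$, and $\Ht(T) = 2 = \left\lfloor \log_2 2 \right\rfloor + 1$, and symmetrically if $R = \ArbreVide$. This case must be treated by hand precisely because the induction hypothesis cannot be invoked on an empty subtree (the statement involves $\log_2$ of the number of nodes).

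Now assuming $L$ and $R$ both nonempty, they are themselves weight balanced — the value $\WDes$ of a node depends only on the sizes of its own two subtrees, which are unchanged when the node is viewed inside a subtree — so the induction hypothesis gives $\Ht(L) = \left\lfloor \log_2 \Nd(L) \right\rfloor + 1$ and $\Ht(R) = \left\lfloor \log_2 \Nd(R) \right\rfloor + 1$. Since $m \mapsto \left\lfloor \log_2 m \right\rfloor + 1$ is non-decreasing, $\max\{\Ht(L), \Ht(R)\} = \left\lfloor \log_2 \max\{\Nd(L), \Nd(R)\} \right\rfloor + 1$, whence $\Ht(T) = 1 + \max\{\Ht(L), \Ht(R)\} = \left\lfloor \log_2 \max\{\Nd(L), \Nd(R)\} \right\rfloor + 2$. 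Weight balance at the root gives $|\Nd(R) - \Nd(L)| \leq 1$, from which one checks (splitting according to the parity of $\Nd(T)$: equal counts when $\Nd(T)$ is odd, counts differing by exactly one when $\Nd(T)$ is even) that $\max\{\Nd(L), \Nd(R)\} = \left\lfloor \Nd(T)/2 \right\rfloor$. It then remains to invoke the elementary identity $\left\lfloor \log_2 \left\lfloor m/2 \right\rfloor \right\rfloor + 1 = \left\lfloor \log_2 m \right\rfloor$, valid for every integer $m \geq 2$, with $m = \Nd(T)$: writing $2^j \leq m < 2^{j+1}$ with $j \geq 1$, one has $2^{j-1} \leq \left\lfloor m/2 \right\rfloor < 2^j$, so both sides equal $j$.

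The argument presents no genuine difficulty; the only points requiring attention are the parity bookkeeping needed to identify $\max\{\Nd(L), \Nd(R)\}$ with $\left\lfloor \Nd(T)/2 \right\rfloor$, and the separate treatment of the "one empty subtree" case that falls outside the scope of the induction hypothesis. Everything else reduces to monotonicity of $\left\lfloor \log_2 \cdot \right\rfloor$ and the standard halving identity for the binary logarithm.
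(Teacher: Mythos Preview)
Your proof is correct and follows essentially the same structural-induction approach as the paper. Your organization is slightly different—you unify the two size cases via $\max\{\Nd(L),\Nd(R)\}=\lfloor \Nd(T)/2\rfloor$ and the halving identity, whereas the paper computes each parity case separately—and you handle the empty-subtree case explicitly, which the paper glosses over in its ``analog computation'' for the unequal-size case; but the substance is the same.
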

\begin{proof}
    We proceed by structural induction on the set of nonempty weight balanced binary trees.
    The lemma is true for the one-node binary tree. Assume now that~(\ref{eq:RelationHtNdWEq})
    holds for both the weight balanced binary trees $L$ and $R$ such that $T := L \ABCons R$
    is weight balanced. We have now two cases to consider, depending if
    $L$ and $R$ have the same number of nodes or not. If $\Nd(L) = \Nd(R)$, set $k := \Nd(L)$. We have
    \begin{align}
        \left \lfloor \log_2(\Nd(T)) \right \rfloor + 1 & = \left \lfloor \log_2(2k + 1) \right \rfloor + 1 \\
                                                        & = \left \lfloor \log_2(2) + \log_2\left(k + 1/2\right) \right \rfloor + 1 \\
                                                        & = \left \lfloor \log_2\left(k + 1/2\right) \right \rfloor + 2 \label{eq:PreuveNdWeq1} \\
                                                        & = \left \lfloor \log_2(k) \right \rfloor + 2 \label{eq:PreuveNdWeq2} \\
                                                        & = \Ht(L) + 1 \label{eq:PreuveNdWeq3} \\
                                                        & = \Ht(R) + 1 \\
                                                        & = \Ht(T).
    \end{align}
    The equality between~(\ref{eq:PreuveNdWeq1}) and~(\ref{eq:PreuveNdWeq2})
    is provided by the fact that $k$ is an integer. The equality between~(\ref{eq:PreuveNdWeq2})
    and~(\ref{eq:PreuveNdWeq3}) follows by induction hypothesis.

    If $\Nd(L) \ne \Nd(R)$, assume without lost of generality that
    $\Nd(L) = \Nd(R) + 1$ and set $k := \Nd(L)$. An analog computation
    as above implies~(\ref{eq:RelationHtNdWEq}).
\end{proof}

\begin{Proposition} \label{prop:WEqSousEnsembleEq}
    The set of weight balanced binary trees is a subset of the set of the
    (height) balanced binary trees.
\end{Proposition}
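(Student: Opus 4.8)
The plan is to proceed by structural induction on the set of weight balanced binary trees, showing that every weight balanced binary tree is also (height) balanced. The key tool is Lemma~\ref{lem:RelationHtNdWEq}, which pins down the height of a weight balanced binary tree $T$ as $\Ht(T) = \lfloor \log_2(\Nd(T)) \rfloor + 1$.

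First I would handle the base cases: the empty tree $\ArbreVide$ and the one-node tree are trivially balanced, since they have no nodes with nonempty subtrees (or the unique node has imbalance $0$). For the inductive step, let $T = L \ABCons R$ be weight balanced. Then $L$ and $R$ are weight balanced as well (any witness of weight imbalance inside $L$ or $R$ would still be one inside $T$), so by the induction hypothesis $L$ and $R$ are (height) balanced; it remains to check that the root of $T$ is balanced, i.e.\ that $\Ht(R) - \Ht(L) \in \{-1, 0, 1\}$. Since $T$ is weight balanced, $\WDes_T(\text{root}) = \Nd(R) - \Nd(L) \in \{-1, 0, 1\}$, so $\Nd(L)$ and $\Nd(R)$ differ by at most one; in particular, if both $L$ and $R$ are nonempty, $\lfloor \log_2(\Nd(L)) \rfloor$ and $\lfloor \log_2(\Nd(R)) \rfloor$ differ by at most one, and then Lemma~\ref{lem:RelationHtNdWEq} gives $|\Ht(L) - \Ht(R)| \leq 1$. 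One must separately dispose of the degenerate cases where $L$ or $R$ is empty: if, say, $R = \ArbreVide$, then $\Nd(R) = 0$, so $\Nd(L) \leq 1$, forcing $L$ to be empty or a single node, hence $\Ht(L) \leq 1 = \Ht(R) + 1$ and the root imbalance lies in $\{-1, 0\}$.

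The main obstacle, though minor, is being careful about the floor-of-logarithm bookkeeping when the two subtrees have sizes $k$ and $k+1$ where $k$ is a power of two minus something — one has to confirm that $\lfloor \log_2 k \rfloor$ and $\lfloor \log_2(k+1) \rfloor$ really do differ by at most one for all $k \geq 1$, which is immediate since $\log_2$ is increasing and increases by less than $1$ over $[k, k+1]$ once $k \geq 1$. Combined with Lemma~\ref{lem:RelationHtNdWEq} applied to the nonempty subtrees, this closes the induction and establishes that $\EnsEq^{\text{weight}} \subseteq \EnsEq$, where $\EnsEq = \EnsEq^{[-1,1]}$ is the set of (height) balanced binary trees.
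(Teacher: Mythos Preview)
Your proof is correct and follows essentially the same approach as the paper: structural induction using Lemma~\ref{lem:RelationHtNdWEq} to convert the weight-balance condition $|\Nd(R)-\Nd(L)|\le 1$ into the height-balance condition $|\Ht(R)-\Ht(L)|\le 1$ at the root, with the induction hypothesis handling the subtrees. You are in fact slightly more careful than the paper, which applies Lemma~\ref{lem:RelationHtNdWEq} to $L$ and $R$ without explicitly treating the case where one of them is empty; your handling of that degenerate case is a harmless (and welcome) addition.
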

\begin{proof}
    We proceed by structural induction on the set of weight balanced binary trees
    to show that each weight balanced binary tree is also (height) balanced.
    This property is true for both the empty tree and the one-node binary tree.
    Assume now that this property holds for two weight balanced binary trees $L$
    and $R$ such that $T := L \ABCons R$ is weight balanced. By Lemma~\ref{lem:RelationHtNdWEq},
    we have
    \begin{equation}
        \Ht(R) - \Ht(L) = \lfloor \log_2(\Nd(R)) \rfloor - \lfloor \log_2(\Nd(L)) \rfloor,
    \end{equation}
    and since $T$ is weight balanced, we have $|\Nd(R) - \Nd(L)| \leq 1$
    so that $|\Ht(R) - \Ht(L)| \leq 1$. By induction hypothesis, $L$ and
    $R$ are (height) balanced, proving that $T$ also is.
\end{proof}

\begin{Proposition} \label{prop:ClotureIntArbresWEq}
    Let $T_0$ and $T_1$ be two weight balanced binary trees such that
    $T_0 \OrdTam T_1$. Then, the interval $[T_0, T_1]$ only contains weight
    balanced binary trees.
\end{Proposition}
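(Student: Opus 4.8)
The plan is to refine the monotonicity idea underlying the statistic $\phi$ used earlier in the paper to prove antisymmetry of $\OrdTam$. For a binary tree $U$ and a node $x$ of $U$, write $\nu^R_U(x)$ (resp. $\nu^L_U(x)$) for the number of nodes of the right (resp. left) subtree of $x$ in $U$, so that $\WDes_U(x) = \nu^R_U(x) - \nu^L_U(x)$. Since a right rotation does not change the infix order of the nodes, the nodes of $T_0$, of $T_1$, and of every binary tree $T$ in $[T_0, T_1]$ can all be identified through their common infix position, and I will use this identification throughout.

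The first step is a local lemma: if $U_0 \CouvTam U_1$, then for every node $x$ one has $\nu^R_{U_0}(x) \leq \nu^R_{U_1}(x)$ and $\nu^L_{U_0}(x) \geq \nu^L_{U_1}(x)$. I would prove this by inspecting the right rotation of root $y$ turning $S_0 := (A \ABCons B) \ABCons C$ into $S_1 := A \ABCons (B \ABCons C)$, with $x$ the left child of $y$ in $S_0$. Only the subtrees of $x$ and of $y$ change: at $x$ the left subtree stays $A$ while the right subtree passes from $B$ to $B \ABCons C$, so $\nu^L$ is unchanged and $\nu^R$ increases; at $y$ the right subtree stays $C$ while the left subtree passes from $A \ABCons B$ to $B$, so $\nu^R$ is unchanged and $\nu^L$ decreases; a node strictly below $x$ or $y$ (that is, inside $A$, $B$ or $C$) has exactly the same two subtrees; and a strict ancestor of $y$ has one subtree in which $S_0$ is replaced by $S_1$, two trees with the same node set since a rotation preserves the number of nodes, so there again both $\nu^R$ and $\nu^L$ are unchanged.

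From this, because $\OrdTam$ is the reflexive--transitive closure of $\CouvTam$, the same two inequalities hold whenever $U_0 \OrdTam U_1$. Then, for $T \in [T_0, T_1]$ and any node $x$, from $T_0 \OrdTam T$ I obtain $\WDes_T(x) = \nu^R_T(x) - \nu^L_T(x) \geq \nu^R_{T_0}(x) - \nu^L_{T_0}(x) = \WDes_{T_0}(x) \geq -1$, and from $T \OrdTam T_1$ I obtain $\WDes_T(x) \leq \WDes_{T_1}(x) \leq 1$. Hence $\WDes_T(x) \in \{-1, 0, 1\}$ for every node of $T$, that is, $T$ is weight balanced.

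The only genuine work is the case analysis in the local lemma, and I expect the main subtlety to be purely organizational: pinning down the identification of the node sets of $T_0$, $T$ and $T_1$, and not overlooking the ancestors of $y$ lying outside $S_0$ in that analysis. It is worth noting that this route uses neither the main closure theorem (Theorem~\ref{thm:ClotureIntArbresEq}) nor the height/size relations of Lemma~\ref{lem:RelationHtNdWEq} and Proposition~\ref{prop:WEqSousEnsembleEq}, because the weight imbalance of each node of $T$ is sandwiched directly between those of $T_0$ and $T_1$.
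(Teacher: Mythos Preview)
Your proof is correct and follows essentially the same approach as the paper: both rest on the observation that a right rotation never decreases the weight imbalance of any node, proved by the same local inspection at the two nodes $x$ and $y$ involved in the rotation and the remark that all other nodes keep their subtrees. The only packaging difference is that the paper computes $\WDes$ directly (showing it strictly increases at $x$ and $y$ and is unchanged elsewhere) and concludes via the one-sided remark ``once some node has $\WDes\geq 2$ it stays $\geq 2$'', whereas you split $\WDes$ into $\nu^R$ and $\nu^L$ and finish with the two-sided sandwich $\WDes_{T_0}(x)\leq\WDes_T(x)\leq\WDes_{T_1}(x)$; both are immediate from the same monotonicity.
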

\begin{proof}
    Let us show that for all binary tree $T$, any rotation operation performed
    into $T$ does not decrease the weight imbalance values of any node of $T$.
    Let $y$ be a node in $T$ and $x$ its left child. Let $(A \ABCons B) \ABCons C$
    be the subtree of root $y$ in $T$. Let $T'$ be the binary tree obtained by
    the rotation of root $y$ from $T$. We have the following weight imbalance
    values:
    \begin{equation}
        \begin{cases}
            \WDes_{T}(x) = \Nd(B) - \Nd(A), \\
            \WDes_{T}(y) = \Nd(C) - \Nd(B) - \Nd(A) - 1,
        \end{cases}
    \end{equation}
    and
    \begin{equation}
        \begin{cases}
            \WDes_{T'}(x) = \Nd(B) + \Nd(C) + 1 - \Nd(A), \\
            \WDes_{T'}(y) = \Nd(C) - \Nd(B),
        \end{cases}
    \end{equation}
    showing that $\WDes_{T'}(x) > \WDes_{T}(x)$ and $\WDes_{T'}(y) > \WDes_{T}(y)$.
    Besides, note that the rotation does not change the weight imbalance
    values of the other nodes of $T$.

    This shows that the set of weight balanced binary trees is closed by interval
    in the Tamari lattice since, by starting from a weight balanced binary tree $T$
    and by performing a rotation that gives a weight unbalanced binary tree $T'$, there
    exists a node $z$ of $T'$ such that $\WDes_{T'}(z) \geq 2$ and it is
    impossible to decrease this value so that each binary tree greater than $T'$
    is not weight balanced.
\end{proof}

Note that the proof of Proposition~\ref{prop:ClotureIntArbresWEq} also proves
that for all $k \geq 0$, the sets of \emph{$k$-weight balanced binary trees},
that are the sets of binary trees $T$ such that for all node $x$ of $T$, the
relation $\left|\WDes_T(x)\right| \leq k$ holds, are closed by interval in the Tamari
lattice.
\medskip

Since by Proposition~\ref{prop:WEqSousEnsembleEq}, weight balanced binary
trees are also (height) balanced, by Proposition~\ref{prop:ClotureIntArbresWEq}
and Theorem~\ref{thm:IntEqHypercube}, the intervals of weight balanced binary
trees are isomorphic to a hypercube. However, the set of weight balanced
binary trees has an additional property:

\begin{Proposition} \label{prop:ArbresWEqGradue}
    The restriction of the Tamari order on the set of weight balanced binary
    trees is a graded poset.
\end{Proposition}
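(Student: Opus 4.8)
The plan is to exhibit an explicit rank function on the subposet of weight balanced binary trees and thereby show it is graded. The natural candidate is the statistic $\psi$ that assigns to a weight balanced binary tree $T$ the number of its nodes whose right subtree is nonempty; I will show that every covering relation of the subposet raises $\psi$ by exactly $1$, which makes $\psi$ a rank function.

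First I would determine precisely which rotations can occur inside the subposet. Suppose $T_0 \CouvTam T_1$ with both $T_0$ and $T_1$ weight balanced, let $y$ be the root of the rotation, $x$ its left child, and write the subtree of root $y$ in $T_0$ as $(A \ABCons B) \ABCons C$; set $\alpha := \Nd(A)$, $\beta := \Nd(B)$, $\gamma := \Nd(C)$. As already computed in the proof of Proposition~\ref{prop:ClotureIntArbresWEq}, only the weight imbalance values of $x$ and $y$ change, and
\[
  \WDes_{T_0}(x) = \beta - \alpha,\quad \WDes_{T_0}(y) = \gamma - \alpha - \beta - 1,\quad \WDes_{T_1}(x) = \beta + \gamma + 1 - \alpha,\quad \WDes_{T_1}(y) = \gamma - \beta.
\]
Imposing that all four lie in $\{-1,0,1\}$ gives, in particular, $\gamma \geq \alpha + \beta$ (from $\WDes_{T_0}(y) \geq -1$) and $\beta + \gamma \leq \alpha$ (from $\WDes_{T_1}(x) \leq 1$); adding the first into the second forces $\alpha + 2\beta \leq \alpha$, hence $\beta = 0$, and then $\gamma = \alpha$, while $\WDes_{T_0}(x) \geq -1$ forces $\alpha \leq 1$. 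So the modified subtree must be of the form $(A \ABCons \ArbreVide) \ABCons C$ with $\Nd(A) = \Nd(C) \in \{0,1\}$, turned into $A \ABCons (\ArbreVide \ABCons C)$.

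Then the effect on $\psi$ is a one-line check: a rotation of root $y$ alters only the right subtrees of $y$ and of $x$; the right subtree of $y$ equals $C$ before and after, while the right subtree of $x$ passes from $B = \ArbreVide$ to $B \ABCons C = \ArbreVide \ABCons C \neq \ArbreVide$. Thus exactly one node, namely $x$, acquires a nonempty right subtree, so $\psi(T_1) = \psi(T_0) + 1$. Since covering relations increase $\psi$ by $1$, the map $\psi$ is strictly increasing along every chain of the subposet, hence it is a genuine rank function and $(\EnsEq^{[-1,1]}$-analogue of) the subposet of weight balanced binary trees is graded. The only step requiring real work is the elimination that pins down $\beta = 0$ (and therefore the two allowed rotation patterns); everything else is bookkeeping. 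As an alternative, gradedness can also be read off from Propositions~\ref{prop:WEqSousEnsembleEq} and~\ref{prop:ClotureIntArbresWEq} together with Theorem~\ref{thm:IntEqHypercube}, since every interval of the subposet is then isomorphic to a hypercube and all maximal chains of a hypercube have the same length; but the statistic $\psi$ makes the rank completely explicit.
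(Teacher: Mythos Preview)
Your proof is correct and follows the same overall strategy as the paper: first pin down exactly which rotations keep both endpoints weight balanced, then exhibit a statistic that increases by exactly~$1$ under each such rotation. Your inequality manipulation ($\gamma \geq \alpha + \beta$ combined with $\beta + \gamma \leq \alpha$ forcing $\beta = 0$, $\gamma = \alpha$, $\alpha \leq 1$) is a clean shortcut; the paper instead lists the four cases $(\WDes_{T_0}(x), \WDes_{T_0}(y)) \in \{-1,0\}^2$ and eliminates them one by one, arriving at the same two admissible local pictures.

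The genuine difference is the choice of rank function. You use $\psi(T) = $ number of nodes with nonempty right child (which is exactly the Narayana statistic $\Nar$ appearing later in the paper), while the paper uses $\phi(T) = $ number of subtrees isomorphic to $S_1 = \ArbreVide \ABCons (\ArbreVide \ABCons \ArbreVide)$. Your statistic is arguably more natural and your verification that it jumps by~$1$ (only $x$'s right subtree changes, from $B = \ArbreVide$ to $\ArbreVide \ABCons C \neq \ArbreVide$) is immediate; the paper has to check separately that each of the two admissible rotations destroys one $S_0$ and creates one $S_1$. Your alternative remark via Theorem~\ref{thm:IntEqHypercube} is also valid and gives gradedness of every interval for free, though the explicit $\psi$ has the advantage of being a global rank function without further argument.
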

\begin{proof}
    Let us characterize the conservative weight balancing rotations. Let
    $T_0 := (A \ABCons B) \ABCons C$ and $T_1 := A \ABCons (B \ABCons C)$
    be two weight balanced binary trees such that $T_1$ is obtained by
    a rotation at the root $y$ of $T_0$. Denote by $x$ the left child of
    $y$ in $T_0$. Note that the rotation that transforms $T_0$ into $T_1$
    cannot be a conservative weight balancing rotation if $\WDes_{T_0}(x) = 1$
    or $\WDes_{T_0}(y) = 1$ since, following the proof of Proposition~\ref{prop:ClotureIntArbresWEq},
    the imbalance values of $x$ and $y$ both increase after a rotation. Here
    follows the list of the weight imbalance values of the nodes $x$ and
    $y$ in $T_0$ and $T_1$ expressed as
    $(\WDes_{T_0}(x), \WDes_{T_0}(y)) \longrightarrow (\WDes_{T_1}(x), \WDes_{T_1}(y))$:
    \begin{multicols}{2}
        \begin{enumerate}[label = (R'\arabic*)]
            \item $(-1, -1) \longrightarrow (2\Nd(A) - 1, \Nd(A))$, \vspace*{1em} \label{eq:CasWRot1}
            \item $(0, -1) \longrightarrow (2\Nd(A) + 1, \Nd(A))$, \label{eq:CasWRot2}
            \item $(-1, 0) \longrightarrow (2\Nd(A), \Nd(A) + 1)$, \vspace*{1em} \label{eq:CasWRot3}
            \item $(0, 0) \longrightarrow (2\Nd(A) + 2, \Nd(A) + 1)$. \label{eq:CasWRot4}
        \end{enumerate}
    \end{multicols}
    Hence, we have four kind of rotations to explore:
    \begin{enumerate}[label = \underline{\bf Case \arabic*:}, fullwidth]
        \item Regarding~\ref{eq:CasWRot1}, we necessarily have $\Nd(A) = 1$. Indeed,
        if $\Nd(A) \geq 2$, $y$  would not be weight balanced in $T_1$, and if
        $\Nd(A) = 0$, since $\WDes_{T_0}(x) = -1$, that would imply that $\Nd(B) = -1$,
        which is absurd. Hence, since $\Nd(A) = 1$, we have $\Nd(B) = 0$ and $\Nd(C) = 1$.
        Thus, there is only one pair $(T_0, T_1)$ satisfying this kind of conservative
        weight balancing rotation:
        \begin{equation} \label{eq:RotationConsW1}
            T_0 =
            \scalebox{.25}{%
            \raisebox{-4em}{%
            \begin{tikzpicture}
                \node[Feuille](0)at(0,-3){};
                \node[Noeud](1)at(1,-2){};
                \node[Feuille](2)at(2,-3){};
                \draw[Arete](1)--(0);
                \draw[Arete](1)--(2);
                \node[Noeud](3)at(3,-1){};
                \node[Feuille](4)at(4,-2){};
                \draw[Arete](3)--(1);
                \draw[Arete](3)--(4);
                \node[Noeud](5)at(5,0){};
                \node[Feuille](6)at(6,-2){};
                \node[Noeud](7)at(7,-1){};
                \node[Feuille](8)at(8,-2){};
                \draw[Arete](7)--(6);
                \draw[Arete](7)--(8);
                \draw[Arete](5)--(3);
                \draw[Arete](5)--(7);
            \end{tikzpicture}}}
            \quad \longrightarrow \quad
            \scalebox{.25}{%
            \raisebox{-4em}{%
            \begin{tikzpicture}
                \node[Feuille](0)at(0,-2){};
                \node[Noeud](1)at(1,-1){};
                \node[Feuille](2)at(2,-2){};
                \draw[Arete](1)--(0);
                \draw[Arete](1)--(2);
                \node[Noeud](3)at(3,0){};
                \node[Feuille](4)at(4,-2){};
                \node[Noeud](5)at(5,-1){};
                \node[Feuille](6)at(6,-3){};
                \node[Noeud](7)at(7,-2){};
                \node[Feuille](8)at(8,-3){};
                \draw[Arete](7)--(6);
                \draw[Arete](7)--(8);
                \draw[Arete](5)--(4);
                \draw[Arete](5)--(7);
                \draw[Arete](3)--(1);
                \draw[Arete](3)--(5);
            \end{tikzpicture}}}
            = T_1.
        \end{equation}
        \item Concerning~\ref{eq:CasWRot2}, we necessarily have $\Nd(A) = 0$.
        Indeed, if $\Nd(A) \geq 1$, $x$ would not be weight balanced in
        $T_1$. Hence, since $\Nd(A) = 0$, we have $\Nd(B) = 0$ and $\Nd(C) = 0$.
        Thus, there is only one pair $(S_0, S_1)$ that satisfies this kind
        of conservative weight balancing rotation:
        \begin{equation}
            S_0 = \label{eq:RotationConsW2}
            \scalebox{.25}{%
            \raisebox{-2em}{%
            \begin{tikzpicture}
                \node[Feuille](0)at(0,-2){};
                \node[Noeud](1)at(1,-1){};
                \node[Feuille](2)at(2,-2){};
                \draw[Arete](1)--(0);
                \draw[Arete](1)--(2);
                \node[Noeud](3)at(3,0){};
                \node[Feuille](4)at(4,-1){};
                \draw[Arete](3)--(1);
                \draw[Arete](3)--(4);
            \end{tikzpicture}}}
            \quad \longrightarrow \quad
            \scalebox{.25}{%
            \raisebox{-2em}{%
            \begin{tikzpicture}
                \node[Feuille](0)at(0,-1){};
                \node[Noeud](1)at(1,0){};
                \node[Feuille](2)at(2,-2){};
                \node[Noeud](3)at(3,-1){};
                \node[Feuille](4)at(4,-2){};
                \draw[Arete](3)--(2);
                \draw[Arete](3)--(4);
                \draw[Arete](1)--(0);
                \draw[Arete](1)--(3);
            \end{tikzpicture}}}
            = S_1.
        \end{equation}
        \item Regarding~\ref{eq:CasWRot3}, we necessarily have $\Nd(A) = 0$.
        That implies $\Ht(B) = -1$, which is absurd. Hence,~\ref{eq:CasWRot3}
        cannot be a conservative weight balancing rotation.
        \item Concerning~\ref{eq:CasWRot4}, $x$ satisfies $\WDes_{T_1}(x) \geq 2$,
        and thus~\ref{eq:CasWRot4} is not a case of a conservative weight
        balancing rotation.
    \end{enumerate}
    \smallskip

    Hence, we only have two sorts of conservative weight balancing rotations.
    They are the ones depicted in~(\ref{eq:RotationConsW1}) and~(\ref{eq:RotationConsW2}).

    Since each such rotation suppresses exactly one subtree of the form
    $S_0$ and adds exactly one subtree of the form $S_1$, we can define a
    map $\phi : \EnsAB \rightarrow \EnsNat$ where $\phi(T)$
    is the number of subtrees of the form $S_1$ in $T$. Hence, since
    by Proposition~\ref{prop:ClotureIntArbresWEq} the covering relations
    of the Tamari lattice restricted to the weight balanced binary trees
    are only conservative weight balancing rotations, the statistic
    $\phi$ is a ranking function of the Tamari lattice restricted to
    these elements, and shows that this poset is graded.
\end{proof}

\subsection{Binary trees with fixed canopy}
The \emph{canopy} $\Canop(T)$ (see~\cite{LR98} and~\cite{V04}) of a binary
tree $T$ is the word on the alphabet $\{0, 1\}$ obtained by browsing
the leaves of $T$ from left to right except the first and the last one,
writing $0$ if the considered leaf is oriented to the right, $1$ otherwise
(see Figure~\ref{fig:ExempleCanopee}).
\begin{figure}[ht]
    \centering
    \scalebox{.25}{%
    \begin{tikzpicture}
        \node[Feuille](0)at(0,-3){};
        \node[Noeud](1)at(1,-2){};
        \node[Feuille](2)at(2,-3){};
        \node[] (2') [below of = 2] {\scalebox{3}{$0$}};
        \draw[Arete](1)--(0);
        \draw[Arete](1)--(2);
        \node[Noeud](3)at(3,-1){};
        \node[Feuille](4)at(4,-4){};
        \node[] (4') [below of = 4] {\scalebox{3}{$1$}};
        \node[Noeud](5)at(5,-3){};
        \node[Feuille](6)at(6,-4){};
        \node[] (6') [below of = 6] {\scalebox{3}{$0$}};
        \draw[Arete](5)--(4);
        \draw[Arete](5)--(6);
        \node[Noeud](7)at(7,-2){};
        \node[Feuille](8)at(8,-3){};
        \node[] (8') [below of = 8] {\scalebox{3}{$0$}};
        \draw[Arete](7)--(5);
        \draw[Arete](7)--(8);
        \draw[Arete](3)--(1);
        \draw[Arete](3)--(7);
        \node[Noeud](9)at(9,0){};
        \node[Feuille](10)at(10,-3){};
        \node[] (10') [below of = 10] {\scalebox{3}{$1$}};
        \node[Noeud](11)at(11,-2){};
        \node[Feuille](12)at(12,-3){};
        \node[] (12') [below of = 12] {\scalebox{3}{$0$}};
        \draw[Arete](11)--(10);
        \draw[Arete](11)--(12);
        \node[Noeud](13)at(13,-1){};
        \node[Feuille](14)at(14,-3){};
        \node[] (14') [below of = 14] {\scalebox{3}{$1$}};
        \node[Noeud](15)at(15,-2){};
        \node[Feuille](16)at(16,-3){};
        \draw[Arete](15)--(14);
        \draw[Arete](15)--(16);
        \draw[Arete](13)--(11);
        \draw[Arete](13)--(15);
        \draw[Arete](9)--(3);
        \draw[Arete](9)--(13);
    \end{tikzpicture}}
    \caption{The canopy of this binary tree is $0100101$.}
    \label{fig:ExempleCanopee}
\end{figure}

For all $u \in \{0, 1\}^*$, define the set $\EnsCanop_u$ by
\begin{equation}
    \EnsCanop_u := \left\{T \in \EnsAB : \Canop(T) = u\right\}.
\end{equation}

Note that the sets of binary trees with a given canopy play a role in a injective
Hopf morphism relating the Hopf algebra of noncommutative symmetric functions
$\Sym$~\cite{GKDLLRT94} and the Hopf algebra of binary trees $\PBT$~\cite{LR98, HNT05-2}.
Recall that the fundamental basis of $\PBT$ is $\left\{\PP_T\right\}_{T \in \EnsAB}$
and is indexed by binary trees. One can see the fundamental basis of $\Sym$
as a basis $\left\{\PP_u\right\}_{u \in \{0, 1\}^*}$ indexed by binary words.
The injective Hopf morphism $\beta : \Sym \hookrightarrow \PBT$ also satisfies
(see~\cite{GirBX})
\begin{equation}
    \beta(\PP_u) = \sum_{T \; \in \; \EnsCanop_u} \PP_T.
\end{equation}

\begin{Proposition} \label{prop:ClotureIntArbresCanopee}
    For all $u \in \{0, 1\}^*$, the set $\EnsCanop_u$ is an interval of
    the Tamari lattice.
\end{Proposition}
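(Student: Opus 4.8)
The plan is to show that $\EnsCanop_u$ is nonempty, has a unique minimal element $T_u^{\min}$ and a unique maximal element $T_u^{\max}$ with respect to $\OrdTam$, and that it coincides exactly with the interval $[T_u^{\min}, T_u^{\max}]$. The starting point is the well-known fact (see~\cite{LR98}) that the map $\Canop$ is constant on the fibers of the canonical surjection from the Tamari lattice to the hypercube (the boolean lattice on $\{0,1\}^{n-1}$), or, more concretely, that two binary trees with the same canopy are related by rotations that do not change the canopy. First I would recall that a rotation at a node $y$ whose subtree is $(A \ABCons B) \ABCons C$ leaves the sequence of leaf orientations unchanged: indeed, the rotation does not modify the infix order of the nodes, and it does not create or destroy leaves, nor does it change whether a given leaf is a left or right child of its parent, \emph{except} possibly for the two leaves adjacent to the reorganized subtree; a direct inspection of Figure~\ref{fig:Rotation} shows that even those orientations are preserved. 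Hence $\Canop(T_0) = \Canop(T_1)$ whenever $T_0 \CouvTam T_1$, so $\Canop$ is an invariant of each connected component of the Tamari lattice under $\CouvTam$ — wait, that is too strong; rather, $\Canop$ is constant along covering relations, so it is constant on any \emph{interval} and more generally along any saturated chain.

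Next I would establish that each $\EnsCanop_u$ is an order-convex subset, \emph{i.e.} if $T_0, T_2 \in \EnsCanop_u$ and $T_0 \OrdTam T \OrdTam T_2$, then $T \in \EnsCanop_u$. This is actually immediate from the previous paragraph: $\Canop$ is constant along the chain $T_0 \CouvTam \cdots \CouvTam T \CouvTam \cdots \CouvTam T_2$, so $\Canop(T) = \Canop(T_0) = u$. This already shows $\EnsCanop_u$ is closed by interval. To upgrade "closed by interval" to "is an interval" I must produce a minimum and a maximum. The cleanest route is to exhibit them explicitly as canonical binary trees: given $u \in \{0,1\}^{n-1}$, the minimal tree $T_u^{\min}$ is the unique binary tree with canopy $u$ that avoids the pattern $\MotifA{.45}{}{}$-type configuration forced to the maximal side (equivalently, the $\OrdTam$-minimal element of its fiber), and dually $T_u^{\max}$ is the $\OrdTam$-maximal one. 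I would construct these recursively from the factorization $u = u' \, \varepsilon \, u''$ at a distinguished position, or appeal to the description of the fibers of $\Canop$ as products of Tamari intervals. Then I would argue that every $T$ with $\Canop(T) = u$ satisfies $T_u^{\min} \OrdTam T \OrdTam T_u^{\max}$, by building an explicit sequence of canopy-preserving rotations from $T_u^{\min}$ up to $T$ and from $T$ up to $T_u^{\max}$; equivalently, one shows any two elements of $\EnsCanop_u$ are joined by a saturated chain inside $\EnsCanop_u$, which combined with finiteness and the convexity just proved yields that $\EnsCanop_u = [T_u^{\min}, T_u^{\max}]$.

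The main obstacle is the last point: proving that the fiber $\EnsCanop_u$ is \emph{connected} as a subposet and has a single minimum and maximum, rather than merely being a union of intervals. The convexity argument above only shows each fiber is a disjoint union of intervals a priori; ruling out disconnectedness requires knowing that canopy-preserving rotations suffice to move between any two trees of the same canopy. I would handle this by an explicit recursive/inductive construction: decompose a binary tree $T$ with canopy $u$ according to its root, observe that the canopy of $T = L \ABCons R$ decomposes as $\Canop(L) \cdot \varepsilon \cdot \Canop(R)$ for a well-determined middle letter $\varepsilon$ depending only on whether the relevant leaves are left- or right-oriented, and then use induction on $n$ together with the fact that within a fixed canopy one can always rotate the root down (or up) as far as the canopy constraints allow, reducing to strictly smaller subtrees. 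This gives a canonical normal form in each fiber (the minimum), proves connectedness, and simultaneously identifies $T_u^{\min}$ and $T_u^{\max}$, completing the proof that $\EnsCanop_u$ is an interval.
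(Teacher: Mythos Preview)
Your argument has a genuine error in the first step. The claim that a rotation always preserves the canopy is false: when the middle subtree $B$ is empty, the leaf $B$ is the \emph{right} child of $x$ in $T_0$ but becomes the \emph{left} child of $y$ in $T_1$, so its contribution to the canopy flips from $0$ to $1$. (The smallest example is the rotation $(\ArbreVide \ABCons \ArbreVide) \ABCons \ArbreVide \CouvTam \ArbreVide \ABCons (\ArbreVide \ABCons \ArbreVide)$, sending canopy $0$ to canopy $1$.) Consequently your derivation of order-convexity from ``canopy is constant along saturated chains'' collapses. The correct observation, and the one the paper uses, is that the canopy can only move \emph{upward} under a right rotation: either $B \ne \ArbreVide$ and the canopy is unchanged, or $B = \ArbreVide$ and exactly one letter flips from $0$ to $1$. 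Thus $\Canop$ is weakly increasing along $\CouvTam$, and if $T_0 \OrdTam T \OrdTam T_2$ with $\Canop(T_0) = \Canop(T_2) = u$, a squeeze gives $\Canop(T) = u$. This repairs the convexity step.

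For the second half --- producing a unique minimum and maximum in each $\EnsCanop_u$ --- your proposed recursive construction is plausible but not what the paper does, and you yourself flag connectedness as the main obstacle. The paper bypasses this entirely with a counting argument: a binary tree is maximal in its fiber $\EnsCanop_u$ exactly when every possible rotation changes the canopy, \emph{i.e.}\ every left child has empty right subtree; the generating series for such trees is $(1-x)/(1-2x)$, giving $2^{n-1}$ of them on $n$ nodes, which matches the number of canopy words of length $n-1$. Hence each $\EnsCanop_u$ contains exactly one maximal element, and dually one minimal element. This is both shorter and sidesteps the connectedness issue you identified.
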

\begin{proof}
    Let us prove first that $\EnsCanop_u$ is closed by interval in the
    Tamari lattice. Consider a binary tree $T_0$ and $y$ one of its nodes.
    Let $(A \ABCons B) \ABCons C$ the subtree of $T_0$ of root $y$ and $T_1$
    be the binary tree obtained by the rotation of root $y$ from $T_0$.
    Regardless $A$ and $C$, if $B$ is not empty, we have $\Canop(T_0) = \Canop(T_1)$;
    Otherwise, $B$ is a leaf and its orientation changes from right to left.
    Thus, $\Canop(T_1)$ is lexicographically not smaller than $\Canop(T_0)$,
    which proves that $\EnsCanop_u$ is closed by interval.

    We give now a counting argument to prove that $\EnsCanop_u$ also is
    an interval of the Tamari lattice. Let $T$ be a maximal element among
    $\EnsCanop_u$. Thus, each rotation changes the canopy of $T$, and hence,
    for every node $y$ which has a left child $x$ in $T$, $x$ has no right
    child. The set of such maximal binary trees, denoted $\mathcal{M}$, is
    characterized by the following regular specification (see~\cite{FS09}
    for a general survey on regular specifications):
    \begin{equation}
        \mathcal{M} = \mathcal{L} \times \left\{\Noeud{.4}{}\right\} \times \mathcal{M} \enspace + \enspace \{\ArbreVide\},
    \end{equation}
    where $\mathcal{L}$ is the set of left comb binary trees. It admits the
    following generating series $M(x)$, which enumerates the elements of
    $\mathcal{M}$ according to their number of nodes:
    \begin{equation}
        M(x) = \frac{1 - x}{1 - 2x} = 1 + \sum_{n \geq 1} 2^{n - 1} x ^n.
    \end{equation}
    Moreover, for all $n \geq 1$ there are exactly $2^{n - 1}$ sets $\EnsCanop_u$
    where $\ell(u) = n - 1$, and there are the same number of such maximal binary
    trees. That implies that there is exactly one maximal element in each
    $\EnsCanop_u$. By the same reasoning, we can show that there is exactly
    one minimal tree in each $\EnsCanop_u$, proving the result.
\end{proof}

The statement of Proposition~\ref{prop:ClotureIntArbresCanopee} is
already known~\cite{LR02}, only our proof is new.

\subsection{Narayana binary trees}

Let $T$ be a binary tree. Denote by $\Nar(T)$ the number of nodes of $T$
that have a nonempty right child. We say that $T$ is a \emph{$k$-Narayana binary tree}
if $\Nar(T) = k$. These binary trees are enumerated by the Narayana numbers~\cite{Nar55}
(see Sequence \Sloane{A001263} of~\cite{SLO08}). First values are
\begin{center}
    \begin{tabular}{c|llllllll}
        $n$ & \multicolumn{8}{c}{$\#\{T \in \EnsAB_n : \Nar(T) = k\}$, $k = 0, \dots, n\!-\!1$} \\ \hline
        $1$ & $1$ \\
        $2$ & $1$ & $1$ \\
        $3$ & $1$ & $3$  & $1$ \\
        $4$ & $1$ & $6$  & $6$   & $1$ \\
        $5$ & $1$ & $10$ & $20$  & $10$  & $1$ \\
        $6$ & $1$ & $15$ & $50$  & $50$  & $15$  & $1$ \\
        $7$ & $1$ & $21$ & $105$ & $175$ & $105$ & $21$ &  $1$ \\
        $8$ & $1$ & $28$ & $196$ & $490$ & $490$ & $196$ & $28$ & $1$
    \end{tabular}
\end{center}

\begin{Proposition}
    For all $k \geq 0$ and $T_0$ and $T_1$ two $k$-Narayana binary trees
    such that $T_0 \OrdTam T_1$, the interval $[T_0, T_1]$ only contains
    $k$-Narayana binary trees.
\end{Proposition}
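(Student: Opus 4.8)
The plan is to show that the statistic $\Nar$ is monotone non-decreasing along the covering relation $\CouvTam$, and then to conclude by the same squeezing argument used in the proofs of Proposition~\ref{prop:ClotureIntArbresWEq} and Proposition~\ref{prop:ClotureIntArbresCanopee}. So first I would analyze precisely how one right rotation affects $\Nar$.

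Let $T_0$ be a binary tree, $y$ a node with nonempty left subtree, $S_0 := (A \ABCons B) \ABCons C$ the subtree of root $y$, and $x$ the left child of $y$. After the rotation of root $y$, the subtree $S_0$ is replaced by $S_1 := A \ABCons (B \ABCons C)$, whose root is $x$ and in which the node $y$ is the root of $B \ABCons C$. Since a rotation is local and does not modify the internal structure of $A$, $B$, $C$, nor the right subtree of any node lying outside $S_0$, only the contributions of $x$ and $y$ to $\Nar$ can change. The node $y$ has $C$ as right subtree both in $T_0$ and in $T_1$, so its contribution to $\Nar$ is unchanged. The node $x$ has $B$ as right subtree in $T_0$ and $B \ABCons C$ — which is never empty — in $T_1$; hence its contribution increases by $1$ if $B = \ArbreVide$ and is unchanged otherwise. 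Therefore $\Nar(T_1) - \Nar(T_0) = [B = \ArbreVide] \in \{0, 1\}$, and in particular $\Nar(T_0) \leq \Nar(T_1)$ whenever $T_0 \CouvTam T_1$.

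By transitivity of $\OrdTam$ this yields $\Nar(T_0) \leq \Nar(T)$ for every $T$ with $T_0 \OrdTam T$. Now, if $T_0$ and $T_1$ are both $k$-Narayana binary trees with $T_0 \OrdTam T_1$, and $T \in [T_0, T_1]$, then $k = \Nar(T_0) \leq \Nar(T) \leq \Nar(T_1) = k$, which forces $\Nar(T) = k$; that is, $T$ is a $k$-Narayana binary tree. This is exactly the statement to prove.

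I do not expect a genuine obstacle here: the only point requiring attention is the bookkeeping of which nodes change their right subtree under a rotation, and the observation that the increment $[B = \ArbreVide]$ occurs precisely when $B$ is a leaf whose orientation flips — i.e. exactly when the canopy changes (compare the proof of Proposition~\ref{prop:ClotureIntArbresCanopee}). Thus the decomposition of $\EnsAB_n$ into $k$-Narayana classes is compatible with, and refined by, the interval structure, and the argument is purely a monotonicity-plus-squeezing one.
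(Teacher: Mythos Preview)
Your proof is correct and follows essentially the same approach as the paper: both analyze the effect of a single rotation on $\Nar$, observe that it stays the same when $B \ne \ArbreVide$ and increases by one when $B = \ArbreVide$, and conclude by monotonicity that $\Nar$ is squeezed to $k$ on the whole interval. Your version is slightly more explicit in tracking the contributions of $x$ and $y$ separately, but the argument is the same.
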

\begin{proof}
    Consider a node $y$ of $T_0$ and let $(A \ABCons B) \ABCons C$ the
    subtree of $T_0$ of root $y$ and $T_1$ be the binary tree obtained by
    the rotation of root $y$ from $T_0$. Regardless $A$ and $C$, if $B$
    is not empty, $T_0$ and $T_1$ have the same number of nodes that have
    a right child; Otherwise, the number of right children increases by
    one in $T_1$. Hence, in every chain $T_0 \OrdTam T_1 \OrdTam \dots \OrdTam T_\ell$,
    we have $\Nar(T_0) \leq \Nar(T_1) \leq \dots \leq \Nar(T_\ell)$.
    That proves that the set of $k$-Narayana binary trees is closed by
    interval in the Tamari lattice.
\end{proof}

\begin{Proposition} \label{prop:ArbresNarayanaUnionCanop}
    For all $k \geq 0$, the set of $k$-Narayana binary trees with $n$
    nodes is the disjoint union of the sets $\EnsCanop_u$ where
    $\ell(u) = n - 1$ and $u$ contains $k$ occurrences of $1$.
\end{Proposition}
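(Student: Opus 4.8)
The plan is to establish a bijection between $k$-Narayana binary trees with $n$ nodes and the disjoint union of the canopy classes $\EnsCanop_u$ with $\ell(u)=n-1$ and $|u|_1=k$, by relating the statistic $\Nar(T)$ to the number of occurrences of the letter $1$ in $\Canop(T)$. The key observation is that the canopy records, for each internal leaf of $T$, whether it is a right leaf ($0$) or a left leaf ($1$), and the number of left leaves among the non-extremal leaves is governed by how many nodes carry a nonempty right subtree.

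First I would recall that a binary tree with $n$ nodes has exactly $n+1$ leaves, so $\Canop(T)$ is a word of length $n-1$. Next I would set up the counting of left leaves. A leaf of $T$ is a \emph{left leaf} (contributes a $1$ to the canopy, provided it is not the very first leaf) precisely when it is the left child of its parent node; it is a \emph{right leaf} when it is the right child of its parent. I would argue that the node $x$ whose left subtree is empty contributes a left leaf as the left child of $x$, and this left leaf is non-extremal (i.e.\ is neither the first nor the last leaf in the infix order) exactly when $x$ is not the leftmost node of $T$. Dually, a node whose right subtree is nonempty does \emph{not} contribute its ``right slot'' as a right leaf. Carrying out this bookkeeping: the number of nodes with empty right subtree is $n - \Nar(T)$; among the corresponding right leaves, exactly one is the last leaf of $T$ (the one attached to the rightmost node), so the number of $0$'s in $\Canop(T)$ is $n - \Nar(T) - 1$. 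Since $\Canop(T)$ has length $n-1$, the number of $1$'s is $(n-1)-(n-\Nar(T)-1)=\Nar(T)$. This shows $|{\Canop(T)}|_1 = \Nar(T)$, which is precisely the identity we need.

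From this identity the Proposition follows immediately: a binary tree $T$ with $n$ nodes is a $k$-Narayana tree if and only if $\Canop(T)$ is a word of length $n-1$ with exactly $k$ letters equal to $1$, i.e.\ if and only if $T \in \EnsCanop_u$ for some such $u$. Since distinct words $u$ give disjoint classes $\EnsCanop_u$ by definition, the set of $k$-Narayana binary trees with $n$ nodes is the disjoint union $\bigsqcup_{\ell(u)=n-1,\ |u|_1=k} \EnsCanop_u$.

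The main obstacle I anticipate is the careful sign/off-by-one bookkeeping of which leaves are counted in the canopy: the canopy deliberately omits the first and last leaves, and one must check that the first leaf is always a left leaf (attached to the leftmost node of $T$) and the last leaf is always a right leaf (attached to the rightmost node), so that removing them subtracts exactly one $1$ from the ``left-leaf count'' and exactly one $0$ from the ``right-leaf count''. A clean way to handle this rigorously is structural induction on $T = L \ABCons R$: the canopy of $T$ is obtained by concatenating a suitable truncation of $\Canop(L)$, a single letter recording the boundary between $L$ and $R$, and a truncation of $\Canop(R)$, and $\Nar(T) = \Nar(L) + \Nar(R) + [R \ne \ArbreVide]$; matching these two recursions termwise gives the identity without any global leaf-counting. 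I would present the induction as the backbone of the proof and use the leaf-counting picture only as intuition.
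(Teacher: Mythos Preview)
Your proposal is correct and takes essentially the same approach as the paper: both reduce the statement to the identity $|\Canop(T)|_1 = \Nar(T)$ and then conclude immediately. The only difference is tactical---you prove this identity by a direct leaf-counting argument (counting $0$'s as $n-\Nar(T)-1$), while the paper proves it by structural induction on $T = L \ABCons R$ with four cases according to which of $L,R$ is empty; your closing paragraph in fact sketches exactly the paper's induction as an alternative.
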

\begin{proof}
    It is enough to show that for all binary tree $T$ of canopy $u$,
    the number of $1$ in $u$ is $\Nar(u)$. Let us show this property by
    structural induction on the set of binary trees. If $T$ is empty,
    this property is clearly satisfied. Assume now that $T := L \ABCons R$,
    and set $v := \Canop(L)$ and $w := \Canop(R)$. We have now to deal
    four cases whether $L$ and $R$ are empty or not.
    \begin{enumerate}[label = \underline{\bf Case \arabic*:}, fullwidth]
        \item If $L$ and $R$ are empty, $T$ is the one-node binary tree
        and the property is satisfied.
        \item If $L$ and $R$ are both not empty, then $\Canop(T) = v.0.1.w$.
        Since $\Nar(T) = \Nar(L) + \Nar(R) + 1$, by induction hypothesis,
        the property is satisfied.
        \item If $L$ is empty and $R$ not, then $\Canop(T) = 1.w$. Since
        $\Nar(T) = \Nar(R) + 1$, by induction hypothesis, the property
        is satisfied.
        \item If $R$ is empty and $L$ not, then $\Canop(T) = v.0$. Since
        $\Nar(T) = \Nar(L)$, by induction hypothesis, the property is
        satisfied.
    \end{enumerate}
\end{proof}

\begin{Corollaire}
    For all $k \geq 0$, the set of $k$-Narayana binary trees with $n$ nodes
    is a disjoint union of intervals in the Tamari lattice.
\end{Corollaire}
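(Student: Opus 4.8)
The plan is to combine the two preceding results in a straightforward way. By Proposition~\ref{prop:ArbresNarayanaUnionCanop}, the set of $k$-Narayana binary trees with $n$ nodes is the disjoint union of the sets $\EnsCanop_u$ taken over all words $u \in \{0, 1\}^*$ with $\ell(u) = n - 1$ and exactly $k$ occurrences of the letter $1$. By Proposition~\ref{prop:ClotureIntArbresCanopee}, each such $\EnsCanop_u$ is an interval of the Tamari lattice. Hence the set of $k$-Narayana binary trees with $n$ nodes is a disjoint union of intervals.

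First I would restate the decomposition provided by Proposition~\ref{prop:ArbresNarayanaUnionCanop}: fixing $n \geq 1$ and $k \geq 0$, one has
\begin{equation}
    \{T \in \EnsAB_n : \Nar(T) = k\} = \biguplus_{\substack{u \in \{0, 1\}^{n-1} \\ |u|_1 = k}} \EnsCanop_u,
\end{equation}
where $|u|_1$ denotes the number of occurrences of $1$ in $u$. This is a disjoint union because distinct words $u$ index disjoint sets $\EnsCanop_u$ (a binary tree has a single canopy). Then I would invoke Proposition~\ref{prop:ClotureIntArbresCanopee} to note that each $\EnsCanop_u$ on the right-hand side is an interval of the Tamari lattice, which finishes the argument.

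Honestly, there is no real obstacle here: the corollary is an immediate consequence of the two propositions just proved, and the only thing to be careful about is making the disjointness explicit and confirming that the union ranges over the correct index set (words of length $n-1$ with $k$ ones), which is exactly what Proposition~\ref{prop:ArbresNarayanaUnionCanop} supplies. I would keep the proof to two or three sentences, essentially a pointer to the preceding results.
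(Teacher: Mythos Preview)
Your proposal is correct and follows essentially the same approach as the paper: invoke Proposition~\ref{prop:ArbresNarayanaUnionCanop} to decompose the set of $k$-Narayana trees with $n$ nodes as a disjoint union of sets $\EnsCanop_u$, then apply Proposition~\ref{prop:ClotureIntArbresCanopee} to conclude that each piece is an interval. The paper's proof is even terser than yours, but the content is identical.
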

\begin{proof}
    The property follows from the fact that the set of $k$-Narayana binary
    trees with $n$ nodes is the union of some binary trees with a given
    canopy (Proposition~\ref{prop:ArbresNarayanaUnionCanop}) and that the
    sets of binary trees with a given canopy are intervals of the Tamari lattice
    (Proposition~\ref{prop:ClotureIntArbresCanopee}).
\end{proof}

\bibliographystyle{alpha}
\bibliography{Bibliographie}

\end{document}